\numberwithin{equation}{section}
\newtheorem{thm}{Theorem}[section]
\newtheorem*{Theorem*}{Theorem}
\newtheorem{cor}[thm]{Corollary}
\newtheorem{lem}[thm]{Lemma}
\newtheorem{prop}[thm]{Proposition}
 { \theoremstyle{definition}

\newtheorem{exmp}[thm]{Example}
\newtheorem{rem}[thm]{Remark} }
\def\C{\mathbb C}
\def\Z{\mathbb Z}
\def\gl{\mathfrak{gl}}
\let\on\operatorname
\def\End{\on{End}}
\def\Res{\on{Res}}
\def\Wr{\on{Wr}}
\def\St{\mathcal{\acute S}}
\def\Sh{\widehat S}
\def\Dh{\widehat D}
\def\Wh{\widehat W}
\def\Vh{\widehat V}
\def\red{\mathrm{red}}
\let\alb\allowbreak
\def\>{\relax\ifmmode\mskip.666667\thinmuskip\relax\else\kern.111111em\fi}
\def\:{\relax\ifmmode\mskip.333333\thinmuskip\relax\else\kern.0555556em\fi}
\def\<{\relax\ifmmode\mskip-.333333\thinmuskip\relax\else\kern-.0555556em\fi}
\def\?{\relax\ifmmode\mskip-.666667\thinmuskip\relax\else\kern-.111111em\fi}
\def\vsk#1>{\vskip#1\baselineskip}
\def\vv#1>{\vadjust{\vsk#1>}\ignorespaces}
\def\vvn#1>{\vadjust{\nobreak\vsk#1>\nobreak}\ignorespaces}
\def\lsym#1{#1\alb\dots\relax#1\alb} \def\lc{\lsym,}
\newcommand{\subalign}[1]{%
\vcenter{%
\Let@ \restore@math@cr \default@tag
\baselineskip\fontdimen10 \scriptfont\tw@
\advance\baselineskip\fontdimen12 \scriptfont\tw@
\lineskip\thr@@\fontdimen8 \scriptfont\thr@@
\lineskiplimit\lineskip
\ialign{\hfil$\m@th\scriptstyle##$&$\m@th\scriptstyle{}##$\hfil\crcr
#1\crcr
}%
}%
}
\begin{document}
\allowdisplaybreaks

\newcommand{\arXivNumber}{2202.06405}

\renewcommand{\PaperNumber}{081}

\FirstPageHeading

\ShortArticleName{Difference Operators and Duality for Trigonometric Gaudin and Dynamical Hamiltonians}

\ArticleName{Difference Operators and Duality for Trigonometric\\ Gaudin and Dynamical Hamiltonians}

\Author{Filipp UVAROV}

\AuthorNameForHeading{F.~Uvarov}

\Address{Higher School of Economics, 6 Usacheva Str., Moscow, 119048, Russia}
\Email{\href{mailto:fuvarov@hse.ru}{fuvarov@hse.ru}}

\ArticleDates{Received February 28, 2022, in final form September 26, 2022; Published online October 25, 2022}

\Abstract{We study the difference analog of the quotient differential operator from [Tara\-sov~V., Uvarov~F., \textit{Lett. Math. Phys.} \textbf{110} (2020), 3375--3400, arXiv:1907.02117]. Starting with a space of quasi-exponentials $W=\langle \alpha_{i}^{x}p_{ij}(x),\, i=1\lc n,\, j=1\lc n_{i}\rangle$, where $\alpha_{i}\in\C^{*}$ and $p_{ij}(x)$ are polynomials, we consider the formal conjugate $\check{S}^{\dagger}_{W}$ of the quotient difference operator $\check{S}_{W}$ satisfying $\Sh =\check{S}_{W}S_{W}$. Here, $S_{W}$ is a linear difference operator of order $\dim W$ annihilating $W$, and $\Sh$ is a linear difference operator with constant coefficients depending on $\alpha_{i}$ and $\deg p_{ij}(x)$ only. We construct a space of quasi-exponentials of dimension $\on{ord} \check{S}^{\dagger}_{W}$, which is annihilated by $\check{S}^{\dagger}_{W}$ and describe its basis and discrete exponents. We also consider a similar construction for differential operators associated with spaces of quasi-polynomials, which are linear combinations of functions of the form $x^{z}q(x)$, where $z\in\mathbb C$ and~$q(x)$ is a~polynomial. Combining our results with the results on the bispectral duality obtained in [Mukhin~E., Tarasov~V., Varchenko~A., \textit{Adv. Math.} \textbf{218} (2008), 216--265, arXiv:math.QA/0605172], we relate the construction of the quotient difference operator to the $(\mathfrak{gl}_{k},\mathfrak{gl}_{n})$-duality of the trigonometric Gaudin Hamiltonians and trigonometric dynamical Hamiltonians acting on the space of polynomials in $kn$ anticommuting variables.}

\Keywords{difference operator; $(\mathfrak{gl}_{k},\mathfrak{gl}_{n})$-duality; trigonometric Gaudin model; Bethe ansatz}

\Classification{82B23; 17B80; 39A05; 34M35}

\section{Introduction}

{\bf 1.1.}~Consider an operator $T$ acting on functions of a variable $x$ by the rule $(Tf)(x)=f(x+1)$. An operator $S$ of the form
$S=\sum_{i=0}^{N}a_{i}(x)T^{N-i}$, where $a_{0}(x)\lc a_{N}(x)$ are complex valued functions
of $x$ and $a_{0}(x)\ne 0$, is called a linear difference operator of
order $N$. Say that the operator $S$ is monic if $a_{0}(x)=1$. Let us write $\on{ord}(S)$ for the order of~$S$.

Let us fix a branch of $\ln x$ and write $\alpha^{x}$ for ${\rm e}^{x\ln\alpha}$ for any non-zero complex number $\alpha$. A~quasi-exponential is a function of the form $\alpha^{x}p(x)$ for some non-zero $\alpha$ and polynomial $p(x)$. We will say that a complex vector space $W$ is a space of quasi-exponentials if $W$ has a basis consisting of quasi-exponentials.
Let $W$ be a space of quasi-exponentials with a basis
$\{\alpha_{i}^{x}p_{ij}(x),\,i=1\lc n,\,j=1\lc n_{i}\}$,
where the numbers $\alpha_{1}\lc\alpha_n$ are distinct, and $p_{ij}$ are some polynomials. Set $d_i=\max_j\bigl(\deg p_{ij}(x)\bigr)$.
It can be shown that there exists a unique monic linear difference operator $S_{W}$ of order $\dim{W}$ annihilating $W$ and a monic linear difference operator $\check S_{W}$ such that{\samepage
\[\prod_{i=1}^{n}(T-\alpha_{i})^{d_i+1} = \check{S}_{W} S_{W},\]
see Sections~\ref{4.1}--\ref{qdo qe} for details. We will call $\check{S}_{W}$ the quotient difference operator.}

Write $\check{S}_{W}=\sum_{i=1}^{m}\check{a}_{i}(x)T^{m-i}$ and denote $T_{-}=T^{-1}$. The formal conjugate $\check{S}^{\dagger}_{W}$ of $\check{S}_{W}$ is a~linear difference operator acting on a function $f(x)$ as follows:
\[
\big(\check{S}^{\dagger}_{W}f\big)(x)=\sum_{i=1}^{m}T_{-}^{m-i}(\check{a}_{i}(x)f(x)).
\]

In Section \ref{qdo qe}, we construct a vector space of functions $Q(W)$ of dimension $\on{ord}\big(\check{S}^{\dagger}_{W}\big)=m$ such that $\check{S}^{\dagger}_{W}$ annihilates $Q(W)$. We prove that $Q(W)$ has a basis of the form
\begin{equation*}
\{\alpha_{i}^{-x}q_{ij}(x),\,i=1\lc n,\,j=1\lc l_{i}\},\qquad q_{ij}\in\C [x],
\end{equation*}
and describe the degrees of the polynomials $q_{ij}(x)$.

For a space of quasi-exponentials $W$ and a point $z\in\C$, we define
the discrete exponents of $W$ at $z$ associated with the operator $T$ and
the $T_-$-discrete exponents of $W$ at $z$ associated with the operator
$T_{-}$. In Sections~\ref{tde} and~\ref{4.5}, we compute the $T_-$-discrete
exponents of the space~$Q(W)$ at the point $z-1$ using
the discrete exponents of $W$ at the point $z$.

{\bf 1.2.}~In Section \ref{spaces with data}, we introduce spaces of quasi-exponentials
with difference data $\big(\bar{\alpha},\bar{\mu};\bar{z},\bar{\lambda}\big)$,
where $\bar{\alpha}=(\alpha_1\lc\alpha_n)$, $\bar{z}=(z_1\lc z_k)$ are
sequences of distinct complex numbers, and $\bar{\mu}=\big(\mu^{(1)}\lc\mu^{(n)}\big)$,
$\bar{\lambda}=\big(\lambda^{(1)}\lc\lambda^{(k)}\big)$ are sequences of partitions.
A space $W$ with the difference data
$\big(\bar{\alpha},\bar{\mu};\bar{z},\bar{\lambda}\big)$ has a basis of the form
$\{\alpha_{i}^{x}p_{ij}(x)\}$, and for each $i=1\lc n$, the partition~$\mu^{(i)}$ describes the degrees of the polynomials $p_{ij}(x)$ with given
$i$. The numbers $z_1\lc z_k$ are singular points (not all) of $W$, and
for each $a=1\lc k$, the partition $\lambda^{(a)}$ describes the discrete
exponents of~$W$ at the point~$z_a$. We denote the set of all spaces of quasi-exponentials with the fixed difference data as $\mathcal{E}\big(\bar{\alpha},\bar{\mu};\bar{z},\bar{\lambda}\big)$.

Applying the results
of Sections \ref{qdo qe}--\ref{4.5}, we define a map
\[
\mathfrak{T}_{1}\colon \ \mathcal{E}\big(\bar{\alpha},\bar{\mu};\bar{z},\bar{\lambda}\big)\rightarrow
\mathcal{E}\big(\bar{\alpha},\bar{\mu}';1-\bar{z},\bar{\lambda}'\big)
\]
by sending the space
$W\in\mathcal{E}\big(\bar{\alpha},\bar{\mu};\bar{z},\bar{\lambda}\big)$
to the image of the space $Q(W)$ under the map
$f(x)\mapsto f(-x)$. Here, the sequences $\bar{\mu}'$, $\bar{\lambda}'$ are
obtained from $\bar{\mu}$, $\bar{\lambda}$ by replacing all partitions
$\mu^{(i)}$,~$\lambda^{(a)}$ by their conjugate, $\big(\mu^{(i)}\big)'$,~$\big(\lambda^{(a)}\big)'$, and
$1-\bar{z}=(1-z_{1}\lc 1-z_{k})$, see details in
Section \ref{spaces with data}.

{\bf 1.3.}~Besides quasi-exponentials, we consider quasi-polynomials, which are functions of the form $x^{z}p(x)$, where $z\in\C$ and $p(x)$ is a polynomial. We introduce the notion of a space of quasi-polynomials with data $\big(\bar{z},\bar{\lambda};\bar{\alpha},\bar{\mu}\big)$, which is analogous to the notion of a space of quasi-exponentials with difference data. Denote the set of all spaces of quasi-polynomials with the fixed data $\big(\bar{z},\bar{\lambda};\bar{\alpha},\bar{\mu}\big)$ as $\mathcal{P}\big(\bar{z},\bar{\lambda};\bar{\alpha},\bar{\mu}\big)$. We introduce an analog of the map $\mathfrak{T}_{1}$ for the spaces of quasi-polynomials:
\[\mathfrak{T}_{2}\colon \ \mathcal{P}\big(\bar{z},\bar{\lambda};\bar{\alpha},\bar{\mu}\big)\rightarrow \mathcal{P}\big(1-\bar{z}-\bar{\lambda}'_{1}-\bar{\lambda}_{1},\bar{\lambda}';\bar{\alpha},\bar{\mu}'\big),
\]
where $1-\bar{z}-\bar{\lambda}_{1}-\bar{\lambda}'_{1}=
\big(1-z_1-\lambda^{(1)}_1-\big(\lambda^{(1)}\big)'_1\lc
1-z_k-\lambda^{(k)}_1-\big(\lambda^{(k)}\big)'_1\big)$ and
$\lambda^{(i)}_1$,~$\big(\lambda^{(i)}\big)'_1$ are the first components of the partitions
$\lambda^{(i)}$, $\big(\lambda^{(i)}\big)'$. The map~$\mathfrak{T}_{2}$ provides a space of quasi-polynomials, which is annihilated by the formal conjugate of the quotient differential operator, an analog of the quotient difference operator introduced above.

The map $\mathfrak{T}_{2}$ is constructed as the counterpart of the map $\mathfrak{T}_{1}$ under the bispectral duality introduced and studied in paper \cite{MTV4}, see also Section \ref{BD}. More precisely, the bispectral duality establishes a bijection
\[
\mathfrak{T}_{3}\colon \ \mathcal{P}\big(\bar{z},\bar{\lambda};\bar{\alpha},\bar{\mu}\big)\rightarrow \mathcal{E}\big(\bar{\alpha},\bar{\mu};,\bar{z}+\bar{\lambda'}_{1},\bar{\lambda}\big),
\]
where $\bar{z}+\bar\lambda'_1=\big(z_1+\big(\lambda^{(1)}\big)'_1\lc z_k+\big(\lambda^{(k)}\big)'_1\big)$. We define $\mathfrak{T}_{2}=\mathfrak{T}_{3}^{-1}\mathfrak{T}_{1}\mathfrak{T}_{3}$ and prove that for a space of quasi-polynomials $V$, the space $\mathfrak{T}_{2}(V)$ is annihilated by the formal conjugate $\check{D}^{\dagger}_{V}$ quotient differential operator $\check{D}_{V}$ (see Theorem~\ref{map T_2}).

{\bf 1.4.}~To study relations between the quotient difference operator and the quotient differential operator, we use the notion of pseudo-differnce operators, see Section \ref{psdifference op}. Let $V$ be a space of quasi-polynomials with the data $\big(\bar{z},\bar{\lambda};\bar{\alpha},\bar{\mu}\big)$, and denote $W=\mathfrak{T}_{1}(\mathfrak{T}_{3}(V))$. To the spaces~$V$ and~$W$, one can associate pseudo-difference operators~$\mathcal{S}_{V}$ and~$\mathcal{S}_{W}$ called the fundamental pseudo-difference operators of $V$ and $W$, respectively. Then $W=\mathfrak{T}_{1}(\mathfrak{T}_{3}(V))$ implies
\[
\mathcal{S}_{V}=\mathcal{S}_{W}^{-1},
\]
see Theorem \ref{discrete main 1}.

For convenience of a reader, we depict the relations between $\mathfrak{T}_{1}$, $\mathfrak{T}_{2}$, and $\mathfrak{T}_{3}$ on the following commutative diagram:
\begin{equation*}
\begin{tikzpicture}[baseline=(current bounding box.center), scale=0.8]
\node(1) at(-1,-2) {$\mathcal{P}\big(\bar{z},\bar{\lambda};\bar{\alpha},\bar{\mu}\big)$};
\node(2) at(4,0) {$\mathcal{E}\big(\bar{\alpha},\bar{\mu};,\bar{z}+\bar{\lambda'}_{1},\bar{\lambda}\big)$};
\node(3) at(4,-4) {$\mathcal{P}\big(1-\bar{z}-\bar{\lambda}'_{1}-\bar{\lambda}_{1},\bar{\lambda}';\bar{\alpha},\bar{\mu}'\big)$};
\node(4) at(9,-2) {$\quad\quad\quad\mathcal{E}\big(\bar{\alpha},\bar{\mu}';1-\bar{z}-\bar{\lambda'}_{1},\bar{\lambda}'\big)$.};
\node(5) at(1.2,-2) {$\mathcal{S}_{V}$};
\node(6) at(6.5,-2) {$\mathcal{S}_{V}^{-1}$};

\draw[->] (1)--node[above left] {$\mathfrak{T}_{3}$} (2);
\draw[->] (3)--node[below right] {$\mathfrak{T}_{3}$} (4);

\draw[->] (1)--node[below left] {$\mathfrak{T}_{2}$} (3);
\draw[->] (2)--node[above right] {$\mathfrak{T}_{1}$} (4);

\draw[|->] (5)--(6);
\end{tikzpicture}
\end{equation*}

{\bf 1.5.}~Our study of the map $\mathfrak{T}_{1}$ is motivated by the $(\gl_{k},\gl_{n})$-duality between the trigonometric Gaudin Hamiltonians $H_{1}\lc H_{n}\in U(\gl_{k})^{\otimes n}$ and the trigonometric dynamical Hamiltonians $G_{1}\lc G_{n}\in U(\gl_{n})^{\otimes k}$, see \cite{J,TV2}, and Section \ref{trig Gaudin and Dyn}. Both $U(\gl_{k})^{\otimes n}$ and $U(\gl_{n})^{\otimes k}$ act on the space~$\mathfrak{P}_{kn}$ of polynomials in $k$ times $n$ anticommuting variables $\xi_{ai}$, $a=1\lc k$, $i=1\lc n$. Let $\rho(H_{1})\lc \rho(H_{n})$ be the images of the trigonometric Gaudin Hamiltonians in $\End (\mathfrak{P}_{kn})$, and let $\rho(G_{1})\lc\rho(G_{n})$ be the images of the trigonometric dynamical Hamiltonians in $\End (\mathfrak{P}_{kn})$. It is known that
\begin{equation}\label{duality intro}
 \rho(H_{i})=-\rho(G_{i}),\qquad i=1\lc n,
\end{equation}
see \cite{TU2} and Proposition \ref{Gaudin Dyn duality}. In particular, any common eigenvector of $H_{1}\lc H_{n}$ in $\mathfrak{P}_{kn}$ is a~common eigenvector of $G_{1}\lc G_{n}$, and vice versa.

Common eigenvectors of the Hamiltonians can be found using the Bethe ansatz
method. For an ``admissible'' space of quasi-polynomials
$V\in\mathcal{P}\big(\bar{z},\bar{\lambda};\bar{\alpha},\bar{\mu}\big)$,
the Bethe ansatz associates an eigenvector $v_{W}$ of $H_{1}\lc H_{n}$
acting in $\mathfrak{P}_{kn}$, see \cite{MV2} and Sections \ref{BA for trig Gaudin}, \ref{qpol and BA} for details. Denote the corresponding eigenvalues as $h^{V}_{1}\lc h^{V}_{n}$. Similarly, for an ``admissible'' space of
quasi-exponentials
$W\in\mathcal{E}\big(\bar{\alpha},\bar{\mu}';1-\bar{z}-\bar\lambda'_1,\bar{\lambda}'\big)$,
the Bethe ansatz associates an eigenvector $v_{W}$ of $G_{1}\lc G_{n}$
acting in $\mathfrak{P}_{kn}$, see \cite{MV2} and
Sections~\ref{BA for XXX},~\ref{qexp to BAE} for details. Denote the corresponding eigenvalues as $g^{W}_{1}\lc g^{W}_{n}$. We will show that if $W=\mathfrak{T}_{1}(\mathfrak{T}_{3}(V))$, then
\[h_{i}^{V}=-g_{i}^{W},\]
see Theorems \ref{discrete main 2} and \ref{discrete main 2 extended}.
This ``matches'' the $(\gl_{k},\gl_{n})$-duality \eqref{duality intro}, so, using that for generic~$\bar{z}$,~$\bar{\alpha}$, the common eigenspaces of the Hamiltonians are one-dimensional, we conclude that for such~$\bar{z}$,~$\bar{\alpha}$, the vector $v_{V}$ is proportional to $v_{W}$, see Sections~\ref{qdo and duality}, \ref{non-reduced data}.
Here and below, when we say ``for generic~$\bar{z}$,~$\bar{\alpha}$'', we mean ``for all~$\bar{z}$,~$\bar{\alpha}$, except, maybe, solutions of some algebraic equation''.

The exchange of the trigonometric Gaudin and dynamical Hamiltonians under
the $(\gl_{k},\gl_{n})$-duality is expected to be a part of the duality between
the Bethe algebras of the trigonometric Gaudin model and the XXX-type
spin chain model. The Bethe algebra of the trigonometric Gaudin model is
a commutative subalgebra of the universal enveloping algebra
$U\big(\widetilde{\gl_{k}}\big)$ of the loop algebra $\widetilde{\gl_{k}}$,
see \cite{MR}, and the Bethe algebra of the XXX-type spin chain model is
a commutative subalgebra of the Yangian $Y(\gl_{n})$, see \cite{MTV6}. Both
$U\big(\widetilde{\gl_{k}}\big)$ and $Y(\gl_{n})$ act on the space $\mathfrak{P}_{kn}$. The images of the trigonometric Gaudin Hamiltonians in $\End (\mathfrak{P}_{kn})$ belong to the image of the Bethe algebra of the trigonometric Gaudin model, and the images of the trigonometric dynamical Hamiltonians in $\End (\mathfrak{P}_{kn})$ belong to the image of the Bethe algebra of the XXX-type spin chain model.
It is expected that the equality of the images of the Hamiltonians extends to the equality of the images of the Bethe algebras. The corresponding result for
the rational Gaudin model was established in \cite{TU1}, where we developed and used the differential analogs of the results for the quotient difference operator studied here. Therefore, the results of this paper can be considered as the first steps in establishing the duality between the Bethe algebras of the trigonometric Gaudin model and the XXX-type spin chain model.

The results of this work and our previous works \cite{TU1,TU2} are devoted to the $(\gl_{k},\gl_{n})$-duality in quantum integrable models on the space $\mathfrak{P}_{kn}$ of polynomials in anticommuting variables. The parallel results for the space $P_{kn}$ of polynomials in commuting variables were obtained earlier, see works \cite{MTV2,MTV4,MTV1,TV4}. In particular, our map $\mathfrak{T}_{1}\circ\mathfrak{T}_{3}$ is the $\mathfrak{P}_{kn}$-analog of the map $\mathfrak{T}_{3}$ introduced in \cite{MTV4}.

\medskip

{\bf 1.6.~Summary of the results.}
\begin{enumerate}\itemsep=0pt
 \item For a space of quasi-exponentials $W$ and the formal conjugate of the quotient difference operator $\check{S}^{\dagger}_{W}$, we construct a space of quasi-exponentials $Q(W)$ of dimension $\on{ord} \check{S}^{\dagger}_{W}$ annihilated by $\check{S}^{\dagger}_{W}$. We describe quasi-exponential basis of $Q(W)$ and its $T_{-}$-discrete exponents. Our findings allow us to define the map $\mathfrak{T}_{1}$ between sets of spaces of quasi-exponentials with difference data.
 \item We prove that if $W=\mathfrak{T}_{1}(\mathfrak{T}_{3}(v))$, where $\mathfrak{T}_{3}$ is the bispectral duality studied earlier in \cite{MTV4}, then for the fundamental pseudo-difference operators $\mathcal{S}_{V}$ and $\mathcal{S}_{W}$ of $V$ and $W$, respectively, we have $\mathcal{S}_{V}=\mathcal{S}_{W}^{-1}$ (Theorem \ref{discrete main 1}).
 \item We prove that $\mathfrak{T}_{2}=\mathfrak{T}_{3}^{-1}\mathfrak{T}_{1}\mathfrak{T}_{3}$ provides the space of quasi-polynomials annihilated by the quotient differential operator.
 \item For the eigenvalues $h_{1}^{V}\lc h_{n}^{V}$ of the trigonometric Gaudin Hamiltonians given by an admissible space of quasi-polynomials $V$ with the data $\big(\bar{z},\bar{\lambda};\bar{\alpha},\bar{\mu}\big)$ and the eigenvalues $g_{1}^{W}\lc g_{n}^{W}$ of the trigonometric dynamical Hamiltonians given by an admissible space of quasi-exponentials $W$ with the difference data $\big(\bar{\alpha},\bar{\mu}';1-\bar{z}-\bar\lambda'_1,\bar{\lambda}'\big)$, we show that if $W=\mathfrak{T}_{1}(\mathfrak{T}_{3}(v))$, then $h_{i}^{V}=-g_{i}^{W}$ (Theorems~\ref{discrete main 2} and~\ref{discrete main 2 extended}).
\end{enumerate}

{\bf 1.7. Plan of the paper.}
The paper is organized as follows. In Section \ref{qdo}, we construct and study
the quotient difference operator, and define the map $\mathfrak{T}_{1}$.
In Section \ref{q differential op}, we introduce the spaces of quasi-polynomials with the data $\big(\bar{z},\bar{\lambda};\bar{\alpha},\bar{\mu}\big)$ and announce the existence of the map $\mathfrak{T}_{2}$.
We recall the bispectral duality map $\mathfrak{T}_{3}$ in Section \ref{BD}.
In Section \ref{psdifference op}, we study relations between quotient differential and quotient difference operators using
pseudo-difference operators and use these relations to construct and study the map $\mathfrak{T}_{2}$. In Section \ref{duality}, we consider
the $(\gl_{k},\gl_{n})$-duality for the trigonometric Gaudin and dynamical
Hamiltonians and relate it to the composition map
$\mathfrak{T}_{1}\circ\mathfrak{T}_{3}$. Identities for discrete Wronskian
used in the paper are collected in Appendix~\ref{appendixA}.

\section{Quotient difference operator}\label{qdo}
The results of Sections \ref{4.1}--\ref{qdo qe} for difference operators
are analogous to that of \cite[Sec\-tions~6.1--6.4]{TU1} for differential
operators.
\subsection{Factorization of a difference operator}\label{4.1}
For any functions $g_{1}\lc g_{n}$, let
\[
\mathcal{W}{\rm r}(g_{1}\lc g_{n})=\det\big(\big(T^{j-1}g_{i}\big)_{i,j=1}^{n}\big)
\]
 be their discrete Wronskian.
Let $\mathcal{W}{\rm r}_{i}(g_{1}\lc g_{n})$ be the determinant of the $n\times n$ matrix whose $j$-th row is $g_{j}, Tg_{j}\lc T^{n-i-1}g_{j},$ $T^{n-i+1}g_{j}\lc T^{n}g_{j}$.

Fix functions $f_{1}\lc f_{n}$ such that $\mathcal{W}{\rm r}(f_{i_{1}}\lc f_{i_{m}})\neq 0$ for any $1\leq i_{1}< \dots < i_{m}\leq n$. In particular, the functions $f_{1}\lc f_{n}$ are linearly independent.

\begin{lem}\label{wronskian formula for difference op}
There exists a unique monic linear difference operator $S= T^{n}+\sum_{i=1}^{n}a_{i}T^{n-i}$ of order $n$ such that $Sf_{i}=0$, $i=1\lc n$.
Moreover, the coefficients $a_{1}\lc a_{n}$ of the difference operator~$S$ are given by the formula
\begin{equation}\label{Ta_{i}(x)}
a_{i}=(-1)^{i} \frac{\mathcal{W}{\rm r}_{i}(f_{1}\lc f_{n})}{\mathcal{W}{\rm r}(f_{1}\lc f_{n})}, \qquad i=1\lc n ,
\end{equation}
and for any function $g$, we have
\begin{equation}\label{S}
Sg=\frac{\mathcal{W}{\rm r}(f_{1}\lc f_{n}, g)}{\mathcal{W}{\rm r}(f_{1}\lc f_{n})} .
\end{equation}
\end{lem}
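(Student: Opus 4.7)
The plan is to define $S$ directly by the Wronskian ratio \eqref{S} and then verify that it is a monic difference operator of order $n$ with coefficients given by \eqref{Ta_{i}(x)}, that it annihilates every $f_j$, and that it is unique. So set $Sg := \mathcal{W}{\rm r}(f_{1}\lc f_{n}, g)/\mathcal{W}{\rm r}(f_{1}\lc f_{n})$, which is well-defined since the denominator is nonzero by hypothesis.

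The key computation is to expand the numerator determinant along its last row, which is $(g, Tg, \dots, T^{n}g)$. The cofactor of $T^{n}g$ is exactly $\mathcal{W}{\rm r}(f_{1}\lc f_{n})$, so $S$ is monic of order $n$. For $0\le k\le n-1$, the cofactor of $T^{k}g$ is obtained by deleting the $(k{+}1)$st column of $(T^{j-1}f_{i})$, which is precisely the matrix defining $\mathcal{W}{\rm r}_{n-k}(f_{1}\lc f_{n})$ up to the cofactor sign; setting $i=n-k$ and tracking the sign $(-1)^{(n+1)+(k+1)}$ yields the coefficient $(-1)^{i}\mathcal{W}{\rm r}_{i}(f_{1}\lc f_{n})/\mathcal{W}{\rm r}(f_{1}\lc f_{n})$ of $T^{n-i}$, matching \eqref{Ta_{i}(x)}. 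That $Sf_{j}=0$ is immediate: substituting $g=f_{j}$ produces a determinant with two identical rows.

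For uniqueness, suppose $S'=T^{n}+\sum_{i=1}^{n}a'_{i}T^{n-i}$ also annihilates every $f_{j}$. Then $S-S'=\sum_{i=0}^{n-1}b_{i}(x)T^{i}$ has order at most $n-1$ and satisfies $(S-S')f_{j}=0$ for $j=1\lc n$. For each fixed $x$, this is a homogeneous linear system in the vector $(b_{0}(x)\lc b_{n-1}(x))$ whose coefficient matrix is the transpose of $(T^{i-1}f_{j}(x))_{i,j=1}^{n}$, with determinant $\mathcal{W}{\rm r}(f_{1}\lc f_{n})(x)\neq 0$. Hence $b_{i}\equiv 0$ and $S=S'$.

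The only delicate point is bookkeeping of signs and index conventions in the cofactor expansion, in particular reconciling the removal of column $k{+}1$ in the $(n{+}1)\times(n{+}1)$ determinant with the definition of $\mathcal{W}{\rm r}_{i}$ as deletion of the $T^{n-i}$-column in an $n\times n$ matrix; once the reindexing $i=n-k$ is in place, the factor $(-1)^{i}$ in \eqref{Ta_{i}(x)} comes out automatically.
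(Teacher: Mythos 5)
Your proposal is correct and follows essentially the same route as the paper: the paper obtains \eqref{Ta_{i}(x)} by solving the linear system $Sf_i=0$ for the coefficients via Cramer's rule (uniqueness coming from the nonvanishing of $\mathcal{W}{\rm r}(f_1\lc f_n)$) and then derives \eqref{S} from the last-row expansion, while you run the same computation in the opposite direction, defining $S$ by \eqref{S} and reading off the coefficients from the cofactor expansion. The sign bookkeeping $i=n-k$, $(-1)^{(n+1)+(k+1)}=(-1)^{i}$ checks out, and your uniqueness argument is the same nonsingular-Wronskian-matrix fact the paper invokes.
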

\begin{proof}
Solving
\[
\begin{pmatrix}
f_{1} & Tf_{1} & \dots & T^{n-1}f_{1} \\
\vdots & \vdots & & \vdots \\
f_{n} & Tf_{n} & \dots & T^{n-1}f_{n}
\end{pmatrix}
\begin{pmatrix}
a_{n} \\ \vdots \\a_{1}
\end{pmatrix}
=
\begin{pmatrix}
T^n f_1 \\ \vdots \\ T^n f_n
\end{pmatrix}
\]
for $a_{1}\lc a_{n}$ by Cramer's rule yields formula \eqref{Ta_{i}(x)}, and this solution is unique. Formula \eqref{S} follows
from the last row expansion of the determinant in the numerator.
\end{proof}

\begin{prop}\label{Sfact} The difference operator $S$ can be written in the following form:
\begin{equation}\label{S2}
S=\left(T - \frac{g_{1}(x+1)}{g_{1}(x)}\right)\left(T - \frac{g_{2}(x+1)}{g_{2}(x)}\right)\cdots \left(T - \frac{g_{n}(x+1)}{g_{n}(x)}\right),
\end{equation}
where $g_{n}=f_{n}$, and
\begin{equation}\label{g i}
g_{i}=\frac{\mathcal{W}{\rm r}(f_{n},f_{n-1}\lc f_{i} )}{\mathcal{W}{\rm r}(f_{n}, f_{n-1}\lc f_{i+1})},\qquad
i=1\lc n-1.
\end{equation}
\end{prop}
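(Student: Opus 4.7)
The plan is to induct on the order $n$. For $n=1$, the unique monic operator annihilating $f_1$ is $T-f_1(x+1)/f_1(x)$, and formula \eqref{g i} gives $g_1=f_1$, so the factorization \eqref{S2} holds trivially.

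For the inductive step, I would peel off the rightmost factor $R:=T-f_n(x+1)/f_n(x)$, which by construction satisfies $Rf_n=0$ (since $g_n=f_n$). Applying $R$ to the remaining functions yields
\[
\hat f_i:=Rf_i=\frac{\mathcal{W}{\rm r}(f_n,f_i)(x)}{f_n(x)},\qquad i=1\lc n-1.
\]
The key auxiliary fact I need is the discrete Sylvester-type identity
\[
\mathcal{W}{\rm r}(\hat f_{n-1},\hat f_{n-2}\lc\hat f_i)(x)=\frac{\mathcal{W}{\rm r}(f_n,f_{n-1}\lc f_i)(x)}{f_n(x)},\qquad 1\le i\le n-1,
\]
and its analog for arbitrary subfamilies of the $\hat f_j$'s. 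Combined with the assumed non-vanishing of all minor Wronskians of $f_1\lc f_n$, this guarantees that the $\hat f_i$'s satisfy the hypothesis of Lemma \ref{wronskian formula for difference op} at level $n-1$.

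Let $\hat S$ be the monic operator of order $n-1$ annihilating $\hat f_1\lc\hat f_{n-1}$ produced by Lemma \ref{wronskian formula for difference op}. Since $\hat S\cdot R$ is monic of order $n$ and kills every $f_j$, the uniqueness clause of the same lemma forces $S=\hat S\cdot R$. By the induction hypothesis $\hat S=\prod_{i=1}^{n-1}\bigl(T-\hat g_i(x+1)/\hat g_i(x)\bigr)$ with $\hat g_{n-1}=\hat f_{n-1}$ and $\hat g_i=\mathcal{W}{\rm r}(\hat f_{n-1}\lc\hat f_i)/\mathcal{W}{\rm r}(\hat f_{n-1}\lc\hat f_{i+1})$ for $i<n-1$. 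Substituting the displayed identity collapses each $\hat g_i$ to $\mathcal{W}{\rm r}(f_n\lc f_i)/\mathcal{W}{\rm r}(f_n\lc f_{i+1})=g_i$, and checking $\hat g_{n-1}=\mathcal{W}{\rm r}(f_n,f_{n-1})/f_n=g_{n-1}$ directly, one obtains \eqref{S2}.

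The main obstacle is the Sylvester-type identity itself. It is the discrete counterpart of the classical formula relating $W(W(f_n,f_{n-1})/f_n\lc W(f_n,f_i)/f_n)$ to $W(f_n,f_{n-1}\lc f_i)/f_n$ for continuous Wronskians, and I would expect to deduce it by induction on $n-i$, the two-function base reducing to the Desnanot--Jacobi $3\times 3$ condensation identity applied to the matrix with rows $(T^jf_n)_{j=0,1,2}$, $(T^jf_{n-1})_{j=0,1,2}$, $(T^jf_i)_{j=0,1,2}$. Care is needed because $T$ advances both rows and columns of the Wronskian matrix simultaneously, so shifts must be tracked carefully; I expect this identity (or enough to derive it) to be among those collected in Appendix~\ref{appendixA}.
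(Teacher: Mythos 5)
Your proof is correct, but it factors the operator from the opposite end compared with the paper. You peel off the rightmost factor $R=T-f_{n}(x+1)/f_{n}(x)$, pass to the transformed family $\hat f_{i}=Rf_{i}=\mathcal{W}{\rm r}(f_{n},f_{i})/f_{n}$, and invoke a discrete Sylvester-type identity to identify the $\hat g_{i}$ with the $g_{i}$. That identity does hold and is essentially already available: since $\hat f_{i}=f_{n}(x+1)\,(T-1)(f_{i}/f_{n})$, formula \eqref{Wr1} gives $\mathcal{W}{\rm r}(\hat f_{n-1},\dots,\hat f_{i})=\bigl(T^{(n-i)}(Tf_{n})\bigr)\mathcal{W}{\rm r}\bigl((T-1)(f_{n-1}/f_{n}),\dots,(T-1)(f_{i}/f_{n})\bigr)$, while \eqref{Wr4} gives $\mathcal{W}{\rm r}(f_{n},f_{n-1},\dots,f_{i})=\bigl(T^{(n-i+1)}f_{n}\bigr)\mathcal{W}{\rm r}\bigl((T-1)(f_{n-1}/f_{n}),\dots,(T-1)(f_{i}/f_{n})\bigr)$; the two prefactors differ by exactly a factor of $f_{n}$, so no Desnanot--Jacobi induction is needed, and the same computation yields the nonvanishing of all sub-Wronskians of the $\hat f_{j}$ (this is precisely the mechanism behind identity \eqref{h tilde wronskian} in Section~\ref{s63S}). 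The paper instead peels off the leftmost factor: it takes $S_{2}$ to be the monic operator of order $n-1$ annihilating the untransformed family $f_{2},\dots,f_{n}$, applies the induction hypothesis to it directly --- the formulas for $g_{2},\dots,g_{n}$ involve only $f_{2},\dots,f_{n}$, so they are unchanged --- and then needs only the single evaluation $S_{2}f_{1}=\mathcal{W}{\rm r}(f_{n},\dots,f_{2},f_{1})/\mathcal{W}{\rm r}(f_{n},\dots,f_{2})=g_{1}$ from formula \eqref{S}, after which the extra left factor $T-g_{1}(x+1)/g_{1}(x)$ annihilates $g_{1}$. The paper's route avoids transforming the functions and the auxiliary identity altogether; yours is the classical Frobenius-style order reduction, which makes the recursive structure of the $g_{i}$ transparent at the cost of the (easy) Wronskian identity.
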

\begin{proof}
Denote by $S_{1}$ the difference operator in the right-hand side of \eqref{S2}. By uniqueness of the operator $S$ stated in Lemma \ref{wronskian formula for difference op}, it is sufficient to prove that $S_{1}f_{i}=0$ for all $i=1\lc n$. We will prove it by induction on $n$.

If $n=1$, then $g_{1}=f_{1}$, and $S_{1}f_{1}=(T-f_{1}(x+1)/f_{1}(x))f_{1}(x)=0$.
Let $S_{2}$ be the monic linear difference operator of order $n-1$ such that $S_{2}f_{i}=0$, $i=2\lc n$. By induction assumption,
\[S_{2}=\left(T - \frac{g_{2}(x+1)}{g_{2}(x)}\right)\left(T - \frac{g_{3}(x+1)}{g_{3}(x)}\right)\cdots \left(T - \frac{g_{n}(x+1)}{g_{n}(x)}\right).\]

Since $S_{1}= (T - g_{1}(x+1)/g_{1}(x))S_{2}$, we have $S_{1}f_{i}=0$, $i=2\lc n$. Formulas~\eqref{S} and~\eqref{g i} yeild $S_{2}f_{1}=g_{1}$, thus $S_{1}f_{1}=0$ as well.
\end{proof}

\subsection{Formal conjugate difference operator}
Denote $T_{-}=T^{-1}$. Then $(T_{-}f)(x)=f(x-1)$. Let $f_{1}\lc f_{n}$, and $S$ be like in the previous section.
Define the \textit{formal conjugate} of $S$ by the formula:
\begin{equation*}
S^{\dagger}h(x)=(T_{-})^{n}h(x)+\sum_{i=1}^{n}(T_{-})^{n-i} \bigl(a_{i}(x)h(x)\bigr) .
\end{equation*}

By Proposition \ref{Sfact}, we have
\begin{equation}\label{Sddagger}
S^{\dagger}=\left(T_{-}-\frac{g_{n}(x+1)}{g_{n}(x)}\right)\left(T_{-}-\frac{g_{n-1}(x+1)}{g_{n-1}(x)}\right)\cdots \left(T_{-}-\frac{g_{1}(x+1)}{g_{1}(x)}\right).
\end{equation}

Define \[h_{i} = T\left(\frac{\mathcal{W}{\rm r}(f_{1}\lc
f_{i-1},f_{i+1}
\lc f_{n})}{\mathcal{W}{\rm r}(f_{1}\lc f_{n})}\right) .\]

\begin{prop}\label{Kernel for difference conjugate}
We have $S^{\dagger}h_{i}=0$ for all $i=1\lc n$.
\end{prop}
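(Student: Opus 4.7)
The plan is to exploit the factorization \eqref{Sddagger} of $S^\dagger$ combined with the observation that $S$ is uniquely determined by its kernel $\langle f_1,\dots,f_n\rangle$ (Lemma~\ref{wronskian formula for difference op}), so reordering the $f_k$'s leaves $S$ unchanged while producing a different factorization via Proposition~\ref{Sfact}.

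First I would verify by direct calculation that each first-order operator $T_{-}-g_k(x+1)/g_k(x)$ annihilates $1/g_k(x+1)$:
\[
\Bigl(T_{-}-\frac{g_k(x+1)}{g_k(x)}\Bigr)\frac{1}{g_k(x+1)}=\frac{1}{g_k(x)}-\frac{g_k(x+1)}{g_k(x)}\cdot\frac{1}{g_k(x+1)}=0.
\]
Applied to the rightmost factor of~\eqref{Sddagger}, together with formula~\eqref{g i} for $g_1$ and two Wronskian row-reversals yielding $g_1=(-1)^{n-1}\mathcal{W}{\rm r}(f_1,\dots,f_n)/\mathcal{W}{\rm r}(f_2,\dots,f_n)$, this identifies $1/g_1(x+1)$ with $(-1)^{n-1}h_1$. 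Hence the rightmost factor of $S^\dagger$ kills $h_1$, so $S^\dagger h_1=0$.

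For $i>1$, I would repeat the argument after reordering: set $\tilde f_1=f_i$ and let $\tilde f_2,\dots,\tilde f_n$ be the remaining $f_k$'s in their original order. Proposition~\ref{Sfact} applied to the reordered sequence produces a new factorization $S=\tilde B_1\cdots\tilde B_n$ of the same operator, and the subset-Wronskian nonvanishing hypothesis carries over since the two sequences differ only by a permutation. The same calculation now shows that the new rightmost factor $\tilde B_1^\dagger$ of the resulting factorization of $S^\dagger$ annihilates a nonzero scalar multiple of $h_i$, so $S^\dagger h_i=0$.

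The main technical hurdle is keeping track of the signs produced by the row permutations of the Wronskians --- in the row-reversal leading to $g_1=(-1)^{n-1}\mathcal{W}{\rm r}(f_1,\dots,f_n)/\mathcal{W}{\rm r}(f_2,\dots,f_n)$, and in the permutation relating the reordered sequence to the original one. These signs are elementary to compute, and only the nonvanishing of the proportionality constant between $h_i$ and $1/\tilde g_1(x+1)$ is needed for the argument to go through.
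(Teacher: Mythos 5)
Your proposal is correct and follows essentially the same route as the paper: it uses the identification $h_1=(-1)^{n-1}/g_1(x+1)$ together with the factorization \eqref{Sddagger} to kill $h_1$, and then handles $h_2,\dots,h_n$ by permuting the $f_k$'s to obtain an alternative factorization of the same operator $S$, exactly as the paper does (following the differential analog in \cite{TU1}).
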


\begin{proof}Since $h_{1}=(-1)^{n-1}/g_{1}(x+1)$, formula~\eqref{Sddagger} immediately gives $S^{\dagger}h_{1}=0$. To prove that~$S^{\dagger}$ annihilates $h_{2}\lc h_{n}$, one can consider factorization~\eqref{S2} of~$S$, where functions $g_{1}\lc g_{n}$ are defined using a different order of functions $f_{1}\lc f_{n}$, see the proof of Proposition~6.3 in~\cite{TU1} for a differential analog of this argument.
\end{proof}

\subsection{Quotient difference operator}\label{s63S}
Consider functions $f_{1},f_{2}\lc f_{n}, h_1\lc h_k$ such that $\mathcal{W}{\rm r}(g_{1}\lc g_{m})\neq 0$ for any subset $\{g_{1}\lc g_{m}\}$ of $\{f_{1},f_{2}\lc f_{n}, h_1\lc h_k\}$. Let $S$ and $\Sh$ be the monic linear difference operators of order~$n$ and~$n+k$ annihilating $f_{1},f_{2}\lc f_{n}$ and $f_{1},f_{2}\lc f_{n}, h_1\lc h_k$, respectively. Then there is a unique difference operator $\check{S}$ such that $\Sh=\check{S}S$. Indeed, the existence of $\check{S}$ can be seen from the factorization formula \eqref{S2}, and the uniqueness follows from the long division algorithm. We will call $\check{S}$ the \textit{quotient difference operator}.

Define functions $\phi_1\lc\phi_k$ by the formula
\begin{equation}\label{phi}
\phi_{a}=T\left(\frac{\mathcal{W}{\rm r}(f_{1}\lc f_{n},h_{1}\lc
h_{a-1},h_{a+1}
\lc h_{k})}{\mathcal{W}{\rm r}(f_{1}\lc f_{n},h_{1}\lc h_{k})}\right) .
\end{equation}

\begin{prop}\label{kernel for difference quotient conjugate}
We have $\check{S}^{\dagger}\phi_{a}=0$ for all $a=1\lc k$.
\end{prop}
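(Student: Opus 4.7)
The plan is to mimic the strategy sketched at the end of the proof of Proposition~\ref{Kernel for difference conjugate}: by choosing the right ordering in the factorization~\eqref{S2} of $\widehat{S}$, we arrange matters so that the \emph{rightmost} factor of $\check{S}^{\dagger}$ visibly annihilates $\phi_{a}$.

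First, I would apply Proposition~\ref{Sfact} to the ordered tuple
$$\bigl(h_{a},\,h_{1},\dots,\widehat{h_{a}},\dots,h_{k},\,f_{1},\dots,f_{n}\bigr),$$
producing a factorization of $\widehat{S}$ as a product of $n+k$ first-order operators. Because the last $n$ factors annihilate $f_{1},\dots,f_{n}$, their product must equal $S$ by the uniqueness part of Lemma~\ref{wronskian formula for difference op} (this last block is precisely the factorization given by Proposition~\ref{Sfact} for the tuple $(f_{1},\dots,f_{n})$). The first $k$ factors therefore compose to the quotient operator $\check{S}$ from $\widehat{S}=\check{S}S$, by uniqueness of long division.

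Next, I would read off from formula~\eqref{g i}, applied to the chosen ordering, that the leftmost factor of $\check{S}$ has the form $\bigl(T-\tilde g(x+1)/\tilde g(x)\bigr)$ with
$$\tilde g = \pm\,\frac{\mathcal{W}\mathrm{r}\bigl(f_{1},\dots,f_{n},h_{1},\dots,h_{k}\bigr)}{\mathcal{W}\mathrm{r}\bigl(f_{1},\dots,f_{n},h_{1},\dots,\widehat{h_{a}},\dots,h_{k}\bigr)},$$
the sign accounting only for rearranging the arguments in the antisymmetric discrete Wronskian. Taking the formal conjugate reverses the order of the factors, so the \emph{rightmost} factor of $\check{S}^{\dagger}$ is $\bigl(T_{-}-\tilde g(x+1)/\tilde g(x)\bigr)$. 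The elementary calculation used for $h_{1}$ in Proposition~\ref{Kernel for difference conjugate} shows that this single factor annihilates $1/\tilde g(x+1)$, and comparing with the definition~\eqref{phi} of $\phi_{a}$ identifies $1/\tilde g(x+1)=\pm\,\phi_{a}(x)$. Hence $\check{S}^{\dagger}\phi_{a}=0$.

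The only real obstacle is the bookkeeping in Step~2: one has to track that the reorderings relating the Wronskians of the tuple $\bigl(f_{n},\dots,f_{1},h_{k},\dots,\widehat{h_{a}},\dots,h_{1},h_{a}\bigr)$ with the canonical tuple $\bigl(f_{1},\dots,f_{n},h_{1},\dots,h_{k}\bigr)$ only introduce an overall sign, which is irrelevant for membership in the kernel. A parallel argument for differential operators, worked out in detail in the proof of Proposition~6.3 of~\cite{TU1}, can serve as a template.
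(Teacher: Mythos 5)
Your proof is correct, but it takes a different route from the paper's. The paper's own argument sets $\tilde h_a=Sh_a$, uses the discrete Wronskian composition identity \eqref{h tilde wronskian} (which rests on \eqref{Wr1} and \eqref{Wr2}) to show that $\mathcal{W}{\rm r}\big(\tilde h_{i_1}\lc\tilde h_{i_m}\big)=\mathcal{W}{\rm r}(f_1\lc f_n,h_{i_1}\lc h_{i_m})/\mathcal{W}{\rm r}(f_1\lc f_n)$, and then simply invokes Proposition~\ref{Kernel for difference conjugate} for the operator $\check S$ and the functions $\tilde h_1\lc\tilde h_k$, identifying $\tilde\phi_a$ with $\phi_a$ at the end. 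You instead work directly with the factorization of $\Sh$ from Proposition~\ref{Sfact} for the ordering that places $h_a$ first and $f_1\lc f_n$ last: the inductive structure of that factorization makes the last $n$ factors compose to $S$ (by uniqueness in Lemma~\ref{wronskian formula for difference op}), hence the first $k$ compose to $\check S$ (by uniqueness of the quotient), and the leftmost factor of $\check S$ is governed by $\tilde g=\pm\,\mathcal{W}{\rm r}(f_1\lc f_n,h_1\lc h_k)/\mathcal{W}{\rm r}(f_1\lc f_n,h_1\lc h_{a-1},h_{a+1}\lc h_k)$; since the formal conjugate reverses the order of first-order factors, the rightmost factor of $\check S^\dagger$ kills $1/\tilde g(x+1)=\pm\phi_a$. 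All the ingredients you use are available (the hypothesis that every sub-Wronskian of $\{f_1\lc f_n,h_1\lc h_k\}$ is nonzero licenses the reordered factorization, and the sign from permuting rows of the Wronskian is indeed harmless). What your approach buys is that it bypasses the composition identity \eqref{h tilde wronskian} and Appendix~\ref{appendixA} entirely, at the cost of the factorization bookkeeping; the paper's version is more modular in that it reuses Proposition~\ref{Kernel for difference conjugate} as a black box, but needs Proposition~\ref{disc wronsc conj}.
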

\begin{proof}
Set
$\tilde{h}_{a}=Sh_{a}$, $a=1\lc k$. Formula \eqref{S} yields $\tilde{h}_{i}=\mathcal{W}{\rm r}(f_{1}\lc f_{n},h_{i})/\mathcal{W}{\rm r}(f_{1}\lc f_{n})$. Using this and the Wronskian identities~\eqref{Wr1} and~\eqref{Wr2}, it is easy to check that
\begin{equation}\label{h tilde wronskian}
 \mathcal{W}{\rm r}\big(\tilde{h}_{i_{1}},\lc \tilde{h}_{i_{m}}\big)=\frac{\mathcal{W}{\rm r}(f_{1}\lc f_{n},h_{i_{1}}\lc h_{i_{m}})}{\mathcal{W}{\rm r}(f_{1}\lc f_{n})}
\end{equation}
for any $1\leq i_{1}<\dots <i_{m}\!\leq k$. In particular, $\mathcal{W}{\rm r}\big(\tilde{h}_{i_{1}},\lc \tilde{h}_{i_{m}}\big)\!\neq 0$ for any ${1\leq i_{1}<\dots <i_{m}\!\leq k}$.

By Proposition \ref{Kernel for difference conjugate}, the functions
\[\tilde{\phi}_{a}=T\left(\frac{\mathcal{W}{\rm r}\big(\tilde{h}_{1}\lc
\tilde h_{a-1},\tilde h_{a+1}
\lc\tilde{h}_{k}\big)}{\mathcal{W}{\rm r}\big(\tilde{h}_{1}\lc\tilde{h}_{k}\big)}\right),\qquad a=1\lc k,\]
vanish under the action of $\check{S}^{\dagger}$.

Taking $\{i_{1}\lc i_{m}\} = \{1\lc a-1, a+1\lc k\}$ and $\{i_{1}\lc i_{m}\} = \{1\lc k\}$ in formula~\eqref{h tilde wronskian}, it is easy to see that $\phi_{a} = \tilde{\phi}_{a}$, $a=1\lc k$. The proposition is proved.
\end{proof}

Let $W$ and $\widehat{W}$ be the vector spaces with the bases $f_{1}\lc f_{n}$ and $f_{1}\lc f_{n}, h_{1}\lc h_{k}$, respectively. We will call the span of $\phi_{1}\lc \phi_{k}$ the quotient conjugate space for the pair $\big(W,\widehat{W}\big)$.

\subsection{Quotient difference operator and spaces of quasi-exponentials}\label{qdo qe}
Recall that a quasi-exponential is a function of the form $\alpha^{x}p(x)$ for some non-zero $\alpha$ and a~polynomial~$p(x)$, and a~space of quasi-exponentials is a vector space with a basis consisting of quasi-exponentials. It is straightforward to check that if $g_{1}\lc g_{m}$ are quasi-exponentials, then $\mathcal{W}{\rm r}(g_{1}\lc g_{m})=0$ if and only if $g_{1}\lc g_{m}$ are linearly dependent. Therefore, by Lemma~\ref{wronskian formula for difference op}, for any space of quasi-exponentials $W$, there exists a unique monic linear difference operator~$S_{W}$ of order $\dim{W}$ annihilating~$W$. We will call $S_{W}$ the \textit{fundamental difference operator} of $W$. The following lemma will be useful for us later.
\begin{lem}\label{unique kernel}
If for two spaces of quasi-exponentials $W_{1}$ and $W_{2}$, we have $S_{W_{1}}=S_{W_{2}}$, then $W_{1}=W_{2}$.
\end{lem}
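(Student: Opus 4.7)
The plan is to reduce the statement to a dimension bound on the kernel of the common operator $S:=S_{W_1}=S_{W_2}$ acting on quasi-exponentials, and to read this bound off from the Wronskian criterion stated at the start of Section~\ref{qdo qe}.

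Set $n=\on{ord}(S)=\dim W_1=\dim W_2$, and let $W=W_1+W_2$. The union of quasi-exponential bases of $W_1$ and $W_2$ is a spanning set of quasi-exponentials, so $W$ is itself a space of quasi-exponentials, and by construction $S$ annihilates all of~$W$. If I can show $\dim W\le n$, then $W_1,W_2\subseteq W$ are $n$-dimensional subspaces of an at-most-$n$-dimensional space, forcing $W_1=W=W_2$.

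To bound $\dim W$, I will show that any $n+1$ quasi-exponentials $g_1\lc g_{n+1}$ annihilated by $S$ must be linearly dependent. Writing $S=T^n+\sum_{k=1}^n a_k(x)\,T^{n-k}$, each row of the matrix $M=\bigl(T^{j-1}g_i\bigr)_{i,j=1}^{n+1}$ satisfies the same relation
\[
T^n g_i\;=\;-\sum_{k=1}^n a_k(x)\,T^{n-k}g_i,
\]
so the $(n+1)$-th column of $M$ is a linear combination of columns $1\lc n$ with coefficients $-a_k(x)$ that do not depend on the row index~$i$. Hence $\det M=0$, i.e., $\mathcal{W}{\rm r}(g_1\lc g_{n+1})=0$, and the Wronskian criterion for quasi-exponentials recalled at the beginning of Section~\ref{qdo qe} forces $g_1\lc g_{n+1}$ to be linearly dependent. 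The only point requiring even passing attention is the observation that $W_1+W_2$ is itself a space of quasi-exponentials, so that the Wronskian criterion applies; after this, the proof reduces to the one-line column manipulation above, so no substantive obstacle arises.
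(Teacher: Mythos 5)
Your proof is correct and rests on the same two ingredients as the paper's: the vanishing of the discrete Wronskian of $n+1$ solutions of a monic order-$n$ difference operator (which the paper extracts from formula \eqref{S} applied to $h_i\in W_2$ against a basis of $W_1$, and which you re-derive by the column operation) together with the Wronskian criterion for quasi-exponentials. Packaging this as a dimension bound on $W_1+W_2$ rather than as the mutual inclusions $W_2\subset W_1$ and $W_1\subset W_2$ is only a cosmetic difference.
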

\begin{proof}
Let $f_{1}\lc f_{n}$ and $h_{1}\lc h_{n}$ be the quasi-exponential bases of $W_{1}$ and $W_{2}$, respectively. Using formula \eqref{S}, for each $i=1\lc n$, we have $\mathcal{W}{\rm r}(f_{1}\lc f_{n}, h_{i})=\mathcal{W}{\rm r}(f_{1}\lc f_{n})S_{W_{1}}h_{i}=0$. Therefore, $f_{1}\lc f_{n}, h_{i}$ are linearly dependent for each $i=1\lc n$, and $W_{2}\subset W_{1}$. Similarly, one proves that $W_{1}\subset W_{2}$.
\end{proof}

In this paper, a partition $\mu=(\mu_{1},\mu_{2},\dots{})$ is an infinite nonincreasing sequence of nonnegative integers stabilizing at zero. Let $\mu'=(\mu'_{1},\mu'_{2},\dots{})$ denote the conjugated partition,
that is, $\mu'_{i}=\#\{j\,\vert\,\mu_{j}\geq i\}$.
In particular,
$\mu_{1}'$ equals the number of nonzero entries in $\mu$.

Fix nonzero complex numbers $\alpha_{1}\lc\alpha_{n}$ and nonzero partitions
$\mu^{(1)}\lc\mu^{(n)}$. Assume that $\alpha_{i}\neq\alpha_{j}$ for $i\neq j$.
For each $i=1\lc n$,
denote $n_{i}=\big(\mu^{(i)}\big)'_{1}$.
Let $W$ be a space of quasi-exponentials with a basis
\[
\{\alpha_{i}^{x}q_{ij}(x) ,\,i=1\lc n,\, j=1\lc n_{i}\},\]
where $q_{ij}(x)$ are polynomials such that
$\deg q_{ij} =n_i +\mu^{(i)}_{j}-j$.

Denote $p_{i}=\mu_{1}^{(i)}+n_{i}=\max_{j}\deg q_{ij}+1$, and take $\widehat{W}$ to be the span the functions $\alpha _{i}^{x}x^{p}$,
$i=1\lc n$, $p=0\lc p_{i}-1$. Let $Q(W)$ denote the quotient conjugate space for the pair $\big(W,\widehat{W}\big)$.

Let $S_{W}$ be the monic linear difference operator of order $\dim W$ annihilating $W$. We will say that $S_{W}$ is the fundamental difference operator of $W$. On the other hand, the difference operator $\Sh=\prod_{i=1}^{n} (T-\alpha_{i} )^{p_{i}}$ annihilates $\widehat{W}$. Then there exists a difference operator $\check{S}_{W}$ such that $\Sh=\check{S}_{W}S_{W}$,
see Section~\ref{s63S}. By Proposition~\ref{kernel for difference quotient conjugate}, the difference operator $\check{S}_{W}^{\dagger}$ annihilates~$Q(W)$.

\begin{prop}\label{mualpha}
The space $Q(W)$ has a basis of the form
\[\big\{\alpha_{i}^{-x}\check q_{ij}(x)\, |\, i=1\lc n,\, j=1\lc\mu_1^{(i)}\big\} ,\]
where $\deg \check q_{ij} = \mu_{1}^{(i)}+\big(\mu^{(i)}\big)'_{j}-j $, $i=1\lc n$, $j=1\lc\mu_1^{(i)}$.
\end{prop}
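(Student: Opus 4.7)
My plan is to embed $Q(W)$ into a concrete ambient space, split it along exponential bases, and pin down the polynomial degrees via discrete Wronskian identities.

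Taking formal conjugates of $\widehat{S}=\check{S}_{W}S_{W}$ yields $\widehat{S}^{\dagger}=S_{W}^{\dagger}\check{S}_{W}^{\dagger}$, so $Q(W)=\ker\check{S}_{W}^{\dagger}\subseteq\ker\widehat{S}^{\dagger}$. Using $(T-\alpha)^{\dagger}=T_{-}-\alpha$, I get $\widehat{S}^{\dagger}=\prod_{i=1}^{n}(T_{-}-\alpha_{i})^{p_{i}}$, whose kernel is $\widehat{W}^{\dagger}=\bigoplus_{i=1}^{n}V_{i}$ with $V_{i}=\langle\alpha_{i}^{-x}x^{p}\mid p=0,\dots,p_{i}-1\rangle$. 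Next, the coefficients of $S_{W}$ are rational functions of $x$ without exponential dependence: by \eqref{Ta_{i}(x)} they are ratios of discrete Wronskians of quasi-exponentials, in which the common exponential factor $\prod_{i}\alpha_{i}^{n_{i}x}$ cancels. Hence $\check{S}_{W}^{\dagger}$ also has rational coefficients, so it preserves each $\alpha_{i}^{-x}\C(x)$; consequently $Q(W)=\bigoplus_{i=1}^{n}Q_{i}$ where $Q_{i}:=Q(W)\cap V_{i}$.

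To produce a basis, I would extend the basis $\{\alpha_{i}^{x}q_{ij}\}$ of $W$ to one of $\widehat{W}$ by adjoining monomial quasi-exponentials $h_{i,j'}=\alpha_{i}^{x}x^{c_{i,j'}}$, $j'=1,\dots,\mu_{1}^{(i)}$, whose exponents exhaust the complementary set $C_{i}:=\{0,\dots,p_{i}-1\}\setminus\{n_{i}+\mu_{j}^{(i)}-j\mid j=1,\dots,n_{i}\}$; this is possible because the $q_{ij}$ have distinct prescribed degrees. Counting exponential factors in the Wronskian ratio \eqref{phi} shows $\phi_{i,j'}\in V_{i}$: removing one $\alpha_{i}$-quasi-exponential from the $\widehat{W}$-basis alters the total exponential factor by exactly $\alpha_{i}^{-x}$, and this survives the $T$-shift. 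By the Wronskian-non-vanishing argument in the proof of Proposition~\ref{kernel for difference quotient conjugate}, the $\phi_{a}$'s are linearly independent, so for each $i$ the $\mu_{1}^{(i)}$ functions $\phi_{i,j'}$ form a basis of $Q_{i}$, each of the form $\phi_{i,j'}=\alpha_{i}^{-x}\check{q}_{i,j'}(x)$ for a polynomial $\check{q}_{i,j'}$.

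The main obstacle is identifying $\deg\check{q}_{i,j'}$. Since $\widehat{S}$ has constant coefficients, $\mathcal{W}{\rm r}(\widehat{W}\text{-basis})=C\prod_{i}\alpha_{i}^{p_{i}x}$ for a nonzero constant $C$, so (up to a unit shift and a nonzero scalar) $\check{q}_{i,j'}$ equals the polynomial part of $\mathcal{W}{\rm r}(\widehat{W}\text{-basis}\setminus h_{i,j'})$. After factoring the exponential $\alpha_{i}^{(p_{i}-1)x}\prod_{i'\ne i}\alpha_{i'}^{p_{i'}x}$ out of the latter Wronskian, the residual polynomial determinant is evaluable via the Jacobi bialternant identity: the only degree contribution comes from the modified $\alpha_{i}$-block, where a Schur-polynomial computation gives degree exactly $p_{i}-1-c_{i,j'}$, while the unchanged $\alpha_{i'}$-blocks contribute only constants. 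Finally, the standard combinatorial identity---the complement in $\{0,\dots,p_{i}-1\}$ of $\{n_{i}+\mu_{j}^{(i)}-j\}_{j=1}^{n_{i}}$ equals $\{n_{i}+j-1-(\mu^{(i)})'_{j}\mid j=1,\dots,\mu_{1}^{(i)}\}$---lets me rewrite $p_{i}-1-c_{i,j'}=\mu_{1}^{(i)}+(\mu^{(i)})'_{j'}-j'$, matching the claimed degree.
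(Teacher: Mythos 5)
Your proposal is correct and follows essentially the same route as the paper: both identify the complementary exponent set $\boldsymbol{d}_{i}^{c}$, reduce the problem to computing the exponential factor and the polynomial degree of the discrete Wronskian of $\widehat{W}$ with one monomial $\alpha_{i}^{x}x^{c}$ removed, and convert $p_{i}-1-c$ into $\mu_{1}^{(i)}+\big(\mu^{(i)}\big)'_{j}-j$ via the same Young-diagram boundary identity. The differences are organizational rather than substantive: your preliminary decomposition $Q(W)=\bigoplus_{i}Q_{i}$ through $\ker\widehat{S}^{\dagger}$ is extra scaffolding that the paper bypasses by computing each spanning function $f_{ij}=\alpha_{i}^{-x}\tilde{r}_{ij}(x)$ directly (its triangular sum over $s>j$ is exactly what justifies your passage from the mixed basis of $\widehat{W}$ to the pure monomial Wronskian without raising the degree), and your bialternant/Schur evaluation plays the role of the paper's inductive formula for $\mathcal{W}{\rm r}_{ij}\big(\Wh\big)$, both left at a comparable level of detail.
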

\begin{proof}
Denote
\begin{gather*}
\mathcal{W}{\rm r}\big({\Wh}\big)=\mathcal{W}{\rm r}\big(\alpha _{1}^{x},\alpha _{1}^{x}x\lc \alpha _{1}^{x}x^{p_{1}-1},
\dots,
\alpha _{n}^{x}, \alpha _{n}^{x}x\lc \alpha _{n}^{x}x^{p_{n}-1}\big),
\\
\mathcal{W}{\rm r}_{ij}\big(\Wh\big)=\mathcal{W}{\rm r}\big({}\dots , \widehat{\alpha _{i}^{x}x^{j}},\dots{}\big).
\end{gather*}
The functions in the second line are the same except
the function $\alpha _{i}^{x}x^{j}$ is omitted.

For each $i=1\lc n$, set
\begin{equation*}
\boldsymbol{d}_{i}=\big\{n_{i}+\mu^{(i)}_j-j ,\, j=1\lc n_{i}\big\} ,\qquad
\boldsymbol{d}_{i}^{c}=\{0,1,2\lc p_{i}-1\}\setminus\boldsymbol{d}_{i} .
\end{equation*}
Notice that the functions $\alpha_{i}^{x}x^{l}$, $i=1\lc n$, $l\in \boldsymbol{d}_{i}^{c}$, complement
the basis ${\{\alpha_{i}^{x}q_{ij}(x),\, i=1\lc n}$, $j=1\lc n_{i}\}$ of $W$ to a basis of $\Wh$. Therefore, from the construction of the space $Q(W)$, in particular, from formula~\eqref{phi}, it follows that $Q(W)$ is spanned by functions $f_{ij}$, $i=1\lc n$, $j\in \boldsymbol{d}_{i}^{c}$ of the form
\begin{equation*}
f_{ij} = T\frac{\mathcal{W}{\rm r}_{ij}\big(\Wh\big)}{\mathcal{W}\big(\Wh\big)}+T\sum_{s=j+1}^{p_i-1} C_{ils} \frac{\mathcal{W}{\rm r}_{is}\big(\Wh\big)}{\mathcal{W}{\rm r}\big(\Wh\big)},
\end{equation*}
where $C_{ils}$ are complex numbers.

Using an induction similar to what we used in the proof of Lemma 6.5 in \cite{TU1}, we obtain the following formulas:
\begin{gather}\label{Sdenom}
\mathcal{W}{\rm r}\big(\Wh\big)=\prod_{i=1}^{n}\left(\alpha_{i}^{p_{i}x}\prod_{s=1}^{p_i-1}
\alpha_{i}^{s}s! \right) \prod_{1\leq i<j\leq n}(\alpha_{j}-\alpha_{i})^{p_{i}p_{j}} ,
\\
\mathcal{W}{\rm r}_{ij}\big(\Wh\big)=r_{ij}(x)
 \prod_{l=1}^n\Bigg(\alpha_{l}^{(p_{l}-\delta_{il})x}\prod_{\substack{s=1\\(l,s)\ne(i,j)}}^{p_l-1}
\alpha_{l}^{s}s!\Bigg) \prod_{1\leq l<l'\leq n}(\alpha_{l'}-\alpha_{l})^{(p_{l}-\delta_{li})(p_{l'}-\delta_{l'i})},\nonumber
\end{gather}
where $r_{ij}(x)$ is a monic polynomial in $x$ and $\deg r_{ij}=p_{i}-j-1$. Then for the functions $f_{ij}$, we have $f_{ij}=\alpha_{i}^{-x}\tilde{r}_{ij}(x)$, where $\deg \tilde{r}_{ij}=p_{i}-j-1$.

Notice that $\boldsymbol{d}_{i}^{c}=\big\{n_{i}-\big(\mu^{(i)}\big)'_{l}+l-1\,|\,l=1\lc\mu_{1}^{(i)}\big\}$. This can be illustrated by enumerating, starting from~$0$, the sides of boxes in the Young diagram for $\mu^{(i)}$ that form the bottom-right boundary, see the example with $\mu^{(i)}=(7,4,2,0,\dots)$ on the picture below:

\vsk1.2>
\hbox to\hsize{\hss\hsize130.2pt\parindent0pt
\vtop{\vskip-\baselineskip\vtop to 0pt{\offinterlineskip
\vrule height.4pt depth 0pt width 130.2pt\par
\vrule height18pt \kern18pt
\vrule height18pt \kern18pt
\vrule height18pt \kern18pt
\vrule height18pt \kern18pt
\vrule height18pt \kern18pt
\vrule height18pt \kern18pt
\vrule height18pt \kern17.8pt
\vrule height18pt width1.6pt\par
\vrule height.4pt depth0pt width73.8pt
\vrule height.4pt depth1.2pt width56.3pt\vskip-1.2pt
\vrule height18pt \kern18pt
\vrule height18pt \kern18pt
\vrule height18pt \kern18pt
\vrule height18pt \kern18pt
\vrule height18pt width1.6pt
\par
\vrule height.4pt depth0pt width36.8pt
\vrule height.4pt depth1.2pt width38.4pt
\vskip-1.2pt
\vrule height18pt \kern18pt
\vrule height18pt \kern18pt
\vrule height18pt width1.6pt
\par
\vrule height1.6pt width38.4pt
\vss}}\kern-130.2pt
\vtop{\vtop to 0pt{\strut\kern133pt\raise2.8pt\rlap{\footnotesize9}\kern-7pt\vskip-0.0pt
\strut\kern76.0pt\rlap{}\kern9.2pt\rlap{\footnotesize6}\kern9.2pt\rlap{}\kern5.2pt\rlap{\footnotesize7}\kern18pt\rlap{\footnotesize8}\kern9pt\rlap{}\vskip-3.5pt
\strut\kern78.6pt\rlap{\footnotesize5}\vskip-4.7pt
\strut\kern49.4pt\rlap{\footnotesize3}\kern9.2pt\rlap{}\kern4.2pt\rlap{\footnotesize4}\vskip-3.4pt
\strut\kern41.2pt\rlap{\footnotesize2}\vskip-5.1pt
\strut\kern7.8pt\rlap{\footnotesize0}\kern18.4pt\rlap{\footnotesize1}\vss}
\vskip64pt}\hss}

Then the set $\big\{n_{i}+\mu^{(i)}_j-j,\, j=1\lc n_{i}\big\}$
corresponds to the
right-most sides of the rows, which are
the vertical bonds of the boundary, and the set
$\big\{n_{i}-\big(\mu^{(i)}\big)'_j+j-1 ,\, j=1\lc\mu_{1}^{(i)}\big\}$
corresponds to the bottom sides of the columns, which are the horizontal bonds
of the boundary. For instance, in the given example, $\big\{n_{i}+\mu^{(i)}_j-j ,\,
j=1,2,3\big\}=\{2,5,9\}$ and $\big\{n_{i}-\big(\mu^{(i)}\big)'_j+j-1 ,\, j=1\lc
7\big\}=\{0,1,3,4,6,7,8\}$. Since the horizontal bonds of the boundary complement the vertical bonds, we have $\boldsymbol{d}_{i}^{c}=\{0,1,2\lc p_{i}-1\}\setminus\big\{n_{i}+\mu^{(i)}_j-j ,\, j=1\lc n_{i}\big\}= \big\{n_{i}-\big(\mu^{(i)}\big)'_j+j-1 ,\, j=1\lc\mu_{1}^{(i)}\big\}$.

Denote $j_{l}=n_{i}-\big(\mu^{(i)}\big)'_{l}+l-1$, $l=1\lc \mu_{1}^{(i)}$, so that $\boldsymbol{d}_{i}^{c}=\big\{j_{l},\,l=1\lc \mu_{1}^{(i)}\big\}.$
Denote $\check{q}_{il}=\tilde{r}_{ij_{l}}$, $l=1\lc \mu_{1}^{(i)}$. Then
\[\big\{\alpha_{i}^{-x}\check q_{il}(x)\, |\, i=1\lc n,\, l=1\lc\mu_1^{(i)}\big\}\]
is a basis of $Q(W)$, and
\[
\deg \check{q}_{il} = p_i-j_{l}-1 =
\mu_{1}^{(i)}+n_{i}-\big(n_{i}-\big(\mu^{(i)}\big)'_{l}+l-1\big)-1=\mu_{1}^{(i)}+\big(\mu^{(i)}\big)'_{l}-l.\tag*{\qed}
\]\renewcommand{\qed}{}
\end{proof}

\subsection{Transform of discrete exponents}\label{tde}

Denote $M'=\sum_{i=1}^n\big(\mu^{(i)}\big)'_{1}=\dim W$ and $M=\sum_{i=1}^n\mu^{(i)}_{1}=\dim Q(W)$. For $z\in\C$, define \textit{the sequence of discrete exponents of~$W$ at $z$} as
a unique sequence of integers $(e_{1}>\dots>e_{M'})$
with the property:
there exists a basis $\psi_{1}\lc \psi_{M'}$ of $W$ such that for each $i=1\lc M'$, $\big(T^{j}\psi_{i}\big)(z)=0$ for $j=0\lc e_{i}-1$ and $\big(T^{e_{i}}\psi_{i}\big)(z)\neq 0$.

The sequence of discrete exponents of $W$ at $z$ differs from the sequence $(M'-1,M'-2\lc 0)$ if and only if $z$ is a root of $\mathcal{W}{\rm r}(g_{1}\lc g_{M'})$, where $g_{1}\lc g_{M'}$ is any basis of $W$. If $z$ is such a root, we will call it a \textit{discrete singular point} of $W$.

Define \textit{the sequence of $T_{-}$-discrete exponents of $W$ at $z$} by replacing the operator $T$ in the definition of the sequence of discrete exponents by the operator $T_{-}=T^{-1}$.

\begin{prop}\label{Q_+ exponents}
Let $(e_{1}\lc e_{M'})$ be the sequence of discrete exponents of $W$ at some point $z\in \C$. Define a partition $\lambda = (\lambda_{1},\lambda_{2},\dots)$ by $e_{i}=M'+\lambda_{i}-i$, $i=1\lc M'$ and $\lambda_{M'+1}=0$ for $i>M'$. Let $(\check e_{1}\lc\check e_{M})$ be the sequence of $T_{-}$-discrete exponents of $Q(W)$ at $z-1$. Define a~partition $\eta=(\eta_{1},\eta_{2},\dots)$ by $\check e_{a}=M+\eta_{a}-a$, $a=1\lc M$, and $\eta_{M+1}=0$. Then $\eta_{a}\geq\lambda'_{a}$ for all $a=1,2,\dots$.
\end{prop}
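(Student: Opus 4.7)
The plan is to exhibit an explicit basis of $Q(W)$ whose $T_-$-vanishing orders at $z-1$ can be read off from a carefully chosen basis of $\hat{W}$, then conclude via the standard complement-of-a-partition identity. By formula~\eqref{Sdenom}, the Wronskian $\mathcal{W}{\rm r}(\hat W)$ is a nonvanishing exponential, so the discrete exponents of $\hat W$ at $z$ form the complete sequence $(M+M'-1,M+M'-2,\dots,0)$. Using the vanishing filtration $V_k=\{f\in\hat W\colon f(z+s)=0 \text{ for }s<k\}$, whose successive jumps all have multiplicity one, I would construct a basis $\chi_1,\dots,\chi_{M+M'}$ of $\hat W$ in which $\chi_i$ has discrete exponent $\hat e_i=M+M'-i$ and $W$ is the span of those $\chi_i$ with $\hat e_i\in\{e_1,\dots,e_{M'}\}$; this amounts to selecting, for each $k\in\{e_j\}$, a vector of exponent $k$ lying in $W$, and for each $k$ in the complement $\{c_1>\dots>c_M\}=\{0,\dots,M+M'-1\}\setminus\{e_j\}$, a vector of exponent $k$ lying in $\hat W\setminus(W+V_{k+1})$. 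Denote by $i_a$ the position of the completion vector of exponent $c_a$, so that $c_a=M+M'-i_a$ and $i_1<\dots<i_M$.

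Applying formula~\eqref{phi} with $h_a=\chi_{i_a}$, Propositions~\ref{kernel for difference quotient conjugate} and~\ref{mualpha} imply that the resulting $\phi_a$ form a basis of $Q(W)$. Since $\mathcal{W}{\rm r}(\hat W)$ has no zeros, $\phi_a(z-1-k)=0$ is equivalent to $\mathcal{W}{\rm r}_a(\chi)(z-k)=0$, and the row-$i$ column-$l$ entry $\chi_i(z-k+l-1)$ of that determinant vanishes precisely when $l\in[k+1,k+\hat e_i]\cap[1,M+M'-1]$. For every index $i\le k+1$, one has $\hat e_i\ge M+M'-k-1$, so the vanishing fills out all of $[k+1,M+M'-1]$ and that row is supported in $[1,k]$ only. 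When $k<i_a-1$, the omitted index $i_a$ exceeds $k+1$, so all $k+1$ such rows remain in $\mathcal{W}{\rm r}_a(\chi)$; but $k+1$ rows supported on $k$ columns must be linearly dependent, forcing $\mathcal{W}{\rm r}_a(\chi)(z-k)=0$. Therefore $\phi_a$ has $T_-$-vanishing order at least $i_a-1$ at $z-1$.

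To finish, let $Q(W)^{[N]}$ denote the subspace of $Q(W)$ of functions vanishing at $z-1,z-2,\dots,z-N$. From $\phi_a\in Q(W)^{[i_a-1]}$ and the linear independence of the $\phi_a$'s one gets $\dim Q(W)^{[N]}\ge\#\{a\colon i_a-1\ge N\}$; taking $N=i_{M-b+1}-1$ yields $\dim Q(W)^{[N]}\ge b$, whence $\check e_b\ge i_{M-b+1}-1=M+M'-1-c_{M-b+1}$. The complement identity
\[
c_{M-b+1}=M'+b-1-\lambda'_b,\qquad b=1,\dots,M,
\]
is a standard consequence of the bijection between $M'$-subsets of $\{0,\dots,M+M'-1\}$ and partitions $\lambda\subseteq M'\times M$; it applies here because $e_i\le M+M'-1$ forces $\lambda_i\le M$. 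Substituting gives $\eta_b=\check e_b-(M-b)\ge\lambda'_b$. The main technical ingredient is the staircase vanishing analysis of $\mathcal{W}{\rm r}_a(\chi)$; the basis construction reduces to filtration bookkeeping, and the complement identity is classical.
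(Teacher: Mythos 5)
Your proposal is correct and follows essentially the same route as the paper: an exponent-adapted basis of $\widehat{W}$ containing a basis of $W$, the staircase vanishing of the omitted-row Wronskians at $z, z-1,\dots$ (your ``$k+1$ rows supported on $k$ columns'' degeneracy is the paper's upper-triangular block factorization in different words), the complement-of-a-partition identity, and the same dimension count for $Q(W)^{[N]}$. The only cosmetic slip is the word ``precisely'' in describing where the entries $\chi_i(z-k+l-1)$ vanish --- the discrete exponent only guarantees vanishing on that range, not non-vanishing off it --- but your argument uses only the guaranteed direction, so nothing breaks.
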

\begin{proof}
Let $\{\psi_{1}\lc\psi_{M'}\}$ be
a basis of $W$ such that for each $i=1\lc M'$, $j=0\lc e_{i}-1$, we have $\big(T^{j}\psi_{i}\big)(z)=0$ and $\big(T^{e_{i}}\psi_{i}\big)(z)\neq 0$.

By formula \eqref{Sdenom}, the Wronskian $\mathcal{W}{\rm r}\big(\Wh\big)$ has no zeros,
thus~$z$ is not a discrete singular point of~$\Wh$.
Therefore, there is
a basis $\{f_{1},f_{2}\lc f_{M+M'}\}$ of $\Wh$ such that it contains the set $\{\psi_{1}\lc\psi_{M'}\}$ and for
each $i=0\lc M+M'-1$, $j=0\lc i$, we have $f_{i+1} (z+j)=0$ and $f_{i+1} (z+i)\neq 0$.

Consider a matrix-valued function
\[
F_{a}(x)=\big(T^{j}f_{i}\big)_{\subalign{ &i=1\lc M+M',\quad i\neq a\\ &j=0\lc M+M'-2}},\]
and denote
\[\mathcal{W}{\rm r}_{a}\big(\Wh\big)=\det F_{a}(x) = \mathcal{W}{\rm r} (f_{1}\lc f_{a-1},f_{a+1}\lc f_{M+M'}).\]

Notice that since $\{\psi_{1}\lc\psi_{M'}\}\subset\{f_{1}\lc f_{M+M'}\}$, we have $\{e_{1}\lc e_{M'}\}\subset\{0,1,2\lc M+M'-1\}$, in particular, $\lambda_{1}\leq M$. Denote $\boldsymbol{e}^{c}=\{0,1,2\lc M+M'-1\}\setminus\{e_{1}\lc e_{M'}\}$. Then by the construction of the space $Q(W)$, the functions
\[
\chi_{a} \coloneqq T\left(\frac{\mathcal{W}{\rm r}_{a+1}\big(\Wh\big)}{\mathcal{W}{\rm r}\big(\Wh\big)}\right),\qquad a\in \boldsymbol{e}^{c},
\]
span $Q(W)$. Let us prove that
\begin{equation}\label{2.13}
(T_{-})^{b}\chi_{a}(z-1)=0,\qquad b=0\lc M+M'-a-2.
\end{equation}

The matrix $F_{a}(z)$ is upper-triangular, and the diagonal of $F_{a}(z)$ is of the form $\{d_{1},d_{2}\lc d_{a-1},0,0\dots\}$, where $d_{b}\neq 0$, $b=1\lc a-1$. An example with $M+M'=6$, $a=4$ is shown below:
\begin{equation*}
F_{4}(z)=
\begin{pmatrix}
d_{1} & \star & \star & \star & \star \\
0 & d_{2} & \star & \star & \star \\
0 & 0 & d_{3} & \star & \star \\
0 & 0 & 0 & 0 & d_{4} \\
0 & 0 & 0 & 0 & 0
\end{pmatrix}.
\end{equation*}

For every $b=0\lc M+M'-2$, let $F_{ab}$ be an $(M+M'-b-1)\times (M+M'-b-1)$ submatrix of $F_{a}(z)$ located in the upper-left corner. We have{\samepage
\begin{equation}\label{shifts of det}
\det\big[((T_{-})^{b}F_{a})(z)\big]=C_{ab}\cdot \det (F_{ab}),\qquad b=0\lc M+M'-2,
\end{equation}
where $C_{ab}$ are some functions of $z$.}

The relations \eqref{shifts of det} are illustrated by the example with $M+M'=6$, $a=4$, $b=1,2$ below:
\begin{equation*}
((T_{-})F_{4})(z)=
\begin{tikzpicture}[baseline=-0.5ex]
\matrix [matrix of math nodes,left delimiter=(,right delimiter=),] (m)
{
\star & d_{1} & \star & \star & \star \\
\star & 0 & d_{2} & \star & \star \\
\star & 0 & 0 & d_{3} & \star \\
\star & 0 & 0 & 0 & 0 \\
\star & 0 & 0 & 0 & 0 \\
};
\draw (m-1-2.north west) rectangle (m-4-5.south east);
\draw (m-5-1.north west) rectangle (m-5-1.south east);

\end{tikzpicture},
\qquad
\big((T_{-})^{2}F_{4}\big)(z)=
\begin{tikzpicture}[baseline=-0.5ex]
\matrix [matrix of math nodes,left delimiter=(,right delimiter=),] (m)
{
\star & \star & d_{1} & \star & \star \\
\star & \star & 0 & d_{2} & \star \\
\star & \star & 0 & 0 & d_{3} \\
\star & \star & 0 & 0 & 0 \\
\star & \star & 0 & 0 & 0 \\
};
\draw (m-1-3.north west) rectangle (m-3-5.south east);
\draw (m-4-1.north west) rectangle (m-5-2.south east);
\end{tikzpicture}.
\end{equation*}

In each matrix above, we boxed two minors, whose product gives the determinant of the corresponding matrix up to a sign. The lower-left boxed minor in each case corresponds to the factor $C_{ab}$ in formula~\eqref{shifts of det}. The upper-right boxed minor of~$((T_{-})F_{4})(z)$ is the determinant of~$F_{41}$ and the upper-right boxed minor of $\big((T_{-})^{2}F_{4}\big)(z)$ is the determinant of~$F_{42}$.

Since $\det (F_{ab})=0$ for all $b=0\lc M+M'-a-1$, formula~\eqref{shifts of det} implies~\eqref{2.13}.

Notice that
$\boldsymbol{e}^{c}=\{M'-\lambda'_{a}+a-1,\,a=1\lc M \}.$ This can be illustrated by a similar picture to what we used for the set $\boldsymbol{d}_{i}^{c}$ in the proof of Proposition \ref{mualpha}, except now we should enumerate the path which contains $M$ horizontal intervals and $M'$ vertical intervals, where $M$ and $M'$ might be greater then the number of columns and the number of rows in the diagram for $\lambda$, respectively, see the example with $\lambda = (7,4,2,0,0,\dots)$, $M=10$, and $M'=5$ below:

\vsk1.2>
\hbox to\hsize{\hss\hsize130.2pt\parindent0pt
\vtop{\vskip-\baselineskip\vtop to 0pt{\offinterlineskip
\vrule height.4pt depth 0pt width 130.2pt\kern1.5pt\vrule height.4pt depth1.2pt width18pt\kern1pt\vrule height.4pt depth1.2pt width18pt\kern1pt\vrule height.4pt depth1.2pt width18pt\par
\vskip-1.6pt\vrule height18pt \kern18pt
\vrule height18pt \kern18pt
\vrule height18pt \kern18pt
\vrule height18pt \kern18pt
\vrule height18pt \kern18pt
\vrule height18pt \kern18pt
\vrule height18pt \kern17.8pt
\vrule height18pt width1.6pt\par
\vrule height.4pt depth0pt width73.8pt
\vrule height.4pt depth1.2pt width56.3pt\vskip-1.2pt
\vrule height18pt \kern18pt
\vrule height18pt \kern18pt
\vrule height18pt \kern18pt
\vrule height18pt \kern18pt
\vrule height18pt width1.6pt
\par
\vrule height.4pt depth0pt width36.8pt
\vrule height.4pt depth1.0pt width38.4pt
\vskip-1.2pt
\vrule height18pt \kern18pt
\vrule height18pt \kern18pt
\vrule height18pt width1.6pt
\par
\vrule height1.6pt width38.4pt
\par
\vrule height18pt width1.6pt \vskip1.5pt
\par
\vrule height18pt width1.6pt
\vss}}\kern-130.2pt
\vtop{\vtop to 0pt{\strut\kern131pt\raise-5pt\rlap{\footnotesize11}\kern10pt\raise4pt\rlap{\footnotesize12}\kern15pt\raise4pt\rlap{\footnotesize13}\kern18pt\raise4pt\rlap{\footnotesize14}\vskip-2.0pt
\strut\kern76.0pt\rlap{}\kern9.2pt\rlap{\footnotesize8}\kern9.2pt\rlap{}\kern5.2pt\rlap{\footnotesize9}\kern16pt\rlap{\footnotesize10}\kern9pt\rlap{}\vskip-4.4pt
\strut\kern78.6pt\rlap{\footnotesize7}\vskip-4.0pt
\strut\kern49.4pt\rlap{\footnotesize5}\kern9.2pt\rlap{}\kern4.2pt\rlap{\footnotesize6}\vskip-5.4pt
\strut\kern41.2pt\rlap{\footnotesize4}\vskip-3.5pt
\strut\kern12pt\rlap{\footnotesize2}\kern16.4pt\rlap{\footnotesize3}\vskip-4.1pt
\strut\kern5.8pt\rlap{\footnotesize1}\vskip1pt
\strut\kern5.8pt\rlap{\footnotesize0}
\vss}
\vskip64pt}\hss}

\vsk3>
\noindent

Denote $e^{c}_{a}=M'-\lambda'_{a}+a-1$, $a=1\lc M$, so that $\boldsymbol{e}^{c}=\{e^{c}_{a},\,a=1\lc M\}$.

Notice that $M+M'-e_{a}^{c}-2 = M+\lambda'_{a}-a-1$. Therefore, formula \eqref{2.13} yields
\begin{equation}\label{p47_2}
(T_{-})^{b}\chi_{e^{c}_{a}+1}(z-1)=0,\qquad b=0\lc M+\lambda'_{a}-a-1.
\end{equation}

Let $(\check e_{1}\lc\check e_{M})$ be the sequence of $T_{-}$-discrete exponents of $Q(W)$ at $z-1$, and let $\eta=(\eta_{1},\eta_{2},\dots)$ be a partition such that $\check{e}_{a}=M+\eta_{a}-a$, $a=1\lc M$, and $\eta_{M+1}=0$. Denote by $\tilde{\phi}_{1}\lc\tilde{\phi}_{M}$ the basis of $Q(W)$ such that for every $a=1\lc M$, we have $(T_{-})^{b}\tilde{\phi}_{a}(z-1)=0$, $b=0\lc \check e_{a}-1$, and $(T_{-})^{\check{e}_{a}}\tilde{\phi}_{a}(z-1)\neq 0$.

For each $a=1\lc M$, consider the subspace $V_{a}$ of all functions $f$ in $Q(W)$ such that $(T_{-})^{b}f(z-1)=0$, $b=0\lc \check{e}_{a}$. Then the set $\{\tilde{\phi}_{1}\lc\tilde{\phi}_{a-1}\}$ is a basis of $V_{a}$, in particular, $\dim V_{a} = a-1$.

Suppose that $\eta_{a}<\lambda'_{a}$ for some $a=1\lc M$. Then formula~\eqref{p47_2} implies that the span $\tilde{V}_{a}$ of $\chi_{1}\lc\chi_{a}$ is a subspace of~$V_{a}$. But this is impossible since $\dim \tilde{V}_{a}=a>\dim V_{a}$. Therefore, $\eta_{a}\geq\lambda'_{a}$ for all $a=1\lc M$.

As we mentioned above, $\lambda_{1}\leq M$. Therefore, $\lambda^{'}_{M+1}=0$, and the inequality $\eta_{a}\geq\lambda'_{a}$ holds for all $a=1,2,\dots$.

The proposition is proved.
\end{proof}
\begin{rem}
In the next section, we will prove that in Proposition \ref{Q_+ exponents}, we actually have $\eta = \lambda '$, see Corollary \ref{eta=lambda'}.
\end{rem}

\subsection{Quotient for a difference operator with left shifts}\label{4.5}
For any functions $g_{1}\lc g_{n}$, denote
\[ \mathcal{W}{\rm r}_{-}(g_{1}\lc g_{m})=\det\big(\big(T_{-}^{j-1}g_{i}\big)_{i,j=1}^{m}\big).\]
Let $f_{1},f_{2}\lc f_{n}, h_1\lc h_k$ be functions such that $\mathcal{W}{\rm r}_{-}(g_{1}\lc g_{m})\neq 0$ for any subset $\{g_{1}\lc g_{m}\}$ of $\{f_{1},f_{2}\lc f_{n}, h_1\lc h_k\}$. Denote the span of $f_{1}\lc f_{n}$ as $W_{-}$ and the span of $f_{1},f_{2}\lc f_{n}, h_1\lc h_k$ as $\Wh_{-}$. Then define the quotient conjugate space with left shifts for the pair $\big(W_{-},\Wh_{-}\big)$ to be the span of
\begin{equation*}
T_{-}\left(\frac{\mathcal{W}{\rm r}_{-}(f_{1}\lc f_{n},h_{1}\lc
h_{a-1},h_{a+1}
\lc h_{k})}{\mathcal{W}{\rm r}_{-}(f_{1}\lc f_{n},h_{1}\lc h_{k})}\right),\qquad a=1\lc k.
\end{equation*}

Let $W_{-}$ be a vector space with a basis of the form
\[\big\{\alpha_{i}^{-x} q_{ij}(x)\, |\, i=1\lc n,\, j=1\lc\mu_1^{(i)}\big\} ,\]
where $q_{ij}(x)$ are polynomials and $\deg q_{ij} = \mu_{1}^{(i)}+\big(\mu^{(i)}\big)'_{j}-j$, $i=1\lc n$, $j=1\lc\mu_1^{(i)}$.
Also, take $\Wh_{-}$ to be the vector space with a basis $\alpha_{i}^{-x}x^{p}$, $p=0\lc p_{i}-1$. Denote by $Q_{-}(W_{-})$ the quotient conjugate space with left shifts for the pair $\big(W_{-}, \Wh_{-}\big)$.

We have $\dim W_{-}=\sum_{i=1}^{n}\mu^{(i)}_{1}=M$. Similarly to the case of right shifts, it can be shown that there exists a difference operator $S^{-}_{W_{-}}$ of the form
\[ S^{-}_{W_{-}}=(T_{-})^{M}+\sum_{i=1}^{M}b_{i}(x)(T_{-})^{M-i}\]
annihilating $W_{-}$, and that the difference operator $\Sh_{-}=\prod_{i=1}^{n}(T_{-}-\alpha_{i})^{p_{i}}$ annihilating $\Wh_{-}$ is divisible by $S_{W_{-}}$ from the right. Write $\check S^{-}_{W_{-}}$ for the difference operator such that $\Sh_{-}=\check S^{-}_{W_{-}}S^{-}_{W_{-}}$.

For a difference operator $S=\sum_{i=1}^{l}a_{i}(x)(T_{-})^{l-i}$, define its formal conjugate $S^{\dagger}$ by the formula
\begin{gather*}
S^{\dagger}h(x)=\sum_{i=1}^{l}T^{l-i} (a_{i}(x)h(x) ).
\end{gather*}
\begin{prop}\label{kernel for difference quotient conjugate2}
The difference operator $\big(\check{S}^{-}_{W_{-}}\big)^{\dagger}$ annihilates the space $Q_{-}(W_{-})$.
\end{prop}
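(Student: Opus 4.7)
The plan is to reduce this statement to Proposition~\ref{kernel for difference quotient conjugate} by means of the involution $\sigma\colon f(x)\mapsto f(-x)$ acting on functions of one variable. The key observation is that $\sigma T\sigma = T_-$ and $\sigma T_-\sigma = T$, so conjugation by $\sigma$ interchanges the two shift operators. Consequently, if $S=\sum a_i(x)T^{N-i}$ is a right-shift difference operator, then $\sigma S\sigma = \sum a_i(-x)T_-^{N-i}$ is a left-shift difference operator, and conversely; moreover, the two flavors of discrete Wronskian are related by $\mathcal{W}\mathrm{r}_-(g_1,\ldots,g_m)(x) = \mathcal{W}\mathrm{r}(\sigma g_1,\ldots,\sigma g_m)(-x)$.

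Applying $\sigma$ to $W_-$ yields a space of quasi-exponentials $\sigma W_-$ with basis $\{\alpha_i^x q_{ij}(-x)\}$ satisfying exactly the hypotheses of Proposition~\ref{mualpha} (with the same partitions $\mu^{(i)}$ and the same numbers $\alpha_i$), and $\sigma\Wh_-$ equals the auxiliary space $\Wh$ of Section~\ref{qdo qe} for $\sigma W_-$. I would first check that $\sigma$ carries the fundamental left-shift operator $S^-_{W_-}$ to the fundamental (right-shift) operator $S_{\sigma W_-}$, carries $\Sh_-$ to $\Sh$, and hence carries the quotient $\check S^-_{W_-}$ to $\check S_{\sigma W_-}$. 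Next, I would verify that formal conjugation and $\sigma$-conjugation are compatible: since formal conjugation sends $T^j\mapsto T_-^j$ (with coefficients moved to the left) and $\sigma$ swaps $T$ and $T_-$, one gets the identity $\sigma(\check S_{\sigma W_-})^\dagger\sigma = (\check S^-_{W_-})^\dagger$ after a short direct calculation. Finally, by comparing the two definitions of quotient conjugate space and using the Wronskian identity above, I would check that $Q_-(W_-) = \sigma(Q(\sigma W_-))$.

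With these bookkeeping identifications in hand, Proposition~\ref{kernel for difference quotient conjugate} (applied to $\sigma W_-$ and $\sigma\Wh_-$) tells us that $(\check S_{\sigma W_-})^\dagger$ annihilates $Q(\sigma W_-)$. Conjugating by $\sigma$ and using the three compatibilities above then gives $(\check S^-_{W_-})^\dagger \, Q_-(W_-)=0$, which is the claim.

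The main obstacle is the careful verification of the compatibility between $\sigma$-conjugation and the formal conjugate $(\,\cdot\,)^\dagger$, since the latter both reverses the direction of the shift and moves coefficient functions past the shift. Provided one tracks the coefficients through the substitution $x\mapsto -x$ correctly, the rest of the argument is a routine transcription. If this identification turned out to be awkward, the alternative is to mimic the proof of Proposition~\ref{kernel for difference quotient conjugate} line by line: set $\tilde h_a=S^-_{W_-}h_a$, establish the left-shift analogs of formulas \eqref{S} and \eqref{h tilde wronskian} (using the same Wronskian identities \eqref{Wr1} and \eqref{Wr2}, with $T$ replaced by $T_-$), and then apply the left-shift analog of Proposition~\ref{Kernel for difference conjugate} to $\check S^-_{W_-}$ and the functions $\tilde h_1,\ldots,\tilde h_k$; both analogs are proved exactly as in Sections~\ref{4.1}--\ref{s63S}, with $T$ uniformly replaced by $T_-$.
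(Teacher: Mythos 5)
Your argument is correct, and your primary route is genuinely different from the paper's. The paper disposes of this proposition in one line: ``proved similarly to Proposition~\ref{kernel for difference quotient conjugate}'', i.e., it implicitly reruns the whole chain of Sections~\ref{4.1}--\ref{s63S} with $T$ replaced by $T_-$ --- exactly your fallback option. Your main route instead conjugates by the reflection $\sigma\colon f(x)\mapsto f(-x)$, using $\sigma T\sigma=T_-$ to transport the already-proved right-shift statement to the left-shift setting; all the compatibilities you list ($\sigma$ carries $S^-_{W_-}$ to $S_{\sigma W_-}$ by uniqueness of the fundamental operator, carries $\Sh_-$ to $\Sh$ and hence $\check S^-_{W_-}$ to $\check S_{\sigma W_-}$ by uniqueness of the quotient, intertwines the two formal conjugates, and matches $Q_-(W_-)$ with $\sigma(Q(\sigma W_-))$ via $\mathcal{W}{\rm r}_-(g_1\lc g_m)(x)=\mathcal{W}{\rm r}(\sigma g_1\lc\sigma g_m)(-x)$ and $T_-\sigma=\sigma T$) check out by short direct computations. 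This buys a genuine economy: no left-shift analogs of Lemma~\ref{wronskian formula for difference op} or Propositions~\ref{Sfact} and~\ref{Kernel for difference conjugate} need to be restated, and it is in the spirit of the reflection map $\operatorname{refl}$ and the operator $Q^{\rightarrow}$ that the paper itself introduces in Section~\ref{spaces with data}. Two minor remarks: $\sigma W_-$ satisfies the hypotheses of Proposition~\ref{mualpha} with the \emph{conjugate} partitions $\big(\mu^{(i)}\big)'$ rather than $\mu^{(i)}$ (its basis has degrees $\mu^{(i)}_1+\big(\mu^{(i)}\big)'_j-j$), though this is immaterial here since Proposition~\ref{kernel for difference quotient conjugate} only needs the nonvanishing of the sub-Wronskians, which reflection preserves.
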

Proposition \ref{kernel for difference quotient conjugate2} is proved similarly to Proposition \ref{kernel for difference quotient conjugate}.

\begin{prop}\label{mualpha2}
The space $Q_{-}(W_{-})$ has a basis of the form
\[
\big\{\alpha_{i}^{x}\check q_{ij}(x),\,i=1\lc n,\, j=1\lc n_{i}\big\},\]
where $\check q_{ij}(x)$ are polynomials such that
$\deg \check q_{ij} =\big(\mu^{(i)}\big)'_{1}+\mu^{(i)}_{j}-j$.
\end{prop}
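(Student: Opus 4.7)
The plan is to mirror the proof of Proposition \ref{mualpha}, interchanging the roles of $\mu^{(i)}$ and its conjugate $\big(\mu^{(i)}\big)'$ (equivalently, of $n_i$ and $\mu_1^{(i)}$), and replacing the right shift $T$ by the left shift $T_{-}$. Since the construction of $Q_-(W_-)$ differs from $Q(W)$ only in the direction of the shift, the combinatorics of the discrete Wronskian computation should go through almost verbatim.

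First I would set $\boldsymbol{d}_i=\big\{\mu_1^{(i)}+\big(\mu^{(i)}\big)'_j-j,\,j=1\lc\mu_1^{(i)}\big\}$ and $\boldsymbol{d}_i^c=\{0,1\lc p_i-1\}\setminus\boldsymbol{d}_i$, so that the functions $\alpha_i^{-x}x^l$ with $i=1\lc n$, $l\in\boldsymbol{d}_i^c$, complement the quasi-exponential basis of $W_-$ to a basis of $\Wh_-$. Applying the Young-diagram boundary enumeration from the proof of Proposition \ref{mualpha} to the conjugate partition (with the roles of rows and columns swapped) gives $\boldsymbol{d}_i^c=\big\{\mu_1^{(i)}-\mu_l^{(i)}+l-1,\,l=1\lc n_i\big\}$.

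Next I would compute the left Wronskians $\mathcal{W}{\rm r}_-\big(\Wh_-\big)$ and $\mathcal{W}{\rm r}_{-,ij}\big(\Wh_-\big)$ (defined by omitting $\alpha_i^{-x}x^j$ from the list) by the same induction that produces \eqref{Sdenom}. Because $T_-$ acts on $\alpha_i^{-x}$ as multiplication by $\alpha_i$, the exponential factor becomes $\prod_{i=1}^n\alpha_i^{-p_i x}$ and the Vandermonde-type factor becomes $\prod_{1\le i<j\le n}\big(\alpha_j^{-1}-\alpha_i^{-1}\big)^{p_i p_j}$, which is nonzero since the $\alpha_i$ are distinct; the polynomial factor of $\mathcal{W}{\rm r}_{-,ij}\big(\Wh_-\big)$ is a monic polynomial in $x$ of degree $p_i-j-1$, exactly as in \eqref{Sdenom}. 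The construction of the quotient conjugate space with left shifts from Section \ref{4.5} then shows that $Q_-(W_-)$ is spanned by
\[
f_{ij}=T_-\frac{\mathcal{W}{\rm r}_{-,ij}\big(\Wh_-\big)}{\mathcal{W}{\rm r}_-\big(\Wh_-\big)}+T_-\sum_{s=j+1}^{p_i-1}C_{ils}\,\frac{\mathcal{W}{\rm r}_{-,is}\big(\Wh_-\big)}{\mathcal{W}{\rm r}_-\big(\Wh_-\big)},\quad i=1\lc n,\ j\in\boldsymbol{d}_i^c,
\]
and each ratio above has the form $\alpha_i^x\tilde r_{ij}(x)$ with $\deg\tilde r_{ij}=p_i-j-1$, so $f_{ij}=\alpha_i^x\tilde r_{ij}(x)$ of the same degree. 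Writing $\boldsymbol{d}_i^c=\{j_l,\,l=1\lc n_i\}$ with $j_l=\mu_1^{(i)}-\mu_l^{(i)}+l-1$ and setting $\check q_{il}=\tilde r_{ij_l}$ yields the claimed basis, with $\deg\check q_{il}=p_i-j_l-1=n_i+\mu_l^{(i)}-l=\big(\mu^{(i)}\big)'_1+\mu_l^{(i)}-l$.

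The main obstacle is purely bookkeeping: checking that the Wronskian induction survives the substitutions $\alpha_i\leftrightarrow\alpha_i^{-1}$ and $T\leftrightarrow T_-$, and keeping the two Young-diagram enumerations (of $\mu^{(i)}$ and of $\big(\mu^{(i)}\big)'$) straight. No new idea is required beyond what appears in the proof of Proposition \ref{mualpha}.
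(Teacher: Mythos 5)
Your proposal is correct and follows exactly the route the paper intends: the paper gives no separate argument for Proposition \ref{mualpha2}, stating only that it is proved similarly to Proposition \ref{mualpha}, and your write-up carries out precisely that mirroring (conjugate partition, $T\leftrightarrow T_{-}$, $\alpha_i\leftrightarrow\alpha_i^{-1}$) with the correct complementary-set identity $\boldsymbol{d}_i^c=\big\{\mu_1^{(i)}-\mu_l^{(i)}+l-1\big\}$ and degree count $p_i-j_l-1=\big(\mu^{(i)}\big)'_1+\mu_l^{(i)}-l$.
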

Proposition \ref{mualpha2} is proved similarly to Proposition~\ref{mualpha}.

Denote the sequences $(\alpha_{1}\lc\alpha_{n})$ and $\big(\mu^{(1)}\lc\mu^{(n)}\big)$ as $\bar{\alpha}$ and $\bar{\mu}$, respectively. Let $\mathcal{E}(\bar{\alpha},\bar{\mu})$ be the set of all spaces of quasi-exponentials with a basis of the form
\[
\big\{\alpha_{i}^{x}q_{ij}(x)\,|\,i=1\lc n,\, j=1\lc n_{i}\big\},\]
where $q_{ij}(x)$ are polynomials such that
$\deg q_{ij} =\big(\mu^{(i)}\big)'_{1}+\mu^{(i)}_{j}-j$.

Let us write $\bar{\alpha}^{-1}$ for the sequence $\big(\alpha_{1}^{-1}\lc\alpha_{n}^{-1}\big)$ and $\bar{\mu}'$ for the sequence $\big(\big(\mu^{(1)}\big)'\lc \big(\mu^{(n)}\big)'\big)$. By Propositions~\ref{mualpha} and~\ref{mualpha2}, we have maps $Q\colon \mathcal{E}(\bar{\alpha},\bar{\mu})\rightarrow\mathcal{E}\big(\bar{\alpha}^{-1},\bar{\mu}'\big)$, $W\mapsto Q(W)$ and $Q_{-}\colon \mathcal{E}\big(\bar{\alpha}^{-1},\bar{\mu}'\big)\rightarrow\mathcal{E}(\bar{\alpha},\bar{\mu})$, $W_{-}\mapsto Q_{-}(W_{-})$. Let us prove that~$Q_{-}$ is the inverse for~$Q$.

\begin{prop}\label{Q_- inverse}
For any $W\in\mathcal{E}(\bar{\alpha},\bar{\mu})$ and $W_{-}\in\mathcal{E}\big(\bar{\alpha}^{-1},\bar{\mu}'\big)$, the following holds:
\[Q_{-}(Q(W))=W,\qquad Q(Q_{-}(W_{-}))=W_{-}.\]
\end{prop}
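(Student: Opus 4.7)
\medskip
\noindent\textbf{Proof proposal.} The plan is to exploit the fact that taking the formal conjugate swaps right- and left-shift operators and reverses compositions. Applying $\dagger$ to the factorization $\widehat S=\check S_W S_W$ produces
\[
\widehat S^{\dagger}=S_W^{\dagger}\check S_W^{\dagger},
\]
which I would verify is an easy computation from the definitions: writing out the coefficients shows $(AB)^{\dagger}=B^{\dagger}A^{\dagger}$, and that $\dagger$ is an involution between monic right-shift and monic left-shift operators of the same order. In particular, since $\widehat S=\prod_{i}(T-\alpha_i)^{p_i}$, one has $\widehat S^{\dagger}=\prod_{i}(T_{-}-\alpha_i)^{p_i}$, and the key point is that this is exactly the operator $\widehat S_{-}$ annihilating $\widehat W_{-}=\langle\alpha_i^{-x}x^p\rangle$, because the parameters $p_i$ associated with $Q(W)\in\mathcal{E}(\bar\alpha^{-1},\bar\mu')$ in the sense of Section~\ref{4.5} coincide with those of~$W$ (indeed $\mu^{(i)}_1+(\mu^{(i)})'_1$ is symmetric in $\mu^{(i)}\leftrightarrow(\mu^{(i)})'$).

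Next I would identify the two factors in the above equation with the fundamental and quotient left-shift operators of $Q(W)$. Since $\check S_W$ is monic of order $M=\dim Q(W)$ and $\check S_W^{\dagger}$ annihilates $Q(W)$ by Proposition~\ref{kernel for difference quotient conjugate}, the uniqueness statement for the fundamental left-shift operator (the analog of Lemma~\ref{wronskian formula for difference op} proved in Section~\ref{4.5}) forces
\[
\check S_W^{\dagger}=S^{-}_{Q(W)}.
\]
Comparing $\widehat S^{\dagger}=S_W^{\dagger}S^{-}_{Q(W)}$ with the defining factorization $\widehat S_{-}=\check S^{-}_{Q(W)}S^{-}_{Q(W)}$ and using uniqueness of the quotient in the long-division algorithm, I conclude $\check S^{-}_{Q(W)}=S_W^{\dagger}$. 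Proposition~\ref{kernel for difference quotient conjugate2} then yields that $\bigl(\check S^{-}_{Q(W)}\bigr)^{\dagger}=(S_W^{\dagger})^{\dagger}=S_W$ annihilates $Q_{-}(Q(W))$.

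To finish I would match dimensions and invoke the uniqueness of the annihilated space. Proposition~\ref{mualpha2} applied to $Q(W)\in\mathcal{E}(\bar\alpha^{-1},\bar\mu')$ gives $Q_{-}(Q(W))\in\mathcal{E}(\bar\alpha,\bar\mu)$, so in particular $\dim Q_{-}(Q(W))=\sum_{i}(\mu^{(i)})'_1=M'=\dim W$. Since $S_W$ is monic of order $M'$ and annihilates both $W$ and $Q_{-}(Q(W))$, it is the fundamental difference operator of both spaces, whence Lemma~\ref{unique kernel} gives $W=Q_{-}(Q(W))$. The second identity $Q(Q_{-}(W_{-}))=W_{-}$ is proved by the same argument with the roles of $T$ and $T_{-}$, and of $(\bar\alpha,\bar\mu)$ and $(\bar\alpha^{-1},\bar\mu')$, interchanged. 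The only step requiring any care is the bookkeeping in the first paragraph, i.e., checking that the conjugate $\widehat S^{\dagger}$ literally coincides with the operator $\widehat S_{-}$ used to build $Q_{-}$; once that identification is in hand the rest of the proof is a clean application of the uniqueness lemmas.
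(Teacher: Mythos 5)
Your proposal is correct and follows essentially the same route as the paper: conjugate the factorization $\widehat S=\check S_W S_W$, use $\widehat S^{\dagger}=\widehat S_{-}$ together with the uniqueness of the fundamental and quotient operators to identify $\check S_W^{\dagger}=S^{-}_{Q(W)}$ and $\check S^{-}_{Q(W)}=S_W^{\dagger}$, and conclude via Lemma~\ref{unique kernel} that $W$ and $Q_{-}(Q(W))$ share the same fundamental difference operator. The paper phrases this more tersely by introducing the operator-level maps $Q(S_W)=\check S_W^{\dagger}$ and $Q_{-}\big(S^{-}_{W_{-}}\big)=\big(\check S^{-}_{W_{-}}\big)^{\dagger}$, but the underlying computation and the appeal to uniqueness are identical to yours.
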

\begin{proof}
For any $W\in\mathcal{E}(\bar{\alpha},\bar{\mu})$, define $Q(S_{W})$ to be the difference operator $\check{S}_{W}^{\dagger}$. Similarly, for any $W_{-}\in\mathcal{E}\big(\bar{\alpha}^{-1},\bar{\mu}'\big)$, define $Q_{-}(S^{-}_{W_{-}})$ to be the difference operator $\big(\check{S}^{-}_{W_{-}}\big)^{\dagger}$.

Recall that $\Sh=\prod_{i=1}^{n}(T-\alpha_{i})^{p_{i}}=\big(\Sh_{-}\big)^{\dagger}$ and $\Sh=(Q(S_{W}))^{\dagger}S_{W}$.
We have
\begin{equation}\label{p4.9_1}
\Sh_{-}=\big(\Sh\big)^{\dagger}= (S_{W})^{\dagger}Q(S_{W}).
\end{equation}

In the relation $\Sh_{-}=(Q_{-}(S_{W_{-}}))^{\dagger}S_{W_{-}}$, take $W_{-}=Q(W)$. This yields
\begin{equation}\label{p4.9_2}
\Sh_{-}=(Q_{-}(Q(S_{W})))^{\dagger}Q(S_{W}).
\end{equation}

Comparing formulas \eqref{p4.9_1} and \eqref{p4.9_2}, we have $Q_{-}(Q(S_{W}))=S_{W}$. Therefore, the fundamental difference operators of $W$ and $Q_{-}(Q(W))$ coincide, and the relation $Q_{-}(Q(W))=W$ follows from Lemma~\ref{unique kernel}.

The relation $Q(Q_{-}(W_{-}))=W_{-}$ is proved in a similar way.
\end{proof}

\begin{prop}\label{Q_- exponents}
Fix $z\in\C$. Let $(e_{1}\lc e_{M})$ be the sequence of $T_{-}$-discrete exponents of $W_{-}\in\mathcal{E}\big(\bar{\alpha}^{-1},\bar{\mu}'\big)$ at $z-1$. Define a partition $\lambda = (\lambda_{1},\lambda_{2},\dots)$ by $e_{i}=M+\lambda_{i}-i$, $i=1\lc M$ and $\lambda_{M+1}=0$. Let $(\check e_{1}\lc\check e_{M'})$ be the sequence of discrete exponents of $Q_{-}(W_{-})$ at $z$. Define a partition $\eta=(\eta_{1},\eta_{2},\dots)$ by $\check e_{a}=M'+\eta_{a}-a$, $a=1\lc M'$, and $\eta_{M'+1}=0$. Then $\eta_{a}\geq\lambda'_{a}$ for all $a=1,2,\dots$.
\end{prop}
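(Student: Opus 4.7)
The proof is the mirror image of Proposition \ref{Q_+ exponents}, with the roles of $T$ and $T_-$, of the right-shift Wronskian $\mathcal{W}\mathrm{r}$ and the left-shift Wronskian $\mathcal{W}\mathrm{r}_-$, and of the base points $z$ and $z-1$ interchanged. The plan is to transport every step of the previous argument through this symmetry.

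First I would fix a basis $\psi_1\lc\psi_M$ of $W_-$ exhibiting the $T_-$-discrete exponents at $z-1$, i.e., $(T_-^j\psi_i)(z-1)=0$ for $j=0\lc e_i-1$ and $(T_-^{e_i}\psi_i)(z-1)\neq 0$. The analog of formula \eqref{Sdenom} for $\mathcal{W}\mathrm{r}_-\big(\widehat{W}_-\big)$ (a nonvanishing quasi-exponential times a constant) shows that $z-1$ is not a $T_-$-discrete singular point of $\widehat{W}_-$, so I can extend $\{\psi_1\lc\psi_M\}$ to a basis $f_1\lc f_{M+M'}$ of $\widehat{W}_-$ with the triangular property $(T_-^j f_i)(z-1)=0$ for $j=0\lc i-2$ and $(T_-^{i-1}f_i)(z-1)\neq 0$. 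Writing $\boldsymbol{e}^c=\{0,1\lc M+M'-1\}\setminus\{e_1\lc e_M\}$, the defining construction of $Q_-(W_-)$ gives that it is spanned by
\[
\chi_a=T_-\left(\frac{\mathcal{W}\mathrm{r}_-(f_1\lc f_a,f_{a+2}\lc f_{M+M'})}{\mathcal{W}\mathrm{r}_-(f_1\lc f_{M+M'})}\right),\qquad a\in\boldsymbol{e}^c.
\]

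Next, in direct analogy with the matrix analysis around formula \eqref{shifts of det}, I would consider the matrix $F_a(x)=\big(T_-^{j-1}f_i\big)_{i\neq a,\,j=1\lc M+M'-1}$ evaluated at $x=z-1$. This matrix is upper-triangular with the last $M+M'-a$ diagonal entries equal to zero, so the determinants of $T^b F_a$ at $z-1$ vanish for $b=0\lc M+M'-a-2$ by a block factorization identical to the one in the $Q_+$ proof. Translating this into a statement about $\chi_a$ and the point $z$ (using $\chi_a = T_-(\cdot)$ so that $T^b\chi_a(z)$ becomes an evaluation of the inner ratio at $z+b-1$), I obtain the analog of formula \eqref{2.13}:
\[
T^b\chi_a(z)=0,\qquad b=0\lc M+M'-a-2.
\]

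The identification $\boldsymbol{e}^c=\{M-\lambda'_a+a-1,\,a=1\lc M'\}$ carries over verbatim via the Young-diagram boundary argument used in the proof of Proposition~\ref{mualpha}, only with $\mu^{(i)}$ replaced by $\lambda$ and $(M,M')$ replaced by $(M',M)$. Setting $e^c_a=M-\lambda'_a+a-1$ converts the vanishing statement into $T^b\chi_{e^c_a+1}(z)=0$ for $b=0\lc M'+\lambda'_a-a-1$. The final step is the dimension-counting argument: letting $\tilde\phi_1\lc\tilde\phi_{M'}$ be a basis of $Q_-(W_-)$ adapted to the $T$-discrete exponents $(\check e_1\lc\check e_{M'})$ at $z$, and $V_a\subset Q_-(W_-)$ the subspace of $f$ with $T^b f(z)=0$ for $b=0\lc\check e_a$ (so $\dim V_a=a-1$), the assumption $\eta_a<\lambda'_a$ would force the span of $\chi_1\lc\chi_a$ to sit in $V_a$, a contradiction on dimensions. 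Since $\lambda_1\leq M'$ forces $\lambda'_{M'+1}=0$, the inequality $\eta_a\geq\lambda'_a$ extends to all $a\geq 1$.

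The only genuinely new work compared to Proposition~\ref{Q_+ exponents} is checking that the sign/shift bookkeeping in passing from $T_-$-evaluations to $T$-evaluations at the base point $z$ produces the order $M+M'-a-2$ and not some other bound; this is where I would be most careful, but because $T$ and $T_-$ act in exactly opposite directions and the roles of $M$ and $M'$ are swapped, the count comes out correctly. No substantial new idea is needed beyond the $T\leftrightarrow T_-$ symmetry.
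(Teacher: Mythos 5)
Your proposal is correct and follows exactly the route the paper intends: the paper's own ``proof'' of Proposition~\ref{Q_- exponents} is the single line that it is proved similarly to Proposition~\ref{Q_+ exponents}, and you have carried out precisely that $T\leftrightarrow T_-$, $z\leftrightarrow z-1$, $(M,M')\leftrightarrow(M',M)$ mirroring, with the complement set, the vanishing order $M'+\lambda'_a-a-1$, and the dimension count all coming out right. The only blemish is a harmless off-by-one in the intermediate sentence about $\det(T^bF_a)$ (the sharp range for the matrix omitting $f_a$ is $b\leq M+M'-a-1$; the range $b\leq M+M'-a-2$ is what you need for $\chi_a$, which is built from $F_{a+1}$), and your final bounds are the correct ones.
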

Proposition \ref{Q_- exponents} is proved similarly to Proposition~\ref{Q_+ exponents}.
\begin{cor}\label{eta=lambda'}
In both Propositions~{\rm \ref{Q_+ exponents}} and~{\rm \ref{Q_- exponents}}, we have $\eta=\lambda'$.
\end{cor}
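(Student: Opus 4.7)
The plan is to extract the reverse inequality $\eta_a\leq\lambda'_a$ by running Propositions \ref{Q_+ exponents} and \ref{Q_- exponents} against each other, exploiting the invertibility in Proposition \ref{Q_- inverse}. The key elementary fact is that conjugation of partitions preserves the componentwise order: if $\lambda_a\geq\mu_a$ for all $a$, then $\lambda'_i=\#\{j:\lambda_j\geq i\}\geq\#\{j:\mu_j\geq i\}=\mu'_i$ for all $i$.

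For Proposition \ref{Q_+ exponents}, start with $W\in\mathcal{E}(\bar\alpha,\bar\mu)$ whose discrete exponents at $z$ give the partition $\lambda$. Proposition \ref{Q_+ exponents} yields the partition $\eta$ encoding the $T_-$-discrete exponents of $Q(W)$ at $z-1$, with $\eta_a\geq\lambda'_a$. Now apply Proposition \ref{Q_- exponents} to $W_-\coloneqq Q(W)\in\mathcal{E}(\bar\alpha^{-1},\bar\mu')$, whose $T_-$-discrete exponents at $z-1$ are encoded by $\eta$. The proposition produces a partition $\tilde\lambda$ describing the discrete exponents of $Q_-(Q(W))$ at $z$, with $\tilde\lambda_a\geq\eta'_a$ for all $a$. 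But by Proposition \ref{Q_- inverse} we have $Q_-(Q(W))=W$, and the sequence of discrete exponents of $W$ at $z$ is uniquely characterized, so $\tilde\lambda=\lambda$. This gives $\lambda_a\geq\eta'_a$ for every $a$, and applying the conjugation-monotonicity fact yields $\lambda'_a\geq\eta_a$. Combined with $\eta_a\geq\lambda'_a$, we conclude $\eta=\lambda'$.

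The argument for Proposition \ref{Q_- exponents} is symmetric: given $W_-\in\mathcal{E}(\bar\alpha^{-1},\bar\mu')$ with $T_-$-discrete exponents encoded by $\lambda$, Proposition \ref{Q_- exponents} gives $\eta\geq\lambda'$ for the discrete exponents of $Q_-(W_-)$ at $z$; then Proposition \ref{Q_+ exponents} applied to $Q_-(W_-)$ together with $Q(Q_-(W_-))=W_-$ from Proposition \ref{Q_- inverse} gives $\lambda\geq\eta'$, whence $\lambda'\geq\eta$ and therefore $\eta=\lambda'$.

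I do not anticipate any substantive obstacle: all ingredients are already in place. The only point that deserves a sentence of care is the conjugation-monotonicity step, and the matching of the partitions $\lambda$ and $\tilde\lambda$ via the uniqueness of the discrete exponent sequence of $W$ at $z$, both of which are immediate from the definitions given before Proposition \ref{Q_+ exponents}.
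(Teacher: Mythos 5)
Your proposal is correct and follows essentially the same route as the paper: apply Proposition \ref{Q_+ exponents} to get $\eta_a\geq\lambda'_a$, then apply Proposition \ref{Q_- exponents} to $Q(W)$ and use $Q_-(Q(W))=W$ from Proposition \ref{Q_- inverse} to get $\lambda_i\geq\eta'_i$, which by conjugation-monotonicity gives the reverse inequality. The extra details you supply (the identification $\tilde\lambda=\lambda$ via uniqueness of the exponent sequence, and the monotonicity of conjugation) are exactly the steps the paper leaves implicit.
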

\begin{proof}
Consider a space $W\in\mathcal{E}(\bar{\alpha},\bar{\mu})$, and let partitions $\lambda$ and $\eta$ be like in Proposition \ref{Q_+ exponents}, in particular $\eta_{a}\geq\lambda'_{a}$ for all $a=1,2,\dots$. But by Proposition~\ref{Q_- inverse} and~\ref{Q_- exponents}, we have $\lambda_{i}\geq\eta'_{i}$ for all $i=1,2,\dots$, which is the same as $\lambda'_{a}\geq\eta_{a}$ for all $a=1,2,\dots$. Therefore, we have $\eta=\lambda'$.

The equality $\eta=\lambda'$ for Proposition~\ref{Q_- exponents} is proved in a similar way.
\end{proof}

\subsection[Spaces of quasi-exponentials with the difference data (bar alpha, bar mu; bar z, bar lambda)]{Spaces of quasi-exponentials with the difference data $\boldsymbol{\big(\bar{\alpha},\bar{\mu};\bar{z},\bar{\lambda}\big)}$}\label{spaces with data}

Let $W$ be a space from the set $\mathcal{E}(\bar{\alpha},\bar{\mu})$. Assume that there exists a sequence of complex numbers $\bar{z}=(z_{1}\lc z_{k})$ and a sequence of partitions $\bar{\lambda}=\big(\lambda^{(1)}\lc\lambda^{(k)}\big)$ such that $z_{1}\lc z_{k}$ are discrete singular points of $W$, $z_{a}-z_{b}\notin\Z$ for $a\neq b$, sequence $\big(e^{(a)}_{1}\lc e^{(a)}_{M'}\big)$ of discrete exponents at $z_{a}$ is given by $e^{(a)}_{i}=M'+\lambda^{(a)}_{i}-i$ for $i=1\lc M'$, $\lambda^{(a)}_{i}=0$ for $i>M'$, and $\sum_{a=1}^{k}\big|\lambda^{(a)}\big|=\sum_{i=1}^{n}\big|\mu^{(i)}\big|$. Here $|\lambda|$ denotes the number of boxes in the Young diagram corresponding to the partition $\lambda$. We will say that $W$ is a \textit{space of quasi-exponentials with the difference data $\big(\bar{\alpha},\bar{\mu};\bar{z},\bar{\lambda}\big)$.}

\begin{exmp}
Let $W$ be the span of the functions $x-2/3$, $x^{2}$, and $2^{x}x$. This space belongs to the set $\mathcal{E}(\bar{\alpha},\bar{\mu})$, where $n=2$, $\alpha_{1}=1$, $\alpha_{2}=2$, $\mu^{(1)}=(1,1,0,\dots)$, $\mu^{(2)}=(1,0,\dots)$. Since $\mathcal{W}{\rm r}(x-2/3, x^{2}, 2^{x}x)=2^{x}x(x-1)(x+8/3)$, the discrete singular points of $W$ are $0$, $1$, and~$-8/3$. The sequence of discrete exponents of $W$ at $x=0$ and $x=-8/3$ is $(3,1,0)$, and the corresponding partition is $\lambda_{1}=(1,0,\dots)$. The sequence of discrete exponents of $W$ at $x=1$ is $(3,2,0)$, and the corresponding partition is $\lambda_{2}=(1,1,0,\dots)$. Therefore, the space $W$ is a space of quasi-exponentials with the data $\big(\bar{\alpha},\bar{\mu};\bar{z},\bar{\lambda}\big)$, where $\bar{z}=(-8/3,1)$ and $\bar{\lambda}=(\lambda_{1},\lambda_{2})$.
\end{exmp}

\begin{exmp}\label{example 2}
Let $W$ be the span of the functions $x$, $x^{2}$, and $(-1/2)^{x}x$. This space belongs to the set $\mathcal{E}(\bar{\alpha},\bar{\mu})$, where $n=2$, $\alpha_{1}=1$, $\alpha_{2}=-1/2$, $\mu^{(1)}=(1,1,0,\dots)$, $\mu^{(2)}=(1,0,\dots)$. Since $\mathcal{W}{\rm r}\big(x, x^{2}, (-1/2)^{x}x\big)=(-1/2)^{x}x(x+1)(x+2)$, the discrete singular points of $W$ are $0$, $-1$, and $-2$. The sequence of discrete exponents of $W$ at $x=0$ is $(3,2,1)$, and the corresponding partition is $\lambda_{1}=(1,1,1,0,\dots)$. The sequence of discrete exponents of $W$ at $x=-1$ is $(4,2,0)$, and the corresponding partition is $\lambda_{2}=(2,1,0,\dots)$. The sequence of discrete exponents of $W$ at $x=-2$ is $(3,1,0)$, and the corresponding partition is $\lambda_{3}=(1,0,\dots)$. Therefore, the space $W$ is a space of quasi-exponentials with the data $\big(\bar{\alpha},\bar{\mu};\bar{z},\bar{\lambda}\big)$, where either $\bar{z}=(0)$ and $\bar{\lambda}=(\lambda_{1})$, or $\bar{z}=(-1)$ and $\bar{\lambda}=(\lambda_{2})$.
\end{exmp}

Define the map $\operatorname{refl}\colon \mathcal{E}\big(\bar{\alpha}^{-1},\bar{\mu}'\big)\rightarrow \mathcal{E}(\bar{\alpha},\bar{\mu}')$ by $\operatorname{refl} (W) = \{f(-x)\,\vert\, f(x)\in W\}$. Denote $\mathfrak{T}_{1}= \operatorname{refl}\circ Q$.
If for a space $W\in\mathcal{E}(\bar{\alpha},\bar{\mu})$, the difference operator $Q(S_{W})$ is written as $Q(S_{W})=(T_{-})^{M}+\sum_{i=1}^{M}b_{i}(x)(T_{-})^{M-i}$, then
\begin{equation}\label{Q rightarrow}
Q^{\rightarrow}(S_{W})=T^{M}+\sum_{i=1}^{M}b_{i}(-x)T^{M-i}
\end{equation}
is the fundamental difference operator of $\mathfrak{T}_{1}(W)$.

For a sequence $\bar{z}=(z_{1}\lc z_{k})$, denote $1-\bar{z}=(1-z_{1}\lc 1-z_{k})$. Recall that for a sequence of partitions $\bar{\eta}=\big(\eta^{(1)}\lc\eta^{(s)}\big)$, $\bar{\eta}'$ denotes the sequence of the conjugated partitions: $\bar{\eta}'=\big(\big(\eta^{(1)}\big)'\lc \big(\eta^{(s)}\big)'\big)$. The next theorem is the main result of Section~\ref{qdo}, and it is an easy consequence of Propositions \ref{mualpha}, \ref{Q_+ exponents}, and Corollary \ref{eta=lambda'}.
\begin{thm}
Let $W$ be a space of quasi-exponentials with the data $\big(\bar{\alpha},\bar{\mu};\bar{z},\bar{\lambda}\big)$. Then $\mathfrak{T}_{1}(W)$ is a space of quasi-exponentials with the data $\big(\bar{\alpha},\bar{\mu}'; 1-\bar{z},\bar{\lambda}'\big)$.
\end{thm}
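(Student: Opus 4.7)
The plan is to trace the data $(\bar\alpha,\bar\mu;\bar z,\bar\lambda)$ through the two steps defining $\mathfrak T_1=\operatorname{refl}\circ Q$: first I would check that $Q(W)$ lies in $\mathcal E(\bar\alpha^{-1},\bar\mu')$ with $T_-$-discrete exponents at each $z_a-1$ encoded by $(\lambda^{(a)})'$, and then I would push this information through $\operatorname{refl}$ to read off the data of $\mathfrak T_1(W)$.

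For the first step, Proposition~\ref{mualpha} already supplies a basis of $Q(W)$ of the form $\{\alpha_i^{-x}\check q_{ij}(x)\}$ with $\deg\check q_{ij}=\mu_1^{(i)}+(\mu^{(i)})'_j-j$; together with $\dim Q(W)=\sum_i\mu^{(i)}_1=M$ this is precisely membership in $\mathcal E(\bar\alpha^{-1},\bar\mu')$. For the discrete-exponent data I would fix $a\in\{1,\dots,k\}$, note that the sequence of discrete exponents of $W$ at $z_a$ is by assumption $(M'+\lambda^{(a)}_i-i)$, and apply Proposition~\ref{Q_+ exponents} with $z=z_a$. This yields a partition $\eta^{(a)}$ controlling the $T_-$-discrete exponents of $Q(W)$ at $z_a-1$ with $\eta^{(a)}\ge(\lambda^{(a)})'$ componentwise, and Corollary~\ref{eta=lambda'} promotes this to equality $\eta^{(a)}=(\lambda^{(a)})'$.

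For the second step, I would use the elementary fact that the substitution $f(x)\mapsto f(-x)$ intertwines $T_-$ with $T$: if $g(x)=f(-x)$, then $(T^jg)(1-z)=f(z-1-j)=(T_-^j f)(z-1)$. Applied basiswise, this shows that the sequence of discrete exponents of $\mathfrak T_1(W)=\operatorname{refl}(Q(W))$ at $1-z_a$ agrees with the sequence of $T_-$-discrete exponents of $Q(W)$ at $z_a-1$, hence is given by the partition $(\lambda^{(a)})'$. Under the same substitution $\alpha_i^{-x}\mapsto \alpha_i^x$ and the polynomial degrees of the basis in Proposition~\ref{mualpha} are preserved, so $\mathfrak T_1(W)\in\mathcal E(\bar\alpha,\bar\mu')$.

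The remaining routine verifications are: distinctness of the shifted points (since $(1-z_a)-(1-z_b)=z_b-z_a\notin\mathbb Z$), each $1-z_a$ being a discrete singular point of $\mathfrak T_1(W)$ (because $(\lambda^{(a)})'\ne 0$ whenever $\lambda^{(a)}\ne 0$), and the box-count condition $\sum_a|(\lambda^{(a)})'|=\sum_a|\lambda^{(a)}|=\sum_i|\mu^{(i)}|=\sum_i|(\mu^{(i)})'|$. I do not anticipate a genuine obstacle here, since all non-trivial input has been absorbed into Propositions~\ref{mualpha}, \ref{Q_+ exponents} and Corollary~\ref{eta=lambda'}; the only point requiring a bit of care is the bookkeeping that converts $T_-$-exponents at $z_a-1$ into $T$-exponents at $1-z_a$ under the reflection $x\mapsto -x$, which is the step where the shift ``$-1$'' cancels out to give $1-\bar z$ exactly as claimed.
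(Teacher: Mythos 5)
Your proposal is correct and assembles exactly the ingredients the paper itself cites for this theorem (Proposition~\ref{mualpha}, Proposition~\ref{Q_+ exponents}, Corollary~\ref{eta=lambda'}), together with the routine reflection bookkeeping $(T^jg)(1-z)=(T_-^jf)(z-1)$ that converts $T_-$-exponents at $z_a-1$ into $T$-exponents at $1-z_a$. This is the same approach as the paper, which states the result as an easy consequence of those results without writing out the details.
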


Let us write $\mathcal{E}\big(\bar{\alpha},\bar{\mu};\bar{z},\bar{\lambda}\big)$ for the set of all spaces of quasi-exponentials with the difference data $\big(\bar{\alpha},\bar{\mu};\bar{z},\bar{\lambda}\big)$. We constructed a map
\begin{equation}\label{T_1}
\begin{split}
\mathfrak{T}_{1}\colon \ \mathcal{E}\big(\bar{\alpha},\bar{\mu};\bar{z},\bar{\lambda}\big)&\rightarrow \mathcal{E}\big(\bar{\alpha},\bar{\mu}';1-\bar{z},\bar{\lambda}'\big),\\
W & \mapsto \mathfrak{T}_{1}(W).
\end{split}
\end{equation}
In Section~\ref{qdo and duality}, we will show that this map is closely related to the $(\gl_{n},\gl_{k})$-duality of the trigonometric Gaudin and dynamical Hamiltonians.

\section{Quotient differential operator}\label{q differential op}
\subsection{Spaces of quasi-polynomials}
By quasi-polynomial we mean a function of the form $x^{z}p(x)$, where $z\in\C$ and $p(x)$ is a polynomial.

Fix complex numbers $z_{1}\lc z_{k}$ and nonzero partitions
$\lambda^{(1)}\lc\lambda^{(k)}$. Assume that $z_{a}-z_{b}\notin\Z$ for $a\neq b$.
Let $V$ be a vector space of functions in one variable with a basis
$\{x^{z_{a}}q_{ab}(x)\,|\, a=1\lc k,\, b=1\lc \big(\lambda^{(a)}\big)'_{1}\}$,
where $q_{ab}(x)$ are polynomials and $\deg q_{ab} =
\big(\lambda^{(a)}\big)'_{1}+\lambda^{(a)}_{b}-b$.
Assume that the space $V$ satisfies the following property, which we will call the non-degeneracy at $0$: for each $a=1\lc k$ and any $b=1\lc \big(\lambda^{(a)}\big)'_{1}$,
there exists a linear combination of polynomials $q_{a1}, q_{a2}\lc q_{a(\lambda^{(a)})'_{1}}$ which has a root at $x=0$ of multiplicity $b-1$.

Denote $L'=\sum_{a=1}^k\big(\lambda^{(a)}\big)'_{1}=\dim V$. For $\alpha\in\C^{*}$, define \textit{the sequence of exponents of $V$ at $\alpha$} as
a unique sequence of integers $(e_{1}>\dots>e_{L'})$,
with the property:
there exists a basis $f_{1}\lc f_{L'}$ of $V$ such that for each $a=1\lc L'$, we have $f_{a}(x)=(x-\alpha)^{e_a} (1+o(1) )$ as $x\to \alpha$.

For any sufficiently differentiable functions $g_{1}\lc g_{s}$, let
\[\Wr(g_{1}\lc g_{s})=\det\big(\big( ({\rm d}/{\rm d}x )^{j-1}g_{i}(x)\big)_{i,j=1}^{s}\big)\]
be their Wronskian. The sequence of exponents of $V$ at $\alpha$ differs from the sequence $(L'-1,L'-2\lc 0)$ if and only if $\alpha$ is a root of $\Wr(g_{1}\lc g_{L'})$, where $g_{1}\lc g_{L'}$ is any basis of~$V$. If $\alpha$ is such a root, we will call it a \textit{singular point} of~$V$.

Let $\alpha_{1}\lc \alpha_{n}$ be the singular points of $V$ and for each $i=1\lc n$, let $\big(e^{(i)}_{1}\lc e^{(i)}_{L'}\big)$ be the sequence of exponents of $V$ at $\alpha_{i}$. For each $i=1\lc n$, define a partition $\mu^{(i)}=\big(\mu^{(i)}_{1},\mu^{(i)}_{2},\dots{}\big)$ as follows: $e^{(i)}_{a}=L'+\mu^{(i)}_{a}-a$ for $a=1\lc L'$, and $\mu^{(i)}_{a}=0$ for $a>L'$.
Clearly, all partitions $\mu^{(1)}\!\lc\mu^{(n)}$ are nonzero.

Denote the sequences $(z_{1}\lc z_{k})$, $\big(\lambda^{(1)}\lc\lambda^{(k)}\big)$, $\big(\alpha_{1}\lc\alpha_{n}\big)$, and $\big(\mu^{(1)}\lc\mu^{(n)}\big)$ as $\bar{z}$, $\bar{\lambda}$, $\bar{\alpha}$, and $\bar{\mu}$, respectively. We will say that $V$ is a \textit{space of quasi-polynomials with the data $\big(\bar{z},\bar{\lambda};\bar{\alpha},\bar{\mu}\big)$.}

\begin{lem}\label{lambda=mu}
Let $V$ be a space of quasi-polynomials with the data $\big(\bar{z},\bar{\lambda};\bar{\alpha},\bar{\mu}\big)$. Then
\begin{equation}\label{lambda=mu formula}
\sum_{a=1}^{k}\big|\lambda^{(a)}\big|=\sum_{i=1}^{n}\big|\mu^{(i)}\big|.
\end{equation}
Here $|\lambda|$ denotes the number of boxes in the Young diagram corresponding to the partition $\lambda$.
\end{lem}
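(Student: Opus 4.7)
The plan is to compute the degree of the ``polynomial part'' of the Wronskian $\Wr(V)$ of a basis of $V$ in two different ways and compare. Set $L' = \dim V = \sum_{a=1}^k (\lambda^{(a)})'_1$ and $D_{ab} := (\lambda^{(a)})'_1 + \lambda^{(a)}_b - b$. Expanding $({\rm d}/{\rm d}x)^{j-1}(x^{z_a}q_{ab}(x)) = x^{z_a - j + 1}\cdot(\text{polynomial in } x)$ and factoring powers of $x$ row-by-row and column-by-column from the derivative matrix, one writes $\Wr(V) = x^E R(x)$, where $E = \sum_{a=1}^k z_a (\lambda^{(a)})'_1 - \binom{L'}{2}$ and $R(x) \in \C[x]$. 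It then suffices to compute $\deg R$ two different ways.

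\emph{Computation from the leading behavior at $\infty$.} The leading term of $x^{z_a}q_{ab}(x)$ is a nonzero multiple of $x^{z_a + D_{ab}}$. Using the elementary identity
\[
\Wr\bigl(x^{r_1},\dots,x^{r_{L'}}\bigr) = \prod_{1\leq i<j\leq L'}(r_j - r_i)\,x^{\sum_i r_i - \binom{L'}{2}},
\]
the top-degree contribution to $\Wr(V)$ carries a Vandermonde prefactor $\prod \bigl((z_{a'}+D_{a'b'}) - (z_a+D_{ab})\bigr)$, which is nonzero: within a block $a$ the values $D_{ab}$ strictly decrease in $b$ since $\lambda^{(a)}$ is a partition, and across different blocks the differences are nonintegers by the assumption $z_a - z_{a'}\notin\Z$. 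Hence
\[
\deg R = \sum_{a,b} D_{ab} = \sum_{a=1}^k\Bigl(\binom{(\lambda^{(a)})'_1}{2} + |\lambda^{(a)}|\Bigr).
\]

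\emph{Computation from the zeros of $R$.} Since the sequence of exponents of $V$ at the singular point $\alpha_i$ is $e^{(i)}_a = L' + \mu^{(i)}_a - a$, the standard local analysis of the Wronskian shows that $R$ has a zero of exact order $\sum_a e^{(i)}_a - \binom{L'}{2} = |\mu^{(i)}|$ at $\alpha_i$. For the behavior at $x=0$, the non-degeneracy at $0$ furnishes, for each $a$, elements in the span of $q_{a,1},\dots,q_{a,(\lambda^{(a)})'_1}$ with roots at $0$ of multiplicities $0,1,\dots,(\lambda^{(a)})'_1-1$; together these give a basis of $V$ near $0$ of the form $x^{z_a+b-1}(1+O(x))$. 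Applying the same Wronskian identity (and again using $z_a - z_{a'}\notin\Z$ to get distinctness of all the $z_a + b - 1$), one finds $R$ vanishes at $0$ to exact order $r_0 := \sum_{a=1}^k \binom{(\lambda^{(a)})'_1}{2}$. By hypothesis the singular points of $V$ in $\C^*$ are exactly $\alpha_1,\dots,\alpha_n$, so $R$ has no further zeros, giving
\[
\deg R = r_0 + \sum_{i=1}^n |\mu^{(i)}|.
\]

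Equating the two expressions for $\deg R$ cancels the common term $r_0$ and yields the desired identity \eqref{lambda=mu formula}. The only real obstacle is verifying that both Vandermonde-type leading coefficients are nonzero; both verifications ultimately reduce to the combination of $z_a - z_{a'}\notin\Z$ for $a\neq a'$ with the strict monotonicity of $\lambda^{(a)}_b - b$ in $b$ coming from $\lambda^{(a)}$ being a partition.
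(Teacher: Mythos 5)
Your proof is correct and follows essentially the same route as the paper's: both compute the polynomial part of the Wronskian of a basis of $V$ and equate its degree, read off from the leading behavior at infinity, with the total order of its zeros at $0,\alpha_1,\dots,\alpha_n$. The only (cosmetic) difference is that the paper factors out $x^{\sum_a N_a z_a-\sum_{a<b}N_aN_b}$ so that the remaining polynomial is nonvanishing at $0$, whereas you keep the zero of order $\sum_a\binom{N_a}{2}$ at the origin and cancel it at the end; your write-up also makes explicit the role of the non-degeneracy at $0$, which the paper only alludes to in the remark following the lemma.
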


\begin{proof}
Let $g_{1}\lc g_{L'}$ be some basis of the space $V$. Denote $N_{a}=\big(\lambda^{(a)}\big)'_{1}$. Then
\[
\Wr(g_{1}\lc g_{L'})=x^{\sum_{a=1}^{k}N_{a}z_{a}-\sum_{a,b=1}^{k}N_{a}N_{b}}p(x),
\]
where $p(x)$ is a polynomial of degree $\sum_{a=1}^{k}\big|\lambda^{(a)}\big|$. On the other hand, the numbers $\alpha_{1}\lc\alpha_{n}$ are zeros of $p(x)$ with multiplicities $|\mu^{(1)}\big|\lc |\mu^{(n)}\big|$, respectively, and $p(x)$ has no other zeros.
\end{proof}
\begin{rem}
Notice that in the case of spaces of quasi-exponentials with the difference data $\big(\bar{\alpha},\bar{\mu};\bar{z},\bar{\lambda}\big)$, we had to include the condition \eqref{lambda=mu formula} into the definition. As Lemma \ref{lambda=mu} shows, in case of quasi-polynomials, this condition holds automatically. This can be explained by the fact that for the space of quasi-polynomials with the data $\big(\bar{z},\bar{\lambda};\bar{\alpha},\bar{\mu}\big)$, we additionally require the non-degeneracy at $0$.
\end{rem}
\begin{rem}
Notice that if $V$ is a space of quasi-polynomials with some data, then this data
is defined uniquely. This is not the case for spaces of quasi-exponentials with a difference
data, see Example~\ref{example 2}.
\end{rem}
\begin{exmp}
Let $V$ be the span of the functions $f_{1}=x-1$, $f_{2}=(x-1)^{2}$, and $f_{3}=\sqrt{x}(x-1)$. Then $\Wr (f_{1},f_{2},f_{3})=-1/4\, x^{-3/2}(x-1)^{3}$. The sequence of exponents of $V$ at~$1$ is $(3,2,1)$. Therefore, $V$ is a space of quasi-polynomials with the data $\big(\bar{z},\bar{\lambda};\bar{\alpha},\bar{\mu}\big)$, where $\bar{z}=(0,1/2)$, $\bar{\lambda}=(\lambda_{1},\lambda_{2})$ with $\lambda_{1}=(1,1,0,\dots)$, $\lambda_{2}=(1,0,\dots)$, $\bar{\alpha}=(1)$, and $\bar{\mu}=(\mu_{1})$ with $\mu_{1}=(1,1,1,0,\dots)$.
\end{exmp}

\subsection{Spaces of quasi-polynomials and quotient differential operator}
We will use the following two facts about linear differential operators. For proofs, see for example,~\cite{TU1}.
\begin{enumerate}\itemsep=0pt
\item Let $f_{1}(x)\lc f_{s}(x)$ be sufficiently differentiable functions such that $\Wr (f_{1}\lc f_{s})\neq 0$. Then there is a unique monic linear differential operator $D=({\rm d}/{\rm d}x)^{s}+\sum_{i=1}^{s}\!a_{i}(x)({\rm d}/{\rm d}x)^{s-i}\!$ of order $s$ such that $Df_{i}=0$, $i=1\lc s$. The coefficients of the operator $D$ are given by the formulas
\begin{equation}\label{a_{i}(x)}
a_{i}(x)=(-1)^{i} \frac{\Wr_{i}(f_{1}\lc f_{s})}{\Wr(f_{1}\lc f_{s})} , \qquad i=1\lc s ,
\end{equation}
where $\Wr_{i}(f_{1}\lc f_{s})$ is the determinant of the $s\times s$ matrix whose $j$-th row is $f_{j}, ({\rm d}/{\rm d}x)f_{j}\lc ({\rm d}/{\rm d}x)^{s-i-1}f_{j},$ $({\rm d}/{\rm d}x)^{s-i+1}f_{j}\lc ({\rm d}/{\rm d}x)^{s}f_{j}$.

\item Let $V$ and $\Vh$ be two spaces of functions such that $V\subset \Vh$, and for any $f_{1}\lc f_{m}\in \Vh$, $\Wr(f_{1}\lc f_{m})\neq 0$ if and only if $f_{1}\lc f_{m}$ are linearly independent. Let $D$ and $\Dh$ be linear differential operators of order $\dim V$ and $\dim \Vh$ annihilating~$V$ and~$\Vh$, respectively. Then there exists a differential operator $\check{D}$ such that $\Dh=\check{D}D$.
\end{enumerate}

Consider a space $V$ like in the previous section. By item~(1) above, there exists a unique monic differential operator $D_{V}$ of order $L'$ annihilating $V$. We will say that $D_{V}$ is
\textit{the fundamental differential operator of}~$V$.

Denote $l_{a} = \lambda^{(a)}_{1}+\big(\lambda^{(a)}\big)'_{1}-1$. Introduce a differential operator
\begin{equation*}
\Dh=\prod_{a=1}^{k}\prod_{b=0}^{l_{a}}\left(x\frac{{\rm d}}{{\rm d}x}-z_{a}-b\right).
\end{equation*}
Then the span $\Vh$ of the functions $x^{z_{a}+b}$, $a=1\lc k$, $b=0\lc l_{a}$ is annihilated by $\Dh$.

Since $V\subset\Vh$, there exists a differential operator $\check{D}_{V}$ such that $\Dh=\check{D}_{V}x^{k}D_{V}$, see item~(2) in the beginning of the section.

For a differential operator $D=\sum_{i=0}^{s}b_{i}(x)({\rm d}/{\rm d}x)^{s-i}$, define its \textit{formal conjugate} $D^{\dagger}$ by the formula:
\[
D^{\dagger}f(x)=\sum_{i=0}^{s}\left(-\frac{{\rm d}}{{\rm d}x}\right)^{s-i}(b_{i}(x)f(x)),
\]
where $f(x)$ is any sufficiently differentiable function.

Let $\check{D}_{V}^{\dagger}$ be the formal conjugate of $\check{D}_{V}$. Denote $1-\bar{z}-\bar{\lambda}'_{1}-\bar{\lambda}_{1} = \big(1-z_{1}-\big(\lambda^{(1)}\big)'_{1}-\lambda^{(1)}_{1},1-z_{2}-\big(\lambda^{(2)}\big)'_{1}-\lambda^{(2)}_{1}\lc 1-z_{k}-\big(\lambda^{(k)}\big)'_{1}-\lambda^{(k)}_{1}\big)$. We have the following theorem

\begin{thm}\label{map T_2}
Let $V$ be a space of quasi-polynomials with the data $\big(\bar{z},\bar{\lambda};\bar{\alpha},\bar{\mu}\big)$. Then there exists a unique space $\mathfrak{T}_{2}(V)$ of quasi-polynomials with the data $\big(1-\bar{z}-\bar{\lambda}'_{1}-\bar{\lambda}_{1},\bar{\lambda}';\bar{\alpha},\bar{\mu}'\big)$, which is annihilated by $\check{D}_{V}^{\dagger}$.
\end{thm}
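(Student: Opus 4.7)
The plan is to follow the strategy outlined in Section~1.3: define the candidate
\[
\mathfrak{T}_{2}(V):=\mathfrak{T}_{3}^{-1}\bigl(\mathfrak{T}_{1}(\mathfrak{T}_{3}(V))\bigr),
\]
verify that the data it carries matches the one claimed, and then use the pseudo-difference machinery of Section~\ref{psdifference op} to deduce that $\check D_{V}^{\dagger}$ annihilates it.

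First I would chase the data through the composition. Starting from $V\in\mathcal{P}\big(\bar{z},\bar{\lambda};\bar{\alpha},\bar{\mu}\big)$, the bispectral duality~$\mathfrak{T}_{3}$ sends $V$ to an element of $\mathcal{E}\big(\bar{\alpha},\bar{\mu};\bar{z}+\bar{\lambda}'_{1},\bar{\lambda}\big)$. Applying the map $\mathfrak{T}_{1}$ from~\eqref{T_1}, whose effect on difference data was computed in Section~\ref{qdo}, one lands in $\mathcal{E}\big(\bar{\alpha},\bar{\mu}';1-\bar{z}-\bar{\lambda}'_{1},\bar{\lambda}'\big)$. Applying $\mathfrak{T}_{3}^{-1}$, and using the elementary identity $(\bar{\lambda}')'_{1}=\bar{\lambda}_{1}$, places the output in $\mathcal{P}\big(1-\bar{z}-\bar{\lambda}'_{1}-\bar{\lambda}_{1},\bar{\lambda}';\bar{\alpha},\bar{\mu}'\big)$, matching the target set. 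Along the way one should check that the image indeed satisfies the non-degeneracy at $0$ required of a space of quasi-polynomials; this should be automatic from the description of $\mathfrak{T}_{3}$, since~$\mathfrak{T}_{3}^{-1}$ maps any space of quasi-exponentials with difference data to a space of quasi-polynomials with data.

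The substantive step is annihilation, and I would obtain it from Theorem~\ref{discrete main 1}. That theorem asserts that for $W=\mathfrak{T}_{1}(\mathfrak{T}_{3}(V))$, the fundamental pseudo-difference operators satisfy $\mathcal{S}_{V}=\mathcal{S}_{W}^{-1}$. The pseudo-difference formalism of Section~\ref{psdifference op} is designed precisely so that this identity simultaneously encodes the factorization $\hat{D}=\check D_{V}x^{k}D_{V}$ together with its formal conjugate, and the difference factorization $\hat S_{W}=\check S_{W}S_{W}$ governing $Q(\cdot)$. Unwinding $\mathcal{S}_{V}=\mathcal{S}_{W}^{-1}$ through $\mathfrak{T}_{3}^{-1}$ should therefore translate into the statement that the quasi-polynomial space $\mathfrak{T}_{3}^{-1}(W)=\mathfrak{T}_{2}(V)$ lies in the kernel of the formal conjugate of the quotient differential operator. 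This is the hard step; once Theorem~\ref{discrete main 1} is available it becomes a matter of reading off the two factorizations encoded by~$\mathcal{S}_{V}$.

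Uniqueness then follows by a dimension count. Since $\on{ord}(\hat D)=\sum_{a=1}^{k}(l_{a}+1)=L'+\sum_{a}\lambda_{1}^{(a)}$ and $\on{ord}(x^{k}D_{V})=L'$, one has
\[
\on{ord}\bigl(\check D_{V}^{\dagger}\bigr)=\on{ord}(\check D_{V})=\sum_{a=1}^{k}\lambda_{1}^{(a)},
\]
which agrees with $\dim\mathfrak{T}_{2}(V)=\sum_{a}\bigl((\lambda^{(a)})'\bigr)'_{1}=\sum_{a}\lambda_{1}^{(a)}$. Hence any space of quasi-polynomials with the prescribed data lying in $\ker\check D_{V}^{\dagger}$ must coincide with $\mathfrak{T}_{2}(V)$, since a linear differential operator of order $m$ has kernel of dimension exactly~$m$.
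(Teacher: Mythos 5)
Your proposal is correct and follows essentially the same route as the paper: define $\mathfrak{T}_{2}=\mathfrak{T}_{3}^{-1}\mathfrak{T}_{1}\mathfrak{T}_{3}$, chase the data through the three maps, derive the annihilation from Theorem~\ref{discrete main 1}, and get uniqueness from the fact that a space of the prescribed data has dimension equal to $\on{ord}\check{D}_{V}^{\dagger}$. The only place you are lighter than the paper is the ``unwinding'' step, where the paper explicitly computes $\mathcal{S}_{V}^{-1}=(\bar{q}_{\bar{z},\bar{\lambda}}(-x))^{-1}\tau\big(\check{D}_{V}\big)$ from the factorization of $\Dh$, matches it against $\mathcal{S}_{W}$ via the combinatorial identity $\bar{q}_{\bar{z},\bar{\lambda}}(-x)=(-1)^{L'}q_{\bar{\eta},\bar{\lambda}'}(x)$, and uses $\big(\tau\big(\check{D}_{V}\big)\big)^{\ddagger}=\tau\big(\check{D}_{V}^{\dagger}\big)$ to conclude that $\bar{S}_{W}^{\#}$ is a multiple of $p_{\bar{\alpha},\bar{\mu}'}(x)\check{D}_{V}^{\dagger}$ — but this is exactly the computation your plan calls for.
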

We will prove Theorem \ref{map T_2} in Section~\ref{proof of T_{2}}.

Let us write $\mathcal{P}\big(\!\bar{z},\bar{\lambda};\bar{\alpha},\bar{\mu}\big)$ for the set of all spaces of quasi-polynomials with the data $\big(\!\bar{z},\bar{\lambda};\bar{\alpha},\bar{\mu}\big)$. By Theorem~\ref{map T_2}, we have a map
\begin{equation*}
\begin{split}
\mathfrak{T}_{2}\colon \ \mathcal{P}\big(\bar{z},\bar{\lambda};\bar{\alpha},\bar{\mu}\big)&\rightarrow \mathcal{P}\big(1-\bar{z}-\bar{\lambda}'_{1}-\bar{\lambda}_{1},\bar{\lambda}';\bar{\alpha},\bar{\mu}'\big),\\
V & \mapsto \mathfrak{T}_{2}(V).
\end{split}
\end{equation*}

\section{Bispectral duality}\label{BD}
In this section, we recall a transformation introduced in~\cite{MTV4}.

Fix sequences $\bar{z}$, $\bar{\alpha}$, $\bar{\lambda}$, and $\bar{\mu}$, where $\bar{z}=(z_{1}\lc z_{k})$ is a sequence of complex numbers such that $z_{a}-z_{b}\notin\Z$ for $a\neq b$, $\bar{\alpha}=(\alpha_{1}\lc\alpha_{n})$ is a sequence of nonzero complex numbers such that $\alpha_{i}\neq\alpha_{j}$ for $i\neq j$, and $\bar{\lambda}=\big(\lambda^{(1)}\lc\lambda^{(k)}\big)$, $\bar{\mu}=\big(\mu^{(1)}\lc\mu^{(n)}\big)$ are sequences of non-zero partitions. Denote $L'=\sum_{a=1}^{k}\big(\lambda^{(a)}\big)'_{1}$, $M'=\sum_{i=1}^{n}\big(\mu^{(i)}\big)'_{1}$, and $n_{ab}=\big(\lambda^{(a)}\big)'_{1}+\lambda^{(a)}_{b}-b$.

Define polynomials $p_{\bar{\alpha},\bar{\mu}}(x)$ and $q_{\bar{z},\bar{\lambda}}(x)$ as follows:
\begin{gather}\label{p}
p_{\bar{\alpha},\bar{\mu}}(x)=\prod_{i=1}^{n}(x-\alpha_{i})^{(\mu^{(i)})'_{1}},
\\
\label{q}
q_{\bar{z},\bar{\lambda}}(x)=\prod_{a=1}^{k}\prod_{b=1}^{(\lambda^{(a)})'_{1}}(x-z_{a}-n_{ab}).
\end{gather}

Let $V$ be a space of quasi-polynomials with the data $\big(\bar{z},\bar{\lambda};\bar{\alpha},\bar{\mu}\big)$.
Let $D_{V}$ be the fundamental differential operator of~$V$. Define the functions $\beta_{1}(x)\lc\beta_{L'}(x)$ by
\[
x^{L'}D_{V}=\left(x\frac{{\rm d}}{{\rm d}x}\right)^{L'}+\sum_{a=1}^{L'}\beta_{a}(x)\left(x\frac{{\rm d}}{{\rm d}x}\right)^{L'-a}.
\]

\begin{lem}\label{properties of beta}
The following holds
\begin{enumerate}\itemsep=0pt
\item[$1.$] The functions $\beta_{1}(x)\lc\beta_{L'}(x)$ are rational functions regular at infinity.
Denote $\beta_{a}(\infty)=\lim\limits_{x\to\infty}\beta_{a}(x)$, $a=1\lc L'$. Then
\begin{equation}\label{D coef are rational f}
u^{L'}+\sum_{a=1}^{L'}\beta_{a}(\infty)u^{L'-a}=q_{\bar{z},\bar{\lambda}}(u).
\end{equation}
\item[$2.$] For each $a=1\lc L'$, $p_{\bar{\alpha},\bar{\mu}}(x)\beta_{a}(x)$ is a polynomial in $x$.
\end{enumerate}
\end{lem}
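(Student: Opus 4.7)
The approach is to convert $D_V$ to Euler-operator form and then read off part 1 from the indicial equation at infinity and part 2 from the pole structure at the singular points of $D_V$.

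First I would use the identity $x^s(\mathrm{d}/\mathrm{d}x)^s = \theta(\theta-1)\cdots(\theta-s+1)$ with $\theta = x\,\mathrm{d}/\mathrm{d}x$ to rewrite
\begin{equation*}
x^{L'}D_V \;=\; \prod_{j=0}^{L'-1}(\theta-j) \;+\; \sum_{i=1}^{L'} x^i a_i(x)\prod_{j=0}^{L'-i-1}(\theta-j),
\end{equation*}
where $a_i(x)$ are the coefficients of $D_V$ from \eqref{a_{i}(x)}. Expanding powers of $\theta$ shows that each $\beta_a(x)$ is an explicit $\Z$-linear combination of the functions $x^i a_i(x)$ for $i\leq a$. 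Since the $a_i(x)$ are rational by the Wronskian formula, so are the $\beta_a(x)$.

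For part 1, I would compute the indicial polynomial at $\infty$. By the assumption $\deg q_{ab}=n_{ab}$ and $z_a-z_{a'}\notin\Z$, a suitable basis of $V$ has $L'$ pairwise distinct leading asymptotics $x^{z_a+n_{ab}}(1+o(1))$ as $x\to\infty$. Substituting $f(x)=x^s$ into $x^{L'}D_V$ produces $x^s\bigl(\prod_{j=0}^{L'-1}(s-j)+\sum_a \beta_a(x)\prod_{j=0}^{L'-a-1}(s-j)\bigr)$; writing a basis element as $x^{z_a+n_{ab}}+\text{lower}$ and demanding the leading asymptotic of $D_V$ applied to it vanish forces each $\beta_a(x)$ to have a finite limit $\beta_a(\infty)$ at $\infty$ and forces each $s=z_a+n_{ab}$ to be a root of the monic polynomial $u^{L'}+\sum_a\beta_a(\infty)u^{L'-a}$. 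With $L'$ distinct roots this polynomial must coincide with $q_{\bar z,\bar\lambda}(u)=\prod_{a,b}(u-z_a-n_{ab})$, proving \eqref{D coef are rational f}.

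For part 2, I would locate all possible poles of $\beta_a$. The exponent computation behind Lemma \ref{lambda=mu} gives $\Wr(V)=C x^E\prod_i (x-\alpha_i)^{|\mu^{(i)}|}$, so $\beta_a(x)$ can only have poles at $0$ and at the $\alpha_i$. A direct comparison between the $x$-power of $\Wr_i(V)$ and that of $\Wr(V)$ (the former is lower by exactly $i$) shows that $x^i a_i(x)$ is regular at $x=0$, hence so is $\beta_a(x)$. Each $\alpha_i$ is an \emph{apparent} singularity of $D_V$, since every basis element of $V$ is polynomial near $\alpha_i\neq 0$, with exponents $L'+\mu^{(i)}_a-a$, $a=1,\dots,L'$. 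The "normal" values $L'-a$ for $a>n_i:=\bigl(\mu^{(i)}\bigr)'_1$ already appear among these exponents, so in the local Euler-operator expansion around $\alpha_i$ the corresponding factors of the indicial polynomial coincide with those of $\prod_{j=0}^{L'-1}(\theta-j)$ at $x=\alpha_i$. Tracking the resulting cancellations in the expression for $\beta_a(x)$ as a combination of $x^i a_i(x)$ yields that $(x-\alpha_i)^{n_i}\beta_a(x)$ is holomorphic at $\alpha_i$. Multiplying over $i$ gives $p_{\bar\alpha,\bar\mu}(x)\beta_a(x)=\prod_i(x-\alpha_i)^{n_i}\beta_a(x)\in\C[x]$.

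The main obstacle will be the sharp pole bound $n_i=\bigl(\mu^{(i)}\bigr)'_1$ at each $\alpha_i$: the generic Fuchsian estimate only gives the cruder bound $j$ on the $j$-th coefficient $a_j(x)$. The improvement to $n_i$ requires using both that $\alpha_i$ is apparent and that the anomalous exponent shifts occur exclusively for $a\leq n_i$, so that the non-anomalous exponents (which already match $0,1,\dots,L'-n_i-1$ trivially) absorb $L'-n_i$ orders of the would-be pole.
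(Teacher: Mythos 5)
Your overall plan --- pass to the Euler-operator form, read off part~1 from the indicial polynomial at infinity, and prove part~2 by bounding the poles of the ratios $\Wr_a/\Wr$ at $0$ and at the $\alpha_i$ --- is the same as the paper's, and your treatment of part~1 and of regularity at the origin essentially reproduces the argument there (the paper appeals to \eqref{a_{i}(x)}, \eqref{Wr_a}, and \eqref{Wr} for exactly these points). One small inaccuracy: the $x$-exponent of $\Wr_i$ is lower than that of $\Wr$ by the quantity recorded in \eqref{Wr_a}, which is strictly less than $i$ rather than exactly $i$; since all you need is ``by at most $i$,'' the conclusion that $x^{i}a_{i}(x)$ is regular at $0$ survives.

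The genuine gap is in the step you yourself flag as the main obstacle: the bound $n_i=\big(\mu^{(i)}\big)'_1$ on the pole order at $\alpha_i$. The mechanism you invoke --- that the exponents $0,1\lc L'-n_i-1$ occur at $\alpha_i$, so the corresponding factors of the indicial polynomial agree with those of the regular part --- only controls the leading Laurent coefficients $c_j=\lim_{x\to\alpha_i}(x-\alpha_i)^{j}a_{j}(x)$. Writing the indicial polynomial $\prod_a\big(r-e^{(i)}_a\big)=\sum_j c_j\,r(r-1)\cdots(r-L'+j+1)$, its divisibility by $r(r-1)\cdots(r-L'+n_i+1)$ is equivalent to $c_j=0$ for $j>n_i$; this lowers the pole order of $a_j$ from $j$ only to $j-1$, not to $n_i$. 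For instance, with $L'=3$ and exponents $\{0,1,5\}$ one has $n_i=1$, yet the indicial polynomial $r(r-1)(r-5)$ by itself only yields pole order $\le 2$ for $a_3$, while the lemma needs $\le 1$. To get the true bound $\min(j,n_i)$ you must use all of the local data, not just the indicial roots: either estimate the order of vanishing of $\Wr_j$ at $\alpha_i$ term by term, minimizing $\sum_a\max\big(e^{(i)}_a-d_{\sigma(a)},0\big)$ over permutations matching the exponents with the derivative orders $0\lc L'-j-1,L'-j+1\lc L'$ (this is what the paper's bullet about the multiplicity of $\alpha_i$ as a zero of $\tilde p(x)$ in \eqref{Wr_a} encodes), or factor $D_V=\check D\,D_{V_0}$ locally with $V_0$ spanned by the basis elements of exponents $0\lc L'-n_i-1$, so that $D_{V_0}$ is holomorphic at $\alpha_i$ and $\check D$ has order $n_i$ and hence Fuchsian poles of order at most $n_i$. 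Either route makes your heuristic precise; as written, ``tracking the resulting cancellations'' does not yet constitute a proof.
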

\begin{proof}
The fact that $\beta_{1}(x)\lc\beta_{L'}(x)$ are rational functions regular at infinity follows from formula \eqref{a_{i}(x)}. Notice that $\ker \prod_{b=1}^{(\lambda^{(a)})'_{1}}(x({\rm d}/{\rm d}x)-z_{a}-n_{ab})$ is the span of $\big\{x^{z_{a}+n_{ab}}\,|\,a=1\lc k,b=1\lc \big(\lambda^{(a)}\big)'_{1}\big\}$, which implies formula~\eqref{D coef are rational f}.

Part (2) of the lemma follows from formula \eqref{a_{i}(x)} and the following observations:
\begin{itemize}\itemsep=0pt
\item Let $g_{1}\lc g_{L'}$ be a basis of $V$. Denote $N_{a}=\big(\lambda^{(a)}\big)'_{1}$. For each $a=1\lc L'$, define an integer $c_{a}$ by $\sum_{b=c_{a}}^{L'}N_{b}>a$, $\sum_{b=c_{a}+1}^{L'}N_{b}<a$. Then one can check that
\begin{equation}\label{Wr_a}
\Wr_{a}(g_{1}\lc g_{L'})=x^{\sum_{a=1}^{k}N_{a}z_{a}-\sum_{a,b=1}^{k}N_{a}N_{b}-\sum_{b=c_{a}+1}^{L'}N_{b}}\tilde{p}(x),
\end{equation}
where $\tilde{p}(x)$ is a polynomial, and for each $i=1\lc n$, $\alpha_{i}$ is a zero of $\tilde{p}(x)$ of multiplicity not less than $\sum_{\substack{j=1\\j\neq i}}^{n}\big(\mu^{(j)}\big)'_{1}$.
\item As noted in the proof of Lemma \ref{lambda=mu}, we have
\begin{equation}\label{Wr}
\Wr(g_{1}\lc g_{L'})=x^{\sum_{a=1}^{k}N_{a}z_{a}-\sum_{a,b=1}^{k}N_{a}N_{b}}p(x),
\end{equation}
where $p(x)$ is a polynomial, the numbers $\alpha_{1}\lc\alpha_{n}$ are zeros of $p(x)$ with multiplicities $\big|\mu^{(1)}\big|\lc \big|\mu^{(n)}\big|$, respectively, and $p(x)$ has no other zeros.\hfill \qed
\end{itemize}\renewcommand{\qed}{}
\end{proof}

We will call the differential operator $\bar{D}_{V}=x^{L'}p_{\bar{\alpha},\bar{\mu}}(x)D_{V}$ \textit{the regularized fundamental differential operator} of~$V$.

Let $W$ be a space of quasi-exponentials with the difference data $\big(\bar{\alpha},\bar{\mu};\bar{z},\bar{\lambda}\big)$.

Let $b_{1}(x)\lc b_{M'}$ be the coefficients of the fundamental difference operator $S_{W}$ of $W$:
\[
S_{W}=T^{M'}+\sum_{i=1}^{M'}b_{i}(x)T^{M'-i}.
\]

Denote $\bar{z}-\bar{\lambda'}_{1}=\big(z_{1}-\big(\lambda^{(1)}\big)'_{1}\lc z_{k}-\big(\lambda^{(k)}\big)'_{1}\big)$ and $\bar{z}+\bar{\lambda'}_{1}=\big(z_{1}+\big(\lambda^{(1)}\big)'_{1}\lc z_{k}+\big(\lambda^{(k)}\big)'_{1}\big)$.
\begin{lem}\label{p_{W}} The following holds.
\begin{enumerate}\itemsep=0pt
\item[$1.$] The coefficients $b_{i}(x)$ of $S_{W}$ are rational functions regular at infinity. Denote $b_{i}(\infty)=\lim\limits_{x\to\infty}b_{i}(x)$. Then
\[u^{M'}+\sum_{i=1}^{M'}b_{i}(\infty)u^{M'-i}=p_{\bar{\alpha},\bar{\mu}}(u).\]
\item[$2.$] For each $i=1\lc M'$, $q_{\bar{z}-\bar{\lambda'}_{1},\bar{\lambda}}(x)b_{i}(x)$ is a polynomial in~$x$.
\end{enumerate}
\end{lem}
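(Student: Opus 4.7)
The plan is to parallel the proof of Lemma~\ref{properties of beta}, adapting from the differential to the discrete setting, with Proposition~\ref{Sfact} playing the role of the differential factorization.

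For part~(1), express $b_i(x)$ via Cramer's rule \eqref{Ta_{i}(x)} as $b_i(x) = (-1)^i\,\mathcal{W}{\rm r}_i(\bar f)/\mathcal{W}{\rm r}(\bar f)$ for a quasi-exponential basis $\bar f=(f_1,\dots,f_{M'})$ of $W$. Pulling the exponential prefactor out of each row of the two discrete Wronskians (as in the derivation of \eqref{Sdenom}) leaves a ratio of polynomials in $x$, so $b_i$ is rational. To establish regularity at infinity and to evaluate $b_i(\infty)$, I would use the factorization $S_W=\prod_{l=1}^{M'}(T-g_l(x+1)/g_l(x))$ of Proposition~\ref{Sfact}. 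For a quasi-exponential $f=\alpha^x p(x)$ with $p$ a nonzero polynomial, $f(x+1)/f(x)=\alpha\,p(x+1)/p(x)\to \alpha$ as $x\to\infty$. The functions $g_l$ are themselves generalized quasi-exponentials, namely ratios of discrete Wronskians of quasi-exponentials, whose exponential base is a product of the $\alpha_j$'s raised to integer powers; in particular each $g_l(x+1)/g_l(x)$ is a rational function regular at infinity with limit one of the $\alpha_j$. Choosing the ordering of the basis in \eqref{g i} that groups all $n_j$ elements of exponential base $\alpha_j$ consecutively ensures that $\alpha_j$ appears as such a limit in exactly $n_j=(\mu^{(j)})'_1$ factors. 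Therefore
\[
\lim_{x\to\infty} S_W \;=\; \prod_{j=1}^{n}(T-\alpha_j)^{n_j} \;=\; p_{\bar{\alpha},\bar{\mu}}(T),
\]
which is the desired identity.

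For part~(2), the only possible poles of $b_i(x)$ lie at zeros of the denominator $\mathcal{W}{\rm r}(\bar f)$, equivalently at discrete singular points of $W$; by hypothesis, these lie in $\{z_1,\lc z_k\}$. To bound the order of the pole at each $z_a$, fix a basis $\{\psi^{(a)}_1,\lc\psi^{(a)}_{M'}\}$ of $W$ realizing the discrete exponents $e_i^{(a)}=M'+\lambda^{(a)}_i-i$, so that $\psi_i^{(a)}(z_a+j)=0$ for $0\leq j<e_i^{(a)}$. A direct analysis of the vanishing pattern of the matrix $\big(\psi_i^{(a)}(x+j-1)\big)$ at points $x=z_a+s$, $s\in\Z$, shows that the order of vanishing of the polynomial part of $\mathcal{W}{\rm r}(\bar f)$ at each such point is bounded above by the corresponding multiplicity in $q_{\bar z-\bar\lambda'_1,\bar\lambda}(x)=\prod_{a,b}(x-z_a-\lambda^{(a)}_b+b)$. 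Hence $q_{\bar z-\bar\lambda'_1,\bar\lambda}(x)\,b_i(x)$ has no finite poles, and by part~(1) it is regular at infinity, so it is a polynomial.

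The main obstacle will be the bookkeeping in part~(2): precisely controlling the multiplicities of the zeros of the discrete Wronskian of a quasi-exponential basis with prescribed discrete exponents, and matching them against the factors of $q_{\bar z-\bar\lambda'_1,\bar\lambda}$. Unlike the differential case, where the analysis collapses to the order of vanishing at a single point, here one must track vanishing across the arithmetic progression $\{z_a+s:s\in\Z\}$, invoking the hypothesis $z_a-z_b\notin\Z$ to ensure that contributions from distinct $z_a$ do not interfere.
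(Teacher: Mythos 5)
Part (1) of your proposal is sound and close to the paper's intent: the paper disposes of item (1) by saying it is proved ``similarly to item (1) in Lemma~\ref{properties of beta}'', and your route through the factorization of Proposition~\ref{Sfact} --- each $g_l$ equals $\alpha_{i_l}^x$ times a ratio of polynomials, so $g_l(x+1)/g_l(x)\to\alpha_{i_l}$, and the multiset $\{\alpha_{i_l}\}$ consists of each $\alpha_j$ with multiplicity $n_j$ --- is a legitimate way to carry out that analogy. (The remark about choosing a special ordering of the basis is unnecessary: the exponential base of $g_l$ is always that of $f_l$, whatever the ordering.)

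Part (2), however, has a genuine gap. First, the claim that ``by hypothesis'' the discrete singular points of $W$ lie in $\{z_1,\lc z_k\}$ is not what the difference data gives you: the definition only requires $z_1,\lc z_k$ to be \emph{some} discrete singular points with prescribed exponents, and $\mathcal{W}{\rm r}(f_1,\lc f_{M'})$ in general vanishes at further points of the lattices $z_a+\Z$. Second, and more seriously, the bound you propose --- that the order of vanishing of the polynomial part of $\mathcal{W}{\rm r}(\bar f)$ at each point is at most its multiplicity as a root of $q_{\bar z-\bar\lambda'_1,\bar\lambda}$ --- is false. The polynomial part of $\mathcal{W}{\rm r}(\bar f)$ has degree $\sum_i\big|\mu^{(i)}\big|=\sum_a\big|\lambda^{(a)}\big|$, while $\deg q_{\bar z-\bar\lambda'_1,\bar\lambda}=\sum_a\big(\lambda^{(a)}\big)'_1$, which is strictly smaller unless every $\lambda^{(a)}$ is a single column; so the denominator necessarily carries zeros not matched by any factor of $q$. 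Concretely, in Example~\ref{example 2} with $\bar z=(-1)$ and $\lambda^{(1)}=(2,1,0,\dots)$ one has $\mathcal{W}{\rm r}=(-1/2)^x x(x+1)(x+2)$ but $q_{\bar z-\bar\lambda'_1,\bar\lambda}(x)=x(x+2)$: the denominator vanishes at $x=-1$ while $q$ does not, and $q\,b_i$ is a polynomial only because the numerators $\mathcal{W}{\rm r}_i$ also vanish there. A correct proof must therefore control the ratio rather than the denominator alone; this is exactly the content of Lemma~3.9 of [MTV4], which the paper cites for item (2) instead of reproving it.
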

\begin{proof}
Item (1) of the lemma can be proved similarly to item (1) in Lemma~\ref{properties of beta}. For a proof of item (2), see \cite[Lemma~3.9]{MTV4}.
\end{proof}

We will call the difference operator $\bar{S}_{W}=q_{\bar{z}-\bar{\lambda'}_{1},\bar{\lambda}}(x)S_{W}$ \textit{the regularized fundamental difference operator} of $W$.

For any complex numbers $b_{ai}$, $a=0\lc s$, $0=1\lc r$, consider a differential operator $D$ and a difference operator $S$ defined by
\[
D=\sum_{a=0}^{s}\sum_{i=0}^{r}b_{ai} x^{a}\left(x\frac{{\rm d}}{{\rm d}x}\right)^{i},\qquad S=\sum_{a=0}^{s}\sum_{i=0}^{r}b_{ai} x^{i} T^{a}.
\]
We will say that $D$ is \textit{bispectral dual} to $S$, and vice versa, and write $D=S^{\#}$, $S=D^{\#}$.

The following theorem was proved in~\cite{MTV4}.
\begin{thm}\label{bisp dual}
There exists a bijection
\begin{equation}\label{T_3}
\mathfrak{T}_{3}\colon \ \mathcal{P}\big(\bar{z},\bar{\lambda};\bar{\alpha},\bar{\mu}\big) \rightarrow \mathcal{E}\big(\bar{\alpha},\bar{\mu};\bar{z}+\bar{\lambda'}_{1},\bar{\lambda}\big)
\end{equation}
such that for every $V\in \mathcal{P}\big(\bar{z},\bar{\lambda};\bar{\alpha},\bar{\mu}\big)$, $\bar{D}_{V}^{\#}$ is the regularized fundamental difference operator of~$\mathfrak{T}_{3}(V)$.
\end{thm}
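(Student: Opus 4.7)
The plan is to construct $\mathfrak{T}_{3}(V)$ directly from $\bar{D}_{V}^{\#}$ and then verify the resulting space has the required quasi-exponential structure. By Lemma~\ref{properties of beta}, we may write
\[
\bar{D}_{V}=\sum_{a=0}^{L'}P_{a}(x)\left(x\frac{{\rm d}}{{\rm d}x}\right)^{L'-a},
\]
where $P_{0}(x)=p_{\bar{\alpha},\bar{\mu}}(x)$ and $P_{a}(x)=p_{\bar{\alpha},\bar{\mu}}(x)\beta_{a}(x)$ for $a\geq 1$ are polynomials in $x$ of degree at most $M'$. Expanding $P_{a}(x)=\sum_{j}P_{a,j}x^{j}$ and applying $\#$ yields
\[
\bar{D}_{V}^{\#}=\sum_{a,j}P_{a,j}\,x^{L'-a}T^{j},
\]
a difference operator of $T$-order $M'$. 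Its coefficient of $T^{M'}$ picks up the leading $x$-coefficient of each $P_{a}$, which equals $\beta_{a}(\infty)$ (with the convention $\beta_{0}=1$), and by part~(1) of Lemma~\ref{properties of beta} sums to $q_{\bar{z},\bar{\lambda}}(x)$. Hence $\bar{D}_{V}^{\#}=q_{\bar{z},\bar{\lambda}}(x)\,S_{W}$ for a unique monic difference operator $S_{W}$ of order $M'$, and I define $\mathfrak{T}_{3}(V):=\ker S_{W}=:W$. Since $q_{\bar{z},\bar{\lambda}}=q_{(\bar{z}+\bar{\lambda}'_{1})-\bar{\lambda}'_{1},\bar{\lambda}}$, the operator $\bar{D}_{V}^{\#}$ is by construction the regularized fundamental difference operator of $W$.

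The substantive step is to identify the full difference data of $W$. This requires relating the local structure of $D_{V}$ at its singular points and at infinity to the quasi-exponential decomposition of $\ker S_{W}$, via the bispectral dictionary that exchanges polynomial dependence on $x$ on the differential side with polynomial dependence on $T$ on the difference side. Under this dictionary: the factorization of $p_{\bar{\alpha},\bar{\mu}}(x)$ into linear factors $(x-\alpha_{i})$ governs the exponential rates $\alpha_{i}$ of the quasi-exponentials comprising $W$; the exponents of $V$ at each $\alpha_{i}$ (encoded by $\mu^{(i)}$) determine the degrees of the polynomials $q_{ij}(x)$ in the quasi-exponential blocks of $W$; and the exponents of $V$ at infinity, which recover $\bar{\lambda}$ through formula~\eqref{D coef are rational f}, determine the discrete exponents of $W$ at the shifted points $z_{a}+(\lambda^{(a)})'_{1}$. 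The shift by $\bar{\lambda}'_{1}$ is accounted for by the extra factor $x^{L'}$ in the regularization $\bar{D}_{V}=x^{L'}p_{\bar{\alpha},\bar{\mu}}(x)D_{V}$.

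Bijectivity is obtained by the symmetric inverse construction: given $W\in\mathcal{E}(\bar{\alpha},\bar{\mu};\bar{z}+\bar{\lambda}'_{1},\bar{\lambda})$, form $\bar{S}_{W}=q_{\bar{z},\bar{\lambda}}(x)S_{W}$ (which by Lemma~\ref{p_{W}} has polynomial coefficients in $x$ of controlled degree), apply $\#$ to obtain a differential operator of the form $\sum b_{ai}x^{a}(xd/dx)^{i}$, and normalize by removing the left factor $x^{L'}p_{\bar{\alpha},\bar{\mu}}(x)$ to recover a monic differential operator $D_{V}$ of order $L'$; set $\mathfrak{T}_{3}^{-1}(W):=\ker D_{V}$. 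Since $\#$ is a formal involution on the relevant space of operators and since Lemma~\ref{unique kernel} (together with its differential analog) ensures that a space is determined by its fundamental operator, the two maps are mutually inverse. The main obstacle is the identification carried out in the second paragraph: establishing the precise partition-valued dictionary under $\#$ requires a careful local analysis at each singular point of $D_{V}$ (and correspondingly at each zero of the leading and trailing coefficients of $S_{W}$), using factorizations of the fundamental operators in the spirit of Proposition~\ref{Sfact} together with the Young-diagram combinatorics of the proof of Proposition~\ref{mualpha}. This local-to-global dictionary is the core of the result and is where the work in~\cite{MTV4} is concentrated.
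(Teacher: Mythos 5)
The paper does not actually prove Theorem~\ref{bisp dual}: it is imported wholesale from the proofs of Theorems~4.1 and~4.2 of~\cite{MTV4} (see the remark immediately following the statement). Your first paragraph correctly reproduces the elementary bookkeeping that underlies the statement: writing $\bar{D}_{V}=\sum_{a}P_{a}(x)\big(x\,{\rm d}/{\rm d}x\big)^{L'-a}$ with $P_{a}=p_{\bar{\alpha},\bar{\mu}}\beta_{a}$ a polynomial of degree at most $M'$ whose leading coefficient is $\beta_{a}(\infty)$, applying $\#$, and reading off from formula~\eqref{D coef are rational f} that the coefficient of $T^{M'}$ in $\bar{D}_{V}^{\#}$ is $q_{\bar{z},\bar{\lambda}}(x)$, so that $\bar{D}_{V}^{\#}=q_{\bar{z},\bar{\lambda}}(x)S_{W}$ for a unique monic difference operator $S_{W}$ of order $M'$ with coefficients regular at infinity. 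This part is fine and consistent with Lemmas~\ref{properties of beta} and~\ref{p_{W}}.

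The gap is everything after that. First, the definition $\mathfrak{T}_{3}(V):=\ker S_{W}$ is not well posed: the kernel of a difference operator in the space of all functions is a module over $1$-periodic functions and is far from $M'$-dimensional, so one must instead exhibit an $M'$-dimensional space of quasi-exponentials annihilated by $S_{W}$ and show it is the unique such space with the stated data. Second, and more importantly, your second paragraph --- the identification of the rates $\alpha_{i}$, of the degree data encoded by $\bar{\mu}$, and of the discrete exponents at the shifted points $z_{a}+\big(\lambda^{(a)}\big)'_{1}$ --- is a description of what has to be proved rather than a proof, as you acknowledge. In~\cite{MTV4} this is handled by an explicit construction of a quasi-exponential basis of $W$ from a basis of $V$, not by an abstract analysis of $\ker S_{W}$, and verifying the discrete exponents is a genuine argument, not a formal consequence of $\#$ exchanging $x$ and $T$. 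Likewise, your bijectivity argument presupposes that the inverse construction lands in $\mathcal{P}\big(\bar{z},\bar{\lambda};\bar{\alpha},\bar{\mu}\big)$, including the non-degeneracy at $0$ built into that definition, which is again part of the substantive content. So as a self-contained proof the proposal has a genuine gap; as a reduction to~\cite{MTV4} it matches what the paper itself does, namely cite that reference.
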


\begin{rem}
Theorem~\ref{bisp dual} follows from the proofs of Theorems 4.1 and 4.2 in \cite{MTV4}. The latter theorems state the duality for spaces called non-degenerate in~\cite{MTV4}. We will not need the duality for non-degenerate spaces here.
\end{rem}
\begin{exmp}
Consider the space $W$ from Example~\ref{example 2}. Then
\[S_{W}=T^{3}-\frac{3(x+3)}{2(x+2)} T^{2}+\frac{x+3}{2x}.\]

If we choose the difference data $\big(\bar{\alpha},\bar{\mu};\bar{z},\bar{\lambda}\big)$ for $W$ with $\bar{z}=(0)$ and $\bar{\lambda}=(\lambda_{1})$, $\lambda_{1}=(1,1,1,0\dots)$, then $\bar{S}_{W}=x(x+1)(x+2)S_{W}$ and $\mathfrak{T}^{-1}_{3}(W)$ is the span of the functions $1+(1/2)x^{-3}$, $x^{-1}$, and $x^{-2}-(1/2)x^{-3}$.

If we choose the difference data $\big(\bar{\alpha},\bar{\mu};\bar{z},\bar{\lambda}\big)$ for $W$ with $\bar{z}=(-1)$ and $\bar{\lambda}=(\lambda_{2})$, $\lambda_{2}=(2,1,0,\dots)$, then $\bar{S}_{W}=x(x+2)S_{W}$ and $\mathfrak{T}^{-1}_{3}(W)$ is the span of the functions $1-(3/8)x^{-3}$ and $x^{-2}-x^{-3}$.
\end{exmp}

We will call the space $\mathfrak{T}_{3}(V)$ bispectral dual to $V$, and vice versa. In Section~\ref{psdifference op}, we will construct the map $\mathfrak{T}_{2}$ as the counterpart of the map $\mathfrak{T}_{1}$ under the bispectral duality~$\mathfrak{T}_{3}$, see formula~\eqref{commutative diagram relation} for the precise statement.

\section{Algebra of pseudo-difference operators}\label{psdifference op}
A pseudo-difference operator is a formal series of the form
\begin{equation}\label{pseudodifference operator}
\sum_{m=-\infty}^{M}\sum_{l=-\infty}^{L}C_{lm}x^{l}T^{m},
\end{equation}
where $C_{lm}$ are some complex numbers. Using the operator relations $T^{m}x^{l}=(x+m)^{l}T^{m}$, $l,m\in\Z$, and identifying $(x+m)^{l}$ with its Laurent series at infinity, one can multiply series~\eqref{pseudodifference operator}. This multiplication is associative. Denote the algebra of pseudo-difference operators as~$\Psi\mathfrak{D}_{q}$.

\begin{lem}\label{invertible pseudodifference operators}
If $\mathcal{S}=\sum_{m=-\infty}^{M}\sum_{l=-\infty}^{L}C_{lm} x^{l}T^{m}$ with $C_{LM}\neq 0$, then $\mathcal{S}$ is invertible in~$\Psi\mathfrak{D}_{q}$.
\end{lem}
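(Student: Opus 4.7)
The plan is to factor $\mathcal{S}$ as the product of its leading monomial $C_{LM}x^L T^M$ with an operator of the form $1+R$, invert the monomial directly, and invert $1+R$ by a formal geometric series in $\Psi\mathfrak{D}_q$.

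For step one, using the relation $T^m x^l = (x+m)^l T^m$ and expanding $(x-M)^{-L}$ as its Laurent series at infinity, one verifies directly that $C_{LM}^{-1}(x-M)^{-L}T^{-M}$ is a two-sided inverse of $C_{LM}x^L T^M$ in $\Psi\mathfrak{D}_q$. Setting $y := x - M$ and $1 + R := C_{LM}^{-1}(x-M)^{-L}T^{-M}\mathcal{S}$, the relation $T^{-M}x^l T^M = (x-M)^l$ shows that $R$ is a sum of monomials $c_{\alpha\beta}\,y^{\alpha}T^{\beta}$ with $\alpha \leq 0$, $\beta \leq 0$, and $(\alpha,\beta) \neq (0,0)$; in particular every monomial appearing in $R$ satisfies $\alpha + \beta \leq -1$.

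For step two, I would show that $\sum_{k\geq 0}(-R)^k$ is a well-defined element of $\Psi\mathfrak{D}_q$ by a finiteness argument at each coefficient. Repeatedly applying $T^{\beta}y^{\alpha} = (y+\beta)^{\alpha}T^{\beta}$ to monomials $r_i = c_i\,y^{\alpha_i}T^{\beta_i}$ of $R$ gives
\[
r_1 \cdots r_k \;=\; c_1\cdots c_k\,\prod_{i=1}^{k}\bigl(y + \beta_1 + \cdots + \beta_{i-1}\bigr)^{\alpha_i}\,T^{\beta_1+\cdots+\beta_k},
\]
and expanding each factor at $y = \infty$ produces monomials $y^{\gamma}T^{\beta}$ with $\beta = \beta_1 + \cdots + \beta_k$ exactly and $\gamma \leq \alpha_1 + \cdots + \alpha_k$. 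A contribution to the coefficient of $y^{\gamma^*}T^{\beta^*}$ therefore forces
\[
\gamma^* + \beta^* \;\leq\; \sum_{i=1}^{k}(\alpha_i + \beta_i) \;\leq\; -k,
\]
so only $k \leq -(\gamma^*+\beta^*)$ contribute, and for each such $k$ the constraints $\sum \beta_i = \beta^*$ and $\sum \alpha_i \geq \gamma^*$ with $\alpha_i,\beta_i \leq 0$ leave only finitely many tuples of monomials. The same finiteness shows that the telescoping identities $(1+R)\sum_{k=0}^{N}(-R)^k = 1 - (-R)^{N+1}$ pass to the limit coefficient by coefficient, so $\sum_{k\geq 0}(-R)^k$ is a two-sided inverse of $1+R$. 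Consequently
\[
\mathcal{S}^{-1} \;=\; \Bigl(\sum_{k\geq 0}(-R)^k\Bigr)\,C_{LM}^{-1}(x-M)^{-L}T^{-M} \;\in\; \Psi\mathfrak{D}_q.
\]

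The only nontrivial point is the convergence of this geometric series; once the estimate $\alpha+\beta \leq -1$ for monomials of $R$ is isolated, the combinatorial bound $k \leq -(\gamma^*+\beta^*)$ makes the finiteness at each coefficient routine, and I do not anticipate further obstacles.
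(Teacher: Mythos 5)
Your proof is correct and follows essentially the same route as the paper: factor out the invertible leading monomial $C_{LM}x^LT^M$ (the paper normalizes two-sidedly via $1+\St=C_{LM}^{-1}x^{-L}\mathcal{S}T^{-M}$, you one-sidedly via $(x-M)^{-L}T^{-M}\mathcal{S}$, an immaterial difference) and invert the remaining $1+R$ by a formal geometric series. Your explicit finiteness bound $k\leq-(\gamma^*+\beta^*)$ justifying coefficientwise convergence is a detail the paper simply asserts, and it is correct.
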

\begin{proof}
Define $\St$ by the rule $1+\St=C_{LM}^{-1} x^{-L}\mathcal{S} T^{-M}$.
Then $\sum_{j=0}^{\infty}(-1)^{j}\St^{j}$ is a well-defined element of $\Psi\mathfrak{D}_{q}$ and the inverse of~$\mathcal{S}$ is given by the formula:
\[
\mathcal{S}^{-1}=C_{LM}^{-1}T^{-M}\left(\sum_{j=0}^{\infty}(-1)^{j}\St^{j}\right)x^{-L}.\tag*{\qed}
\]\renewcommand{\qed}{}
\end{proof}

We consider a difference operator $S=\sum_{i=0}^{M}a_{i}(x)T^{M-i}$ with rational coefficients $a_{0}(x)\lc a_{M}(x)$ as an element of $\Psi\mathfrak{D}_{q}$ replacing each $a_{i}(x)$ by its Laurent series at infinity. By Lemma~\ref{invertible pseudodifference operators}, if $a_{0}(x)=1$, and $a_{1}(x)\lc a_{M}(x)$ are regular at infinity, then $S$ is invertible in $\Psi\mathfrak{D}_{q}$.

Denote by $\bar{\mathfrak{D}}$ the algebra of differential operators with rational coefficients. One can check that the assignment
\begin{equation}\label{tau}
\tau\colon \ x\frac{{\rm d}}{{\rm d}x} \mapsto -x,\qquad x \mapsto T
\end{equation}
defines a monomorphism of algebras $\tau\colon \bar{\mathfrak{D}}\rightarrow\Psi\mathfrak{D}_{q}$.

As before, fix sequences $\bar{z}$, $\bar{\alpha}$, $\bar{\lambda}$, and $\bar{\mu}$, where $\bar{z}=(z_{1}\lc z_{k})$ is a sequence of complex numbers such that $z_{a}-z_{b}\notin\Z$ for $a\neq b$, $\bar{\alpha}=(\alpha_{1}\lc\alpha_{n})$ is a sequence of nonzero complex numbers such that $\alpha_{i}\neq\alpha_{j}$ for $i\neq j$, and $\bar{\lambda}=\big(\lambda^{(1)}\lc\lambda^{(k)}\big)$, $\bar{\mu}=\big(\mu^{(1)}\lc\mu^{(n)}\big)$ are sequences of non-zero partitions.

Let $V$ be a space of quasi-polynomials with the data $\big(\bar{z},\bar{\lambda};\bar{\alpha},\bar{\mu}\big)$. Let $\bar{D}_{V}\in\bar{\mathfrak{D}}$ be the fundamental regularized differential operator of $V$. Define \textit{the fundamental pseudo-difference operator}~$\mathcal{S}_{V}$ of $V$ by the following formula:
\begin{equation*}
\mathcal{S}_{V}=(p_{\bar{\alpha},\bar{\mu}}(T))^{-1} \tau \big(\bar{D}_{V}\big) (q_{\bar{z},\bar{\lambda}}(-x))^{-1},
\end{equation*}
where the polynomials $p_{\bar{\alpha},\bar{\mu}}(x)$ and $q_{\bar{z},\bar{\lambda}}(x)$ are defined in formulas~\eqref{p} and~\eqref{q}, respectively.

Let $W$ be a space of quasi-exponentials with the difference data $\big(\bar{\alpha},\bar{\mu};\bar{z},\bar{\lambda}\big)$. Let $\bar{S}_{W}$ be the fundamental regularized difference operator of $W$. Define \textit{the fundamental pseudo-difference operator} $\mathcal{S}_{W}$ of $W$ by the following formula:
\begin{equation*}
\mathcal{S}_{W}=(q_{\bar{z}-\bar{\lambda'}_{1},\bar{\lambda}}(x))^{-1} \bar{S}_{W} (p_{\bar{\alpha},\bar{\mu}}(T))^{-1}.
\end{equation*}

Notice that both $\mathcal{S}_{V}$ and $\mathcal{S}_{W}$ have the form $1+\sum_{l,m\leq 1}C_{lm}x^{l}T^{m}$. Therefore, by Lemma~\ref{invertible pseudodifference operators}, the operators $\mathcal{S}_{V}$ and $\mathcal{S}_{W}$ are invertible in $\Psi\mathfrak{D}_{q}$.

Recall the maps $\mathfrak{T}_{1}$ and $\mathfrak{T}_{3}$, see formulas \eqref{T_1} and \eqref{T_3}, respectively. Denote $1-\bar{z}-\bar{\lambda'}_{1}=\big(1-z_{1}-\big(\lambda^{(1)}\big)'_{1}\lc 1-z_{k}-\big(\lambda^{(k)}\big)'_{1}\big)$.

\begin{thm}\label{discrete main 1}
Consider a space $V\in \mathcal{P}\big(\bar{z},\bar{\lambda};\bar{\alpha},\bar{\mu}\big)$. Denote $W=\mathfrak{T}_{1}(\mathfrak{T}_{3}(V))\in\mathcal{E}\big(\bar{\alpha},\bar{\mu}';1-\bar{z}-\bar{\lambda'}_{1},\bar{\lambda}'\big)$. Let~$\mathcal{S}_{V}$ and $\mathcal{S}_{W}$ be the fundamental pseudo-difference operators of $V$ and $W$, respectively. Then
\[
\mathcal{S}_{W}=\mathcal{S}_{V}^{-1}.
\]
\end{thm}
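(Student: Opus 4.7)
The plan is to work entirely inside $\Psi\mathfrak{D}_{q}$ and factor both $\mathcal{S}_{V}$ and $\mathcal{S}_{W}$ through the intermediate space $W'=\mathfrak{T}_{3}(V)$, reducing the theorem to a single algebraic identity. First I would extend the formal conjugate to an anti-automorphism $\dagger\colon x\mapsto x$, $T\mapsto T_{-}$ of $\Psi\mathfrak{D}_{q}$, introduce the \emph{reflection} automorphism $r'\colon x\mapsto -x$, $T\mapsto T_{-}$ (whose action on a difference operator reproduces the substitution in formula \eqref{Q rightarrow}), and form their composition $\epsilon=r'\circ\dagger$, an anti-automorphism sending $x\mapsto -x$, $T\mapsto T$. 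A direct check using $Tx=(x+1)T$ shows all three maps are well-defined on $\Psi\mathfrak{D}_{q}$.

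The conceptual hinge is the identity
\[
\tau(\bar{D}_{V})=\epsilon(\bar{S}_{W'}).
\]
By Theorem~\ref{bisp dual} one has $\bar{S}_{W'}=\bar{D}_{V}^{\#}$, so writing $\bar{D}_{V}=\sum_{a,i}b_{ai}x^{a}(x\,{\rm d}/{\rm d}x)^{i}$ gives $\bar{S}_{W'}=\sum b_{ai}x^{i}T^{a}$, and both $\tau(\bar{D}_{V})$ and $\epsilon(\bar{S}_{W'})$ evaluate to $\sum(-1)^{i}b_{ai}T^{a}x^{i}$. Combining this with the factorization $\bar{S}_{W'}=q_{\bar{z},\bar{\lambda}}(x)\,S_{W'}$ (valid since for $W'$ one has $\bar{z}_{W'}-(\bar{\lambda}_{W'})'_{1}=\bar{z}$) and $\epsilon(q_{\bar{z},\bar{\lambda}}(x))=q_{\bar{z},\bar{\lambda}}(-x)$, I would rearrange this to
\[
\tau(\bar{D}_{V})\,(q_{\bar{z},\bar{\lambda}}(-x))^{-1}=\epsilon(S_{W'})=r'(S_{W'}^{\dagger}).
\]

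On the $\mathfrak{T}_{1}$ side, the quotient relation $\Sh_{W'}=\check{S}_{W'}S_{W'}$ inverts in $\Psi\mathfrak{D}_{q}$ to $\check{S}_{W'}^{\dagger}=(S_{W'}^{\dagger})^{-1}\Sh_{W'}^{\dagger}$. Applying $r'$, noting that $\Sh_{W'}^{\dagger}$ has constant coefficients so $r'(\Sh_{W'}^{\dagger})=\Sh_{W'}=\Sh_{W}$ (using $(\mu^{(i)})'_{1}+\mu^{(i)}_{1}=p_{i}$), and identifying $r'(\check{S}_{W'}^{\dagger})$ with $S_{W}$ via formula \eqref{Q rightarrow}, one gets $S_{W}=(r'(S_{W'}^{\dagger}))^{-1}\Sh_{W}$. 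Since $\bar{S}_{W}=q_{1-\bar{z}-\bar{\lambda}'_{1}-\bar{\lambda}_{1},\bar{\lambda}'}(x)\,S_{W}$, the $q$-factor in the definition of $\mathcal{S}_{W}$ cancels, and the factorization $\Sh_{W}=p_{\bar{\alpha},\bar{\mu}}(T)\,p_{\bar{\alpha},\bar{\mu}'}(T)$ finally yields
\[
\mathcal{S}_{W}=S_{W}\,(p_{\bar{\alpha},\bar{\mu}'}(T))^{-1}=(r'(S_{W'}^{\dagger}))^{-1}\,p_{\bar{\alpha},\bar{\mu}}(T).
\]
Inverting the definition of $\mathcal{S}_{V}$ directly produces $\mathcal{S}_{V}^{-1}=q_{\bar{z},\bar{\lambda}}(-x)\,\tau(\bar{D}_{V})^{-1}\,p_{\bar{\alpha},\bar{\mu}}(T)$, which by the key identity above equals the same expression, proving $\mathcal{S}_{W}=\mathcal{S}_{V}^{-1}$.

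The main obstacle is spotting the correct auxiliary involution: one must recognize that the spatial reflection built into $\mathfrak{T}_{1}$ is precisely what absorbs the sign-and-ordering discrepancy between the bispectral transform $\#$ (which swaps $x\leftrightarrow T$ and $x\,{\rm d}/{\rm d}x\leftrightarrow x$ with reversed ordering) and the algebra map $\tau$ (which sends $x\,{\rm d}/{\rm d}x\mapsto -x$ and preserves ordering), and this alignment is captured by $\epsilon=r'\circ\dagger$. Once this is in place, the rest of the proof is systematic bookkeeping of the $p$- and $q$-polynomial factors and the cancellations they produce.
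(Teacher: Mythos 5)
Your proposal is correct and follows essentially the same route as the paper: your anti-automorphism $\epsilon=r'\circ\dagger$ coincides with the paper's involution $(\cdot)^{\ddagger}$ of formula \eqref{ddagger}, your key identity $\tau\big(\bar{D}_{V}\big)=\epsilon\big(\bar{S}_{W'}\big)$ is exactly the paper's $\bar{S}_{U}=\bar{D}_{V}^{\#}=\bar{S}_{V}^{\ddagger}$, and the rest is the same inversion of the quotient relation $\Sh=\check{S}_{W'}S_{W'}$ inside $\Psi\mathfrak{D}_{q}$ together with the cancellation $\Sh=p_{\bar{\alpha},\bar{\mu}'}(T)\,p_{\bar{\alpha},\bar{\mu}}(T)$. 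The only cosmetic difference is that the paper conjugates everything by $\ddagger$ first and undoes it at the end, whereas you carry the factors $r'$ and $\dagger$ separately throughout.
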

\begin{proof}
For any pseudo-difference operator $\mathcal{S}=\sum_{i=-\infty}^{N}\sum_{j=-\infty}^{K}C_{ij}x^{i}T^{j}$, define a pseudo-dif\-fer\-ence operator $\mathcal{S}^{\ddagger}$ by
\begin{equation}\label{ddagger}
\mathcal{S}^{\ddagger}=\sum_{i=-\infty}^{N}\sum_{j=-\infty}^{K}C_{ij}T^{j}(-x)^{i}.
\end{equation}
It is easy to check that $(\cdot)^{\ddagger}$ is an involutive antiautomorphism on $\Psi\mathfrak{D}_{q}$.

Let $V$ be a space of quasi-polynomials with the data $\big(\bar{z},\bar{\lambda};\bar{\alpha},\bar{\mu}\big)$. Let $\bar{D}_{V}$ be the fundamental regularized differential operator of $V$. Denote $\bar{S}_{V}=\tau\big(\bar{D}_{V}\big)$, where $\tau$ is given by formula \eqref{tau}.

Denote $U=\mathfrak{T}_{3}(V)\in\mathcal{E}\big(\bar{\alpha},\bar{\mu};\bar{z}+\bar{\lambda'}_{1},\bar{\lambda}\big)$. Let $S_{U}$ be the fundamental difference operator of~$U$. Then $\bar{S}_{U}=q_{\bar{z},\bar{\lambda}}(x)S_{U}$ is the regularized fundamental difference operator of~$U$, where the polynomial $q_{\bar{z},\bar{\lambda}}(x)$ is defined in formula~\eqref{q}.
We have $\bar{S}_{U}=\bar{D}_{V}^{\#}=\bar{S}_{V}^{\ddagger}$.

Therefore, for the fundamental pseudo-difference operator $\mathcal{S}_{V}$ of~$V$, we get
\begin{align}
\mathcal{S}_{V}^{\ddagger} & = \big((q_{\bar{z},\bar{\lambda}}(-x))^{-1}\big)^{\ddagger}\big(\bar{S}_{V}\big)^{\ddagger}\big((p_{\bar{\alpha},\bar{\mu}}(T))^{-1}\big)^{\ddagger} = (q_{\bar{z},\bar{\lambda}}(x))^{-1} \bar{S}_{U} (p_{\bar{\alpha},\bar{\mu}}(T))^{-1}\nonumber\\
& = S_{U} (p_{\bar{\alpha},\bar{\mu}}(T))^{-1}.\label{5.1}
\end{align}

By construction, for the fundamental difference operator $Q^{\rightarrow}(S_{U})$ of $\mathfrak{T}_{1}(U)$, see~\eqref{Q rightarrow}, we have
\[
p_{\bar{\alpha},\bar{\mu}'}(T) p_{\bar{\alpha},\bar{\mu}}(T)=(Q^{\rightarrow}(S_{U}))^{\ddagger}S_{U}.
\]

Let us rewrite the last formula as follows
\[
\big[(p_{\bar{\alpha},\bar{\mu}'}(T))^{-1} (Q^{\rightarrow}(S_{U}))^{\ddagger}\big] \big[S_{U} (p_{\bar{\alpha},\bar{\mu}}(T))^{-1}\big]=1.
\]

This, together with formula \eqref{5.1}, gives
\begin{equation}\label{5.2}
\big(\mathcal{S}_{V}^{\ddagger}\big)^{-1}=(p_{\bar{\alpha},\bar{\mu}'}(T))^{-1} (Q^{\rightarrow}(S_{U}))^{\ddagger}.
\end{equation}

Applying the involutive antiautomorphism $(\cdot )^{\ddagger}$ to both sides of equation \eqref{5.2}, we obtain
\begin{equation}\label{p3.2_1}
\mathcal{S}_{V}^{-1}=Q^{\rightarrow}(S_{U})(p_{\bar{\alpha},\bar{\mu}'}(T))^{-1}.
\end{equation}

Let $\mathcal{S}_{W}$ be the fundamental pseudo-difference operator of $W$. By definition, we have $\mathcal{S}_{W}=Q^{\rightarrow}(S_{U})(p_{\bar{\alpha},\bar{\mu}'}(T))^{-1}$. Therefore, formula~\eqref{p3.2_1} gives $\mathcal{S}_{V}^{-1}=\mathcal{S}_{W}$.

Theorem \ref{discrete main 1} is proved.
\end{proof}

\subsection{Proof of Theorem \ref{map T_2}}\label{proof of T_{2}}
For each space $V$ of quasi-polynomials with the data $\big(\bar{z},\bar{\lambda};\bar{\alpha},\bar{\mu}\big)$, define
\begin{equation}\label{commutative diagram relation}
\mathfrak{T}_{2}(V)=\mathfrak{T}_{3}^{-1}\mathfrak{T}_{1}\mathfrak{T}_{3}(V).
\end{equation}

Let $D_{V}$ be the fundamental differential operator of $V$. We need to show that $\mathfrak{T}_{2}(V)$ is annihilated by $\check{D}^{\dagger}_{V}$.
By definition, the regularized fundamental differential operator~$\bar{D}_{V}$ of~$V$ is given by the formula $\bar{D}_{V}=p_{\bar{\alpha},\bar{\mu}}(x) x^{L'} D_{V}$, where $p_{\bar{\alpha},\bar{\mu}}(x)$ is the polynomial defined in formula~\eqref{p}. Denote $\bar{S}_{V}=\tau(\bar{D}_{V})$, where $\tau$ is given by formula~\eqref{tau}. Then
\begin{equation}\label{4.15.1}
\tau\big(x^{L'}D_{V}\big)=\tau \big((p_{\bar{\alpha},\bar{\mu}}(x))^{-1}\big)\tau(\bar{D}_{V})=(p_{\bar{\alpha},\bar{\mu}}(T))^{-1}\bar{S}_{V}.
\end{equation}

Denote $l_{a} = \lambda^{(a)}_{1}+\big(\lambda^{(a)}\big)'_{1}-1$. By definition of $\check{D}_{V}$, we have
\begin{equation}\label{4.15.2}
\prod_{a=1}^{k}\prod_{b=0}^{l_{a}}\left(x\frac{{\rm d}}{{\rm d}x}-z_{a}-b\right)=\check{D}_{V}x^{L'}D_{V}.
\end{equation}

Applying the homomorphism $\tau$ to both sides of relation \eqref{4.15.2} and using formula~\eqref{4.15.1}, we get
\begin{gather}\label{p3.2}
\prod_{a=1}^{k}\prod_{b=0}^{l_{a}}(-x-z_{a}-b)=\tau\big(\check{D}_{V}\big) (p_{\bar{\alpha},\bar{\mu}}(T))^{-1} \bar{S}_{V}.
\end{gather}

Denote $\Delta _{a}=\{0\lc l_{a}\}\setminus\big\{\big(\lambda^{(a)}\big)'_{1}+\lambda^{(a)}_{b}-b,\,b=1\lc \big(\lambda^{(a)}\big)'_{1}\big\}$, and set
\[\bar{q}_{\bar{z},\bar{\lambda}}(x)=\prod_{a=1}^{k}\prod_{b\in\Delta_{a}}(x-z_{a}-b).\]

Notice that \[\prod_{a=1}^{k}\prod_{b=0}^{l_{a}}(-x-z_{a}-b)=\bar{q}_{\bar{z},\bar{\lambda}}(-x) q_{\bar{z},\bar{\lambda}}(-x),\]
where $q_{\bar{z},\bar{\lambda}}(x)$ is defined in formula~\eqref{q}.

Then we can rewrite relation \eqref{p3.2} as follows:
\begin{equation}\label{4.15.3}
\big[(\bar{q}_{\bar{z},\bar{\lambda}}(-x))^{-1} \tau(\check{D}_{V})\big] \big[(p_{\bar{\alpha},\bar{\mu}}(T))^{-1} \bar{S}_{V} (q_{\bar{z},\bar{\lambda}}(-x))^{-1}\big]=1,
\end{equation}

Since, by definition, $\mathcal{S}_{V}=(p_{\bar{\alpha},\bar{\mu}}(T))^{-1} \bar{S}_{V} (q_{\bar{z},\bar{\lambda}}(-x))^{-1}$, formula \eqref{4.15.3} gives
\begin{equation}\label{4.29}
\mathcal{S}_{V}^{-1}=(\bar{q}_{\bar{z},\bar{\lambda}}(-x))^{-1} \tau(\check{D}_{V}).
\end{equation}

Let $W=\mathcal{T}_{1}\mathcal{T}_{3}(V)$. Let $\mathcal{S}_{W}$ and $\bar{S}_{W}$ be the fundamental pseudo-difference operator of $W$ and the regularized fundamental difference operator of $W$, respectively. Denote $\bar{\eta}=\big(1-z_{1}-\big(\lambda^{(1)}\big)'_{1}-\lambda^{(1)}_{1}\lc 1-z_{k}-\big(\lambda^{(k)}\big)'_{1}-\lambda^{(k)}_{1}\big)$. Then by Theorem~\ref{discrete main 1}, we have
\begin{equation}\label{4.15.4}
\mathcal{S}_{V}^{-1}=\mathcal{S}_{W}=(q_{\bar{\eta},\bar{\lambda}'}(x))^{-1} \bar{S}_{W} (p_{\bar{\alpha},\bar{\mu}'}(T))^{-1}.
\end{equation}

Notice that for each $a=1\lc k$, $\Delta_{a}=\big\{(\lambda^{(a)})'_{1}-(\lambda^{(a)})'_{b}+b-1,\,b=1\lc\lambda^{(a)}_{1}\big\}$. This can be illustrated by enumerating sides of boxes in the Young diagram for the partition $\lambda^{(a)}$ similarly to what we did in the proof of Proposition~\ref{mualpha}. Using this description of $\Delta_{a}$, one can check that $\bar{q}_{\bar{z},\bar{\lambda}}(-x)=(-1)^{L'}q_{\bar{\eta},\bar{\lambda}'}(x)$. Therefore, formulas~\eqref{4.29} and~\eqref{4.15.4} give
\[
\bar{S}_{W}=(-1)^{L'}\tau\big(\check{D}_{V}\big) p_{\bar{\alpha},\bar{\mu}'}(T).
\]

Thus,
\[
\bar{S}_{W}^{\ddagger}=(-1)^{L'} p_{\bar{\alpha},\bar{\mu}'}(T)\big(\tau\big(\check{D}_{V}\big)\big)^{\ddagger}.
\]

Using that $\big(\tau\big(\check{D}_{V}\big)\big)^{\ddagger}=\tau\big(\check{D}_{V}^{\dagger}\big)$, we obtain
\[
(-1)^{L'} p_{\bar{\alpha},\bar{\mu}'}(x)\check{D}_{V}^{\dagger}=\tau^{-1}\big(\bar{S}_{W}^{\ddagger}\big)=\bar{S}_{W}^{\#}.
\]
Notice that by definition of the map $\mathfrak{T}_{3}$, the differential operator $\bar{S}_{W}^{\#}$ annihilates the space~$\mathfrak{T}_{2}(V)$, therefore, we proved that $\check{D}_{V}^{\dagger}$ annihilates $\mathfrak{T}_{2}(V)$.

The uniqueness of the space $\mathfrak{T}_{2}(V)$ follows from an analog of Lemma~\ref{unique kernel} for differential operators.

Theorem~\ref{map T_2} is proved.

\section[Duality for trigonometric Gaudin and dynamical Hamiltonians]{Duality for trigonometric Gaudin\\ and dynamical Hamiltonians}\label{duality}

\subsection[(gl\_k,gl\_n)-duality for trigonometric Gaudin and dynamical Hamiltonians]{$\boldsymbol{(\gl_{k},\gl_{n})}$-duality for trigonometric Gaudin and dynamical Hamiltonians}\label{trig Gaudin and Dyn}

Let $\mathfrak{X}_{n}$ be the vector
space of
all polynomials in anticommuting variables $\xi_{1}\lc\xi_{n}$.
Since $\xi_{i}\xi_{j}=-\xi_{j}\xi_{i}$ for any $i$, $j$, in particular,
$\xi_{i}^{2}=0$ for any~$i$,
the monomials $\xi_{i_{1}}\dots\xi_{i_{l}}$,
$1\leq i_{1}<i_{2}<\dots<i_{l}\leq n$, form
a basis of $\mathfrak{X}_{n}$. Notice that the space $\mathfrak{X}_{n}$ coincides with the exterior algebra of~$\C^{n}$.

The left derivations $\partial_1\lc\partial_n$ on $\mathfrak{X}_{n}$ are
linear maps such that
\begin{gather}
\label{leftderivation}
\partial_{i} (\xi_{j_{1}}\dots\xi_{j_{l}} ) =
\begin{cases}
(-1)^{s-1}\xi_{j_{1}}\dots\xi_{j_{s-1}} \xi_{j_{s+1}}\dots\xi_{j_{l}} ,
 &\text{if $i=j_s$ for some } s,
\\
=0,& \text{otherwise}.
\end{cases}
\end{gather}
It is easy to check that
$\partial_{i}\partial_{j}=-\partial_{j}\partial_{i}$ for any $i$, $j$,
in particular, $\partial_{i}^{2}=0$ for any $i$, and
$\partial_{i}\xi_{j}+\xi_{j}\partial_{i}=\delta_{ij}$ for any $i$, $j$.

Let $e_{ij}$, $i,j=1\lc n$, be the standard basis of the Lie algebra
$\gl_{n}$, in particular, we have
$[e_{ij},e_{kl}]=\delta_{jk}e_{il}-\delta_{il}e_{kj}$.
Define a $\gl_{n}$-action on~$\mathfrak{X}_{n}$ by
the rule
$e_{ij}\mapsto \xi_{i}\partial_{j}$. As a $\gl_{n}$-module, $\mathfrak{X}_{n}$ is isomorphic to $\bigoplus_{l=0}^{n}L_{\omega_{l}}$,
where $L_{\omega_{l}}$ is the irreducible finite-dimensional $\gl_{n}$-module of highest weight
\begin{equation*}
\omega_{l}=(\underset{l}{\underbrace{ 1\lc 1}},0\lc 0) .
\end{equation*}
The component $L_{\omega_l}$ in $\mathfrak{X}_{n}$ is spanned by the monomials of degree~$l$.

\begin{rem}
As we mentioned in Introduction, the $(\gl_{n},\gl_{k})$-duality for integrable systems was first studied in works of Mukhin, Tarasov, and Varchenko for the case, when instead of the space $\mathfrak{X}_{n}$, one considers the space $P_{n}$ of polynomials in commuting variables. The latter is also a $\gl_{n}$-module, and it decomposes into irreducibles as $\bigoplus_{i=1}^{\infty}L_{s_{i}}$, where $L_{s_{i}}$ is the irreducible finite-dimensional $\gl_{n}$-module of highest weight $s_{i}=(i,0,0,\dots)$.
\end{rem}

From now on, we will consider the Lie algebras $\gl_{n}$ and $\gl_{k}$ together.
We will write superscripts~$\langle n\rangle$ and~$\langle k\rangle$ to distinguish objects associated with algebras $\mathfrak{gl}_{n}$ and $\mathfrak{gl}_{k}$, respectively. For example, $e_{ij}^{\langle n\rangle}$, $i,j=1\lc n$,
is the basis of~$\mathfrak{gl}_{n}$, and $e_{ab}^{\langle k\rangle}$, $a,b=1\lc k$,
is the basis of~$\mathfrak{gl}_{k}$.

Let $\mathfrak{P}_{kn}$ be the vector space of polynomials in $kn$ pairwise anticommuting variables $\xi_{ai}$, $a=1\lc k$, $i=1\lc n$.
We have two vector space isomorphisms $\psi_{1} \colon (\mathfrak{X}_{k})^{\otimes n}\to \mathfrak{P}_{kn},$ and $\psi_{2}\colon (\mathfrak{X}_{n})^{\otimes k}\to \mathfrak{P}_{kn}$ given by
\begin{gather*}
\psi_{1} \colon \ (p_{1}\otimes\dots\otimes p_{n})\mapsto p_{1}(\xi_{11}\lc\xi_{k1})p_{2}(\xi_{12}\lc\xi_{k2})\cdots p_{n}(\xi_{1n}\lc\xi_{kn}),
\\
\psi_{2} \colon \ (p_{1}\otimes\dots\otimes p_{k})\mapsto p_{1}(\xi_{11}\lc\xi_{1n})p_{2}(\xi_{21}\lc\xi_{2n})\cdots p_{k}(\xi_{k1}\lc\xi_{kn}).
\end{gather*}

Let $\partial_{ai}$, $a=1\lc k$, $i=1\lc n$, be the left derivations on $\mathfrak{P}_{kn}$ defined similarly to the left derivations
on $\mathfrak{X_{n}}$, see~\eqref{leftderivation}. For any $g\in U(\gl_k)$, denote $g_{(i)}=1^{\otimes(i-1)} \otimes g\otimes 1^{\otimes(n-i)}
\in U(\gl_{k})^{\otimes n}$. We will identify the algebra $U(\gl_{k})$ and its image under the
diagonal embedding
$g\mapsto\sum_{i=1}^{n}g_{(i)}\in U(\gl_{k})^{\otimes n}$. We will use similar conventions for $U(\gl_{n})^{\otimes k}$.
Define actions of~$U(\gl_{k})^{\otimes n}$ and $U(\gl_{n})^{\otimes k}$ on $\mathfrak{P}_{kn}$ by the formulas
\begin{gather}\label{action k}
\rho^{\langle k,n\rangle}\colon \ \big(e^{\langle k\rangle}_{ab}\big)_{(i)}\mapsto \xi_{ai}\partial_{bi},\\
\label{action n}
\rho^{\langle n,k\rangle}\colon \ \big(e^{\langle n\rangle}_{ij}\big)_{(a)}\mapsto \xi_{ai}\partial_{aj}.
\end{gather}
Then $\psi_{1}$ and $\psi_{2}$ are isomorphisms of $U(\gl_{k})^{\otimes n}$- and $U(\gl_{n})^{\otimes k}$-modules, respectively.

For any $i,j=1\lc n$, $i\neq j$, define the following elements of $U(\gl_{k})^{\otimes n}$
\begin{gather*}
\Omega^{+}_{(ij)}=\frac{1}{2} \underset{a=1}{\overset{k}{\sum}}\big(e^{\langle k\rangle}_{aa}\big)_{(i)} \big(e^{\langle k\rangle}_{aa}\big)_{(j)}+\underset{1\leq a<b\leq k}{\sum}\big(e^{\langle k\rangle}_{ab}\big)_{(i)} \big(e^{\langle k\rangle}_{ba}\big)_{(j)},\\
\Omega^{-}_{(ij)}=\frac{1}{2} \underset{a=1}{\overset{k}{\sum}}\big(e^{\langle k\rangle}_{aa}\big)_{(i)} \big(e^{\langle k\rangle}_{aa}\big)_{(j)}+\underset{1\leq a<b\leq k}{\sum}\big(e^{\langle k\rangle}_{ba}\big)_{(i)} \big(e^{\langle k\rangle}_{ab}\big)_{(j)}.
\end{gather*}

Fix sequences of pairwise distinct complex numbers $\bar{z}=(z_{1}\lc z_{k})$ and $\bar{\alpha}=(\alpha_{1}\lc \alpha_{n})$. For each $i=1\lc n$, define \textit{the trigonometric Gaudin Hamiltonians} $H^{\langle k,n\rangle}_{i}(\bar{\alpha},\bar{z})\in U(\gl_{k})^{\otimes n}$ by the following formula:
\begin{gather*}
H^{\langle k,n\rangle}_{i}(\bar{\alpha},\bar{z})=\sum_{a=1}^{k}\left(z_{a}-\frac{e^{\langle k\rangle}_{aa}}{2}\right)\big(e^{\langle k\rangle}_{aa}\big)_{(i)}+\sum_{\substack{j=1\\j\neq i}}^{n}\frac{\alpha_{i}\Omega^{+}_{(ij)}+\alpha_{j}\Omega^{-}_{(ij)}}{\alpha_{i}-\alpha_{j}}.
\end{gather*}
For each $i=1\lc n$, define \textit{the trigonometric dynamical Hamiltonians} $G^{\langle n,k\rangle}_{i}(\bar{z},\bar{\alpha})\in U(\gl_{n})^{\otimes k}$ by the following formula:
\begin{gather*}
G^{\langle n,k\rangle}_{i}(\bar{z},\bar{\alpha})= -\frac{(e^{\langle n\rangle}_{ii})^{2}}{2}+\sum_{a=1}^{k}z_{a}\big(e^{\langle n\rangle}_{ii}\big)_{(a)} \\
\hphantom{G^{\langle n,k\rangle}_{i}(\bar{z},\bar{\alpha})= }{}
+\sum_{\substack{j=1\\j\neq i}}^{n}\frac{\alpha_{j}}{\alpha_{i}-\alpha_{j}}\big(e^{\langle n\rangle}_{ij}e^{\langle n\rangle}_{ji}-e^{\langle n\rangle}_{ii}\big)+\sum_{j=1}^{n}\sum_{1\leq a<b\leq k}\big(e^{\langle n\rangle}_{ij}\big)_{(a)}(e^{\langle n\rangle}_{ji})_{(b)}.
\end{gather*}

Denote $-\bar{z}+1 =(-z_{1}+1\lc -z_{k}+1)$. Let $\rho^{\langle k,n\rangle}$ and $\rho^{\langle n,k\rangle}$ be the $U(\gl_{k})^{\otimes n}$ and $U(\gl_{n})^{\otimes k}$-actions on~$\mathfrak{P}_{kn}$ defined in formulas~\eqref{action k} and~\eqref{action n}, respectively. The following can be checked by a straightforward computation.

\begin{prop}\label{Gaudin Dyn duality}
For any $i=1\lc n$, we have
\[ \rho^{\langle k,n\rangle}\bigl(H_{i}^{\langle k,n\rangle}(\bar{\alpha},\bar{z})\bigr) = -\rho^{\langle n,k\rangle}\bigl(G_{i}^{\langle n,k\rangle}(-\bar{z}+1,\bar{\alpha})\bigr).\]
\end{prop}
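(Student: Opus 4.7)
Since both sides are explicit endomorphisms of $\mathfrak{P}_{kn}$ obtained from tensor-algebra elements by applying the representations $\rho^{\langle k,n\rangle}$ and $\rho^{\langle n,k\rangle}$, the natural approach is a direct calculation: expand each side in the non-commuting spanning set of monomials $\xi_{ai}\xi_{bj}\partial_{ci}\partial_{dj}$ and $\xi_{ai}\partial_{bj}$ of $\on{End}(\mathfrak{P}_{kn})$, normal-order using $\xi_{ai}^{2}=\partial_{ai}^{2}=0$ and $\partial_{ai}\xi_{bj}+\xi_{bj}\partial_{ai}=\delta_{ab}\delta_{ij}$, and then compare coefficients. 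Both sides are quadratic in the Lie-algebra generators, so the computation is bounded in complexity.

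For the left-hand side, the image under $\rho^{\langle k,n\rangle}$ of the Cartan part $\sum_{a}(z_{a}-e^{\langle k\rangle}_{aa}/2)(e^{\langle k\rangle}_{aa})_{(i)}$ is $\sum_{a}z_{a}\,\xi_{ai}\partial_{ai}-\tfrac{1}{2}\sum_{a,j}\xi_{aj}\partial_{aj}\xi_{ai}\partial_{ai}$, whose normal-ordering produces a degree-one correction $-\tfrac{1}{2}\sum_{a}\xi_{ai}\partial_{ai}$ together with purely quartic terms. The images of $\Omega^{+}_{(ij)}$ and $\Omega^{-}_{(ij)}$ contribute quartic monomials of the shape $\xi_{ai}\xi_{bj}\partial_{bi}\partial_{aj}$ and $\xi_{bi}\xi_{aj}\partial_{ai}\partial_{bj}$, weighted by $\alpha_{i}/(\alpha_{i}-\alpha_{j})$ and $\alpha_{j}/(\alpha_{i}-\alpha_{j})$ respectively.

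For the right-hand side I would carry out the analogous computation using $\rho^{\langle n,k\rangle}\bigl((e^{\langle n\rangle}_{ij})_{(a)}\bigr)=\xi_{ai}\partial_{aj}$. The key observation is that after the substitution $z_{a}\mapsto 1-z_{a}$ the Cartan piece $\sum_{a}(1-z_{a})(e^{\langle n\rangle}_{ii})_{(a)}$ gives $\sum_{a}(1-z_{a})\xi_{ai}\partial_{ai}$: the $-z_{a}$ parts match the left side after the global sign, while the $+\sum_{a}\xi_{ai}\partial_{ai}$ part together with the contributions of $-(e^{\langle n\rangle}_{ii})^{2}/2$ and of the $-e^{\langle n\rangle}_{ii}$ inside the $j\ne i$ sum account exactly for the degree-one corrections produced on the left by normal-ordering. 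Splitting $\sum_{j,\,a<b}(e^{\langle n\rangle}_{ij})_{(a)}(e^{\langle n\rangle}_{ji})_{(b)}$ into the $j=i$ and $j\ne i$ parts, and using the identity $\alpha_{i}/(\alpha_{i}-\alpha_{j})=1+\alpha_{j}/(\alpha_{i}-\alpha_{j})$, the quartic terms line up with the images of $\alpha_{i}\Omega^{+}_{(ij)}$ and $\alpha_{j}\Omega^{-}_{(ij)}$ up to the predicted overall sign. The main obstacle is pure sign bookkeeping: each $\partial\xi$-swap introduces a sign, and one has to verify that the duality exchange of tensor-factor index and weight-space index converts the $\Omega^{+}$/$\Omega^{-}$ splitting on the Gaudin side into the $a<b$ ordering on the dynamical side with the correct coefficient.
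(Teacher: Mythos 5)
Your plan is exactly the argument the paper has in mind: the paper offers no more than ``can be checked by a straightforward computation'' (deferring the details to Theorem~4.4 of the cited work of Tarasov--Uvarov), and that computation is precisely the normal-ordering comparison in $\End(\mathfrak{P}_{kn})$ that you describe, including the role of the identity $\alpha_i/(\alpha_i-\alpha_j)=1+\alpha_j/(\alpha_i-\alpha_j)$ and the cancellation of the degree-one corrections against the shift $z_a\mapsto 1-z_a$ and the $-\bigl(e^{\langle n\rangle}_{ii}\bigr)^2/2$ term. The outline is correct and takes essentially the same route as the paper.
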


Proposition \ref{Gaudin Dyn duality} is a~part of Theorem~4.4 in~\cite{TU2}. A similar identity for the case, when instead of the space~$\mathfrak{P}_{kn}$, we have the space $P_{kn}=S^{k}\C^{n}=S^{n}\C^{k}$ of polynomials in $kn$ commutative variables, can be found in~\cite{TV4}.

\subsection{Bethe ansatz method for trigonometric Gaudin model}\label{BA for trig Gaudin}
Fix sequences $\boldsymbol{l}=(l_{1}\lc l_{k})\in\Z_{\geq 0}^{k}$ and $\boldsymbol{m}=(m_{1}\lc m_{n})\in\Z_{\geq 0}^{n}$ such that $\sum_{a=1}^{k}l_{a}=\sum_{i=1}^{n}m_{i}$.
Let $\mathfrak{P}_{kn}[\boldsymbol{l},\boldsymbol{m}]\subset\mathfrak{P}_{kn}$ be the span of all monomials $\xi_{11}^{d_{11}}\cdots\xi_{k1}^{d_{k1}}\cdots \xi_{1n}^{d_{1n}}\cdots\xi_{kn}^{d_{kn}}$ such that $\sum_{a=1}^{k}d_{ai}=m_{i}$ and $\sum_{i=1}^{n}d_{ai}=l_{a}$. Assume that $\mathfrak{P}_{kn}[\boldsymbol{l},\boldsymbol{m}]\neq \{0\}$. We have
\[ \mathfrak{P}_{kn}[\boldsymbol{l},\boldsymbol{m}]=\big\{p\in \mathfrak{P}_{kn}\,|\,e^{\langle k\rangle}_{aa}p=l_{a}p,\, e^{\langle n\rangle}_{ii}p=m_{i}p,\,a=1\lc k,\,i=1\lc n\big\}.\]

Under the map $\psi_{1}$, the space $\mathfrak{P}_{kn}[\boldsymbol{l},\boldsymbol{m}]$ correspond to the weight subspace of weight $(l_{1}\lc l_{k})$ of the subrepresentation $L^{\langle k\rangle}_{\omega_{m_{1}}}\otimes\dots\otimes L^{\langle k\rangle}_{\omega_{m_{n}}}$ of $\mathfrak{X}_{k}^{\otimes n}=\big({\bigoplus}_{l=0}^{k}L^{\langle k\rangle}_{\omega_{l}}\big)^{\otimes n}$. Similarly, under the map $\psi_{2}$, the space $\mathfrak{P}_{kn}[\boldsymbol{l},\boldsymbol{m}]$ correspond to the weight subspace of weight $(m_{1}\lc m_{n})$ of the subrepresentation $L^{\langle n\rangle}_{\omega_{l_{1}}}\otimes\dots\otimes L^{\langle n\rangle}_{\omega_{l_{k}}}$ of $\mathfrak{X}_{n}^{\otimes k}=\big({\bigoplus}_{l=0}^{n}L^{\langle n\rangle}_{\omega_{l}}\big)^{\otimes k}$.

It is easy to check that all trigonometric Gaudin and dynamical Hamiltonians commute with elements $e^{\langle k\rangle}_{11}\lc e^{\langle k\rangle}_{kk},e^{\langle n\rangle}_{11}\lc e^{\langle n\rangle}_{nn}$. Therefore, $H^{\langle k,n\rangle}_{1}(\bar{\alpha},\bar{z}) \lc H^{\langle k,n\rangle}_{n}(\bar{\alpha},\bar{z})$, $G^{\langle n,k\rangle}_{1}(\bar{z},\bar{\alpha})\lc G^{\langle n,k\rangle}_{n}(\bar{z},\bar{\alpha})$ act on the subspace $\mathfrak{P}_{kn}[\boldsymbol{l},\boldsymbol{m}]$. We will be interested in the common eigenvectors of the Hamiltonians in the subspace $\mathfrak{P}_{kn}[\boldsymbol{l},\boldsymbol{m}]$.

For each $m\in\Z_{\geq 0}$, let $\omega_{m}$ be a partition given by $\omega_{m}=(1\lc 1,0,0,\dots)$ with $m$ ones. Define the sequence $\boldsymbol{l}_{0}=\big(l^{0}_{1}\lc l^{0}_{k}\big)$ by $l^{0}_{a}=\sum_{i=1}^{n}(\omega_{m_{i}})_{a}$.

For any sequence of integers $(c_{1}\lc c_{k})$ and for each $a=1\lc k-1$, define a transformation
\[ r_{a}\colon \ (c_{1}\lc c_{k})\mapsto (c_{1}\lc c_{a}-1,c_{a+1}+1\lc c_{k}).\]
Since $\sum_{a=1}^{k}l_{a}=\sum_{a=1}^{k}l^{0}_{a}=\sum_{i=1}^{n}m_{i}$, there exist integers $\bar{l}_{1}\lc\bar{l}_{k-1}$ such that $\boldsymbol{l}=r_{1}^{\bar{l}_{1}}\cdots r_{k-1}^{\bar{l}_{k-1}}\boldsymbol{l}_{0}$. It is easy to check that if $\bar{l}_{a}<0$ for some $a=1\lc k-1$, then $\mathfrak{P}_{kn}[\boldsymbol{l},\boldsymbol{m}]= 0$. Therefore, we can assume that $\bar{l}_{a}\geq 0$ for all $a=1\lc k-1$.

Put $\bar{l}_{0}=\bar{l}_{k}=0$. Then we have
\[ l_{a}=\sum_{i=1}^{n}(\omega_{m_{i}})_{a}+\bar{l}_{a-1}-\bar{l}_{a},\qquad a=1\lc k.\]
Therefore
\begin{equation}\label{bar l}
\bar{l}_{a}=\sum_{b=a+1}^{k}\left(l_{b}-\sum_{i=1}^{n}(\omega_{m_{i}})_{b}\right), \qquad a=0\lc k-1.
\end{equation}

Let $\boldsymbol{t}$ be a set of $\bar{l}_{1}+\dots +\bar{l}_{k-1}$ variables:
\[ \boldsymbol{t}=\big(t^{(1)}_{1}\lc t^{(1)}_{\bar{l}_{1}},t^{(2)}_{1}\lc t^{(2)}_{\bar{l}_{2}}\lc t^{(k-1)}_{1}\lc t^{(k-1)}_{\bar{l}_{k-1}}\big).\]

Fix sequences of pairwise distinct complex numbers $\bar{z}=(z_{1}\lc z_{k})$ and $\bar{\alpha}=(\alpha_{1}\lc \alpha_{n})$. Define \textit{the master function}:
\begin{gather}
\Phi (\boldsymbol{t},\bar{\alpha},\bar{z},\boldsymbol{l},\boldsymbol{m}) = \prod_{1\leq i<j\leq n}(\alpha_{i}-\alpha_{j})^{\min(m_{i},m_{j})}\prod_{i=1}^{n}\prod_{a=1}^{\bar{l}_{m_{i}}}\big(t_{a}^{(m_{i})}-\alpha_{i}\big)^{-1}
\prod_{i=1}^{n}\alpha_{i}^{\sum_{a=1}^{m_{i}}z_{a}+\frac{m_{i}}{2}}\nonumber\\
\hphantom{\Phi (\boldsymbol{t},\bar{\alpha},\bar{z},\boldsymbol{l},\boldsymbol{m}) =}{}
 \times \prod_{a=1}^{k-1}\prod_{b=1}^{\bar{l}_{a}}\big(t_{b}^{(a)}\big)^{z_{a+1}-z_{a}+1}
\prod_{a=1}^{k-1}\prod_{1\leq b<b'\leq \bar{l}_{a}}\big(t_{b}^{(a)}-t_{b'}^{(a)}\big)^{2} \nonumber\\
\hphantom{\Phi (\boldsymbol{t},\bar{\alpha},\bar{z},\boldsymbol{l},\boldsymbol{m}) =}{}
\times \prod_{a=1}^{k-2}\prod_{b=1}^{\bar{l}_{a}}\prod_{b'=1}^{\bar{l}_{a+1}}\big(t_{b}^{(a)}-t_{b'}^{(a+1)}\big)^{-1}.\label{Phi}
\end{gather}

The following equations are called the Gaudin Bethe ansatz equations:
\begin{equation}\label{BAE}
\left(\frac{1}{\Phi}\frac{\partial \Phi}{\partial t^{(a)}_{b}}\right)(\boldsymbol{t},\bar{\alpha},\bar{z},\boldsymbol{l},\boldsymbol{m})=0,\qquad a=1\lc k-1,\quad b=1\lc\bar{l}_{a}.
\end{equation}

We will call a solution $\boldsymbol{t}$ of the Gaudin Bethe ansatz equation \eqref{BAE} Gaudin admissible if
\begin{equation}\label{Gaudin admissible}
t_{i}^{(a)}\neq t_{j}^{(a)},\quad t_{i'}^{(b)}\neq t_{j'}^{(b+1)},\quad t_{i}^{(a)}\neq \alpha_{l}, \qquad t_{i}^{(a)}\neq 0
\end{equation}
for all $a=1\lc k-1$, $i,j=1\lc \bar{l}_{a}$, $i\neq j$, $b=1\lc k-2$, $i'=1\lc \bar{l}_{b}$, $j'=1\lc \bar{l}_{b+1}$, $l=1\lc n$.

We will also need a function constructed in \cite{MaV} and denoted there as $\phi (z,t)$. This function was introduced to obtain a hypergeometric solution of the trigonometric Knizhnik--Zamolodchikov (KZ) equations. The explicit formulas for $\phi (z,t)$ are rather lengthy, and we will not need them to formulate the statements below, so we omit them and instead, indicate how notations in \cite{MaV} match our notation. The parameters $z_{1}\lc z_{n}$ in \cite{MaV} correspond to $\alpha_{1}\lc \alpha_{n}$ in our paper, and variables $t_{b}^{(a)}$ in \cite{MaV} correspond to $t_{a}^{(b)}$ in our paper. We will write $\phi (\bar{\alpha}, \boldsymbol{t})$ for $\phi (z,t)$ with $z_{1}\lc z_{n}$ replaced by $\alpha_{1}\lc \alpha_{n}$ and $t_{b}^{(a)}$ replaced by $t_{a}^{(b)}$. The Lie algebra $\gl_{N}$ in \cite{MaV} corresponds to $\gl_{k}$ here, and for the $\gl_{N}$-weights $\Lambda_{1}\lc \Lambda_{n}$, $\nu$ in \cite{MaV}, we should take the $\gl_{k}$-weights $\omega_{m_{1}}\lc\omega_{m_{n}}$, $(l_{1}\lc l_{k})$, respectively. Then under the identification $\psi_{1}$, $\phi (\bar{\alpha}, \boldsymbol{t})$ becomes a $\mathfrak{P}_{kn}[\boldsymbol{l},\boldsymbol{m}]$-valued function.

\begin{thm}\label{m function to eigenvalues}
Let $\boldsymbol{t}$ be a Gaudin admissible solution of the Gaudin Bethe ansatz equations \eqref{BAE}. Suppose that $\phi (\bar{\alpha}, \boldsymbol{t})\neq 0$. Then $\phi (\bar{\alpha}, \boldsymbol{t})$ is a common eigenvector of the Gaudin Hamiltonians, and for each $i=1\lc n$, the corresponding eigenvalue $h^{\langle k,n\rangle}_{i}(\boldsymbol{t},\bar{\alpha},\bar{z},\boldsymbol{l},\boldsymbol{m})$ of $H^{\langle k,n\rangle}_{i}(\bar{\alpha},\bar{z})$ is given by
\begin{equation}\label{Phi to h}
h^{\langle k,n\rangle}_{i}(\boldsymbol{t},\bar{\alpha},\bar{z},\boldsymbol{l},\boldsymbol{m})=\left(\alpha_{i}\frac{\partial}{\partial\alpha_{i}}\ln\Phi\right)(\boldsymbol{t},\bar{\alpha},\bar{z}-\boldsymbol{l},\boldsymbol{l},\boldsymbol{m}),
\end{equation}
where $\bar{z}-\boldsymbol{l}=(z_{1}-l_{1},z_{2}-l_{2}\lc z_{k}-l_{k})$.
\end{thm}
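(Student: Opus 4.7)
The plan is to reduce the claim to the off-shell Bethe ansatz identity that is implicit in \cite{MaV}. The function $\phi(\bar{\alpha},\boldsymbol{t})$ was introduced there as a $\mathfrak{P}_{kn}[\boldsymbol{l},\boldsymbol{m}]$-valued rational function of $(\bar{\alpha},\boldsymbol{t})$, chosen precisely so that $\Phi(\boldsymbol{t},\bar{\alpha},\bar{z}-\boldsymbol{l},\boldsymbol{l},\boldsymbol{m})^{1/\kappa}\phi(\bar{\alpha},\boldsymbol{t})$, after integration over an appropriate family of cycles in $\boldsymbol{t}$, satisfies the trigonometric KZ system
\[
\kappa\>\alpha_i\>\frac{\partial\Psi}{\partial\alpha_i} \;=\; H_i^{\langle k,n\rangle}(\bar{\alpha},\bar{z}-\boldsymbol{l})\>\Psi,\qquad i=1\lc n.
\]
First I would record the pointwise (integrand) form of this KZ identity. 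Differentiating $\Phi^{1/\kappa}\phi$ with respect to $\alpha_i$ produces both $\alpha_i\>\partial_{\alpha_i}\ln\Phi$ and $\alpha_i\>\partial_{\alpha_i}\phi$; comparing this with the action of $H_i^{\langle k,n\rangle}$ on $\Phi^{1/\kappa}\phi$ and isolating the contributions that, after integration, would come from total $\boldsymbol{t}$-derivatives, one obtains the off-shell identity
\[
H_i^{\langle k,n\rangle}(\bar{\alpha},\bar{z}-\boldsymbol{l})\>\phi(\bar{\alpha},\boldsymbol{t})
\;=\;\Bigl(\alpha_i\>\frac{\partial\ln\Phi}{\partial\alpha_i}\Bigr)\phi(\bar{\alpha},\boldsymbol{t})
\;+\;\sum_{a=1}^{k-1}\sum_{b=1}^{\bar{l}_a}B_{iab}(\bar{\alpha},\boldsymbol{t})\>\frac{\partial\ln\Phi}{\partial t^{(a)}_b},
\]
with coefficients $B_{iab}$ valued in $\mathfrak{P}_{kn}[\boldsymbol{l},\boldsymbol{m}]$, where all derivatives of $\ln\Phi$ are evaluated at $(\boldsymbol{t},\bar{\alpha},\bar{z}-\boldsymbol{l},\boldsymbol{l},\boldsymbol{m})$. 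Admissibility \eqref{Gaudin admissible} is exactly what guarantees that $\phi$ and each $B_{iab}$ are regular at the chosen $\boldsymbol{t}$.

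Once the off-shell identity is in hand, the theorem is immediate: the Gaudin Bethe ansatz equations \eqref{BAE} annihilate every factor $\partial\ln\Phi/\partial t^{(a)}_b$, so the residual double sum vanishes; under the hypothesis $\phi(\bar{\alpha},\boldsymbol{t})\ne 0$, we conclude that $\phi(\bar{\alpha},\boldsymbol{t})$ is a common eigenvector of $H_1^{\langle k,n\rangle}(\bar{\alpha},\bar{z})\lc H_n^{\langle k,n\rangle}(\bar{\alpha},\bar{z})$ with the eigenvalues \eqref{Phi to h}.

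The main obstacle is establishing the off-shell identity itself, which requires matching the explicit combinatorial formula for $\phi$ from \cite{MaV} against the explicit definition of $H_i^{\langle k,n\rangle}(\bar{\alpha},\bar{z}-\boldsymbol{l})$ given in Section \ref{trig Gaudin and Dyn}. The computation is substantial but mechanical once notations are aligned (the parameters $z_i$ and variables $t^{(a)}_b$ of \cite{MaV} correspond, respectively, to our $\alpha_i$ and $t^{(b)}_a$, and $\gl_N$ there is our $\gl_k$). The only genuinely delicate bookkeeping concerns the shift $\bar{z}\mapsto\bar{z}-\boldsymbol{l}$ appearing in \eqref{Phi to h}: the diagonal summand $\sum_{a}(z_a-e^{\langle k\rangle}_{aa}/2)(e^{\langle k\rangle}_{aa})_{(i)}$ of $H_i^{\langle k,n\rangle}$ acts on the weight subspace $\mathfrak{P}_{kn}[\boldsymbol{l},\boldsymbol{m}]$ as a scalar depending on $\boldsymbol{l}$, and absorbing this scalar into the master-function side of the identity amounts to replacing $\bar{z}$ by $\bar{z}-\boldsymbol{l}$ in $\Phi$, which is exactly the shift prescribed by the theorem.
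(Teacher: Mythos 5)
Your overall strategy --- establish the ``off-shell'' identity
$H_i^{\langle k,n\rangle}\phi=(\alpha_i\partial_{\alpha_i}\ln\Phi)\,\phi+\sum_{a,b}B_{iab}\,\partial\ln\Phi/\partial t^{(a)}_b$ and then impose the Bethe equations --- is the standard mechanism behind this kind of statement, and it is essentially what underlies the paper's proof as well. The paper, however, gives no computation at all: it simply cites Reshetikhin--Varchenko and describes the argument as the steepest-descent (quasiclassical $\kappa\to 0$) analysis of the genuine integral solutions $\int_\gamma\Phi^{1/\kappa}\phi$ of the trigonometric KZ system near a critical point of $\Phi$, declaring the theorem to be the trigonometric modification of their Corollary~4.16. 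Your pointwise/algebraic formulation is the cleaner of the two (no cycles, no asymptotics), and it is in fact the identity that makes the steepest-descent expansion close up; but in both versions the real content --- verifying the identity against the explicit combinatorial formula for $\phi$ from Markov--Varchenko --- is deferred, so your proposal is no less complete than the paper's own proof.

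There is, however, a concrete error in the one step you do try to justify, namely the origin of the shift $\bar z\mapsto\bar z-\boldsymbol{l}$ in \eqref{Phi to h}. The summand $\sum_{a}\bigl(z_a-e^{\langle k\rangle}_{aa}/2\bigr)\bigl(e^{\langle k\rangle}_{aa}\bigr)_{(i)}$ of $H_i^{\langle k,n\rangle}(\bar\alpha,\bar z)$ does \emph{not} act as a scalar on $\mathfrak{P}_{kn}[\boldsymbol{l},\boldsymbol{m}]$: under $\rho^{\langle k,n\rangle}$ the operator $\bigl(e^{\langle k\rangle}_{aa}\bigr)_{(i)}$ is $\xi_{ai}\partial_{ai}$, whose eigenvalue on a monomial is the occupation number $d_{ai}$, which is not constant on the weight subspace (only the row and column sums $l_a$, $m_i$ are fixed there). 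Only the undecorated $e^{\langle k\rangle}_{aa}$ in the quadratic part becomes the scalar $l_a$, so $H_i(\bar\alpha,\bar z)-H_i(\bar\alpha,\bar z-\boldsymbol{l})=\sum_a l_a\bigl(e^{\langle k\rangle}_{aa}\bigr)_{(i)}$ is itself a nonscalar operator, and the shift cannot be ``absorbed'' into $\Phi$ by the scalar argument you describe. The correct accounting has to come out of the full off-shell computation (note also that the factor $\alpha_i^{\sum_{a=1}^{m_i}z_a+m_i/2}$ in \eqref{Phi} already carries part of the compensation). This does not break your plan --- the identity, once proved with the arguments as stated in \eqref{Phi to h}, gives the theorem --- but the heuristic you offer for why $\bar z-\boldsymbol{l}$ appears is not a valid shortcut.
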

\begin{proof}
The theorem can be proved by applying the steepest descend method to hypergeometric solutions of the trigonometric KZ equations. We refer a reader to the work \cite{RV}, where the method was applied to hypergeometric solutions of the rational KZ equations. Theorem \ref{m function to eigenvalues} is the modification of Corollary 4.16 in \cite{RV} to the trigonometric case.
\end{proof}

\subsection[Spaces of quasi-polynomials and eigenvalues of trigonometric Gaudin Hamiltonians]{Spaces of quasi-polynomials and eigenvalues\\ of trigonometric Gaudin Hamiltonians}\label{qpol and BA}

Fix a pair $(\boldsymbol{l}, \boldsymbol{m})$ like in the previous section. Assume additionally that $l_{a}\neq 0$ and $m_{i}\neq 0$ for all $a=1\lc k$, $i=1\lc n$. Assume that $\mathfrak{P}_{kn}[\boldsymbol{l},\boldsymbol{m}]\neq \{0\}$. Define the sequence of partitions $\bar{\lambda}=\big(\lambda^{(1)}\lc\lambda^{(k)}\big)$ by $\lambda^{(a)}=(l_{a},0,0,\dots)$, $a=1\lc k$. Recall that for each $m\in\Z_{\geq 0}$, $\omega_{m}$~is a~partition given by $\omega_{m}=(1\lc 1,0,0,\dots)$ with $m$ ones. Define a sequence of partitions $\bar{\mu}=(\omega_{m_{1}}\lc\omega_{m_{n}})$.

Let $\bar{z}=(z_{1}\lc z_{k})$ be a sequence of complex numbers such that $z_{a}-z_{b}\notin\Z$ for $a\neq b$. Let $\bar{\alpha}=(\alpha_{1}\lc\alpha_{n})$ be a sequence of pairwise distinct non-zero complex numbers. Let $V$ be a space of quasi-polynomials with the data $\big(\bar{z},\bar{\lambda};\bar{\alpha},\bar{\mu}\big)$. Then~$V$ has a basis of the form
\[
\big\{x^{z_{1}}q_{1}(x),x^{z_{2}}q_{2}(x)\lc x^{z_{k}}q_{k}(x)\big\},
\]
where $q_{1}(x)\lc q_{k}(x)$ are polynomials and $\deg q_{a}(x)=l_{a}$.

For each $a=1\lc k-1$, $b=1\lc k$, define
\begin{gather*}
T_{b}(x)=\prod_{\substack{i=1\\m_{i}\geq b}}^{n}(x-\alpha_{i}),\\
y_{a}(x)=\frac{\Wr\big(x^{z_{k}}q_{k}(x),x^{z_{k-1}}q_{k-1}(x)\lc x^{z_{a+1}}q_{a+1}(x)\big)}{\prod_{b=a+1}^{k}\big(x^{z_{b}-k+b}T_{b}(x)\big)}.
\end{gather*}

One can check that for each $a=1\lc k-1$, $y_{a}(x)$ is a polynomial of degree $\bar{l}_{a}$. The polynomials $q_{1}(x)\lc q_{k}(x)$ can be normalized in such a way that the polynomials $y_{0}(x)\lc y_{n-1}(x)$ are monic. Write
\[ y_{a}(x)=\prod_{b=1}^{\bar{l}_{a}}\big(x-\tilde{t}^{(a)}_{b}\big).\]
We will call the space $V$ Gaudin admissible if the tuple
\[ \tilde{\boldsymbol{t}}=\big(\tilde t^{(1)}_{1}\lc \tilde t^{(1)}_{\bar{l}_{1}},\tilde t^{(2)}_{1}\lc \tilde t^{(2)}_{\bar{l}_{2}}\lc \tilde t^{(k-1)}_{1}\lc \tilde t^{(k-1)}_{\bar{l}_{k-1}}\big)\]
satisfies conditions \eqref{Gaudin admissible}.

The following theorem was proved in \cite{MV}.
\begin{thm}\label{q-pol to BAE}
Let $V$ be Gaudin admissible.
Then $\tilde{\boldsymbol{t}}$
is a Gaudin admissible solution of the Gaudin Bethe ansatz equations \eqref{BAE}.
\end{thm}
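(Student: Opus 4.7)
The plan is to verify each Gaudin Bethe ansatz equation \eqref{BAE} at the point $\boldsymbol t=\tilde{\boldsymbol t}$ by deriving it from a polynomial identity among the $y_a$'s obtained from the Wronskian identity \eqref{Wr2}.

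First, I would compute $\Phi^{-1}\partial\Phi/\partial t^{(a)}_b$ term-by-term from \eqref{Phi} to put the BAE in the concrete form
\[
\frac{z_{a+1}-z_a+1}{t^{(a)}_b}-\!\!\sum_{i\colon m_i=a}\!\!\frac{1}{t^{(a)}_b-\alpha_i}+2\!\!\sum_{b'\ne b}\!\!\frac{1}{t^{(a)}_b-t^{(a)}_{b'}}-\!\sum_{b'}\frac{1}{t^{(a)}_b-t^{(a+1)}_{b'}}-\!\sum_{b'}\frac{1}{t^{(a)}_b-t^{(a-1)}_{b'}}=0.
\]

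Next, setting $W_a=\mathcal{W}{\rm r}(x^{z_k}q_k,\ldots,x^{z_{a+1}}q_{a+1})$ and $\widetilde W_a=\mathcal{W}{\rm r}(x^{z_k}q_k,\ldots,x^{z_{a+2}}q_{a+2},x^{z_a}q_a)$, the identity \eqref{Wr2} from Appendix~\ref{appendixA} applied to the tuple $x^{z_k}q_k,\ldots,x^{z_{a+2}}q_{a+2},x^{z_{a+1}}q_{a+1},x^{z_a}q_a$ yields $\mathcal{W}{\rm r}(W_a,\widetilde W_a)=W_{a+1}W_{a-1}$. Pulling out the gauge factors $\prod_b x^{z_b-k+b}T_b(x)$ prescribed by the definition of $y_a$ and tracking their cancellations, this rearranges into a polynomial identity of the form
\[
C\,y_{a-1}(x)\,y_{a+1}(x)=x^{z_{a+1}-z_a+1}\,\frac{T_a(x)}{T_{a+1}(x)}\,\mathcal{W}{\rm r}(y_a,\widetilde y_a)(x)
\]
for some nonzero constant $C$, where $\widetilde y_a$ is defined analogously to $y_a$ with $q_{a+1}$ replaced by $q_a$. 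The key observation is that the exponents $z_b-k+b$ combine to leave the surviving power $z_{a+1}-z_a+1$, and the $T_b$ factors cancel in pairs to leave exactly the ratio $T_a/T_{a+1}=\prod_{i:m_i=a}(x-\alpha_i)$, isolating the correct subset of the $\alpha_i$'s.

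Finally, I would specialize this identity to $x=\tilde t^{(a)}_b$. By the admissibility conditions \eqref{Gaudin admissible}, $y_a$ has a simple zero at $\tilde t^{(a)}_b$ while $y_{a\pm1}$, $\widetilde y_a$, $T_a/T_{a+1}$, and $\tilde t^{(a)}_b$ itself are all nonzero there, so both sides are nonzero and one may take the logarithmic derivative. Using the elementary fact that $y_a''(\tilde t^{(a)}_b)/y_a'(\tilde t^{(a)}_b)=2\sum_{b'\ne b}(\tilde t^{(a)}_b-\tilde t^{(a)}_{b'})^{-1}$ for a monic polynomial with simple roots (which emerges from $\mathcal{W}{\rm r}(y_a,\widetilde y_a)=y_a\widetilde y_a'-y_a'\widetilde y_a$ after differentiating once more and dividing out the vanishing factor), and noting that log derivatives of the monic $y_{a\pm1}$ give the adjacent-level sums, while log derivatives of $x^{z_{a+1}-z_a+1}$ and $T_a/T_{a+1}$ give the first and second terms of the BAE respectively, one recovers precisely \eqref{BAE}.

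The main obstacle is the gauge-factor bookkeeping in the second step: verifying that the many factors $x^{z_b-k+b}T_b(x)$ from $W_{a\pm1}$, $W_a$, and $\widetilde W_a$ cancel down to exactly $x^{z_{a+1}-z_a+1}\cdot T_a/T_{a+1}$ (with no residual $T_b$'s for $b\ne a,a+1$ and with the correct power of $x$) is routine but error-prone, and an off-by-one error would yield the wrong shift in the first BAE term. A secondary but lighter check is that the Gaudin admissibility \eqref{Gaudin admissible} is precisely the hypothesis needed to guarantee that all denominators produced by the logarithmic differentiation are nonzero, so that specialization to $x=\tilde t^{(a)}_b$ yields a genuine numerical identity rather than an indeterminacy.
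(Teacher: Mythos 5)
The paper does not prove this statement itself: it is quoted from \cite{MV} (it is part of the quasi-polynomial Bethe-ansatz correspondence there), so there is no internal proof to compare against. Your proposal reconstructs exactly the standard argument behind that result: express the level-$a$ Bethe equation as the vanishing of a logarithmic derivative, establish a ``fertility'' identity $\Wr(y_a,\tilde y_a)=\Pi\,y_{a-1}y_{a+1}$ from the Wronskian-of-Wronskians identity, and evaluate the logarithmic derivative of both sides at a root of $y_a$, using that $\Wr(y_a,\tilde y_a)(t)=-y_a'(t)\tilde y_a(t)\neq 0$ and that the log-derivative of the left side at $t$ collapses to $y_a''(t)/y_a'(t)=2\sum_{b'\neq b}(t-t_{b'})^{-1}$. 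The skeleton is right, your term-by-term computation of $\Phi^{-1}\partial\Phi/\partial t^{(a)}_b$ from \eqref{Phi} is correct, and you correctly identify that \eqref{Gaudin admissible} is what makes every evaluation legitimate (simplicity of the roots of $y_a$ follows from $t^{(a)}_b\neq t^{(a)}_{b'}$, and $\tilde y_a(t)\neq 0$ then follows from the identity itself).

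Two concrete slips. First, the identity you need is the \emph{differential} Wronskian-of-Wronskians identity $\Wr(\Wr(g,h_1),\Wr(g,h_2))=\Wr(g)\,\Wr(g,h_1,h_2)$, not \eqref{Wr2} from Appendix~\ref{appendixA}: the $y_a$ in Section~\ref{qpol and BA} are built from the differential Wronskian $\Wr$, whereas \eqref{Wr2} is the discrete identity and carries the extra shifts $T^{(m-1)}\mathcal{W}{\rm r}(Tf_1,\lc Tf_n)$, which have no counterpart here; your formula $\mathcal{W}{\rm r}(W_a,\widetilde W_a)=W_{a+1}W_{a-1}$ is only correct if all Wronskians are read as differential ones. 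Second, the placement of $T_a/T_{a+1}=\prod_{i\colon m_i=a}(x-\alpha_i)$ in your displayed identity is on the wrong side: taking logarithmic derivatives of the identity as you wrote it yields $+\sum_{i\colon m_i=a}(t-\alpha_i)^{-1}$ on the right, while the equation you derived from \eqref{Phi} requires $-\sum_{i\colon m_i=a}(t-\alpha_i)^{-1}$; so the factor must multiply the $y_{a-1}y_{a+1}$ side (equivalently appear inverted). This is exactly the bookkeeping hazard you flagged, and it must be resolved before the specialization step, but it does not affect the viability of the method.
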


Define functions $\beta_{1}(x)\lc \beta_{k}(x)$ by the following formula:
\[
x^{k}D_{V}=\left(x\frac{{\rm d}}{{\rm d}x}\right)^{k}+\sum_{a=1}^{k}\beta_{a}(x)\left(x\frac{{\rm d}}{{\rm d}x}\right)^{k-a}.
\]
By Lemma~\ref{properties of beta}, the functions $\beta_{1}(x)\lc \beta_{k}(x)$ are rational.

Let $\tilde{\boldsymbol{t}}$ be the Gaudin admissible solution of the Gaudin Bethe ansatz equation corresponding to $V$, like in Theorem~\ref{q-pol to BAE}. Suppose that $\phi\big(\bar{\alpha},\tilde{\boldsymbol{t}}\big)\neq 0$. Denote $\bar{z}+\boldsymbol{l}=(z_{1}+l_{1},z_{2}+l_{2}\lc z_{k}+l_{k})$. According to Theorem~\ref{m function to eigenvalues}, $\phi\big(\bar{\alpha},\tilde{\boldsymbol{t}}\big)$ is a common eigenvector of the trigonometric Gaudin Hamiltonians, and for each $i=1\lc n$, the corresponding eigenvalue of $H^{\langle k,n\rangle}_{i}(\bar{\alpha},\bar{z}+\boldsymbol{l})$ is $h_{i}^{V}\coloneqq h^{\langle k,n\rangle}_{i}\big(\tilde{\boldsymbol{t}},\bar{\alpha},\bar{z}+\boldsymbol{l},\boldsymbol{l},\boldsymbol{m}\big)$. We will also call $\phi\big(\tilde{\boldsymbol{t}},\bar{\alpha}\big)$ the Bethe vector $v_{V}$ corresponding to~$V$.
\begin{prop}\label{prop5.4} The following holds
\begin{equation}\label{p5.4}
h_{i}^{V}=\frac{1}{\alpha_{i}}\Res_{x=\alpha_{i}}\left(\frac{1}{2}\beta_{1}^{2}(x)-\beta_{2}(x)\right)+\frac{m_{i}^{2}}{2}-m_{i}.
\end{equation}
\end{prop}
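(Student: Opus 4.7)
The plan is to compute both sides of~\eqref{p5.4} explicitly and match them, using the master function formula for $h_i^V$ and the Wronskian formulas for the coefficients of $D_V$.

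For the eigenvalue, I would apply Theorem~\ref{m function to eigenvalues}: the inner parameter shift in~\eqref{Phi to h} cancels the $+\boldsymbol{l}$ appearing in the definition of $h_i^V$, giving $h_i^V = \alpha_i\,\partial_{\alpha_i}\ln\Phi(\tilde{\boldsymbol{t}},\bar{\alpha},\bar{z},\boldsymbol{l},\boldsymbol{m})$. Differentiating~\eqref{Phi} (only the three factors $\prod_{i<j}(\alpha_i-\alpha_j)^{\min(m_i,m_j)}$, $\prod_i\prod_b(t_b^{(m_i)}-\alpha_i)^{-1}$, and $\alpha_i^{\sum_{a=1}^{m_i}z_a+m_i/2}$ contribute) yields
\begin{equation*}
h_i^V = \sum_{a=1}^{m_i}z_a + \frac{m_i}{2} + \sum_{j \ne i}\frac{\alpha_i\min(m_i,m_j)}{\alpha_i-\alpha_j} + \sum_{b=1}^{\bar{l}_{m_i}}\frac{\alpha_i}{\tilde{t}_b^{(m_i)}-\alpha_i}.
\end{equation*}

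On the differential operator side, Lemma~\ref{lambda=mu} yields $\bar{l}_0 = \sum_a l_a - \sum_i m_i = 0$, so the auxiliary polynomial $y_0$ is the constant $1$, and hence $W:=\Wr(x^{z_1}q_1,\dots,x^{z_k}q_k) = \pm\prod_{b=1}^k x^{z_b-k+b}T_b(x)$. Combining $a_1 = -W'/W$ with the change of variables $x^k\partial^k = \theta(\theta-1)\cdots(\theta-k+1)$ (so that $\beta_1 = xa_1 - \binom{k}{2}$) gives the clean partial-fraction form
\begin{equation*}
\beta_1(x) = -\sum_{b=1}^k(z_b+l_b) - \sum_{i=1}^n\frac{m_i\alpha_i}{x-\alpha_i}.
\end{equation*}
The coefficient $\beta_2$ is obtained analogously from $a_2 = \Wr_2/\Wr$ via the expansion $\beta_2 = x^2 a_2 - \binom{k-1}{2}\,xa_1 + c_k$ (with $c_k$ a combinatorial constant); to evaluate it, one expands $\Wr_2$ through the Wronskian identities collected in Appendix~\ref{appendixA} and simplifies using the explicit form $y_a(x) = \prod_b(x-\tilde{t}_b^{(a)})$ of the auxiliary polynomials.

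The main obstacle is controlling $\beta_2$ and extracting $\Res_{x=\alpha_i}\beta_2$. My plan is to use Theorem~\ref{q-pol to BAE}, which says that $\tilde{\boldsymbol{t}}$ satisfies the Bethe ansatz equations~\eqref{BAE}; these are precisely the relations needed to reduce the initially unwieldy expression for the residue of $\beta_2$ at $\alpha_i$ to a sum over the Bethe roots $\tilde{t}_b^{(m_i)}$, the other singular points $\alpha_j$, and the $z_a$'s. Combining this with the direct computation
\begin{equation*}
\frac{1}{\alpha_i}\Res_{x=\alpha_i}\frac{\beta_1^2(x)}{2} = m_i\sum_{b=1}^k(z_b+l_b) + m_i\sum_{j\ne i}\frac{m_j\alpha_j}{\alpha_i-\alpha_j},
\end{equation*}
obtained from the explicit form of $\beta_1$ above, the right-hand side of~\eqref{p5.4} matches term-for-term the master-function expression from the first step, with the additive constant $m_i^2/2 - m_i$ arising from the balance between the simple pole of $\beta_1$ at $\alpha_i$ (residue $-m_i\alpha_i$) and the exponent $\sum_{a=1}^{m_i}z_a + m_i/2$ in $\Phi$'s $\alpha_i$-factor.
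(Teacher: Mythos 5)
Your setup is sound, and the parts you actually compute agree with the paper's proof: the cancellation of the $+\boldsymbol{l}$ shift in Theorem~\ref{m function to eigenvalues}, the resulting expression for $h_i^V$ obtained by differentiating~\eqref{Phi}, the observation that $y_0=1$ gives a closed form for the Wronskian, the partial-fraction form of $\beta_1$, and the residue of $\tfrac12\beta_1^2$ all match formulas~\eqref{p5.4_2} and~\eqref{p5.4_4}. But the entire substance of the proposition lives in $\frac{1}{\alpha_i}\Res_{x=\alpha_i}\beta_2(x)$, and that is precisely the step you do not carry out. It is this residue that must produce the Bethe-root sum $\sum_{b}\alpha_i/\big(\tilde t^{(m_i)}_b-\alpha_i\big)$, convert the full sum $m_i\sum_{b}(z_b+l_b)$ coming from $\tfrac12\beta_1^2$ into the truncated sum $\sum_{a=1}^{m_i}z_a$, and turn $m_im_j$ into $\min(m_i,m_j)$; none of these cancellations is exhibited, and the additive constant $\tfrac{m_i^2}{2}-m_i$ is asserted rather than derived.

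Moreover, the route you propose for that step is off target in two ways. First, Appendix~\ref{appendixA} collects identities for the \emph{discrete} Wronskian $\mathcal{W}{\rm r}$ built from the shift $T$, not for the differential Wronskian $\Wr$, so it cannot be quoted to expand $\Wr_2$. Second, the Bethe ansatz equations~\eqref{BAE} are not what makes the residue computable. The paper instead uses the differential analogue of the factorization in Proposition~\ref{Sfact}: it writes $x^kD_V=\prod_{a=1}^{k}\bigl(x\frac{{\rm d}}{{\rm d}x}+Y_a\bigr)$ with $Y_a=-x\ln'\bigl(T_a(x)y_{a-1}(x)/y_a(x)\bigr)-z_a$, whence $\beta_2=\sum_{a<b}Y_aY_b+\sum_a xY_a'$ in closed form. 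The residue at $\alpha_i$ is then elementary, because Gaudin admissibility guarantees that $\alpha_i$ is not a root of any $y_a$ while $\alpha_i$ is a root of $T_a$ exactly for $a\le m_i$; the Bethe roots enter only through the regular value $\alpha_i\,y_{m_i}'(\alpha_i)/y_{m_i}(\alpha_i)$, and equations~\eqref{BAE} are never invoked. Without this factorization (or an equivalent closed form for $\beta_2$), your plan leaves the key identity unverified.
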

\begin{proof}
For each function $g$ of $x$, write $\ln'(g)=(\ln(g))'$, where $(\cdot)'$ is the differentiation with respect to $x$. By an analog of Proposition 2 for differential operators, see \cite{TU1}, we have
\begin{align}
D_{V}={} & \left(\frac{{\rm d}}{{\rm d}x}-\ln'\left(\frac{x^{z_{1}-k+1}T_{1}(x)}{y_{1}(x)}\right)\right)
\left(\frac{{\rm d}}{{\rm d}x}-\ln'\left(\frac{x^{z_{2}-k+2}T_{2}(x)y_{1}}{y_{2}(x)}\right)\right)\cdots\nonumber\\
&{}\times \left(\frac{{\rm d}}{{\rm d}x} -\ln'\left(\frac{x^{z_{k-1}-1}T_{k-1}(x)y_{k-2}(x)}{y_{k-1}(x)}\right)\right)\left(\frac{{\rm d}}{{\rm d}x} -\ln'\big(x^{z_{k}}T_{k}y_{k-1}(x)\big)\right).
\label{D_V}
\end{align}
Multiplying each side of \eqref{D_V} by $x^{k}$, we get
\begin{align}
x^{k}D_{V}={} & \left(x\frac{{\rm d}}{{\rm d}x}-x\ln'\left(\frac{T_{1}(x)}{y_{1}(x)}\right)-z_{1}\right) \left(x\frac{{\rm d}}{{\rm d}x}-x\ln'\left(\frac{T_{2}(x)y_{1}}{y_{2}(x)}\right)-z_{2}\right)\cdots\nonumber\\
&{}\times \left(x\frac{{\rm d}}{{\rm d}x}-x\ln'\left(T_{k}y_{k-1}(x)\right)-z_{k}\right).\label{x^kD_V}
\end{align}
Put $y_{0}(x)=y_{k}(x)=1$. For each $a=1\lc k$, denote
\[
Y_{a}=-x\ln'\left(\frac{T_{a}(x)y_{a-1}(x)}{y_{a}(x)}\right)-z_{a}.
\]
By formula \eqref{x^kD_V}, we have
\begin{equation}\label{p5.4_1}
\beta_{2}(x)=\sum_{1\leq a<b\leq k}Y_{a}(x)Y_{b}(x)+\sum_{a=1}^{k}xY_{a}'(x),\qquad \beta_{1}(x)=\sum_{a=1}^{k}Y_{a}(x).
\end{equation}

Since $\tilde{\boldsymbol{t}}$ is Gaudin admissible, for each $i=1\lc n$, $a=1\lc k-1$, $\alpha_{i}$ is not a root of the polynomial $y_{a}(x)$. Also, for each $i=1\lc n$, $\alpha_{i}$ is a root of the polynomial $T_{a}(x)$ if and only if $a\leq m_{i}$. Using this, we can compute:
\begin{gather}
\frac{1}{\alpha_{i}}\Res_{x=\alpha_{i}} \Bigg(\sum_{1\leq a<b\leq k}Y_{a}(x)Y_{b}(x)\Bigg)\nonumber \\
\qquad{} =\sum_{b=1}^{\bar{l}_{a}}\frac{\alpha_{i}}{\alpha_{i}-\tilde{t}_{b}^{(m_{i})}}+\sum_{a=1}^{m_{i}}\sum_{\substack{b=1\\b\neq a}}^{k}\Bigg(z_{b}+\sum_{\substack{j=1\\m_{j}\geq b}}^{n}\frac{\alpha_{i}}{\alpha_{i}-\alpha_{j}}\Bigg)+m_{i}(m_{i}-1),
\\
\frac{1}{\alpha_{i}}\Res_{x=\alpha_{i}}\Bigg(\sum_{a=1}^{k}xY_{a}'(x)\Bigg)=\frac{m_{i}(m_{i}-1)}{2},
\\
\label{p5.4_2}
\frac{1}{\alpha_{i}}\Res_{x=\alpha_{i}}\Bigg(\frac{1}{2}\Bigg(\sum_{a=1}^{k}Y_{a}(x)\Bigg)^{2}\Bigg)=\sum_{a=1}^{m_{i}}\sum_{b=1}^{k}\Bigg(z_{b}+\sum_{\substack{j=1\\m_{j}\geq b}}^{n}\frac{\alpha_{i}}{\alpha_{i}-\alpha_{j}}\Bigg)+m_{i}^{2}.
\end{gather}

From formulas \eqref{p5.4_1}--\eqref{p5.4_2}, we get
\begin{gather}
\frac{1}{\alpha_{i}}\Res_{x=\alpha_{i}} \left(\frac{1}{2}\beta_{1}^{2}(x)-\beta_{2}(x)\right) \nonumber \\
\qquad{} =\sum_{b=1}^{\bar{l}_{a}}\frac{\alpha_{i}}{\tilde{t}_{b}^{(m_{i})}-\alpha_{i}}+\sum_{a=1}^{m_{i}}z_{a}+\sum_{\substack{j=1\\j\neq i}}^{n}\frac{\alpha_{i}\min(m_{i},m_{j})}{\alpha_{i}-\alpha_{j}}-\frac{m_{i}^{2}}{2}+\frac{3}{2}m_{i}.\label{p5.4_3}
\end{gather}

On the other hand, using formula \eqref{Phi}, we can compute
\begin{gather}
\left(\alpha_{i}\frac{\partial}{\partial\alpha_{i}}\ln\Phi\right) (\tilde{\boldsymbol{t}},\bar{\alpha},\bar{z},\boldsymbol{l},\boldsymbol{m})\nonumber\\
\qquad{} = \sum_{b=1}^{\bar{l}_{a}}\frac{\alpha_{i}}{\tilde{t}_{b}^{(m_{i})}-\alpha_{i}}+\sum_{a=1}^{m_{i}}z_{a}+\sum_{\substack{j=1\\j\neq i}}^{n}\frac{\alpha_{i}\min(m_{i},m_{j})}{\alpha_{i}-\alpha_{j}}+\frac{m_{i}}{2}.\label{p5.4_4}
\end{gather}
Comparing formulas \eqref{p5.4_3}, \eqref{p5.4_4}, and \eqref{Phi to h}, we get relation \eqref{p5.4}.
\end{proof}

\subsection{Bethe ansatz method for XXX-type spin chain model}\label{BA for XXX}
Fix sequences $\boldsymbol{l}=(l_{1}\lc l_{k})\in\Z_{\geq 0}^{k}$ and $\boldsymbol{m}=(m_{1}\lc m_{n})\in\Z_{\geq 0}^{n}$ such that $\sum_{a=1}^{k}l_{a}=\sum_{i=1}^{n}m_{i}$. Assume that $\mathfrak{P}_{kn}[\boldsymbol{l},\boldsymbol{m}]\neq \{0\}$. Unlike in the previous section, we do not assume that $l_{a}\neq 0$ and $m_{i}\neq 0$ for all $a=1\lc k$, $i=1\lc n$.
For each $i=0\lc n-1$, define
\begin{equation}\label{m bar}
\bar{m}_{i}=\sum_{j=i+1}^{n}\left(m_{j}-\sum_{a=1}^{k}(\omega_{l_{a}})_{j}\right).
\end{equation}
The numbers $\bar{m}_{1}\lc \bar{m}_{n-1}$ are the $(\gl_{k},\gl_{n})$-dual analogs of the numbers $\bar{l}_{1}\lc \bar{l}_{k-1}$, see formula~\eqref{bar l}. Recall that $\mathfrak{P}_{kn}[\boldsymbol{l},\boldsymbol{m}]\neq\{0\}$ implies $\bar{l}_{a}\geq 0$, $a=0\lc k-1$. Similarly, $\mathfrak{P}_{kn}[\boldsymbol{l},\boldsymbol{m}]\neq\{0\}$ implies $\bar{m}_{i}\geq 0$, $i=0\lc n-1$.

Let $\boldsymbol{t}$ be a set of $\bar{m}_{1}+\dots +\bar{m}_{n-1}$ variables:
\[
\boldsymbol{t}=\big(t^{(1)}_{1}\lc t^{(1)}_{\bar{m}_{1}},t^{(2)}_{1}\lc t^{(2)}_{\bar{m}_{2}}\lc t^{(n-1)}_{1}\lc t^{(n-1)}_{\bar{m}_{n-1}}\big).
\]

Fix sequences of pairwise distinct complex numbers $\bar{z}=(z_{1}\lc z_{k})$ and $\bar{\alpha}=(\alpha_{1}\lc \alpha_{n})$. We have $\bar{m}_{0}=0$. Also, put $\bar{m}_{n}=0$. The XXX Bethe ansatz equations is the following system of $\bar{m}_{1}+\dots +\bar{m}_{n-1}$ equations:
\begin{gather}\label{XXXBAE}
\frac{\alpha_{i+1}}{\alpha_{i}}=\prod_{\substack{a=1\\l_{a}=i}}^{k}\frac{t_{b}^{(l_{a})}-z_{a}+1}{t_{b}^{(l_{a})}-z_{a}}\prod_{a=1}^{\bar{m}_{i-1}}\frac{t_{b}^{(i)}-t_{a}^{(i-1)}+1}{t_{b}^{(i)}-t_{a}^{(i-1)}}\prod_{a=1}^{\bar{m}_{i+1}}\frac{t_{b}^{(i)}-t_{a}^{(i+1)}}{t_{b}^{(i)}-t_{a}^{(i-1)}-1}\prod_{\substack{a=1\\a\neq b}}^{\bar{m}_{i}}\frac{t_{b}^{(i)}-t_{a}^{(i)}-1}{t_{b}^{(i)}-t_{a}^{(i)}+1},\!\!\!
\end{gather}
where $i=1\lc n-1$, $b=1\lc \bar{m}_{i}$.

A solution $\boldsymbol{t}$ of the XXX Bethe ansatz equations \eqref{XXXBAE} is called XXX-admissible if $t^{(i)}_{a}\neq t^{(i)}_{b}$, $t^{(j)}_{a'}\neq t^{(j+1)}_{b'}$ for any $i=1\lc n-1$, $a,b=1\lc \bar{m}_{i}$, $a\neq b$, $j=1\lc n-2$, $a'=1\lc \bar{m}_{j}$, $b'=1\lc \bar{m}_{j+1}$.

For each $i,j=1\lc n$, define
\begin{gather}\label{X i}
\mathcal{X}_{i}(x, \boldsymbol{t}, \bar{z}, \bar{\alpha})=\alpha_{i}\prod_{\substack{a=1\\l_{a}\geq i}}^{k}\frac{x-z_{a}+1}{x-z_{a}}\prod_{a=1}^{\bar{m}_{i-1}}\frac{x-t^{(i-1)}_{a}+1}{x-t^{(i-1)}_{a}}\prod_{a=1}^{\bar{m}_{i}}\frac{x-t^{(i)}_{a}-1}{x-t^{(i)}_{a}},
\\
\label{tilde E}
\tilde{E}_{j}(x, \boldsymbol{t}, \bar{z}, \bar{\alpha})=\sum_{1\leq i_{1}<\dots <i_{j}\leq n}\mathcal{X}_{i_{1}}(x)\mathcal{X}_{i_{2}}(x-1)\dots \mathcal{X}_{i_{j}}(x-j+1).
\end{gather}
In the last formula $\mathcal{X}_{i}(x)=\mathcal{X}_{i}(x, \boldsymbol{t}, \bar{z}, \bar{\alpha})$, $i=1\lc n$.

Introduce a new variable $u$. Consider the following polynomial in $u$:
\[
E(u,x,\boldsymbol{t}, \bar{z}, \bar{\alpha})=u^{n}+\sum_{j=1}^{n}\tilde{E}_{j}(x,\boldsymbol{t}, \bar{z}, \bar{\alpha})u^{n-i},
\]
which is also a rational function of $x$ regular at infinity. Let $E_{a}(u,\boldsymbol{t}, \bar{z}, \bar{\alpha})$, $a\in\Z_{\geq 0}$ be the coefficients of the Laurent series at infinity of $E(u,x,\boldsymbol{t}, \bar{z}, \bar{\alpha})$ as a function of $x$:
\begin{equation}\label{E expansion}
E(u,x,\boldsymbol{t}, \bar{z}, \bar{\alpha})=\sum_{a=0}^{\infty}x^{-a}E_{a}(u,\boldsymbol{t}, \bar{z}, \bar{\alpha}).
\end{equation}

In \cite{MTV6}, a certain function $\psi_{i}(\boldsymbol{t},\bar{z})$ of $\boldsymbol{t}$ called the \textit{universal weight function for the XXX-type spin chain model} was defined. This function takes values in tensor products of highest weight $\gl_{n}$-modules. In the case that we need, $\psi_{i}(\boldsymbol{t},\bar{z})$ is a $\mathfrak{P}_{kn}[\boldsymbol{l},\boldsymbol{m}]$-valued function. If $\boldsymbol{t}$ is an XXX-admissible solution of the XXX Bethe ansatz equations \eqref{XXXBAE}, and $\psi_{i}(\boldsymbol{t},\bar{z})\neq 0$, then $\psi_{i}(\boldsymbol{t},\bar{z})$ is a~common eigenvector of
the higher transfer matrices for the XXX-type spin chain model. Higher transfer matrices are series in $x^{-1}$, whose coefficients generate a large commutative subalgebra called the XXX Bethe subalgebra inside the Yangian $Y(\gl_{n})$. The XXX Bethe subalgebra depends on parameters $\bar{\alpha}=(\alpha_{1}\lc\alpha_{n})$. The algebra $Y(\gl_{n})$ acts on $\mathfrak{P}_{kn}$. This action depends on parameters $\bar{z}=(z_{1}\lc z_{k})$. Therefore, we have a homomorphism $\rho^{Y}_{\bar{z}}\colon Y(\mathfrak{gl}_{n})\rightarrow\End (\mathfrak{P}_{kn})$.

The images of the trigonometric dynamical Hamiltonians under the action $\rho^{\langle n,k\rangle}\colon (U(\mathfrak{gl}_{n}))^{\otimes k}\allowbreak \rightarrow \End (\mathfrak{P}_{kn})$ introduced in formula \eqref{action n} can be considered as elements of the image of the XXX Bethe subalgebra under the map $\rho^{Y}_{\bar{z}}$, see \cite[Appendix~B]{MTV6}. In particular, if $\boldsymbol{t}$ is an XXX-admissible solution of the XXX Bethe ansatz equations~\eqref{XXXBAE}, and $\psi_{i}(\boldsymbol{t},\bar{z})\neq 0$, then $\psi_{i}(\boldsymbol{t},\bar{z})$ is a common eigenvector of the dynamical Hamiltonians, and the corresponding eigenvalue can be computed using \cite[Proposition~B.1]{MTV6}. We will formulate the result in the following theorem:
\begin{thm}\label{Bethe eigenvectors for G}
Let $\boldsymbol{t}$ be an XXX-admissible solution of the XXX Bethe ansatz equations~\eqref{XXXBAE}. Then for each $i=1\lc n$, we have
\[
G^{\langle n,k\rangle}_{i}(\bar{z},\bar{\alpha})\psi_{i}(\boldsymbol{t},\bar{z})=g^{\langle n,k\rangle}_{i}(\boldsymbol{t},\bar{z},\bar{\alpha})\psi_{i}(\boldsymbol{t},\bar{z}),
\]
where
\begin{equation}\label{g}
g^{\langle n,k\rangle}_{i}(\boldsymbol{t},\bar{z},\bar{\alpha})=-\frac{1}{\alpha_{i}}\Res_{u=\alpha_{i}}\frac{E_{2}(u,\boldsymbol{t}, \bar{z}, \bar{\alpha})}{\prod_{j=1}^{n}(u-\alpha_{i})}+\sum_{\substack{j=1\\j\neq i}}^{n}\frac{\alpha_{j}m_{i}m_{j}}{\alpha_{i}-\alpha_{j}}-\frac{m_{i}^{2}}{2},
\end{equation}
and $E_{2}(u,\boldsymbol{t}, \bar{z}, \bar{\alpha})$ is the coefficient in the expansion \eqref{E expansion}.
\end{thm}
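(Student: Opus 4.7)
The plan is to reduce the theorem to a direct application of Proposition B.1 of \cite{MTV6}, combined with the Bethe-ansatz eigenvalue formula for the higher transfer matrices of the XXX-type spin chain. Since $\boldsymbol{t}$ is XXX-admissible, the universal weight function $\psi_i(\boldsymbol{t},\bar{z})$ is a common eigenvector of the image under $\rho^{Y}_{\bar{z}}\colon Y(\gl_n)\to\End(\mathfrak{P}_{kn})$ of every coefficient of every higher transfer matrix, with eigenvalues encoded by the generating function $E(u,x,\boldsymbol{t},\bar{z},\bar{\alpha})$ from \eqref{E expansion}. The paragraph preceding the statement already notes that the images $\rho^{\langle n,k\rangle}\bigl(G_i^{\langle n,k\rangle}(\bar{z},\bar{\alpha})\bigr)$ lie in the image of the XXX Bethe subalgebra under $\rho^{Y}_{\bar{z}}$, so $\psi_i(\boldsymbol{t},\bar{z})$ is automatically an eigenvector of each $G_i^{\langle n,k\rangle}(\bar{z},\bar{\alpha})$; the content of the theorem is the identification of the eigenvalue with \eqref{g}.

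First I would record the explicit expression coming from Proposition B.1 of \cite{MTV6}, which writes $\rho^{\langle n,k\rangle}\bigl(G_i^{\langle n,k\rangle}(\bar{z},\bar{\alpha})\bigr)$ as a specific linear combination of coefficients of the Yangian transfer matrices, localized at $u=\alpha_i$ by a residue. Applied to $\psi_i(\boldsymbol{t},\bar{z})$, each transfer-matrix coefficient is replaced by the corresponding coefficient of the Laurent expansion of $E(u,x,\boldsymbol{t},\bar{z},\bar{\alpha})$ at $x=\infty$; the relevant order is $x^{-2}$, so the prescription of \cite{MTV6} produces exactly the residue term $-\alpha_i^{-1}\Res_{u=\alpha_i}E_2(u,\boldsymbol{t},\bar{z},\bar{\alpha})/\prod_{j=1}^{n}(u-\alpha_j)$ appearing in \eqref{g}.

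The remaining summands in \eqref{g} come from the Cartan part of the dynamical Hamiltonian: on a weight vector of weight $(m_1,\dots,m_n)$ the summand $-\tfrac12\bigl(e_{ii}^{\langle n\rangle}\bigr)^2$ contributes $-m_i^2/2$, while the off-diagonal piece $\alpha_j\bigl(e_{ij}^{\langle n\rangle}e_{ji}^{\langle n\rangle}-e_{ii}^{\langle n\rangle}\bigr)/(\alpha_i-\alpha_j)$ together with the cross-level terms $\sum_{a<b}\bigl(e_{ij}^{\langle n\rangle}\bigr)_{(a)}\bigl(e_{ji}^{\langle n\rangle}\bigr)_{(b)}$ collapse, by the weight calculation and by use of the Bethe equations \eqref{XXXBAE} to simplify the contributions of the auxiliary variables $t^{(i\pm 1)}_a$ at $u=\alpha_i$, to the sum $\sum_{j\neq i}\alpha_j m_i m_j/(\alpha_i-\alpha_j)$. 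The main obstacle is bookkeeping rather than conceptual: aligning the normalization of the Yangian generators and the pseudo-vacuum eigenvalue used in \cite{MTV6} with the conventions of Section~\ref{trig Gaudin and Dyn}, and then confirming the residue formula by expanding each factor of \eqref{X i} as $\mathcal{X}_j(x-s)=\alpha_j\bigl(1+O(x^{-1})\bigr)$ and reading off the $x^{-2}$-coefficient of the product \eqref{tilde E} at $u=\alpha_i$.
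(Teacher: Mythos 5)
Your proposal matches the paper exactly: the paper gives no independent proof of this theorem, but states it as a direct reformulation of \cite[Proposition~B.1 and Appendix~B]{MTV6}, using the fact that the images of the dynamical Hamiltonians lie in the image of the XXX Bethe subalgebra and that $\psi_{i}(\boldsymbol{t},\bar{z})$ is an eigenvector of the higher transfer matrices with eigenvalues read off from $E(u,x,\boldsymbol{t},\bar{z},\bar{\alpha})$. Your additional bookkeeping about where the residue term and the weight-dependent summands originate goes beyond what the paper records, but the route is the same.
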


\subsection[Spaces of quasi-exponentials and eigenvalues of trigonometric dynamical Hamiltonians]{Spaces of quasi-exponentials and eigenvalues\\ of trigonometric dynamical Hamiltonians}\label{qexp to BAE}

Assume again that $l_{a}\neq 0$ and $m_{i}\neq 0$ for all $a=1\lc k$, $i=1\lc n$. Let the data $\big(\bar{\alpha},\bar{\mu};\bar{z},\bar{\lambda}\big)$ be like in Section~\ref{qpol and BA}, and let $W$ be a space of quasi-exponentials with the difference data $\big(\bar{\alpha},\bar{\mu}';-\bar{z},\bar{\lambda}'\big)$. Then~$W$ has a basis of the form
\[
\big\{\alpha_{1}^{x}r_{1}(x),\alpha_{2}^{x}r_{2}(x)\lc \alpha_{n}^{x}r_{n}(x)\big\},
\]
where $r_{1}(x)\lc r_{n}(x)$ are polynomials and $\deg r_{i}(x)=m_{i}$.

For each $i=1\lc n$, define
\begin{equation}\label{difference T}
T_{i}(x)=\prod_{\substack{a=1\\l_{a}\geq i}}^{k}(x+z_{a}+l_{a}-i).
\end{equation}

The following lemma is a special case of Lemma 3.7 in \cite{MTV4}:
\begin{lem}\label{T is local frame}
For each $i=0\lc n-1$, $j_{1}\lc j_{n-i}\in\{1\lc n\}$, the functions
\[
\frac{\mathcal{W}{\rm r}\big(\alpha_{j_{1}}^{x}r_{j_{1}}(x),\alpha_{j_{2}}^{x}r_{j_{2}}(x)\lc \alpha_{j_{n-i}}^{x}r_{j_{n-i}}(x)\big)}{\prod_{l=i+1}^{n}\big(\alpha_{j_{n-l+1}}^{x}T_{j}(x)\big)}
\]
 are polynomials.
\end{lem}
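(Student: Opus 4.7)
Without loss of generality, assume $j_{1},\ldots,j_{n-i}$ are distinct, since otherwise the Wronskian vanishes identically. The first step is to strip the exponential factors. Using
\[
T^{m-1}\bigl(\alpha_{j_{l}}^{x}r_{j_{l}}(x)\bigr)=\alpha_{j_{l}}^{m-1}\alpha_{j_{l}}^{x}r_{j_{l}}(x+m-1),
\]
pull $\alpha_{j_{l}}^{x}$ out of the $l$-th row of the Wronskian matrix to obtain
\[
\mathcal{W}{\rm r}\bigl(\alpha_{j_{1}}^{x}r_{j_{1}}(x),\ldots,\alpha_{j_{n-i}}^{x}r_{j_{n-i}}(x)\bigr)=\prod_{l=1}^{n-i}\alpha_{j_{l}}^{x}\cdot\mathcal{P}(x),
\]
where $\mathcal{P}(x)=\det\bigl(\alpha_{j_{l}}^{m-1}r_{j_{l}}(x+m-1)\bigr)_{l,m=1}^{n-i}$ is a polynomial in~$x$. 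The exponential factors cancel against those in the denominator, so the lemma reduces to showing that $\mathcal{P}(x)$ is divisible by $\prod_{l=1}^{n-i}T_{j_{l}}(x)$.

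Second, locate the zeros of the denominator. By~\eqref{difference T}, the zeros of $T_{j_{l}}(x)$ are the points $x=j_{l}-z_{a}-l_{a}$ for indices $a$ with $l_{a}\geq j_{l}$. Because $z_{a}-z_{b}\notin\mathbb Z$ for $a\neq b$, the zeros of $\prod_{l}T_{j_{l}}(x)$ partition into disjoint $\mathbb Z$-cosets, one per $a$; within a fixed $a$, the distinctness of $j_{1},\ldots,j_{n-i}$ keeps the zeros pairwise distinct, so each appears with multiplicity one. It therefore suffices to verify, for each admissible pair $(l,a)$, the single vanishing $\mathcal{P}(j_{l}-z_{a}-l_{a})=0$.

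The main step is to convert the discrete exponent data of $W$ at $-z_{a}$ into the required vanishing of $\mathcal{P}$. Since $W$ has difference data $\bigl(\bar{\alpha},\bar{\mu}';-\bar{z},\bar{\lambda}'\bigr)$ with $\lambda^{(a)}=(l_{a},0,\ldots)$, the $W$-partition at $-z_{a}$ is $\omega_{l_{a}}$, and the sequence of discrete exponents of $W$ at $-z_{a}$ is $\{0,1,\ldots,n\}\setminus\{n-l_{a}\}$. Pick an adapted basis $\{\psi_{1},\ldots,\psi_{n}\}$ of $W$ in which each $\psi_{s}$ vanishes on the prescribed initial string $-z_{a},-z_{a}+1,\ldots$ of length equal to its exponent. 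Re-express the restricted family $\{\alpha_{j_{l}}^{x}r_{j_{l}}(x)\}_{l=1}^{n-i}$ in this adapted basis; substituting into $\mathcal{P}$ at $x=j_{l}-z_{a}-l_{a}$, the shifted evaluation arguments $x+m-1=-z_{a}+(j_{l}-l_{a}+m-1)$ fall into the vanishing range of enough adapted basis elements to induce a linear dependence among the rows of the evaluated matrix, forcing $\mathcal{P}(j_{l}-z_{a}-l_{a})=0$.

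The main obstacle is the bookkeeping in this last step: the $(n-i)$-element subset $\{\alpha_{j_{l}}^{x}r_{j_{l}}\}$ spans only a proper subspace of~$W$, whose own discrete exponents at $-z_{a}$ are not directly read off from those of~$W$. I would handle this by induction on the order $n-i$ of the Wronskian, using Proposition~\ref{Sfact} and its factorization $S=\prod_{s}(T-g_{s}(x+1)/g_{s}(x))$ to peel off one quasi-exponential at a time and track how zeros at integer shifts of $-z_{a}$ propagate through the ratios $g_{s}(x+1)/g_{s}(x)$. This is the inductive argument carried out in \cite[Lemma~3.7]{MTV4} in the generality needed here.
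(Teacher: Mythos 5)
There is a genuine problem with your reduction, caused by a misreading of the denominator (admittedly invited by a typo in the statement: the ``$T_{j}(x)$'' there should be ``$T_{l}(x)$'', as one sees by comparing with the special case \eqref{difference y} that the lemma generalizes, and with \cite[Lemma~3.7]{MTV4}). The product of $T$-factors in the denominator is $\prod_{l=i+1}^{n}T_{l}(x)$ --- the indices run over $i+1\lc n$ and do \emph{not} depend on the chosen subset $j_{1}\lc j_{n-i}$ --- whereas you reduce the lemma to divisibility of $\mathcal{P}(x)$ by $\prod_{l=1}^{n-i}T_{j_{l}}(x)$. These are genuinely different polynomials, and with your reading the statement is false. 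For example, take $n=2$, $k=1$, $m_{1}=m_{2}=1$, $l_{1}=2$, so that $T_{1}(x)=x+z_{1}+1$ and $T_{2}(x)=x+z_{1}$. A space $W$ with the difference data of Section~\ref{qexp to BAE} then has both discrete exponents at $-z_{1}$ positive, so every element of $W$ vanishes at $-z_{1}$; hence $r_{1}(x)=c_{1}(x+z_{1})$ (as $\deg r_{1}=1$). For $i=1$, $j_{1}=1$ the correct quotient is $r_{1}(x)/T_{2}(x)=c_{1}$, a polynomial, while your quotient $r_{1}(x)/T_{1}(x)=c_{1}(x+z_{1})/(x+z_{1}+1)$ is not. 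So the divisibility you set out to prove does not hold; the zeros to be accounted for sit at $x=-z_{a}-l_{a}+j$ for $j=i+1\lc l_{a}$, not at $x=-z_{a}-l_{a}+j_{l}$.

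Beyond that, the step carrying the actual content --- converting the discrete-exponent data of $W$ at $-z_{a}$ into vanishing of the sub-Wronskian at the prescribed points --- is only gestured at (``fall into the vanishing range of enough adapted basis elements to induce a linear dependence''), and you yourself identify the obstacle that the exponents of the span of $\alpha_{j_{1}}^{x}r_{j_{1}}\lc\alpha_{j_{n-i}}^{x}r_{j_{n-i}}$ are not determined by those of $W$, before deferring to \cite[Lemma~3.7]{MTV4}. That citation is exactly the paper's entire proof: Lemma~\ref{T is local frame} is stated as a special case of \cite[Lemma~3.7]{MTV4} with no argument given. So your proposal ultimately rests on the same reference as the paper, while the sketch preceding it aims at the wrong divisibility statement.
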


For each $i=0\lc n-1$, $j=1\lc n$, define
\begin{gather}\label{difference y}
y_{i}(x)=\frac{\mathcal{W}{\rm r}\big(\alpha_{n}^{x}r_{n}(x),\alpha_{n-1}^{x}r_{n-1}(x)\lc \alpha_{i+1}^{x}r_{i+1}(x)\big)}{\prod_{j=i+1}^{n}\big(\alpha_{j}^{x}T_{j}(x)\big)},
\qquad \tilde{T}_{j}(x)=\prod_{\substack{a=1\\l_{a}=j}}^{k}(x+z_{a}).
\end{gather}
According to Lemma \ref{T is local frame}, the functions $y_{0}(x)\lc y_{n-1}(x)$ are polynomials.
\begin{lem}
For each $i=1\lc n-1$, there exists a polynomial $\tilde{y}_{i}$ such that
\begin{equation}\label{y i is fertile}
\mathcal{W}{\rm r}\left(y_{i}(x),\frac{\alpha_{i}^{x}}{\alpha_{i+1}^{x}}\tilde{y}_{i}(x)\right)=\frac{\alpha_{i}^{x}}{\alpha_{i+1}^{x}}\tilde{T}_{i}(x)y_{i-1}(x)y_{i+1}(x+1).
\end{equation}
\end{lem}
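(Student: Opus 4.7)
The plan is to construct $\tilde{y}_i$ explicitly as a scalar multiple of a Casoratian ratio built from the basis quasi-exponentials $f_j(x):=\alpha_j^x r_j(x)$, and then verify~\eqref{y i is fertile} by reducing it to a discrete Jacobi (Desnanot) identity for Casoratians from Appendix~\ref{appendixA}. Specifically, I would set
\[
\tilde{y}_i(x):=\alpha_{i+1}\,\frac{\mathcal{W}{\rm r}\bigl(f_n(x),f_{n-1}(x)\lc f_{i+2}(x),f_i(x)\bigr)}{\alpha_i^x\,\prod_{j=i+2}^{n}\alpha_j^x\,\prod_{j=i+1}^{n}T_j(x)},
\]
which is a polynomial by Lemma~\ref{T is local frame} applied to the index tuple $(j_1\lc j_{n-i})=(n,n-1\lc i+2,i)$. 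By construction, the denominator of $(\alpha_i^x/\alpha_{i+1}^x)\tilde{y}_i(x)$ matches the common denominator $D(x):=\prod_{j=i+1}^{n}\alpha_j^x T_j(x)$ of $y_i$, so that $y_i(x)=\mathcal{W}{\rm r}(f_n\lc f_{i+1})(x)/D(x)$ and $(\alpha_i^x/\alpha_{i+1}^x)\tilde{y}_i(x)=\alpha_{i+1}\,\mathcal{W}{\rm r}(f_n\lc f_{i+2},f_i)(x)/D(x)$ share the same denominator.

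Given this shared denominator, the discrete Wronskian $\mathcal{W}{\rm r}\bigl(y_i,(\alpha_i^x/\alpha_{i+1}^x)\tilde{y}_i\bigr)(x)$ reduces to the factor $\alpha_{i+1}/\bigl[D(x)D(x+1)\bigr]$ times the bracket
\[
\mathcal{W}{\rm r}(f_n\lc f_{i+1})(x)\,\mathcal{W}{\rm r}(f_n\lc f_{i+2},f_i)(x+1)-\mathcal{W}{\rm r}(f_n\lc f_{i+2},f_i)(x)\,\mathcal{W}{\rm r}(f_n\lc f_{i+1})(x+1).
\]
The central step is to recognize this bracket as an instance of a discrete Jacobi/Plücker identity for Casoratians with common functions $f_n\lc f_{i+2}$ extended by $f_{i+1}$ and $f_i$; it equals $\mathcal{W}{\rm r}(f_n\lc f_{i+2})(x+1)\,\mathcal{W}{\rm r}(f_n\lc f_i)(x)$, the discrete analog of the classical Desnanot--Jacobi identity for Wronskians. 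I expect this to follow directly from the Wronskian identities~\eqref{Wr1} and~\eqref{Wr2} of Appendix~\ref{appendixA}.

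Finally, I would rewrite $\mathcal{W}{\rm r}(f_n\lc f_{i+2})(x+1)$ and $\mathcal{W}{\rm r}(f_n\lc f_i)(x)$ as $y_{i+1}(x+1)$ and $y_{i-1}(x)$ times the corresponding denominators coming from~\eqref{difference y}, and simplify the resulting ratio of $\alpha_j^x$'s and $T_j$'s. The algebraic identity $T_i(x)=\tilde{T}_i(x)\,T_{i+1}(x+1)$, immediate from the definitions~\eqref{difference T} and~\eqref{difference y}, then produces exactly the factor $\tilde{T}_i(x)$ on the right-hand side of~\eqref{y i is fertile}, while the remaining exponential factors (combined with the prefactor $\alpha_{i+1}$ built into $\tilde{y}_i$) collapse to $\alpha_i^x/\alpha_{i+1}^x$. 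The main obstacle is the bookkeeping of the discrete shifts and the $\alpha_j^x$ factors: the scaling constant $\alpha_{i+1}$ in the definition of $\tilde{y}_i$ is forced by matching these factors, and identifying the precise shifted form of the Casoratian Jacobi identity (rather than an unshifted variant) is the key algebraic input.
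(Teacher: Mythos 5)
Your proposal is correct and follows essentially the same route as the paper: it defines the same $\tilde{y}_i$ as a Casoratian ratio (polynomial by Lemma~\ref{T is local frame}) and derives~\eqref{y i is fertile} from the identities~\eqref{Wr1} and~\eqref{Wr2} of Appendix~\ref{appendixA}, with the $m=2$ case of~\eqref{Wr2} playing the role of the discrete Desnanot--Jacobi identity and $T_i(x)=\tilde{T}_i(x)T_{i+1}(x+1)$ handling the remaining bookkeeping. In fact your write-up supplies the details the paper only sketches.
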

\begin{proof}
Set
\[\tilde{y}_{i}(x)=\alpha_{i+1}\frac{\mathcal{W}{\rm r}\big(\alpha_{n}^{x}r_{n}(x)\lc \alpha_{i+2}^{x}r_{i+2}(x),\alpha_{i}^{x}r_{i}(x)\big)}{\alpha_{n}^{x}\cdots\alpha_{i+2}^{x}\alpha_{i}^{x}\prod_{j=i+1}^{n}(T_{j}(x))},\qquad i=1\lc n-1.\]
By Lemma \ref{y i is fertile}, $\tilde{y}_{1}(x)\lc \tilde{y}_{n-1}(x)$ are polynomials, and \eqref{y i is fertile} follows from discrete Wronskian identities \eqref{Wr1} and \eqref{Wr2}.
\end{proof}

Denote $u_{i}(x)=y_{i}(x+i/2)$, $i=0\lc n-1$. Then equations \eqref{y i is fertile} become
\begin{equation}\label{u i is fertile}
\mathcal{W}{\rm r}\left(u_{i}(x),\frac{\alpha_{i}^{x}}{\alpha_{i+1}^{x}}\tilde{y}_{i}(x+i/2)\right)=\frac{\alpha_{i}^{x}}{\alpha_{i+1}^{x}}\tilde{T}_{i}(x+i/2)u_{i-1}(x+1/2)u_{i+1}(x+1/2),
\end{equation}
where $i=1\lc n-1$.

It is easy to see that for each $i=0\lc n-1$, $\deg u_{i} = \deg y_{i} = \bar{m}_{i}$, where $\bar{m}_{0}\lc \bar{m}_{n-1}$ are given by formula \eqref{m bar}. In particular, $\deg u_{0} = \deg y_{0}=0$. One can normalize polynomials $r_{1}(x)\lc r_{n}(x)$ so that the polynomials $y_{0}(x)\lc y_{n-1}(x)$ (and hence $u_{0}(x)\lc u_{n-1}(x)$) are monic. For each $i=1\lc n-1$, write
\[u_{i}(x)=\prod_{a=1}^{\bar{m}_{i}}\big(x-s^{(i)}_{a}\big).\]

We will call the space $W$ XXX-admissible if for each $i=1\lc n-1$, the polynomial $u_{i}(x)$ has only simple roots, different from the roots of the polynomials $u_{i-1}(x+1/2)$, $u_{i+1}(x+1/2)$, $\tilde{T}_{i}(x+i/2)$, and $u_{i}(x+1)$.

The following theorem is a part of Theorem~7.4 in~\cite{MV}:
\begin{thm}
Let W be XXX-admissible, then relations \eqref{u i is fertile} imply
\begin{equation}\label{BAEs}
\frac{\alpha_{i+1}}{\alpha_{i}}=\prod_{\substack{a=1\\l_{a}=i}}^{k}\frac{s_{b}^{(l_{a})}-\check{z}_{a}+1/2}{s_{b}^{(l_{a})}-\check{z}_{a}-1/2}\prod_{|j-i|=1}\prod_{a=1}^{\bar{m}_{j}}\frac{s_{b}^{(i)}-s_{a}^{(j)}+1/2}{s_{b}^{(i)}-s_{a}^{(j)}-1/2}\prod_{\substack{a=1\\a\neq b}}^{\bar{m}_{i}}\frac{s_{b}^{(i)}-s_{a}^{(i)}-1}{s_{b}^{(i)}-s_{a}^{(i)}+1},
\end{equation}
where $i=1\lc n-1$, $b=1\lc \bar{m}_{i}$, and $\check{z}_{a}=-z_{a}-l_{a}/2+1/2$ for each $a=1\lc k$.
\end{thm}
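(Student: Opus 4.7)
The plan is to derive the Bethe ansatz equations \eqref{BAEs} from the fertility identities \eqref{u i is fertile} by specializing each identity at two consecutive integer shifts of $x$ and then taking a ratio. First I would factor out the exponential $\alpha_i^x/\alpha_{i+1}^x$ from both sides of \eqref{u i is fertile}, reducing it to the polynomial identity
\[
\frac{\alpha_i}{\alpha_{i+1}}\,u_i(x)\,\tilde y_i(x+i/2+1)-u_i(x+1)\,\tilde y_i(x+i/2)=\tilde T_i(x+i/2)\,u_{i-1}(x+1/2)\,u_{i+1}(x+1/2).
\]

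Next, I would evaluate this polynomial identity at $x=s_b^{(i)}$, which kills the first summand because $u_i(s_b^{(i)})=0$, and separately at $x=s_b^{(i)}-1$, which kills the second summand because $u_i(s_b^{(i)})=0$ after the shift. Dividing the two resulting scalar equations cancels the common factor $\tilde y_i(s_b^{(i)}+i/2)$ and produces an explicit expression for $\alpha_{i+1}/\alpha_i$ as a product of ratios of values of $\tilde T_i$, $u_{i-1}$, $u_{i+1}$, and $u_i$ at shifts of $s_b^{(i)}$ by $\pm 1/2$ or $\pm 1$. The XXX-admissibility hypothesis is precisely what guarantees that every factor cancelled or divided by at this stage is non-zero, legitimizing the ratio argument.

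The remaining step is bookkeeping. Using $u_j(x)=\prod_a(x-s_a^{(j)})$, the $u_{i\pm 1}$-ratios become the nearest-neighbour product $\prod_{|j-i|=1}\prod_a (s_b^{(i)}-s_a^{(j)}+1/2)/(s_b^{(i)}-s_a^{(j)}-1/2)$, and the $u_i$-ratio becomes $\prod_{a\ne b}(s_b^{(i)}-s_a^{(i)}-1)/(s_b^{(i)}-s_a^{(i)}+1)$, with the $a=b$ factor contributing an explicit $-1$ that combines with the sign inherent in the Wronskian expansion $\mathcal{W}{\rm r}(f,g)=f(x)g(x+1)-f(x+1)g(x)$. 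Finally, substituting $z_a=-\check z_a-l_a/2+1/2$ rewrites $\tilde T_i(x+i/2)=\prod_{l_a=i}(x+i/2+z_a)$ as $\prod_{l_a=i}(x-\check z_a+1/2)$, so the $\tilde T_i$-ratio matches the first factor of \eqref{BAEs}.

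The main obstacle is the careful sign accounting: one must check that the $-1$ contributed by the $a=b$ term in $u_i(s_b^{(i)}\pm 1)$ and the minus sign produced by the Wronskian conspire to give the correct orientation of \eqref{BAEs}, with $\alpha_{i+1}/\alpha_i$ (not its inverse) on the left. The only other delicate point is verifying that the non-coincidence conditions in the definition of XXX-admissibility rule out exactly the vanishings that the division argument must avoid; this is a direct, term-by-term comparison with the admissibility list.
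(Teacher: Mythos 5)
Your argument is correct. The paper itself gives no proof of this statement — it is quoted as part of Theorem~7.4 of the cited work of Mukhin and Varchenko — but your derivation is exactly the standard one underlying that result: expand the discrete Wronskian in \eqref{u i is fertile} as $u_i(x)\,g(x+1)-u_i(x+1)\,g(x)$, strip off the exponential, evaluate at $x=s_b^{(i)}$ and at $x=s_b^{(i)}-1$ so that one summand dies in each case, and divide. I checked the two delicate points you flag: the $-1$ from the $a=b$ factor of $u_i\big(s_b^{(i)}-1\big)/u_i\big(s_b^{(i)}+1\big)$ does cancel the minus sign from the Wronskian expansion, yielding $\alpha_{i+1}/\alpha_i$ with the orientation of \eqref{BAEs}; and the XXX-admissibility list (roots of $u_i$ simple and distinct from those of $u_{i\pm1}(x+1/2)$, $\tilde T_i(x+i/2)$, $u_i(x+1)$) is precisely what makes every quantity you divide by nonzero, including the factor $\tilde y_i\big(s_b^{(i)}+i/2\big)$, whose nonvanishing follows from the evaluation at $x=s_b^{(i)}$ itself. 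The substitution $z_a=-\check z_a-l_a/2+1/2$ with $l_a=i$ turns $\tilde T_i\big(s_b^{(i)}+i/2\big)/\tilde T_i\big(s_b^{(i)}+i/2-1\big)$ into the first product of \eqref{BAEs}, as you say.
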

A tuple of polynomials $u_{1}(x)\lc u_{n-1}(x)$ such that relations \eqref{u i is fertile} hold for some polynomials $\tilde{y}_{1}(x)\lc \tilde{y}_{n-1}(x)$ is called a \textit{fertile} tuple in \cite{MV}.

Let us call the equations \eqref{XXXBAE} the XXX Bethe ansatz equations associated to $\bar{z}=(z_{1}\lc z_{k})$. For each $i=1\lc n-1$, $a=1\lc \bar{m}_{i}$, set $t^{(i)}_{a}=s^{(i)}_{a}-i/2$. Then, using \eqref{BAEs}, it is easy to check that $\boldsymbol{t}=\big(t^{(1)}_{1}\lc t^{(n-1)}_{\bar{m}_{n-1}}\big)$ is an XXX-admissible solution of the XXX Bethe ansatz equations associated to $-\bar{z}-\bar{l}+\bar{1}=(-z_{1}-l_{1}+1, -z_{2}-l_{2}+1\lc -z_{k}-l_{k}+1)$. Therefore, to each XXX-admissible space of quasi-exponentials $W$ with the difference data $\big(\bar{\alpha},\bar{\mu}';-\bar{z},\bar{\lambda}'\big)$, corresponds a vector $v_{W}=\psi\big(\boldsymbol{t},-\bar{z}-\bar{l}+\bar{1}\big)\in\mathfrak{P}_{kn}[\boldsymbol{l},\boldsymbol{m}]$, which, provided that $v_{W}\neq 0$, is an eigenvector of the trigonometric dynamical Hamiltonians $G^{\langle n,k\rangle}_{1}\big({-}\bar{z}-\bar{l}+\bar{1},\bar{\alpha}\big)\lc G^{\langle n,k\rangle}_{n}\big({-}\bar{z}-\bar{l}+\bar{1},\bar{\alpha}\big)$, and the associated eigenvalues are given by the formula~\eqref{g}, where we should substitute $z_{a}\rightarrow -z_{a}-l_{a}+1$, $a=1\lc k$. We will call $v_{W}$ the Bethe vector corresponding to~$W$.

We are now going to relate the eigenvalues of the trigonometric dynamical Hamiltonians associated with the eigenvector $v_{W}$ and the coefficients of the fundamental difference operator~$S_{W}$ of the space~$W$.

Let $y_{0}(x)\lc y_{n-1}(x)$, $T_{1}(x)\lc T_{n}(x)$ be the polynomials given by \eqref{difference y} and \eqref{difference T}, respectively. Put $y_{n}(x)=1$. Define
\begin{equation}\label{difference Y}
Y_{i}=\alpha_{i}\frac{T_{i}(x+1)y_{i-1}(x+1)y_{i}(x)}{T_{i}(x)y_{i-1}(x)y_{i}(x+1)},\qquad i=1\lc n.
\end{equation}
Comparing formulas \eqref{S2}, \eqref{g i}, and \eqref{difference Y}, we get
\[
S_{W}=(T-Y_{1}(x))(T-Y_{2}(x))\cdots(T-Y_{n}(x)).
\]

For each $i=1\lc n-1$, write
\[y_{i}(x)=\prod_{a=1}^{\bar{m}_{i}}\big(x-\tilde{t}^{(i)}_{a}\big).\]
Then we have
\[ Y_{i}(x)=\alpha_{i}\prod_{\substack{a=1\\l_{a}\geq i}}^{k}\frac{x+z_{a}+l_{a}-i+1}{x+z_{a}+l_{a}-i}\prod_{a=1}^{\bar{m}_{i-1}}\frac{x-\tilde{t}^{(i-1)}_{a}+1}{x-\tilde{t}^{(i-1)}_{a}}
\prod_{a=1}^{\bar{m}_{i}}\frac{x-\tilde{t}^{(i)}_{a}-1}{x-\tilde{t}^{(i)}_{a}},\qquad i=1\lc n.\]

Since $y_{i}(x)=u_{i}(x-i/2)$, we have $s^{(i)}_{a}=\tilde{t}^{(i)}_{a}-i/2$, $i=1\lc n-1$, $a=1\lc\bar{m}_{i}$. Therefore, for the solution $\boldsymbol{t}=\big(t^{(1)}_{1}\lc t^{(n-1)}_{\bar{m}_{n-1}}\big)$ of the XXX Bethe ansatz equations corresponding to the space $W$, we get $t^{(i)}_{a}=s^{(i)}_{a}-i/2=\tilde{t}^{(i)}_{a}-i$. Denote this solution as $\tilde{\boldsymbol{t}}-\boldsymbol{i}$.

Comparing the last formula for $Y_{i}(x)$ with the formula \eqref{X i} for $\mathcal{X}_{i}(x, \boldsymbol{t}, \bar{z}, \bar{\alpha})$, we have
\begin{equation}\label{X Y}
\mathcal{X}_{i}\big(x, \tilde{\boldsymbol{t}}-\boldsymbol{i}, -\bar{z}-\bar{l}+\bar{1}, \bar{\alpha}\big)=Y_{i}(x+i-1).
\end{equation}

Let $\check{E}_{1}(x)\lc\check{E}_{n}(x)$ be the coefficients of the fundamental difference operator $S_{W}$ of the space~$W$:
\[ S_{W}=T^{n}+\sum_{i=1}^{n}\check{E}_{i}(x)T^{n-i}.\]
For each $i=1\lc n$, we have
\begin{equation}\label{check E}
\check{E}_{i}(x)=\sum_{1\leq i_{1}<\dots <i_{j}\leq n}Y_{i_{1}}(x+i_{1}-1)Y_{i_{2}}(x+i_{2}-2)\cdots Y_{i_{j}}(x+i_{j}-j).
\end{equation}

Comparing formulas \eqref{tilde E}, \eqref{check E}, and \eqref{X Y}, we get $\tilde{E}_{i}\big(x,\tilde{\boldsymbol{t}}-\boldsymbol{i}, -\bar{z}-\bar{l}+\bar{1}, \bar{\alpha}\big)=\check{E}_{i}(x)$. This, together with Theorem \ref{Bethe eigenvectors for G}, proves the following:
\begin{prop}\label{eigenv of trig D}
Let $W$ be an XXX-admissible space of quasi-exponentials $W$ with the difference data $\big(\bar{\alpha},\bar{\mu}';-\bar{z},\bar{\lambda}'\big)$. Let~$v_{W}$ be the Bethe vector corresponding to~$W$.
Write the fundamental difference operator $S_{W}$ of the space $W$ in the following form:
\[ S_{W}=\sum_{a=0}^{\infty}x^{-a}E_{a}(T),\]
where $E_{1}(T),E_{2}(T),\dots$ are some polynomials in $T$. Then we have
\[ G^{\langle n,k\rangle}_{i}\big({-}\bar{z}-\boldsymbol{l}+\bar{1},\bar{\alpha}\big)v_{W}=g^{W}_{i}v_{W},\]
where
\begin{equation*}
g^{W}_{i}=-\frac{1}{\alpha_{i}}\Res_{u=\alpha_{i}}\frac{E_{2}(u)}{\prod_{j=1}^{n}(u-\alpha_{i})}+\sum_{\substack{j=1\\j\neq i}}^{n}\frac{\alpha_{j}m_{i}m_{j}}{\alpha_{i}-\alpha_{j}}-\frac{m_{i}^{2}}{2}.
\end{equation*}
\end{prop}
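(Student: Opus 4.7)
The plan is to show that this proposition is essentially a rewriting of Theorem~\ref{Bethe eigenvectors for G} once the already-established identification $\tilde{E}_i(x,\tilde{\boldsymbol t}-\boldsymbol i,-\bar z-\bar l+\bar 1,\bar\alpha)=\check E_i(x)$ is in hand. First I would recall from the preceding paragraph that the shifted tuple $\tilde{\boldsymbol t}-\boldsymbol i$, defined by $t_a^{(i)}=\tilde t_a^{(i)}-i$, is an XXX-admissible solution of the XXX Bethe ansatz equations \eqref{XXXBAE} associated to the sequence $-\bar z-\bar l+\bar 1$, and that by construction $v_W=\psi(\tilde{\boldsymbol t}-\boldsymbol i,\,-\bar z-\bar l+\bar 1)$.

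Next, I would apply Theorem~\ref{Bethe eigenvectors for G} verbatim, with $\bar z$ replaced by $-\bar z-\bar l+\bar 1$ and $\boldsymbol t$ replaced by $\tilde{\boldsymbol t}-\boldsymbol i$. This gives at once that $v_W$ is a common eigenvector of $G_i^{\langle n,k\rangle}(-\bar z-\boldsymbol l+\bar 1,\bar\alpha)$ with eigenvalue
\[
g_i^{\langle n,k\rangle}(\tilde{\boldsymbol t}-\boldsymbol i,-\bar z-\bar l+\bar 1,\bar\alpha)=-\frac{1}{\alpha_i}\Res_{u=\alpha_i}\frac{E_2(u,\tilde{\boldsymbol t}-\boldsymbol i,-\bar z-\bar l+\bar 1,\bar\alpha)}{\prod_{j=1}^n(u-\alpha_i)}+\sum_{\substack{j=1\\j\neq i}}^n\frac{\alpha_j m_i m_j}{\alpha_i-\alpha_j}-\frac{m_i^2}{2}.
\]

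It then remains to identify $E_2(u)$ as defined in the statement (through the Laurent expansion $S_W=\sum_{a=0}^\infty x^{-a}E_a(T)$) with $E_2(u,\tilde{\boldsymbol t}-\boldsymbol i,-\bar z-\bar l+\bar 1,\bar\alpha)$ from~\eqref{E expansion}. Using the identity $\tilde E_i(x,\tilde{\boldsymbol t}-\boldsymbol i,-\bar z-\bar l+\bar 1,\bar\alpha)=\check E_i(x)$, the rational function in $x$ appearing in~\eqref{E expansion} coincides with the polynomial $u^n+\sum_{i=1}^n\check E_i(x)u^{n-i}$ obtained by formally replacing $T$ by $u$ in $S_W$; taking its Laurent series at $x=\infty$ (with the $x^{-a}$-factor placed on the left, matching the convention of the statement) gives $E_a(u,\tilde{\boldsymbol t}-\boldsymbol i,-\bar z-\bar l+\bar 1,\bar\alpha)=E_a(u)$ coefficient by coefficient. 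Substituting $a=2$ into the eigenvalue formula yields exactly the expression for $g_i^W$ in the statement, completing the proof.

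I do not expect a genuine obstacle: the computational content has already been absorbed into the identifications \eqref{X Y} and $\tilde E_i=\check E_i$, and the only mildly subtle point is the bookkeeping check that the Laurent coefficients $E_a(u)$ extracted from the operator $S_W$ (where $x$ and $T$ do not commute) agree with those of the scalar rational function $E(u,x,\tilde{\boldsymbol t}-\boldsymbol i,-\bar z-\bar l+\bar 1,\bar\alpha)$. Since $T$ is treated as a formal indeterminate in the expansion and $x^{-a}$ is written consistently on the left in both places, no reordering is required and the two expansions coincide term-by-term.
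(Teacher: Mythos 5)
Your proposal is correct and follows essentially the same route as the paper: the text immediately preceding the proposition establishes $\tilde{E}_{i}\big(x,\tilde{\boldsymbol{t}}-\boldsymbol{i},-\bar{z}-\bar{l}+\bar{1},\bar{\alpha}\big)=\check{E}_{i}(x)$ via \eqref{X Y} and \eqref{check E}, and the proposition is then obtained by applying Theorem~\ref{Bethe eigenvectors for G} to the solution $\tilde{\boldsymbol{t}}-\boldsymbol{i}$, exactly as you describe. Your extra remark on the consistency of the Laurent-expansion convention (with $x^{-a}$ on the left in both places) is a correct observation of a point the paper leaves implicit.
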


\subsection[Quotient difference operator and duality for trigonometric Gaudin and dynamical Hamiltonians]{Quotient difference operator and duality for trigonometric Gaudin\\ and dynamical Hamiltonians}\label{qdo and duality}

Fix a pair $(\boldsymbol{l}, \boldsymbol{m})$ like in the previous section. Let the data $\big(\bar{z},\bar{\lambda};\bar{\alpha},\bar{\mu}\big)$ be like in Section \ref{qpol and BA}. Let~$V$ be a Gaudin admissible space of quasi-polynomials with the data $\big(\bar{z},\bar{\lambda};\bar{\alpha},\bar{\mu}\big)$.

Recall the maps $\mathfrak{T}_{1}$ and $\mathfrak{T}_{3}$, see formulas \eqref{T_1} and \eqref{T_3}, respectively. Set $W=\mathfrak{T}_{1}(\mathfrak{T}_{3}(V))$. Then $W$ is a space of quasi-exponentials with the difference data $\big(\bar{\alpha},\bar{\mu}';-\bar{z},\bar{\lambda}'\big)$. In this section, we will relate the map $V\mapsto W=\mathfrak{T}_{1}(\mathfrak{T}_{3}(V))$ with the $(\gl_{k},\gl_{n})$-duality of the trigonometric Gaudin and dynamical Hamiltonians.

We will need the following lemma.
\begin{lem}\label{simple spectrum}
For generic $\bar{\alpha},\bar{z}$, the common eigenspaces
of the trigonometric dynamical Hamiltonians $G^{\langle n,k\rangle}_{1}(\bar{z},\bar{\alpha})\lc G^{\langle n,k\rangle}_{n}(\bar{z},\bar{\alpha})$
in $\mathfrak{P}_{kn}$ are one-dimensional.
\end{lem}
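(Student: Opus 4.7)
The plan is to establish simple joint spectrum of the commuting family $\rho^{\langle n,k\rangle}(G_{1}^{\langle n,k\rangle}(\bar{z},\bar{\alpha})),\ldots,\rho^{\langle n,k\rangle}(G_{n}^{\langle n,k\rangle}(\bar{z},\bar{\alpha}))$ in one particular regime of the parameters, and then invoke the Zariski-openness of the simple-spectrum condition to propagate it to generic $(\bar{z},\bar{\alpha})$. The key observation is that in the defining formula for $G_{i}^{\langle n,k\rangle}$, the parameters $\bar{z}$ appear only in the summand $D_{i}:=\sum_{a=1}^{k}z_{a}(e_{ii}^{\langle n\rangle})_{(a)}$, while all remaining terms form an operator $R_{i}(\bar{\alpha})$ independent of $\bar{z}$. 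Under $\rho^{\langle n,k\rangle}$ we have $D_{i}\mapsto \sum_{a=1}^{k}z_{a}\xi_{ai}\partial_{ai}$, which is diagonal in the monomial basis of $\mathfrak{P}_{kn}$ with eigenvalue $\sum_{a=1}^{k}z_{a}d_{ai}$ on the monomial $\prod_{a,i}\xi_{ai}^{d_{ai}}$, where $d_{ai}\in\{0,1\}$.

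First I would check that $\rho^{\langle n,k\rangle}(D_{1}),\ldots,\rho^{\langle n,k\rangle}(D_{n})$ have simple joint spectrum on $\mathfrak{P}_{kn}$ for generic $\bar{z}$: two distinct monomials correspond to distinct $0/1$-matrices $(d_{ai})\ne(d'_{ai})$, and their joint eigenvalue tuples coincide precisely when $\sum_{a=1}^{k}z_{a}(d_{ai}-d'_{ai})=0$ for every $i$, which fails outside finitely many hyperplanes in $\C^{k}$. Next, rescaling $\bar{z}\mapsto t\bar{z}$ gives $\rho^{\langle n,k\rangle}(G_{i}^{\langle n,k\rangle}(t\bar{z},\bar{\alpha}))=t\,\rho^{\langle n,k\rangle}(D_{i})+R_{i}(\bar{\alpha})$. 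Standard continuity of eigenvalues for an analytic family of commuting matrices implies that the joint eigenvalue tuples of the rescaled family have the form $t(\delta_{1},\ldots,\delta_{n})+O(1)$ as $t\to\infty$, where $(\delta_{i})$ ranges over the simple joint spectrum of $(\rho^{\langle n,k\rangle}(D_{i}))$. Since the leading tuples are pairwise distinct, the full tuples are pairwise distinct for $t$ sufficiently large, so at such points the common eigenspaces of the $G_{i}^{\langle n,k\rangle}$'s are one-dimensional.

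The last step is to upgrade this single regime to a generic statement. The condition of pairwise distinct joint eigenvalue tuples is cut out by the non-vanishing of a discriminant-type polynomial $\Delta(\bar{z},\bar{\alpha})$, constructed from the characteristic polynomials of the operators $\rho^{\langle n,k\rangle}(G_{i})$ and appropriate symmetric functions of their joint eigenvalues; this $\Delta$ is polynomial in $\bar{z}$ with coefficients rational in $\bar{\alpha}$, regular away from the diagonals $\alpha_{i}=\alpha_{j}$. The previous paragraph produces a point where $\Delta\ne 0$, so the non-vanishing locus of $\Delta$ is a Zariski-open dense subset of the parameter space, which is precisely the meaning of ``generic $\bar{z},\bar{\alpha}$'' in the statement. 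The main obstacle I expect is making the discriminant construction precise for a commuting family that depends rationally on $\bar{\alpha}$; once one restricts to the Zariski-open subset where $\alpha_{i}\ne\alpha_{j}$ for $i\ne j$, so that the $G_{i}^{\langle n,k\rangle}$ are well-defined and their joint spectrum varies algebraically, the construction proceeds routinely. As a sanity check one could equivalently work with the Gaudin Hamiltonians $H_{i}^{\langle k,n\rangle}$ via Proposition~\ref{Gaudin Dyn duality}, where the analogous leading term $\sum_{a}(z_{a}-l_{a}/2)(e_{aa}^{\langle k\rangle})_{(i)}$ on a weight subspace enjoys the same simple-spectrum property.
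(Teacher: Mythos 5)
Your proposal is correct and follows essentially the same route as the paper: both isolate the $\bar{z}$-dependent diagonal part $\sum_{a}z_{a}\big(e^{\langle n\rangle}_{ii}\big)_{(a)}$, observe that it separates monomials for suitable $\bar{z}$ (you use a hyperplane-complement condition where the paper uses $\Z$-linear independence), dominate the remaining $\bar{z}$-independent terms in a large-$\bar{z}$ regime, and conclude by Zariski-openness of the simple joint spectrum condition.
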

\begin{proof}
For every monomial $p\in\mathfrak{P}_{kn}$, we have
$\big(e^{\langle n\rangle}_{ii}\big)_{(a)}p=m^{a}_{i}(p)p$ for some $m^{a}_{i}(p)\in\Z$.
Moreover, if $p\neq
p'$, there exist $i$, $a$ such that $m^{a}_{i}(p)\neq m^{a}_{i}(p')$. Thus, if $z_{1}\lc z_{k}$ are linearly independent
over $\Z$, the common eigenspaces of the operators $K_{i}=\sum_{a=1}^{k} z_{a}\big(e^{\langle n\rangle}_{ii}\big)_{(a)}$,
\mbox{$i=1\lc n$}, in $\mathfrak{P}_{kn}$ are one-dimensional.
Therefore, the common eigenspaces of the
operators $G^{\langle n,k\rangle}_{1}(\bar{z},\bar{\alpha})\lc G^{\langle n,k\rangle}_{n}(\bar{z},\bar{\alpha})$
in $\mathfrak{P}_{kn}$ are one-dimensional
provided that $z_{1}\lc z_{k}$ are sufficiently large positive numbers linearly independent
over $\Z$.
Hence, the common eigenspaces for generic $\bar{\alpha}$, $\bar{z}$ are one-dimensional.
\end{proof}

Let $v_{V}\in\mathfrak{P}_{kn}[\boldsymbol{l},\boldsymbol{m}]$ be the Bethe vector corresponding to $V$, see Section \ref{qpol and BA}. Assume that $v_{V}\neq 0$. Then the vector $v_{V}$ is an eigenvector of the trigonometric Gaudin Hamiltonians $H^{\langle k,n\rangle}_{1}(\bar{\alpha},\bar{z}+\boldsymbol{l})\lc H^{\langle k,n\rangle}_{n}(\bar{\alpha},\bar{z}+\boldsymbol{l})$. Denote the associated eigenvalues as $h^{V}_{1}\lc h^{V}_{k}$, respectively.

Assume that the space $W=\mathfrak{T}_{1}(\mathfrak{T}_{3}(V))$ is XXX-admissible. Let $v_{W}\in\mathfrak{P}_{kn}[\boldsymbol{l},\boldsymbol{m}]$ be the Bethe vector corresponding to $W$, see Section \ref{qexp to BAE}. Assume that $v_{W}\neq 0$. Then the vector~$v_{W}$ is an eigenvector of the trigonometric dynamical Hamiltonians~$G^{\langle n,k\rangle}_{1}\big({-}\bar{z}-\boldsymbol{l}+\bar{1},\bar{\alpha}\big)\lc G^{\langle n,k\rangle}_{n}\big({-}\bar{z}-\boldsymbol{l}+\bar{1},\bar{\alpha}\big)$. Denote the associated eigenvalues as $g^{W}_{1}\lc g^{W}_{n}$, respectively.

\begin{thm}\label{discrete main 2}
The following holds:
\begin{equation}\label{discrete main 2 formula}
h^{V}_{i}=-g^{W}_{i},\qquad i=1\lc n.
\end{equation}
\end{thm}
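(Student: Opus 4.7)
The plan is to combine Theorem \ref{discrete main 1}, which provides the operator identity $\mathcal{S}_V\mathcal{S}_W = 1$, with the explicit residue formulas for $h_i^V$ in Proposition \ref{prop5.4} and for $g_i^W$ in Proposition \ref{eigenv of trig D}. Under the specialization imposed in this section ($\lambda^{(a)} = (l_a,0,\ldots)$ and $\mu^{(i)} = \omega_{m_i}$ with all $l_a, m_i > 0$), one has $p_{\bar{\alpha},\bar{\mu}'}(T) = \prod_{i=1}^n (T - \alpha_i)$ and $q_{\bar{z},\bar{\lambda}}(-x) = (-1)^k\prod_{a=1}^k(x + z_a + l_a)$, so that the fundamental pseudo-difference operators reduce to
\[
\mathcal{S}_V = \bigl((-x)^k + \textstyle\sum_{a=1}^k \beta_a(T)(-x)^{k-a}\bigr)(q_{\bar{z},\bar{\lambda}}(-x))^{-1}, \qquad \mathcal{S}_W = S_W\,(p_{\bar{\alpha},\bar{\mu}'}(T))^{-1},
\]
where $\beta_1,\ldots,\beta_k$ are the coefficients of $x^kD_V$ expanded in $\theta = x\,d/dx$.

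Equating $\mathcal{S}_V\mathcal{S}_W = 1$ then yields the closed form $S_W = q_{\bar{z},\bar{\lambda}}(-x)\,A^{-1}\,p_{\bar{\alpha},\bar{\mu}'}(T)$ with $A = (-x)^k + \sum_a \beta_a(T)(-x)^{k-a}$. I would expand $A^{-1}$ as a formal Laurent series in $x^{-1}$ and substitute, using the commutation rule $T^m x^l = (x+m)^l T^m$ systematically to move rational functions of $x$ to the left of operators in $T$. This produces an expansion $S_W = \sum_{a\geq 0} x^{-a}E_a(T)$ with $E_0(T) = p_{\bar{\alpha},\bar{\mu}'}(T)$ (consistent with Lemma \ref{p_{W}}) and with $E_2(T)$ an explicit polynomial in $T$ whose coefficients are polynomial expressions in the Laurent coefficients of $\beta_1,\beta_2$ at infinity and in the numbers $z_a + l_a$.

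Next I would compute $\Res_{u=\alpha_i}E_2(u)/\prod_{j=1}^n(u-\alpha_j) = E_2(\alpha_i)/\prod_{j\neq i}(\alpha_i-\alpha_j)$ and use the partial-fraction decompositions of $\beta_1,\beta_2$ afforded by Lemma \ref{properties of beta} (whose poles lie only among $\alpha_1,\ldots,\alpha_n$) to rewrite the outcome as a combination of $\Res_{x=\alpha_i}(\tfrac12\beta_1^2-\beta_2)$ together with bilinear arithmetic terms in $\alpha_i,\alpha_j,m_i,m_j$. Substituting this expression into the formula for $g_i^W$ from Proposition \ref{eigenv of trig D} and comparing with the formula for $h_i^V$ from Proposition \ref{prop5.4} delivers the desired equality $h_i^V = -g_i^W$.

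The principal technical obstacle will be the noncommutative bookkeeping for the order-$x^{-2}$ expansion: the quadratic combination $\tfrac12\beta_1^2-\beta_2$ emerges only after collecting all second-order terms produced by inverting $A$ and commuting the resulting $x^{-1}$'s past shift operators, with the $\beta_1^2$ contribution coming from the square of the first-order geometric correction in $A^{-1}$ and the $\beta_2$ contribution from the next-to-leading term of $A$ itself. The constants $\tfrac{m_i^2}{2}-m_i$ on the Gaudin side and $-\tfrac{m_i^2}{2}+\sum_{j\neq i}\alpha_j m_i m_j/(\alpha_i-\alpha_j)$ on the dynamical side should then cancel cleanly once the subleading coefficients of the prefactors $q_{\bar{z},\bar{\lambda}}(-x)$ and $p_{\bar{\alpha},\bar{\mu}'}(T)$ are absorbed into the residue calculation.
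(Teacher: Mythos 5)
Your proposal is correct and, at its core, runs on the same engine as the paper's proof: both arguments reduce Theorem \ref{discrete main 2} to the residue formulas of Propositions \ref{prop5.4} and \ref{eigenv of trig D} together with the rational-function identity
$\tfrac12\beta_1^2-\beta_2=-\bigl(\tfrac12E_1^2/E_0^2-E_2/E_0\bigr)+u\bigl(E_1/E_0\bigr)'$ (up to an additive constant, which is invisible to the residues), and that identity is obtained in both cases by matching the coefficients of $x^0$, $x^{-1}$, $x^{-2}$ in a pseudo-difference-operator relation. The organizational difference is that the paper routes everything through the intermediate space $U=\mathfrak{T}_3(V)$: it writes $S_U=\sum_a x^{-a}B_a(T)$ with honest polynomials $B_a$, relates the $B_a$ to the $\beta_a$ via the bispectral duality, and extracts the relations \eqref{5.6.9}--\eqref{5.6.10} from the polynomial identity $\prod_i(T-\alpha_i)^{m_i+1}=S_W^{\ddagger}S_U$; you instead invoke Theorem \ref{discrete main 1} as a black box and invert $\mathcal{S}_V$ directly, never introducing the $B_a$. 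This is a legitimate shortcut (indeed one finds $E_1/E_0=\beta_1+Z$ and $E_2/E_0=e_2+Z\beta_1+\beta_1^2-\beta_2-u\beta_1'$ with $Z=\sum_a(z_a+l_a)$, $e_2=\sum_{a<b}(z_a+l_a)(z_b+l_b)$, which reproduces the paper's identity), at the cost of manipulating the full Laurent series $\beta_a(T)$ inside $\Psi\mathfrak{D}_q$ rather than polynomial coefficients. Two points to tighten: first, $E_2(T)$ is not a function of finitely many Laurent coefficients of $\beta_1,\beta_2$ at infinity --- the full rational functions enter, and the individual terms $\beta_2(T)p_{\bar\alpha,\bar\mu'}(T)$, $\beta_1^2(T)p_{\bar\alpha,\bar\mu'}(T)$ are not separately polynomial when some $m_i>1$, only their combination is; this is harmless for your residue computation but the phrasing should be corrected. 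Second, Lemma \ref{properties of beta} only locates the poles of $\beta_1$; the residues $\Res_{x=\alpha_i}\beta_1=-m_i\alpha_i$ needed to evaluate $\Res\bigl(\tfrac12E_1^2/E_0^2\bigr)$ and $\Res\bigl(u(E_1/E_0)'\bigr)$ must be taken from the factorization of $x^kD_V$ in the proof of Proposition \ref{prop5.4} (equivalently, from the exponents of $V$ at $\alpha_i$); the paper instead imports the explicit formula for $E_1/E_0$ from \cite{MTV6}.
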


Before proving the theorem, let us discuss how it explains the relation between the map $V\mapsto W=\mathfrak{T}_{1}(\mathfrak{T}_{3}(V))$ and the $(\gl_{k},\gl_{n})$-duality. By Proposition~\ref{Gaudin Dyn duality}, for each $i=1\lc n$, we have
\begin{equation}\label{Gaudin Dyn duality 2}
G^{\langle n,k\rangle}_{n}\big({-}\bar{z}-\boldsymbol{l}+\bar{1},\bar{\alpha}\big)v_{V}=-H^{\langle k,n\rangle}_{i}(\bar{\alpha},\bar{z}+\boldsymbol{l})v_{V}=-h^{V}_{i}v_{V}.
\end{equation}

Therefore, starting with the space $V$ and the corresponding vector $v_{V}$, we have two different ways to obtain a common eigenvector of the trigonometric dynamical Hamiltonians. First, by the $(\gl_{k},\gl_{n})$-duality, $v_{V}$ is itself a common eigenvector of the dynamical Hamiltonians, see formula~\eqref{Gaudin Dyn duality 2}. Second, the map $V\mapsto W=\mathfrak{T}_{1}(\mathfrak{T}_{3}(V))$ gives the vector~$v_{W}$. Theorem~\ref{discrete main 2} and Lemma~\ref{simple spectrum} assure that for generic $\bar{z},\bar{\alpha}$, these two eigenvectors are the same up to a constant multiple.

Indeed, comparing formulas \eqref{discrete main 2 formula} and~\eqref{Gaudin Dyn duality 2}, we have
\begin{equation*}
G^{\langle n,k\rangle}_{n}\big({-}\bar{z}-\boldsymbol{l}+\bar{1},\bar{\alpha}\big)v_{V}=g^{W}_{i}v_{V},
\end{equation*}
which means that the vectors $v_{V}$ and $v_{W}$ belong to the same eigenspace. Then Lemma \ref{simple spectrum} implies that $v_{W}$ is proportional to $v_{V}$.

\begin{proof}[Proof of Theorem \ref{discrete main 2}]
Denote $U=\mathfrak{T}_{3}(V)\in\mathcal{E}\big(\bar{\alpha},\bar{\mu};\bar{z}+\bar{\lambda'}_{1},\bar{\lambda}\big)$.
By Lemma \ref{p_{W}}, the fundamental difference operator $S_{U}=T^{M}+\sum_{i=1}^{M}b_{i}(x)T^{M-i}$ of $U$ has rational coefficients $b_{1}(x)\lc b_{M}(x)$, which are regular at infinity. Therefore, there exist polynomials $B_{0}(u), B_{1}(u), B_{2}(u),\dots$ such that
\begin{equation}\label{5.6.1}
S_{U}=\sum_{a=0}^{\infty}x^{-a}B_{a}(T).
\end{equation}
Moreover, Lemma \ref{p_{W}} gives an explicit formula for the polynomial $B_{0}(x)$:
\begin{equation}\label{5.6.5}
B_{0}(u)=p_{\bar{\alpha},\bar{\mu}}(u)=\prod_{i=1}^{n}(u-\alpha_{i})^{m_{i}}.
\end{equation}

Consider the regularized fundamental difference operator $\bar{S}_{U}=q_{\bar{z},\bar{\lambda}}(x)S_{U}$ of $U$, where $q_{\bar{z},\bar{\lambda}}(x)\allowbreak =\prod_{a=1}^{k}(x-z_{a}-l_{a})$, see Section~\ref{BD}. Since $\deg q_{\bar{z},\bar{\lambda}}(x)= k$, the coefficients $\bar{b}_{1}(x)\lc\bar{b}_{M}(x)$ in the expansion $\bar{S}_{U}=T^{M}+\sum_{i=1}^{M}\bar{b}_{i}(x)T^{M-i}$ are polynomials in $x$ of degree at most $k$.

Define numbers $A_{ia}$, $i=1\lc M$, $a=1\lc k$ by $\bar{S}_{U}=\sum_{i=1}^{M}\sum_{a=1}^{k}A_{ia}x^{a}T^{i}$. Then we have
\begin{equation}\label{5.6.2}
S_{U}=\frac{1}{\prod_{a=1}^{k}(x-z_{a}-l_{a})}\sum_{i=1}^{M}\sum_{a=1}^{k}A_{ia}x^{a}T^{i}.
\end{equation}

Denote $\sum_{a=1}^{k}(z_{a}+l_{a})=Z$. Comparing formulas \eqref{5.6.1} and \eqref{5.6.2}, we get
\begin{gather}
B_{0}(u)=\sum_{i=1}^{M}A_{i,k}u^{i}, \qquad
B_{1}(u)=\sum_{i=1}^{M} (A_{i,k-1}+ZA_{i,k} )u^{i},\nonumber \\
B_{2}(u)=\sum_{i=1}^{M}\big(A_{i,k-2}+ZA_{i,k-1}+Z^{2}A_{i,k}\big)u^{i}.\label{5.6.4}
\end{gather}

Let $\bar{D}_{V}$ be the regularized fundamental differential operator of $V$. Since $U=\mathfrak{T}_{3}(V)$, by Theorem~\ref{bisp dual}, we have
\begin{equation}\label{5.6.3}
\bar{D}_{V}=\sum_{i=1}^{M}\sum_{a=1}^{k}A_{ia}x^{i}\left(x\frac{{\rm d}}{{\rm d}x}\right)^{a}.
\end{equation}

Let $D_{V}$ be the fundamental differential operator of $V$. We have $\bar{D}_{V}=p_{\bar{\alpha},\bar{\mu}}(x)\big(x^{k}D_{V}\big)$, where $p_{\bar{\alpha},\bar{\mu}}(x)=\prod_{i=1}^{n}(x-\alpha_{i})^{m_{i}}$, see Section~\ref{BD}. Write
\begin{equation*}
x^{k}D_{V}=\left(x\frac{{\rm d}}{{\rm d}x}\right)^{k}+\sum_{a=1}^{k}\beta_{a}(x)\left(x\frac{{\rm d}}{{\rm d}x}\right)^{k-a}.
\end{equation*}
Then formula \eqref{5.6.3} gives
\begin{equation}\label{5.6.6}
\beta_{a}=\frac{\sum_{i=1}^{M}A_{i,k-a}x^{i}}{\prod_{i=1}^{n}(x-\alpha_{i})^{m_{i}}},\qquad a=1\lc k.
\end{equation}
By Proposition \ref{prop5.4}, we have
\begin{equation}\label{5.6.7}
h^{V}_{i}=\frac{1}{\alpha_{i}}\Res_{x=\alpha_{i}}\left(\frac{1}{2}\beta_{1}^{2}(x)-\beta_{2}(x)\right)+\frac{m_{i}^{2}}{2}-m_{i}.
\end{equation}

Using formulas \eqref{5.6.4}, \eqref{5.6.5}, and \eqref{5.6.6}, one can check
\begin{equation*}
\Res_{x=\alpha_{i}}\left(\frac{1}{2}\beta_{1}^{2}(x)-\beta_{2}(x)\right)=\Res_{u=\alpha_{i}}\left(\frac{1}{2}\frac{B^{2}_{1}(u)}{B^{2}_{0}(u)}-\frac{B_{2}(u)}{B_{0}(u)}\right).
\end{equation*}
Therefore, formula \eqref{5.6.7} gives
\begin{equation}\label{5.6.11}
h^{V}_{i}=\frac{1}{\alpha_{i}}\Res_{u=\alpha_{i}}\left(\frac{1}{2}\frac{B^{2}_{1}(u)}{B^{2}_{0}(u)}-\frac{B_{2}(u)}{B_{0}(u)}\right)+\frac{m_{i}^{2}}{2}-m_{i}.
\end{equation}

Consider the space $W=\mathfrak{T}_{1}(U)\in\mathcal{E}\big(\bar{\alpha},\bar{\mu}';-\bar{z},\bar{\lambda}'\big)$.
We have
\begin{equation}\label{7.1.1}
\prod_{i=1}^{n}(T-\alpha_{i})^{m_{i}+1}=S_{W}^{\ddagger}S_{U},
\end{equation}
where the involutive automorphism $(\cdot)^{\ddagger}$ is defined in formula~\eqref{ddagger}.

The fundamental difference operator $S_{W}$ of $W$ can be written in the form
\begin{equation*}
S_{W}=\sum_{a=0}^{\infty}x^{-a}E_{a}(T).
\end{equation*}

Substituting this into formula \eqref{7.1.1}, we have
\[\prod_{i=1}^{n}(T-\alpha_{i})^{m_{i}+1}=\left(\sum_{a=0}^{\infty}E_{a}(T)(-x)^{-a}\right)\left(\sum_{a=0}^{\infty}x^{-a}B_{a}(T)\right).\]

Writing the right hand side of the last formula in the form $\sum_{a=0}^{\infty}x^{-a}P_{a}(T)$ with some polynomials $P_{0}(x),P_{1}(x),P_{2}(x),\dots$ and comparing it to the left hand side, we see that $P_{a}(u)=0$ for all $a\geq 1$, and
\begin{equation}\label{5.6.8}
E_{0}(u)B_{0}(u)=P_{0}(u)=\prod_{i=1}^{n}(u-\alpha_{i})^{m_{i}+1}.
\end{equation}
From $P_{1}(u)=0$, we get
\begin{equation}\label{5.6.9}
E_{0}(u)B_{1}(u)-E_{1}(u)B_{0}(u)=0.
\end{equation}
From $P_{2}(u)=0$, we get
\begin{equation}\label{5.6.10}
E_{2}(u)B_{0}(u)+E_{0}(u)B_{2}(u)+uE'_{1}(u)B_{0}(u)-uE'_{0}(u)B_{1}(u)-E_{1}(u)B_{1}(u)=0.
\end{equation}
In the last formula we used that for every polynomial $P(u)$, we have
\[
P(T)x^{-1}=x^{-1}P(T)-x^{-2}TP'(T)+\sum_{a\geq 3}x^{-a}\tilde{P}_{a}(T)
\]
for some polynomials $\tilde{P}_{3}(u),\tilde{P}_{4}(u),\dots $.

Using relations \eqref{5.6.9} and \eqref{5.6.10}, one can check
\[\frac{1}{2}\frac{B^{2}_{1}(u)}{B^{2}_{0}(u)}-\frac{B_{2}(u)}{B_{0}(u)}
=-\left(\frac{1}{2}\frac{E^{2}_{1}(u)}{E^{2}_{0}(u)}-\frac{E_{2}(u)}{E_{0}(u)}\right)+u\left(\frac{E_{1}(u)}{E_{0}(u)}\right)'.\]

Therefore, formula \eqref{5.6.11} gives
\begin{gather}
h^{V}_{i}= -\frac{1}{\alpha_{i}}\Res_{u=\alpha_{i}}\left(\frac{1}{2}\frac{E^{2}_{1}(u)}{E^{2}_{0}(u)}-\frac{E_{2}(u)}{E_{0}(u)}\right)+ \Res_{u=\alpha_{i}}\left(u\left(\frac{E_{1}(u)}{E_{0}(u)}\right)'\right)+\frac{m_{i}^{2}}{2}-m_{i}.\label{5.6.17}
\end{gather}

Let $g^{W}_{1}\lc g^{W}_{n}$ be the eigenvalues of the trigonometric dynamical Hamiltonians $G^{\langle n,k\rangle}_{1}\big({-}\bar{z}-\boldsymbol{l}+\bar{1},\bar{\alpha}\big)\lc G^{\langle n,k\rangle}_{n}\big({-}\bar{z}-\boldsymbol{l}+\bar{1},\bar{\alpha}\big)$, respectively, associated with the Bethe vector~$v_{W}$. By Proposition~\ref{eigenv of trig D}, we have
\begin{equation}\label{5.6.14}
g^{W}_{i}=-\frac{1}{\alpha_{i}}\Res_{u=\alpha_{i}}\frac{E_{2}(u)}{\prod_{j=1}^{n}(u-\alpha_{i})}+\sum_{\substack{j=1\\j\neq i}}^{n}\frac{\alpha_{j}m_{i}m_{j}}{\alpha_{i}-\alpha_{j}}-\frac{m_{i}^{2}}{2}.
\end{equation}

We will use again \cite[Proposition B.1]{MTV6}, which gives the following explicit formula for the quotient $E_{1}(u)/\prod_{i=1}^{n}(u-\alpha_{i})$:
\begin{equation}\label{5.6.12}
\frac{E_{1}(u)}{\prod_{i=1}^{n}(u-\alpha_{i})}=\sum_{j=1}^{n}\frac{\alpha_{j}m_{j}}{\alpha_{j}-u}.
\end{equation}

From formulas \eqref{5.6.5} and \eqref{5.6.8}, we get
\begin{equation}\label{5.6.13}
E_{0}(u)=\prod_{i=1}^{n}(u-\alpha_{i}).
\end{equation}
Using \eqref{5.6.12} and \eqref{5.6.13}, we can rewrite \eqref{5.6.14} in the following way:
\begin{equation}\label{5.6.15}
g^{W}_{i}=\frac{1}{\alpha_{i}}\Res_{u=\alpha_{i}}\left(\frac{1}{2}\frac{E^{2}_{1}(u)}{E^{2}_{0}(u)}-\frac{E_{2}(u)}{E_{0}(u)}\right)-\frac{m_{i}^{2}}{2}.
\end{equation}

Using \eqref{5.6.12} and \eqref{5.6.13} again, we compute
\begin{equation}\label{5.6.16}
\frac{1}{\alpha_{i}}\Res_{u=\alpha_{i}}\left(u\left(\frac{E_{1}(u)}{E_{0}(u)}\right)'\right)=m_{i}.
\end{equation}

Comparing formulas \eqref{5.6.17}, \eqref{5.6.15}, and \eqref{5.6.16}, we get
\eqref{discrete main 2 formula}.
Theorem \ref{discrete main 2} is proved.
\end{proof}
\subsection{Non-reduced data}\label{non-reduced data}
In the previous section, we related the quotient difference operator and the $(\gl_{k},\gl_{n})$-duality of the trigonometric Gaudin and dynamical Hamiltonians acting on the space $\mathfrak{P}_{kn}[\boldsymbol{l},\boldsymbol{m}]$, where $\boldsymbol{l}=(l_{1}\lc l_{k})$ and $\boldsymbol{m}=(m_{1}\lc m_{n})$ are such that $l_{a}\neq 0$, $a=1\lc k$ and $m_{i}\neq 0$, \mbox{$i=1\lc n$}. In this section, we are going to extend this result to all nontrivial subspaces $\mathfrak{P}_{kn}[\boldsymbol{l},\boldsymbol{m}]$, that is, we are going to include the cases when some $l_{a}$, $m_{i}$ are zero.

Fix $\boldsymbol{l}=(l_{1}\lc l_{k})\in \Z^{k}_{\geq 0}$. For each $a=1\lc k$, let $q_{a}(x)$ be a polynomial of degree $l_{a}$ such that $q_{a}(0)\neq 0$. Fix complex numbers $z_{1}\lc z_{k}$ such that $z_{a}-z_{b}\notin\Z$ if $a\neq b$. Denote by $V$ the space spanned by the functions $x^{z_{a}}q_{a}(x)$, $a=1\lc k$.

Define
\[ V^{\red}=\prod_{\substack{a=1\\l_{a}=0}}^{k}\left(x\frac{{\rm d}}{{\rm d}x}-z_{a}\right)V. \]
Denote $k'=\dim V^{\red}$. Fix $\alpha\in \C^{*}$. Let $(e_{1}>\dots > e_{k})$ be the sequence of exponents of $V$ at $\alpha$, and let $\big(e^{\red}_{1}> \dots >e^{\red}_{k'}\big)$ be the sequence of exponents of~$V^{\red}$ at~$\alpha$.

\begin{lem}\label{non-reduced lemma}
Define a partition $\mu=(\mu_{1},\mu_{2},\dots)$ by $e^{\red}_{a}=k'+\mu_{a}-a$, $a=1\lc k'$, $\mu_{k'+1}=0$. Then $e_{a}=k+\mu_{a}-a$, $a=1\lc k$.

Conversely, if a partition $\mu$ is such that $e_{a}=k+\mu_{a}-a$, $a=1\lc k$, then $\mu_{k'+1}=0$ and $e^{\red}_{a}=k'+\mu_{a}-a$, $a=1\lc k'$.
\end{lem}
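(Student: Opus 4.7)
My plan is to prove the lemma by induction on $s = k - k'$, the number of indices with $l_a = 0$. The base case $s = 0$ is trivial, so the whole argument reduces to a single-operator step: one must show that if $l_{a_0} = 0$, then applying $\phi_{a_0} = xd/dx - z_{a_0}$ to $V$ produces a space $\phi_{a_0}V$ of dimension $k - 1$ whose exponents at $\alpha$ are those of $V$ with the zero exponent removed and all others shifted down by one. A direct computation shows that $\phi_{a_0}$ sends $x^{z_c}q_c(x)$ to $x^{z_c}\bigl((z_c - z_{a_0})q_c(x) + x q_c'(x)\bigr)$, which for $c \neq a_0$ has degree exactly $l_c$ (since $z_c - z_{a_0} \notin \Z$ forces $z_c - z_{a_0} + l_c \neq 0$) and nonzero value at $0$; for $c = a_0$ it vanishes. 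Hence $\phi_{a_0}V$ is again of the form considered in the lemma with one fewer ``$l=0$'' index, and since the operators appearing in the definition of $V^{\red}$ commute, iterating this single step together with the inductive hypothesis yields the claim.

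The single-operator step reduces to the filtration identity
\[
n_e(\phi_{a_0}V) = n_{e+1}(V), \qquad e \geq 0,
\]
where $n_e(U) = \dim\{f \in U : \on{ord}_\alpha f \geq e\}$. From this identity the exponents of $\phi_{a_0}V$ are obtained from those of $V$ by shifting down by one and deleting the zero exponent (the zero exponent of $V$ is present because $q_{a_0}(0)\cdot x^{z_{a_0}} \in V$ is nonzero at $\alpha \in \C^*$); this translates directly into $\mu^{\red}_a = \mu_a$ for $a \leq k'$ and $\mu_{k'+1} = 0$. To prove the identity I will write $f = (x - \alpha)^{e_f} h(x)$ with $h(\alpha) \neq 0$ and compute $\phi_{a_0}(f) = \alpha e_f (x - \alpha)^{e_f - 1} h(\alpha) + O\bigl((x - \alpha)^{e_f}\bigr)$ for $e_f \geq 1$, establishing that $\phi_{a_0}$ drops order at $\alpha$ by \emph{exactly} one on $\mathcal{F}_1(V) = \{f \in V : \on{ord}_\alpha f \geq 1\}$. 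Together with $\ker(\phi_{a_0}|_V) = \on{span}(x^{z_{a_0}}) \subset \mathcal{F}_0(V) \setminus \mathcal{F}_1(V)$ this gives an injection of $\mathcal{F}_{e+1}(V)$ into $\mathcal{F}_e(\phi_{a_0}V)$.

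The expected main obstacle is the surjectivity of this injection, since $\ker \phi_{a_0}$ only handles order-$0$ behavior at $\alpha$ while we must lift elements of arbitrary order. To handle it, given $g \in \mathcal{F}_e(\phi_{a_0}V)$ with a preimage $\tilde f \in V$, I distinguish two cases: if $\on{ord}_\alpha \tilde f \geq 1$ the exact drop-by-one forces $\tilde f \in \mathcal{F}_{e+1}(V)$ directly; if $\on{ord}_\alpha \tilde f = 0$ I replace $\tilde f$ by $f = \tilde f - \alpha^{-z_{a_0}}\tilde f(\alpha)\, x^{z_{a_0}}$, which satisfies $\phi_{a_0}(f) = g$ and $f(\alpha) = 0$, after which the exact drop-by-one again yields $f \in \mathcal{F}_{e+1}(V)$. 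The resolution of the apparent obstruction is precisely that $\phi_{a_0}$ loses no information about positive-order behavior at $\alpha$; only the order-$0$ discrepancy requires correction, and that is exactly what the one-dimensional kernel provides. The converse direction of the lemma then follows formally, since $\mu$ is uniquely determined from the $e_a$ by $\mu_a = e_a - k + a$.
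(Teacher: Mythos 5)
Your proof is correct, but it takes a genuinely different route from the paper. The paper works with the fundamental differential operators: it reduces to the case $l_1=0$, uses the factorization $x^{k}D_{V}=x^{k-1}D_{V^{\red}}\big(x\frac{{\rm d}}{{\rm d}x}-z_{1}\big)$, expands the coefficients around $\alpha$ to form the indicial polynomials $I(r)$ and $I^{\red}(r)$ whose roots are the exponents, and deduces $I(r)=rI^{\red}(r-1)$ from the coefficient identity $b_{a}(\alpha)=b_{a}^{\red}(\alpha)$ (which in turn rests on the Wronskian formulas guaranteeing regularity of the $b_a$ at $\alpha$). You instead act directly on the filtration of $V$ by order of vanishing at $\alpha$: the exact drop-by-one computation $\phi_{a_0}f=\alpha\,e_f\,h(\alpha)(x-\alpha)^{e_f-1}+O\big((x-\alpha)^{e_f}\big)$ (valid since $\alpha\neq 0$), the one-dimensional kernel $\langle x^{z_{a_0}}\rangle$ concentrated in order $0$, and the correction $f=\tilde f-\alpha^{-z_{a_0}}\tilde f(\alpha)x^{z_{a_0}}$ together give the clean identity $n_{e}(\phi_{a_0}V)=n_{e+1}(V)$, from which "delete the zero exponent and shift down by one" is immediate; your verification that $\phi_{a_0}V$ is again of the required form (degree exactly $l_c$ via $z_c-z_{a_0}+l_c\neq 0$, nonvanishing at $0$ via $(z_c-z_{a_0})q_c(0)\neq 0$) makes the induction on $k-k'$ legitimate. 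Your argument is more elementary and self-contained -- it avoids the fundamental differential operator, the Wronskian regularity estimates, and the Frobenius/indicial-polynomial machinery altogether -- at the cost of being less directly tied to the operator identities that the rest of the section manipulates; the paper's version has the advantage that the relation $I(r)=rI^{\red}(r-1)$ sits naturally alongside the analogous operator factorizations used elsewhere. Both proofs correctly observe that the converse direction is formal, since the correspondence between exponent sequences and partitions is bijective.
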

\begin{proof}
It is enough to prove the lemma for the case when $l_{1}=0$, and $l_{2}\lc l_{k}$ are not zero.
Let $D_{V}$ and $D_{V^{\red}}$ be the monic linear differential operators of order $k$ and $k-1$, respectively, annihilating $V$ and $V^{\red}$, respectively. Then
\begin{equation}\label{non-reduced 1}
x^{k}D_{V}=x^{k-1}D_{V^{\red}}\left(x\frac{{\rm d}}{{\rm d}x}-z_{1}\right).
\end{equation}

Define functions $b_{1}(x)\lc b_{k}(x)$, $b_{1}^{\red}(x)\lc b_{k-1}^{\red}(x)$ by
\begin{gather*}
x^{k}D_{V}=\sum_{a=0}^{k}\frac{b_{a}(x)}{(x-\alpha)^{a}}\left(x\frac{{\rm d}}{{\rm d}x}\right)^{k-a},
\\
x^{k-1}D_{V^{\red}}=\sum_{a=0}^{k-1}\frac{b^{\red}_{a}(x)}{(x-\alpha)^{a}}\left(x\frac{{\rm d}}{{\rm d}x}\right)^{k-1-a}.
\end{gather*}

Using formulas \eqref{a_{i}(x)}, \eqref{Wr_a}, and \eqref{Wr}, one can check that $b_{1}(x)\lc b_{k}(x)$, $b_{1}^{\red}(x)\lc b_{k-1}^{\red}(x)$ are regular at $\alpha$. Define polynomials $I(r)$ and $I^{\red}(r)$ by
\begin{gather*}
I(r)=\sum_{a=1}^{k}b_{a}(\alpha)\alpha^{k-a}r(r-1)(r-2)\cdots (r-k+a+1),
\\
I^{\red}(r)=\sum_{a=1}^{k-1}b^{\red}_{a}(\alpha)\alpha^{k-1-a}r(r-1)(r-2)\cdots (r-k+a+2).
\end{gather*}

Notice that $\{e_{1}\lc e_{k}\}$ is the set of roots of the polynomial $I(r)$. Indeed, substituting a~series $\sum_{i=0}^{\infty}A_{i}(x-\alpha)^{i+r}$ into the differential equation $D_{V}f=0$, and looking at the coefficient for the lowest power of $(x-\alpha)$, we get $I(r)=0$. Similarly, $\big\{e^{\red}_{1}\lc e^{\red}_{k'}\big\}$ is the set of roots of the polynomial $I^{\red}(r)$. The polynomials $I(r)$ and $I^{\red}(r)$ are called the indicial polynomials of the differential equations $D_{V}f=0$ and $D_{V^{\red}}f=0$, respectively.

Using formula \eqref{non-reduced 1}, we obtain the following relations:
\begin{equation}\label{non-reduced 2}
b_{a}(x)=b_{a}^{\red}(x)-z_{1}(x-\alpha)b_{a-1}^{\red}(x),\qquad a=1\lc k,
\end{equation}
where we assume that $b^{\red}_{k}(x)=0$. Relations~\eqref{non-reduced 2} imply $b_{a}(\alpha)=b_{a}^{\red}(\alpha)$, $a=1\lc k$. Since~$D_{V}$ and $D_{V^{\red}}$ are monic, we also have $b_{0}(x)=b^{\red}_{0}(x)=1$. Therefore, $I(r)=rI^{\red}(r-1)$, which implies the lemma.
\end{proof}

Let $\{\alpha_{1}\lc\alpha_{n}\}$ be a set including all non-zero singular points of $V$. Assume that $\alpha_{i}\neq\alpha_{j}$ if $i\neq j$, and $\alpha_{i}\neq 0$ for all $i=1\lc n$. Suppose that for each $i=1\lc n$, the sequence of exponents of $V$ at $\alpha_{i}$ is given by
\[ (k,k-1\lc k-m_{i}+1,k-m_{i}-1,k-m_{i}-2\lc 1,0)\]
for some $m_{i}\in \Z$, $0\leq m_{i}\leq k$.

Define a sequence of partitions $\bar{\lambda}=\big(\lambda^{(1)}\lc \lambda^{(k)}\big)$ by $\lambda^{(a)}=(l_{a},0,0,\dots)$, $a=1\lc k$. Define a sequence of partitions $\bar{\mu}=\big(\mu^{(1)}\lc \mu^{(n)}\big)$ by $\mu^{(i)}=(1,1\lc 1,0,0,\dots)$ with $m_{i}$ ones, $i=1\lc n$. Define sequences $\bar{\lambda}^{\red}$, $\bar{\mu}^{\red}$, $\bar{z}^{\red}$, and $\bar{\alpha}^{\red}$ by removing all zero partitions from the sequences $\bar{\lambda}$, $\bar{\mu}$, and removing corresponding numbers from the sequences $\bar{z}=(z_{1}\lc z_{n})$, $\bar{\alpha}=(\alpha_{1}\lc \alpha_{n})$. We will call the data $\big(\bar{z},\bar{\lambda};\bar{\alpha},\bar{\mu}\big)$ reduced if $\big(\bar{z},\bar{\lambda};\bar{\alpha},\bar{\mu}\big)=\big(\bar{z}^{\red},\bar{\lambda}^{\red};\bar{\alpha}^{\red},\bar{\mu}^{\red}\big)$, and non-reduced otherwise.
\begin{prop}\label{non-reduced prop 1}
$V^{\red}$ is a space of quasi-polynomials with the data $\big(\bar{z}^{\red},\bar{\lambda}^{\red};\bar{\alpha}^{\red},\bar{\mu}^{\red}\big)$.
\end{prop}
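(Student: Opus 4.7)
The proof proceeds in two parts. First, I shall exhibit an explicit quasi-polynomial basis of $V^{\red}$, from which the $(\bar{z}^{\red},\bar{\lambda}^{\red})$-part of the data and the non-degeneracy at $0$ read off directly. Second, I shall invoke Lemma \ref{non-reduced lemma} to determine the singular points $\bar{\alpha}^{\red}$ and the exponent partitions $\bar{\mu}^{\red}$.

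For the first part I would observe that the operators $x\frac{d}{dx}-z_a$ pairwise commute, so the factors in $\prod_{a\colon l_a=0}(x\frac{d}{dx}-z_a)$ may be applied in any order. For each $a$ with $l_a=0$, $q_a$ is a nonzero constant and $(x\frac{d}{dx}-z_a)x^{z_a}q_a(x)=0$. For each $b$ with $l_b\neq 0$, a direct computation gives
\[
\left(x\frac{d}{dx}-z_a\right)\bigl(x^{z_b}q_b(x)\bigr) = x^{z_b}\bigl[(z_b-z_a)q_b(x)+xq_b'(x)\bigr],
\]
where the bracketed polynomial has leading coefficient $(z_b-z_a+l_b)\cdot\mathrm{lc}(q_b)\neq 0$ (since $z_b-z_a+l_b\notin\Z$) and constant term $(z_b-z_a)q_b(0)\neq 0$. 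Iterating this observation over all $a$ with $l_a=0$ produces a basis of $V^{\red}$ of the form $\{x^{z_b}\check q_b(x):l_b\neq 0\}$ with $\deg\check q_b=l_b$ and $\check q_b(0)\neq 0$. This exhibits the quasi-polynomial basis associated with the data $(\bar{z}^{\red},\bar{\lambda}^{\red})$ and verifies the non-degeneracy at $0$, since each retained index $b$ has $\big(\lambda^{(b)}\big)'_1=1$, so only a function nonzero at $0$ is required.

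For the second part, Lemma \ref{non-reduced lemma} applies directly to $V$ and $V^{\red}$. The exponent partition of $V$ at each $\alpha_i$ is $\mu^{(i)}$ (with $m_i$ ones), and the converse direction of the lemma guarantees both $\mu^{(i)}_{k'+1}=0$ (so in particular $m_i\leq k'$) and that $V^{\red}$ carries the same partition $\mu^{(i)}$ at $\alpha_i$. When $m_i=0$, this yields the generic exponent sequence $(k'-1,\dots,0)$ and $\alpha_i$ is no longer a singular point of $V^{\red}$; when $m_i>0$, $\alpha_i$ remains singular and its exponent partition is $\mu^{(i)}$. Hence the singular points and exponent partitions of $V^{\red}$ are precisely $\bar{\alpha}^{\red}$ and $\bar{\mu}^{\red}$ obtained by deleting the zero partitions and the corresponding $\alpha_i$. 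The only genuinely nontrivial step in the argument is the quasi-polynomial basis computation of the first part, which is needed to certify that $V^{\red}$ lies in a space of the prescribed form before the exponent bookkeeping of Lemma \ref{non-reduced lemma} is brought to bear.
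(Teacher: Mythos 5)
Your proof is correct and follows essentially the same route as the paper: it exhibits the basis $\{x^{z_b}\check q_b(x)\}$ of $V^{\red}$ by applying the commuting factors $x\frac{{\rm d}}{{\rm d}x}-z_a$ to the basis of $V$, checking that the degree is preserved when $l_b\neq 0$ and the function is killed when $l_b=0$ (the paper records the same computation via $\tilde q_b=\prod_a\big(x\frac{{\rm d}}{{\rm d}x}+z_b-z_a\big)q_b$), and then invokes Lemma~\ref{non-reduced lemma} to transfer the exponent data at each $\alpha_i$. Your explicit verification that the constant term remains nonzero, which certifies the non-degeneracy at $0$, is a detail the paper leaves implicit.
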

\begin{proof}
Recall that $V$ is spanned by the functions $x^{z_{a}}q_{a}(x)$, $a=1\lc k$, where $q_{1}(x)\lc q_{k}(x)$ are polynomials such that $\deg q_{a}=l_{a}$, and $q_{a}(0)\neq 0$, $a=1\lc k$. Then the space $V^{\red}$ is spanned by the functions $x^{z_{a}}\tilde{q}_{a}(x)$, $a=1\lc k$, where
\begin{equation}\label{non-reduced 3}
\tilde{q}_{b}(x)=\prod_{\substack{a=1\\l_{a}=0}}^{k}\left(x\frac{{\rm d}}{{\rm d}x}+z_{b}-z_{a}\right)q_{b}(x).
\end{equation}
If $l_{b}\neq 0$, then for each $a$ in the product on the left hand side of formula \eqref{non-reduced 3}, we have $z_{b}-z_{a}\notin\Z$, which yields $\deg \tilde{q}_{a}(x)=\deg q_{a}(x)$, $a=1\lc k$. If $l_{b}=0$, then formula \eqref{non-reduced 3} implies $\tilde{q}_{b}(x)=0$. This shows that the space $V^{\red}$ has a basis
\[\big\{x^{z_{a}}\tilde{q}_{a}(x)\,|\,z_{a}\text{ is present in }\bar{z}^{\red}\big\},\]
and the degrees of the polynomials $\tilde{q}_{a}(x)$ appearing in this basis correspond to the sequence~$\bar{\lambda}^{\red}$.

Notice that $\bar{\alpha}^{\red}$ is the set of all singular points of~$V$, and the sequences of exponents of~$V$ at these points correspond to the sequence $\bar{\mu}^{\red}$. Therefore, the proposition follows from Lem\-ma~\ref{non-reduced lemma}.
\end{proof}

Recall the maps $\mathfrak{T}_{1}$ and $\mathfrak{T}_{3}$, see 
\eqref{T_1} and~\eqref{T_3}, respectively. Set $W^{\red}=\mathfrak{T}_{1}\big(\mathfrak{T}_{3}\big(V^{\red}\big)\big)$. Then $W^{\red}$ is a space of quasi-exponentials with the difference data $\big(\bar{\alpha}^{\red},\big(\bar{\mu}^{\red}\big)';\allowbreak -\bar{z}^{\red},\big(\bar{\lambda}^{\red}\big)'\big)$. We are going to construct a space $W$ such that
\[W^{\red}=\prod_{\substack{i=1\\m_{i}=0}}^{n}(T-\alpha_{i})W.\] For this we will need the following lemma:
\begin{lem}\label{non-reduced lemma 1}
Fix $\alpha, \beta\in\C^{*}$, and a polynomial $p(x)$. Assume that $\alpha\neq\beta$. Then there exists a~unique polynomial $\tilde{p}(x)$ such that $\deg \tilde{p}(x)=\deg p(x)$, and
\begin{equation}\label{non-reduced 4}
(T-\beta)\alpha^{x}\tilde{p}(x)=\alpha^{x}p(x).
\end{equation}
\end{lem}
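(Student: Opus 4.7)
The plan is to reduce the statement to a linear algebra question about a shift operator on the finite-dimensional space of polynomials of bounded degree. First, I would expand the left-hand side of \eqref{non-reduced 4}: since $(T-\beta)\alpha^{x}\tilde p(x)=\alpha^{x+1}\tilde p(x+1)-\beta\alpha^{x}\tilde p(x)=\alpha^{x}\bigl(\alpha\tilde p(x+1)-\beta\tilde p(x)\bigr)$, the equation is equivalent to
\[
\alpha\tilde p(x+1)-\beta\tilde p(x)=p(x).
\]
Set $n=\deg p(x)$ and let $L\colon\C[x]_{\leq n}\to\C[x]_{\leq n}$ be the linear map $L(\tilde p)(x)=\alpha\tilde p(x+1)-\beta\tilde p(x)$. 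The task reduces to showing that $L$ is a bijection and that the (unique) preimage of $p$ has degree exactly~$n$.

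Next I would check that $L$ is well-defined as a map on $\C[x]_{\leq n}$ and compute how it acts on the leading coefficient: if $\tilde p(x)=cx^{d}+\text{lower order}$ with $d\leq n$ and $c\neq 0$, then
\[
L(\tilde p)(x)=\alpha c(x+1)^{d}-\beta cx^{d}+\text{lower order}=(\alpha-\beta)cx^{d}+\text{lower order}.
\]
Because $\alpha\neq\beta$, the leading coefficient is multiplied by the nonzero constant $\alpha-\beta$, so $L$ preserves the degree of any nonzero polynomial. In particular, $\ker L=0$, and since $\C[x]_{\leq n}$ is finite-dimensional, $L$ is a bijection.

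Finally, applying this to the given $p$ of degree $n$, there exists a unique $\tilde p\in\C[x]_{\leq n}$ with $L(\tilde p)=p$, which yields existence and uniqueness of $\tilde p$ solving \eqref{non-reduced 4}. The leading-coefficient computation above then forces $\deg\tilde p=\deg p=n$, since a polynomial of strictly smaller degree could not produce a degree-$n$ polynomial under $L$. This completes the proof; no genuine obstacle arises, as the argument is elementary linear algebra once the operator $T-\beta$ is conjugated by $\alpha^{x}$ to the shift-difference operator $\alpha\tau-\beta$ on $\C[x]$.
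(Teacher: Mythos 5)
Your proof is correct and follows essentially the same route as the paper: both reduce \eqref{non-reduced 4} to $\alpha\tilde p(x+1)-\beta\tilde p(x)=p(x)$ and exploit that the induced map on coefficients is triangular with diagonal entries $\alpha-\beta\neq 0$. The only cosmetic difference is that the paper solves the resulting triangular recursion for the coefficients explicitly, whereas you package the same fact as injectivity (hence bijectivity) of a degree-preserving operator on the finite-dimensional space $\C[x]_{\leq n}$.
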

\begin{proof}
Relation \eqref{non-reduced 4} is the same as relation
\begin{equation}\label{non-reduced 5}
\alpha\tilde{p}(x+1)-\beta \tilde{p}(x)=p(x).
\end{equation}
Let $a_{0}\lc a_{m}$ be the coefficients of $p(x)$: $p(x)=a_{m}x^{m}+a_{m-1}x^{m-1}+\dots +a_{1}x+a_{0}$.
Substituting a polynomial $\tilde{p}(x)=\tilde{a}_{m}x^{m}+\tilde{a}_{m-1}x^{m-1}+\dots +\tilde{a}_{1}x+\tilde{a}_{0}$ into equation \eqref{non-reduced 5} and comparing coefficients for powers of $x$, we get
\[
\tilde{a}_{m-i}(\alpha-\beta)=a_{m-i}-\alpha\sum_{j=0}^{i-1}\binom{m-j}{m-i}\tilde{a}_{m-j}, \qquad i=0\lc m,
\]
which is a recursion that allows to find the numbers $\tilde{a}_{1}\lc \tilde{a}_{n}$ uniquely.
\end{proof}

For any $\beta\in\C^{*}$, define a linear operator $(T-\beta)^{-1}$ on the space spanned by all functions of the form $\alpha^{x}p(x)$, where $\alpha\in \C^{*}$, $\alpha\neq\beta$, and $p(x)$ is a polynomial, by the formula{\samepage
\[(T-\beta)^{-1}\alpha^{x}p(x)=\alpha^{x}\tilde{p}(x),\]
where $\tilde{p}(x)$ is the polynomial from Lemma \ref{non-reduced lemma 1}.}

Let $1\leq i_{1}<i_{2}< \dots <i_{l}\leq n$ be such that $m_{i}=0$ if $i=i_{s}$ for some $s=1\lc l$, and $m_{i}\neq 0$ otherwise. Denote by $W$ the space spanned by the functions
\[(T-\alpha_{i_{1}})^{-1}(T-\alpha_{i_{2}})^{-1}\dots(T-\alpha_{i_{l}})^{-1}f,\qquad f\in W^{\red}, \qquad\text{and}\qquad \alpha_{i_{1}}^{x}\lc \alpha_{i_{l}}^{x}.\]

Let $S_{W}$ be the fundamental difference operator of $W$. Let $S_{W^{\red}}$ be the fundamental difference operator of $W^{\red}$. Then we have
\begin{equation}\label{non-reduced 7}
S_{W}=S_{W^{\red}}\prod_{\substack{i=1\\m_{i}=0}}^{n}(T-\alpha_{i}).
\end{equation}
Together with Lemma \ref{unique kernel}, this shows that the order of $\alpha_{i_{1}}\lc\alpha_{i_{l}}$ in the definition of $W$ does not matter.

Recall that $W^{\red}$ is a space of quasi-exponentials with the difference data $\big(\bar{\alpha}^{\red},\big(\bar{\mu}^{\red}\big)';\allowbreak -\bar{z}^{\red},\big(\bar{\lambda}^{\red}\big)'\big)$. Then the equality $\deg \tilde{p}(x) = \deg p(x)$ in Lemma~\ref{non-reduced lemma 1} implies that the space $W$ has a basis of the form
\[
\big\{\alpha_{i}^{x}r_{i}(x),\,i=1\lc n \big\},
\]
where $r_{1}(x)\lc r_{n}(x)$ are polynomials such that $\deg r_{i}(x) = m_{i}$, $i=1\lc n$.

Fix $z\in \C$. Let $(\tilde{e}_{1}>\dots > \tilde{e}_{n})$ be the sequence of discrete exponents of~$W$ at~$z$. Denote $n'=n-l=\dim W^{\red}$. Let $\big(\tilde{e}^{\red}_{1}>\dots >\tilde{e}^{\red}_{n'}\big)$ be the sequence of discrete exponents of~$W^{\red}$ at~$z$.
\begin{lem}\label{non-reduced lemma 2}
Define a partition $\lambda=(\lambda_{1},\lambda_{2},\dots)$ by $\tilde{e}^{\red}_{i}=n'+\lambda_{i}-i$, $i=1\lc n'$, $\lambda_{n'+1}=0$. Then $\tilde{e}_{i}=n+\lambda_{i}-i$, $i=1\lc n$.

Conversely, if a partition $\lambda$ is such that $\tilde{e}_{i}=n+\lambda_{i}-i$, $i=1\lc n$, then $\lambda_{n'+1}=0$ and $\tilde{e}^{\red}_{i}=n'+\lambda_{i}-i$, $i=1\lc n'$.
\end{lem}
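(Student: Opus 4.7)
The plan is to induct on $l$, the number of indices $i$ with $m_i = 0$. The base case $l = 0$ is trivial since then $W = W^{\red}$ and the two formulas coincide. For the inductive step, it suffices to establish the following claim: given a space $U$ of quasi-exponentials whose spectrum does not contain a fixed $\beta \in \C^{*}$, and the space $U'$ spanned by $(T - \beta)^{-1} U$ (where $(T-\beta)^{-1}$ is the specific operator from Lemma~\ref{non-reduced lemma 1}) together with $\beta^{x}$, the sequence of discrete exponents of $U'$ at any $z \in \C$ is obtained from that of $U$ by increasing each exponent by $1$ and appending a new exponent $0$. Since this construction satisfies $S_{U'} = S_U(T - \beta)$, iterating it $l$ times with $\beta = \alpha_{i_1}, \dots, \alpha_{i_l}$ in turn reconstructs precisely the space $W$ of the lemma.

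To prove the claim, fix a basis $\psi_1, \dots, \psi_d$ of $U$ adapted to its discrete exponents $(e_1 > \dots > e_d)$ at $z$, so $(T^j \psi_i)(z) = 0$ for $j < e_i$ and $(T^{e_i} \psi_i)(z) \ne 0$. For each $i$, lift $\psi_i$ to $(T - \beta)^{-1} \psi_i \in U'$ via Lemma~\ref{non-reduced lemma 1}, then subtract a suitable multiple of $\beta^{x}$ (possible since $\beta^{z} \ne 0$) to produce $\phi_i \in U'$ with $\phi_i(z) = 0$ and $(T - \beta) \phi_i = \psi_i$. The recursion
\[
\phi_i(z + k + 1) = \beta\, \phi_i(z + k) + \psi_i(z + k)
\]
then shows, by induction on $k$, that $\phi_i(z + k) = 0$ for $0 \le k \le e_i$, while $\phi_i(z + e_i + 1) = \psi_i(z + e_i) \ne 0$. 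Hence $\phi_i$ has discrete exponent $e_i + 1$ at $z$. Together with $\beta^{x}$, which has exponent $0$, the functions $\phi_1, \dots, \phi_d, \beta^{x}$ form a basis of $U'$ realizing the strictly decreasing sequence $(e_1 + 1, \dots, e_d + 1, 0)$, proving the claim.

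Iterating the claim $l$ times yields $\tilde{e}_i = \tilde{e}^{\red}_i + l$ for $i = 1, \dots, n'$ together with $\tilde{e}_{n' + j} = l - j$ for $j = 1, \dots, l$. Substituting $\tilde{e}^{\red}_i = n' + \lambda_i - i$ and using $n = n' + l$ gives $\tilde{e}_i = n + \lambda_i - i$ for $i = 1, \dots, n'$; for $i = n' + j$ one checks $\lambda_{n' + j} = \tilde{e}_{n' + j} + (n' + j) - n = 0$, consistent with $\lambda_{n'+1} = 0$. The converse direction is immediate: the partition $\lambda$ determined by $\tilde{e}_i = n + \lambda_i - i$ agrees with the one supplied by the first direction, forcing $\lambda_{n'+1} = 0$ and $\tilde{e}^{\red}_i = n' + \lambda_i - i$.

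The main delicacy is producing the lift $\phi_i$ with $\phi_i(z) = 0$; without this adjustment the recursion would not propagate the vanishing, and $\phi_i$ could have an exponent different from $e_i + 1$. The freedom to adjust by a multiple of $\beta^{x}$, enabled by $\beta$ not lying in the spectrum of $U$, is precisely what makes the adjustment possible, after which the combinatorics of assembling the exponents into the partition $\lambda$ is routine.
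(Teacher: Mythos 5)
Your proof is correct and follows essentially the same route as the paper: the paper also reduces to a single step ($m_1=0$, the rest nonzero), forms the adjusted lift $\tilde f_i=(T-\alpha_1)^{-1}f_i-\alpha_1^{x-z}\big((T-\alpha_1)^{-1}f_i\big)(z)$ so that $\tilde f_i(z)=0$, and propagates the vanishing via $(T-\alpha_1)\tilde f_i=f_i$ to get the exponent sequence $(\tilde e^{\red}_1+1>\dots>\tilde e^{\red}_{n-1}+1>0)$. Your explicit induction on $l$ and the recursion $\phi_i(z+k+1)=\beta\phi_i(z+k)+\psi_i(z+k)$ are just a repackaging of the paper's one-step reduction and its identity $T^j-\alpha_1^j=\big(\sum_{s=0}^{j-1}\alpha_1^{j-1-s}T^s\big)(T-\alpha_1)$.
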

\begin{proof}
It is enough to prove the Lemma for the case $m_{1}=0$, and $m_{2}\lc m_{n}$ are not zero.

Let $f_{1}(x)\lc f_{n-1}(x)$ be a basis of $W^{\red}$ such that for each $i=1\lc n-1$, $T^{j}f_{i}(z)=0$, $j=0\lc \tilde{e}^{\red}_{i}-1$, and $T^{\tilde{e}^{\red}_{i}}f_{i}(z)\neq 0$. Set
\[
\tilde{f}_{i}(x)=(T-\alpha_{1})^{-1}f_{i}(x)-\alpha_{1}^{x-z}(T-\alpha_{i})^{-1}f_{i}(z),\qquad i=1\lc n.
\]
Then $\tilde{f}_{i}(x)\in W$, $(T-\alpha_{1})\tilde{f}_{i}(x)=f_{i}(x)$, and $\tilde{f}_{i}(z)=0$, $i=1\lc n-1$.

Since $T^{j}-\alpha_{1}^{j}=\big(\sum_{s=0}^{j-1}\alpha_{1}^{j-1-s}T^{s}\big)(T-\alpha_{1})$, we have
\[
T^{j}\tilde{f}_{i}(x)=\alpha_{1}^{j}\tilde{f}_{i}(x)+\sum_{s=0}^{j-1}\alpha_{1}^{j-1-s}T^{s}f_{i}(x).
\]

The last relation implies $T^{j}\tilde{f}_{i}(z)=0$, $j=0\lc \tilde{e}^{\red}_{i}$, and $T^{\tilde{e}^{\red}_{i}+1}\tilde{f}_{i}(z)=T^{\tilde{e}^{\red}_{i}}f_{i}(z)\neq 0$.

Since $\{\alpha_{1}^{x},\tilde{f}_{1}(x)\lc \tilde{f}_{n-1}(x)\}$ is a basis of $W$, the sequence of discrete exponents of $W$ at $z$ is given by
\[
\big(\tilde{e}^{\red}_{1}+1>\dots > \tilde{e}^{\red}_{n-1}+1>0 \big),
\]
which implies the lemma.
\end{proof}

Notice that for each $a=1\lc k$, the sequence of discrete exponents of $W^{\red}$ at $-z_{a}$ is given by
\[
(n',n'-1\lc n'-l_{a}+1,n'-l_{a}-1\lc 1,0).
\]
Therefore, by Lemma \ref{non-reduced lemma 2}, for each $a=1\lc k$, the sequence of discrete exponents of~$W$ at~$-z_{a}$ is given by
\[
(n,n-1\lc n-l_{a}+1,n-l_{a}-1\lc 1,0).
\]
Consider the space $\mathfrak{P}_{kn}[\boldsymbol{l},\boldsymbol{m}]$, where $\boldsymbol{l}=(l_{1}\lc l_{k})$ and $\boldsymbol{m}=(m_{1}\lc m_{n})$. One can repeat all constructions in Section \ref{qpol and BA} for the space $V$. Assume that $V$ satisfies conditions similar to those for a Gaudin admissible space in Section \ref{qpol and BA}. Then we obtain a vector $v_{V}\in \mathfrak{P}_{kn}[\boldsymbol{l},\boldsymbol{m}]$ such that
\[
H^{\langle k,n\rangle}_{i}(\bar{\alpha},\bar{z}+\boldsymbol{l})v_{V}=h_{i}^{V}v_{V},\qquad i=1\lc n
\]
for some numbers $h^{V}_{1}\lc h_{n}^{V}$. We will assume that $v_{V}\neq 0$.

Similarly, one can repeat all constructions in Section \ref{qexp to BAE} for the space $W$. Assume that $W$ satisfies conditions similar to those for an XXX-admissible space in Section \ref{qexp to BAE}. Then we obtain a vector $v_{W}\in \mathfrak{P}_{kn}[\boldsymbol{l},\boldsymbol{m}]$ such that
\[
G^{\langle n,k\rangle}_{i}\big({-}\bar{z}-\boldsymbol{l}+\bar{1},\bar{\alpha}\big)v_{W}=g_{i}^{W}v_{W},\qquad i=1\lc n
\]
for some numbers $g_{1}^{W}\lc g_{n}^{W}$. We will assume that $v_{W}\neq 0$.
\begin{thm}\label{discrete main 2 extended}
The following holds:
\[
h_{i}^{V}=-g_{i}^{W},\qquad i=1\lc n.
\]
\end{thm}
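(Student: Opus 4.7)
The plan is to reduce Theorem~\ref{discrete main 2 extended} to Theorem~\ref{discrete main 2} applied to the reduced data $\bigl(\bar{z}^{\red},\bar{\lambda}^{\red};\bar{\alpha}^{\red},\bar{\mu}^{\red}\bigr)$. I split the indices $i\in\{1\lc n\}$ according to whether $m_i=0$ and treat the two cases separately.

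For indices~$i$ with $m_i=0$, I argue directly that $h_i^V=0=g_i^W$. The Bethe vector $v_V\in\mathfrak{P}_{kn}[\boldsymbol{l},\boldsymbol{m}]$ contains no variable~$\xi_{a,i}$, so each operator $\rho^{\langle k,n\rangle}\bigl((e^{\langle k\rangle}_{ab})_{(i)}\bigr)=\xi_{a,i}\partial_{b,i}$ annihilates~$v_V$. Every summand of $H^{\langle k,n\rangle}_i(\bar{\alpha},\bar{z}+\boldsymbol{l})$ contains such a factor, hence~$h_i^V=0$. Analogously, each summand of~$G^{\langle n,k\rangle}_i(-\bar{z}-\boldsymbol{l}+\bar{1},\bar{\alpha})$, once expanded and reordered using $\partial_{c,i}\xi_{d,j}+\xi_{d,j}\partial_{c,i}=\delta_{cd}\delta_{ij}$, contains a~derivation $\partial_{c,i}$ that reaches $v_W$ first and kills it, so $g_i^W=0$.

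For indices~$i$ with $m_i\neq 0$, the plan is to establish $h_i^V=h_i^{V^{\red}}$ and $g_i^W=g_i^{W^{\red}}$ under the natural embedding of the reduced polynomial ring into $\mathfrak{P}_{kn}$ (using only variables~$\xi_{a,j}$ with $l_a\neq 0$ and $m_j\neq 0$), and then invoke Theorem~\ref{discrete main 2} for the reduced data, which yields $h_i^{V^{\red}}=-g_i^{W^{\red}}$. The extra summands of $H^{\langle k,n\rangle}_i(\bar{\alpha},\bar{z}+\boldsymbol{l})$ indexed by $a$ with $l_a=0$ or by $j$ with $m_j=0$ act by zero on~$v_V$: the $l_a=0$ summands reduce to~$z_a\,\xi_{a,i}\partial_{a,i}$, and no~$\xi_{a,i}$ occurs in~$v_V$ because $l_a=0$; the $m_j=0$ summands vanish by the argument of the previous paragraph applied at the~$j$-th position. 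The analogous cancellations hold for $G^{\langle n,k\rangle}_i$ and~$v_W$.

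The main obstacle is verifying the identifications $v_V=v_{V^{\red}}$ and $v_W=v_{W^{\red}}$ under this embedding. For the Gaudin side this requires tracing the construction of the polynomials $y_a(x)$ of Section~\ref{qpol and BA}: for $a$ with $l_a=0$ the factor $q_a(x)$ is a nonzero constant, and the corresponding factor $(x\,d/dx-z_a)$ applied in passing from $V$ to $V^{\red}$ cancels precisely in the ratio defining each~$y_a(x)$, so the Gaudin admissible tuple~$\tilde{\boldsymbol{t}}$ coming from~$V$ coincides with that of~$V^{\red}$. A~check on the hypergeometric weight function~$\phi(\bar{\alpha},\boldsymbol{t})$ of~\cite{MaV} then shows it is insensitive to trivial tensor factors in the underlying $\gl_k$-module. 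For the dynamical side, the factorization~\eqref{non-reduced 7} together with the analogous property of the universal weight function~$\psi_i(\boldsymbol{t},\bar{z})$ of~\cite{MTV6} gives $v_W=v_{W^{\red}}$. Combining these identifications with Theorem~\ref{discrete main 2} applied to~$V^{\red}$ then completes the proof.
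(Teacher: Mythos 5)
Your handling of the indices with $m_i=0$ is fine and is a legitimate alternative to what the paper does there (the paper gets $h_i^{V,\red}=g_i^{W,\red}=0$ from the residue formulas, since $\alpha_i$ is not a pole of the relevant expressions when $m_i=0$). The gap is in the case $m_i\neq 0$: your reduction to Theorem~\ref{discrete main 2} hinges on the identification $v_V=v_{V^{\red}}$ and on the claim that the Gaudin Bethe tuple $\tilde{\boldsymbol{t}}$ of $V$ coincides with that of $V^{\red}$. This is false in general. Dropping an index $a$ with $l_a=0$ lowers the rank $k$ of $\gl_k$ and changes the numbers $\bar l_1\lc\bar l_{k-1}$ of Bethe roots computed from formula~\eqref{bar l}. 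For instance, with $k=2$, $\boldsymbol{l}=(0,2)$, $n=2$, $\boldsymbol{m}=(1,1)$ one has $\bar l_1=2$, so $\tilde{\boldsymbol{t}}$ consists of the two roots of $y_1(x)$ and $v_V=\phi\big(\bar\alpha,\tilde{\boldsymbol{t}}\big)$ sits two lowering operators below the highest weight in $L_{\omega_1}\otimes L_{\omega_1}$, whereas the reduced data has $k'=1$ and no Bethe variables at all, $v_{V^{\red}}$ being a highest weight vector. Deleting an $a$ with $l_a=0$ does not remove a ``trivial tensor factor'' (that picture only applies to deleting a $j$ with $m_j=0$ on the Gaudin side); it changes the algebra and the entire Bethe ansatz setup. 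The same problem occurs on the XXX side, where deleting an $i$ with $m_i=0$ changes $n$ and the numbers $\bar m_j$ from~\eqref{m bar}. Consequently the step ``the extra summands of the Hamiltonians act by zero on $v_V$'' has no vector to act on that is common to both setups.

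The paper circumvents this entirely by never comparing Bethe vectors: it compares the eigenvalue \emph{formulas}. From $x^{k}D_{V}=x^{k-1}D_{V^{\red}}\big(x\,{\rm d}/{\rm d}x-z_{1}\big)$ one gets $\beta_{1}=\beta_{1}^{\red}-z_{1}$ and $\beta_{2}=\beta_{2}^{\red}-z_{1}\beta_{1}^{\red}$, so $\Res_{x=\alpha_{i}}\big(\tfrac12\beta_{1}^{2}-\beta_{2}\big)$ entering~\eqref{p5.4} is unchanged and $h_{i}^{V}=h_{i}^{V,\red}$; similarly $S_{W}=S_{W^{\red}}\prod_{m_{i}=0}(T-\alpha_{i})$ gives $E_{a}(u)=E_{a}^{\red}(u)\prod_{m_{i}=0}(u-\alpha_{i})$ and hence $g_{i}^{W}=g_{i}^{W,\red}$; the identity $h_{i}^{V,\red}=-g_{i}^{W,\red}$ for $m_i\neq0$ is then read off from the computation already performed in the proof of Theorem~\ref{discrete main 2}. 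To salvage your route you would need a genuine compatibility statement for the weight functions $\phi$ and $\psi_{i}$ under these rank-changing reductions, which is a substantive claim not established in the paper.
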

\begin{proof}
Define functions $\beta_{0}(x)\lc\beta_{k}(x)$, $\beta^{\red}_{0}(x)\lc\beta^{\red}_{k'}(x)$ by
\[
x^{k}D_{V}=\sum_{a=0}^{k}\beta_{a}(x)\left(x\frac{{\rm d}}{{\rm d}x}\right)^{k-a},\qquad x^{k'}D_{V^{\red}}=\sum_{a=0}^{k'}\beta^{\red}_{a}(x)\left(x\frac{{\rm d}}{{\rm d}x}\right)^{k'-a}.
\]

The eigenvalues $h_{1}^{V}\lc h_{n}^{W}$ can be expressed through $\beta_{1}(x)$, $\beta_{2}(x)$ using the same formula as in the case of reduced data, see \eqref{p5.4}. For convenience, we repeat this formula here:
\[
h_{i}^{V}=\frac{1}{\alpha_{i}}\Res_{x=\alpha_{i}}\left(\frac{1}{2}\beta_{1}^{2}(x)-\beta_{2}(x)\right)+\frac{m_{i}^{2}}{2}-m_{i}.
\]
Define also the following numbers:
\[
h^{V,\red}_{i}=\frac{1}{\alpha_{i}}\Res_{x=\alpha_{i}}\left(\frac{1}{2}\big(\beta^{\red}_{1}\big)^{2}(x)-\beta^{\red}_{2}(x)\right)+\frac{m_{i}^{2}}{2}-m_{i}.
\]

Suppose that $l_{1}=0$, and $l_{2}\lc l_{k}$ are not zero. Relation \eqref{non-reduced 1} implies
\[
\beta_{1}=\beta_{1}^{\red}-z_{1},\qquad \beta_{2}=\beta_{2}^{\red}-z_{1}\beta_{1}^{\red}.
\]

Using the last two formulas, it is easy to check that
\begin{equation}\label{non-reduced 6}
\Res_{x=\alpha_{i}}\left(\frac{1}{2}\beta_{1}^{2}(x)-\beta_{2}(x)\right)=\Res_{x=\alpha_{i}}\left(\frac{1}{2}\big(\beta^{\red}_{1}\big)^{2}(x)-\beta^{\red}_{2}(x)\right).
\end{equation}

By induction, formula \eqref{non-reduced 6} holds for any $l_{1}\lc l_{k}$. Therefore, we have $h_{i}^{V}=h^{V,\red}_{i}$, $i=1\lc n$.

Define polynomials $E_{0}(u),E_{1}(u),E_{2}(u),\dots$, $E^{\red}_{0}(u),E^{\red}_{1}(u),E^{\red}_{2}(u),\dots$ by
\[
S_{W}=\sum_{a=0}^{\infty}x^{-a}E_{a}(T),\qquad S_{W^{\red}}=\sum_{a=0}^{\infty}x^{-a}E^{\red}_{a}(T).
\]

The eigenvalues $g_{1}^{W}\lc g_{n}^{W}$ can be expressed through $E_{1}(u)$, $E_{2}(u)$ using the same formula as in the case of reduced data, see \eqref{5.6.15}. For convenience, we repeat this formula here:
\[
g_{i}^{W}=\frac{1}{\alpha_{i}}\Res_{u=\alpha_{i}}\left(\frac{1}{2}\frac{E^{2}_{1}(u)}{E^{2}_{0}(u)}-\frac{E_{2}(u)}{E_{0}(u)}\right)-\frac{m_{i}^{2}}{2}.
\]

Define also the following numbers
\[
g^{W,\red}_{i}=\frac{1}{\alpha_{i}}\Res_{u=\alpha_{i}}\left(\frac{1}{2}\frac{\big(E^{\red}_{1}(u)\big)^{2}}{\big(E^{\red}_{0}(u)\big)^{2}}-\frac{E^{\red}_{2}(u)}{E^{\red}_{0}(u)}\right)-\frac{m_{i}^{2}}{2}.
\]
Using relation \eqref{non-reduced 7}, we have
\[
E_{a}(u)=E_{a}^{\red}(u)\prod_{\substack{i=1\\m_{i}=0}}^{n}(u-\alpha_{i}),
\]
which implies $g_{i}^{W}=g^{W, \red}_{i}$, $i=1\lc n$.

In the proof of Theorem \ref{discrete main 2}, we already checked that $h^{V,\red}_{i}=-g^{W, \red}_{i}$ for all $i$ such that $m_{i}\neq 0$. If $m_{i}=0$, then $h^{V, \red}_{i}=g^{W, \red}_{i}=0$. Therefore, we have $h_{i}^{V}=-g_{i}^{W}$, $i=1\lc n$.

Theorem \ref{discrete main 2 extended} is proved.
\end{proof}

\appendix

\section{Discrete Wronskian identities}\label{appendixA}
In this section, we collect discrete Wronskian identities that were used in the paper. Identities \eqref{Wr1}--\eqref{Wr2} with proofs can also be found in \cite[Appendix~B]{MV2}.

Recall that $T$ is the shift operator defined by $Tf(x)=f(x+1)$. Recall that for any functions $f_{1}\lc f_{n}$, the discrete Wronskian $\mathcal{W}{\rm r}(f_{1}\lc f_{n})$ is the determinant of the matrix $\big(T^{j-1}f_{i}\big)_{i,j=1}^{n}$. Denote $T^{(n)}f=f(Tf)\big(T^{2}f\big)\cdots\big(T^{n-1}f\big)$.
We have the following obvious relations:
\begin{gather}\label{Wr1}
\mathcal{W}{\rm r}(hf_{1}\lc hf_{n})=\big(T^{(n)}h\big)\mathcal{W}{\rm r}(f_{1}\lc f_{n})\qquad \text{for any }h,
\\
\label{Wr3}
\mathcal{W}{\rm r}(1,f_{1}\lc f_{n})=\mathcal{W}{\rm r}((T-1)f_{1}\lc (T-1)f_{n}).
\end{gather}
Assume that $f_{1}\neq 0$. Combining formulas \eqref{Wr1} and \eqref{Wr3}, we get
\begin{equation}\label{Wr4}
\mathcal{W}{\rm r}(f_{1},f_{2}\lc f_{n})=\big(T^{(n)}f_{1}\big)\mathcal{W}{\rm r}\left((T-1)\frac{f_{2}}{f_{1}}\lc (T-1)\frac{f_{n}}{f_{1}}\right).
\end{equation}
\begin{prop}\label{disc wronsc conj} For any functions $f_{1}\lc f_{n}$, $h_{1}\lc h_{m}$, where $f_{1}\neq 0$, the following holds:
\begin{gather}
\mathcal{W}{\rm r}(\mathcal{W}{\rm r}(f_{1}\lc f_{n},h_{1})\lc\mathcal{W}{\rm r}(f_{1}\lc f_{n},h_{m}))\nonumber\\
\qquad{} =\big(T^{(m-1)}\mathcal{W}{\rm r}(Tf_{1}\lc Tf_{n})\big)\mathcal{W}{\rm r}(f_{1}\lc f_{n},h_{1}\lc h_{m}).\label{Wr2}
\end{gather}
\end{prop}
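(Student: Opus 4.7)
I would prove \eqref{Wr2} by induction on $n$, using \eqref{Wr4} to peel off the function $f_1$ at each step. The base case $n=0$ is immediate: the right hand side is $T^{(m-1)}(1)\cdot\mathcal{W}{\rm r}(h_1,\dots,h_m)=\mathcal{W}{\rm r}(h_1,\dots,h_m)$, and the left hand side is $\mathcal{W}{\rm r}(h_1,\dots,h_m)$ since each single-function Wronskian $\mathcal{W}{\rm r}(h_i)$ is just $h_i$.

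For the inductive step, assuming the identity for $n-1$, I would set $\tilde f_j=(T-1)(f_j/f_1)$ for $j=2,\dots,n$ and $\tilde h_i=(T-1)(h_i/f_1)$ for $i=1,\dots,m$. Applying \eqref{Wr4} to each $\mathcal{W}{\rm r}(f_1,\dots,f_n,h_i)$ produces
\[
\mathcal{W}{\rm r}(f_1,\dots,f_n,h_i)=\bigl(T^{(n+1)}f_1\bigr)\,\mathcal{W}{\rm r}(\tilde f_2,\dots,\tilde f_n,\tilde h_i).
\]
Factoring the common multiplier $T^{(n+1)}f_1$ out of each of the $m$ columns via \eqref{Wr1} gives
\[
\mathcal{W}{\rm r}\bigl(\mathcal{W}{\rm r}(f_1,\dots,f_n,h_1),\dots,\mathcal{W}{\rm r}(f_1,\dots,f_n,h_m)\bigr)=T^{(m)}\bigl(T^{(n+1)}f_1\bigr)\cdot\mathcal{W}{\rm r}(G_1,\dots,G_m),
\]
where $G_i=\mathcal{W}{\rm r}(\tilde f_2,\dots,\tilde f_n,\tilde h_i)$. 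The inductive hypothesis applied to the $n-1$ functions $\tilde f_2,\dots,\tilde f_n$ then yields
\[
\mathcal{W}{\rm r}(G_1,\dots,G_m)=T^{(m-1)}\mathcal{W}{\rm r}(T\tilde f_2,\dots,T\tilde f_n)\cdot\mathcal{W}{\rm r}(\tilde f_2,\dots,\tilde f_n,\tilde h_1,\dots,\tilde h_m).
\]

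To close the argument I would apply \eqref{Wr4} once more, this time to $\mathcal{W}{\rm r}(f_1,\dots,f_n,h_1,\dots,h_m)$, obtaining
\[
\mathcal{W}{\rm r}(\tilde f_2,\dots,\tilde f_n,\tilde h_1,\dots,\tilde h_m)=\frac{\mathcal{W}{\rm r}(f_1,\dots,f_n,h_1,\dots,h_m)}{T^{(n+m)}f_1},
\]
and to $\mathcal{W}{\rm r}(Tf_1,\dots,Tf_n)$, using the observation $(T-1)(Tf_j/Tf_1)=T\tilde f_j$, which gives
\[
\mathcal{W}{\rm r}(Tf_1,\dots,Tf_n)=T^{(n)}(Tf_1)\cdot\mathcal{W}{\rm r}(T\tilde f_2,\dots,T\tilde f_n).
\]
Assembling everything reduces the identity to the purely combinatorial check
\[
\frac{T^{(m)}\bigl(T^{(n+1)}f_1\bigr)}{T^{(n+m)}f_1}=T^{(m-1)}\bigl(T^{(n)}(Tf_1)\bigr),
\]
which after writing both sides as a product of shifts of $f_1$ amounts to the exponent count $\min(m-1,l)-\max(0,l-n)=\min(m-2,l-1)-\max(0,l-n)+1$ for each $l$, a routine verification.

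\textbf{Main obstacle.} The calculations themselves are short, but the potential pitfall is the bookkeeping in the last step: one must carefully track how many copies of each shift $f_1(x+l)$ appear on the two sides, since the natural product $T^{(m)}T^{(n+1)}f_1$ contains many repetitions that need to be cancelled against the denominator $T^{(n+m)}f_1$. Beyond this, everything is a mechanical application of \eqref{Wr1} and \eqref{Wr4}.
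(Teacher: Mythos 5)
Your proposal is correct and follows essentially the same route as the paper's proof: induction on $n$, peeling off $f_{1}$ via \eqref{Wr4}, factoring the common multiplier out with \eqref{Wr1}, and closing with the shift-product identity (your $T^{(m)}\big(T^{(n+1)}f_{1}\big)=T^{(m-1)}\big(T^{(n)}(Tf_{1})\big)\,T^{(n+m)}f_{1}$ is exactly the identity used in the paper, re-indexed). The only cosmetic difference is that you start the induction at $n=0$ with the empty-Wronskian convention, whereas the paper starts at $n=1$.
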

\begin{proof}
We will prove the proposition by induction on $n$.
Let $n=1$. Denote $f_{1}=f$. Using formula~\eqref{Wr4}, we compute
\[\mathcal{W}{\rm r}(f,h_{i})=\big(T^{(2)}f\big)\mathcal{W}{\rm r}\left((T-1)\frac{h_{i}}{f}\right)=\big(T^{(2)}f\big) (T-1)\frac{h_{i}}{f},\qquad i=1\lc m.\]
Therefore,
\begin{align*}
\mathcal{W}{\rm r}(\mathcal{W}{\rm r}(f,h_{1})\lc\mathcal{W}{\rm r}(f,h_{m}))& =\big(T^{(m)}T^{(2)}f\big)\mathcal{W}{\rm r}\left((T-1)\frac{h_{1}}{f}\lc (T-1)\frac{h_{m}}{f}\right)\\
&=\big(T^{(m-1)}Tf\big)\big(T^{(m+1)}f\big)\mathcal{W}{\rm r}\left((T-1)\frac{h_{1}}{f}\lc (T-1)\frac{h_{m}}{f}\right)\\
&=\big(T^{(m-1)}Tf\big)\mathcal{W}{\rm r}(h_{1}\lc h_{m}).
\end{align*}
Assume that formula \eqref{Wr2} is true for some $n\geq 1$. For functions $f_{1}\lc f_{n+1}$, $h_{1}\lc h_{m}$, define $\tilde{f}_{i}=(T-1)(f_{i}/f_{1})$, $\tilde{h}_{j}=(T-1)(h_{j}/f_{1})$, $i=2\lc n+1$, $j=1\lc m$. Then we compute
\begin{gather}
\mathcal{W}{\rm r}(\mathcal{W}{\rm r}(f_{1}\lc f_{n+1},h_{1})\lc\mathcal{W}{\rm r}(f_{1}\lc f_{n+1},h_{m}))\nonumber\\
\qquad{}=\big(T^{(m)}T^{(n+2)}f_{1}\big)\mathcal{W}{\rm r}\big(\mathcal{W}{\rm r}\big(\tilde{f}_{2}\lc \tilde{f}_{n+1},\tilde{h}_{1}\big)\lc\mathcal{W}{\rm r}\big(\tilde{f}_{2}\lc \tilde{f}_{n+1},\tilde{h}_{m}\big)\big)\nonumber\\
\qquad{}=\big(T^{(m)}T^{(n+2)}f_{1}\big)\big(T^{(m-1)}\mathcal{W}{\rm r}\big(T\tilde{f}_{2}\lc T\tilde{f}_{n+1}\big)\big)\mathcal{W}{\rm r}\big(\tilde{f}_{2}\lc \tilde{f}_{n+1},\tilde{h}_{1}\lc \tilde{h}_{m}\big)\nonumber\\
\qquad{}=\big(T^{(m-1)}\big[\big(T^{(n+1)}Tf_{1}\big)\mathcal{W}{\rm r}\big(T\tilde{f}_{2}\lc T\tilde{f}_{n+1}\big)\big]\big) \nonumber\\
\qquad\quad{}\times\big(T^{(n+m+1)}f_{1}\big)\mathcal{W}{\rm r}\big(\tilde{f}_{2}\lc \tilde{f}_{n+1},\tilde{h}_{1}\lc \tilde{h}_{m}\big)\nonumber\\
\qquad{}=\big(T^{(m-1)}\mathcal{W}{\rm r}(Tf_{1}\lc Tf_{n+1})\big)\mathcal{W}{\rm r}(f_{1}\lc f_{n+1},h_{1}\lc h_{m}).\label{Wr5}
\end{gather}
Here, on the first step, we used formulas \eqref{Wr1} and \eqref{Wr4}, on the second step, we used the assumption hypothesis, on the third step, we used
\[
T^{(m)}T^{(n+2)}f_{1}=\big(T^{(m-1)}T^{(n+1)}Tf_{1}\big)\big(T^{(n+m+1)}f_{1}\big),
\] and on the fourth step, we used formula \eqref{Wr4} again.

Computation \eqref{Wr5} proves the induction step finishing the proof of the proposition.\end{proof}

\subsection*{Acknowledgements}

The author would like to thank Vitaly Tarasov for many helpful discussions and for his valuable comments on a draft of this text.
The author would like to thank referees for their contribution to improvement of the paper.

\pdfbookmark[1]{References}{ref}
\LastPageEnding

\end{document}